\DeclareFontFamily{T1}{cbgreek}{}
\DeclareFontShape{T1}{cbgreek}{m}{n}{<-6>  grmn0500 <6-7> grmn0600 <7-8> grmn0700 <8-9> grmn0800 <9-10> grmn0900 <10-12> grmn1000 <12-17> grmn1200 <17-> grmn1728}{}
\DeclareSymbolFont{quadratics}{T1}{cbgreek}{m}{n}
\DeclareMathSymbol{\qoppa}{\mathord}{quadratics}{19}
\DeclareMathSymbol{\Qoppa}{\mathord}{quadratics}{21}
\theoremstyle{theorem}
\newtheorem{thm}{Theorem}[section]
\newtheorem{mainthm}{Theorem}
\newtheorem{lem}[thm]{Lemma}
\newtheorem{prop}[thm]{Proposition}
\newtheorem{cor}[thm]{Corollary}
\theoremstyle{definition} 
\newtheorem{deftn}[thm]{Definition}
\newtheorem{example}[thm]{Example}
\theoremstyle{remark} 
\newtheorem{rem}[thm]{Remark}
\renewcommand{\xto}{\xrightarrow}
\newcommand{\calS}{\mathcal{S}}
\newcommand{\calP}{\mathcal{P}}
\newcommand{\Z}{\mathbb{Z}}
\newcommand{\Q}{\mathbb{Q}}
\newcommand{\R}{\mathbb{R}}
\newcommand{\F}{\mathbb{F}}
\newcommand{\beq}{\begin{equation*}}
\newcommand{\eeq}{\end{equation*}}
\newcommand{\On}{\mathrm{O}}
\newcommand{\SOn}{\mathrm{SO}}
\DeclareMathOperator{\Imm}{Imm}
\DeclareMathOperator{\Emb}{Emb}
\DeclareMathOperator{\blEmb}{\widetilde{\Emb}}
\DeclareMathOperator{\Top}{Top}
\DeclareMathOperator{\blTop}{\widetilde{\Top}}
\DeclareMathOperator{\Diff}{Diff}
\DeclareMathOperator{\blDiff}{\widetilde{\Diff}}
\DeclareMathOperator{\Map}{Map}
\DeclareMathOperator{\Hom}{Hom}
\DeclareMathOperator{\hAut}{hAut}
\DeclareMathOperator*{\hocolim}{hocolim}
\DeclareMathOperator{\Fr}{Fr}
\DeclareMathOperator*{\colim}{colim}
\DeclareMathOperator{\Bun}{Bun}
\DeclareMathOperator{\fr}{fr}
\DeclareMathOperator{\sfr}{sfr}
\DeclareMathOperator{\id}{id}
\date{\today}
\begin{document}

\title{On automorphisms of high-dimensional solid tori}

\author{Mauricio Bustamante}
\email{mauricio.bustamante@mat.uc.cl}
\address{Departamento de Matem\'aticas, Universidad Cat\'olica de Chile}

\author{Oscar Randal-Williams}
\email{o.randal-williams@dpmms.cam.ac.uk}
\address{Centre for Mathematical Sciences, Wilberforce Road, Cambridge CB3 0WB, UK}

\begin{abstract}
We study the infinite generation in the homotopy groups of the group of diffeomorphisms of $S^1 \times D^{2n-1}$, for $2n \geq 6$, in a range of degrees up to $n-2$. Our analysis relies on understanding the homotopy fibre of a linearisation map from the plus-construction of the classifying space of a certain space of self-embeddings of stabilisations of this manifold to a form of Hermitian $K$-theory of the integral group ring of $\pi_1(S^1)$. We also show that these homotopy groups vanish rationally.
\end{abstract}

\maketitle
\setcounter{tocdepth}{1}
    \begingroup
    \hypersetup{linkcolor=black}
    \tableofcontents
    \endgroup
		
\section{Introduction and statement of results}
The mapping class group of a compact smooth manifold $M$ is the group of isotopy classes of diffeomorphisms $M\to M$ which fix a neighbourhood of the boundary pointwise. For simply connected closed manifolds of dimension $d>5$, Sullivan has shown that these groups are finitely generated \cite[Theorem 13.3]{sullivan}. In contrast, the mapping class group of a non-simply connected high-dimensional manifold can fail to be finitely generated, for example the torus $M=T^n$ for $n \geq 5$, cf.\ \cite[Theorem 4.1]{HatcherConc}. In fact, this phenomenon already arises for the solid torus \cite[Corollary 5.5]{HatcherWagoner}.

A $d$-dimensional solid torus is a manifold diffeomorphic to the product  $S^1\times D^{d-1}$ of a circle with a closed $(d-1)$-disc. The mapping class group $\pi_0(\Diff_{\partial}(S^1\times D^{d-1}))$ of this manifold is infinitely generated, at least when $d\geq 6$. This was known from early works of Hatcher--Wagoner \cite{HatcherWagoner}, and Hsiang--Sharpe \cite{HsiangSharpe}, who used surgery theory, a parametrised form of the $s$-cobordism theorem, and calculations of the algebraic $K$-groups of integral group rings, to prove that $\pi_0(\Diff_{\partial}(S^1\times D^{d-1}))$ contains a subgroup isomorphic to a countable infinite sum of $\Z/2$'s. In terms of Waldhausen's approach to pseudoisotopy theory \cite{waldhausen}, one can identify the source of this infinite generation as the homotopy fibre of the linearisation map 
\beq
\Omega^{\infty}\mathrm{A}(S^1)\to \Omega^\infty \mathrm{K}(\Z[t, t^{-1}])
\eeq
from Waldhausen's algebraic $K$-theory of spaces for $S^1$ to the algebraic $K$-theory of the integral group ring of $\pi_1(S^1) = \langle t \rangle$. Furthermore, this point of view leads to the discovery of infinitely generated torsion subgroups in the homotopy groups of $B\Diff_{\partial}(S^1\times D^{d-1})$, the classifying space of the topological group $\Diff_{\partial}(S^1\times D^{d-1})$ of self-diffeomorphisms of $S^1\times D^{d-1}$ which are the identity near the boundary, in degrees within the known pseudoisotopy stable range (currently known to be $* \lesssim \tfrac{2n}{3}$).

In this paper we propose a different method for computing the homotopy groups of $B\Diff_{\partial}(S^1\times D^{2n-1})$, which does not use the algebraic $K$-theory of spaces or the stable parametrised $h$-cobordism theorem and so is not subject to the dimension constraints imposed by pseudoisotopy theory. Our approach, which in addition gives a more geometric explanation of this infiniteness, focuses on manifolds of dimension $2n \geq 6$, and is based on certain ``Weiss fibre sequence''
\beq
B\Diff_{\partial}(S^1\times D^{2n-1})\to B\Diff_{\partial}(X_g)\to B\Emb^{\cong}_{\partial/2}(X_g)
\eeq
which lets us interpret the diffeomorphisms of a solid torus as the difference between diffeomorphisms and self-embeddings (fixing only a portion of the boundary) of the manifold 
$$X_g := S^1\times D^{2n-1}\# (S^n\times S^n)^{\# g}.$$
When $g\to\infty$ the homotopy type of the plus-construction of $B\Diff_{\partial}(X_g)$ is understood by work of Galatius--Randal-Williams \cite{GRWActa}, and it has finitely generated homotopy groups. Thus any infinite generation in the homotopy groups of $B\Diff_{\partial}(S^1\times D^{2n-1})$ is due to the space of self-embeddings of $X_g$, and even to the homotopy fibre of a linearisation map 
\beq
\hocolim_{g \to \infty} B\Emb_{\partial/2}^{\cong}(X_g)^+ \to \Omega^\infty_0 \mathrm{GW}(\Z[t, t^{-1}])
\eeq
to a form of Grothendieck--Witt theory (alias Hermitian $K$-theory) of the integral group ring of $\pi_1(S^1)$. By analogy with the above one might consider the domain of this map as a form of ``Hermitian $K$-theory of spaces for $S^1$", though we do not try to pursue this analogy.

This paper begins the systematic analysis of $B\Diff_{\partial}(S^1\times D^{2n-1})$ from this point of view. In this first instance we focus on the infinite-generation phenomenon and restrict to the range of degrees $* < n-2$, in which the embedding spaces $\Emb_{\partial/2}(X_g)$ may be replaced by their block analogues $\blEmb_{\partial/2}(X_g)$, by use of Morlet's lemma of disjunction, and therefore be analysed by surgery theory. 

\begin{mainthm}\label{thm:B}
For $n \geq 3$, all primes $p$, and $0 < k<\min(2p-3, n-2)$, the groups $\pi_k(B\Diff_{\partial}(S^1\times D^{2n-1}))$ are finitely-generated when localised at $p$. 

Furthermore if $2p-3 < n-2$ then there is a map
$$\bigoplus_{a> 0} \Z/p\{t^a - t^{-a}\} \to \pi_{2p-3}(B\Diff_{\partial}(S^1\times D^{2n-1}))$$
which is injective and whose cokernel is finitely-generated after localisation at $p$. When $p=2$ the cokernel is finitely-generated even before localisation.
\end{mainthm}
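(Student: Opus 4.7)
The strategy is to leverage the fibration
$$B\Diff_{\partial}(S^1\times D^{2n-1})\to B\Diff_{\partial}(X_g)\to B\Emb^{\cong}_{\partial/2}(X_g)$$
recorded in the introduction. The plan is to pass to $g\to\infty$, apply the plus-construction to the latter two terms, and exploit the associated long exact sequence of homotopy groups. Since $\hocolim_g B\Diff_{\partial}(X_g)^+$ has finitely generated homotopy groups in every degree by Galatius--Randal-Williams, this reduces both parts of the theorem to controlling $\pi_*(\hocolim_g B\Emb^{\cong}_{\partial/2}(X_g)^+)$ in the stated range. Within the range $* < n-2$, Morlet's lemma of disjunction allows us to replace the embedding space by its block version $\blEmb_{\partial/2}(X_g)$, which is accessible through surgery theory.

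I would then analyse the linearisation map
$$\hocolim_g B\Emb^{\cong}_{\partial/2}(X_g)^+ \to \Omega^\infty_0 \mathrm{GW}(\Z[t, t^{-1}])$$
by fitting it into a commutative diagram of surgery-theoretic fibre sequences: on one side the classical normal-invariant data, built from Madsen--Tillmann-type spectra whose homotopy groups are finitely generated in each degree, and on the other side the Grothendieck--Witt/$L$-theoretic content. A careful bookkeeping should show that the homotopy fibre of the linearisation has finitely generated homotopy groups throughout $* < n-2$, reducing both parts of the theorem to statements about $\pi_*\Omega^\infty_0 \mathrm{GW}(\Z[t,t^{-1}])_{(p)}$.

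For this final step I would use the Karoubi/Bass--Heller--Swan splitting of Hermitian $K$-theory of $\Z[t, t^{-1}]$ equipped with the involution $t\mapsto t^{-1}$, decomposing it into a $\mathrm{GW}(\Z)$-type piece, an $L(\Z)$-type summand from the $K$-theory shift, and a symmetrised $N\mathrm{GW}$ summand produced by the antisymmetric part of $\Z[t,t^{-1}]$ under the involution. In degrees $0 < k < 2p-3$, the $p$-local homotopy of every summand is finitely generated, yielding the first claim. At degree $2p-3$, the symmetrised Nil summand first acquires $p$-torsion, with a natural basis indexed by the skew-symmetric elements $\{x\in \Z[t,t^{-1}] : \sigma(x) = -x\}$; the classes $t^a - t^{-a}$ for $a > 0$ then supply the required injection, and the remainder of the splitting contributes only finitely generated groups. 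The case $p = 2$ is special because the low-dimensional $2$-local $K$-theory of $\Z$ is entirely $2$-torsion, upgrading the conclusion from $2$-local to integral finite generation of the cokernel.

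The principal obstacle is the construction and control of the homotopy fibre of the linearisation map out to degree $n-2$, since this demands a surgery-theoretic analysis that is simultaneously compatible with the stabilisation $g\to\infty$, valid throughout the full block-embedding range $*<n-2$ rather than the pseudoisotopy stable range $*\lesssim 2n/3$, and sharp enough to isolate the genuine infinite generation arising in Grothendieck--Witt theory from any spurious contributions in the fibre. Ensuring that the Karoubi splitting matches the geometry, and in particular that the skew-symmetric basis $\{t^a - t^{-a}\}$ is detected injectively in $\pi_{2p-3}(B\Diff_{\partial}(S^1 \times D^{2n-1}))$ rather than being killed by the fibre of the linearisation map, is the delicate point on which the whole argument will turn.
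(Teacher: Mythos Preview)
Your overall architecture is close to the paper's, but the crucial localisation of the infinite generation is backwards, and the mechanism for injectivity is missing.

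\textbf{Where the infinite generation lives.} You assert that the homotopy fibre of the linearisation map has finitely generated homotopy groups in the range $*<n-2$, and that the infinitely generated $p$-torsion comes from a ``symmetrised $N\mathrm{GW}$ summand'' of $\mathrm{GW}(\Z[t,t^{-1}])$. Both claims are false. Since $\Z$ is regular, the Bass--Heller--Swan theorem gives $\mathrm{K}(\Z[t,t^{-1}]) \simeq B\pi_+ \wedge \mathrm{K}(\Z)$ with no Nil term, and similarly the Shaneson splitting in $L$-theory contributes no Nil. Combined with the fibre sequence $\mathrm{K}(\Z[\pi])_{hC_2} \to \mathrm{GW}(\Z[\pi], \Qoppa^{gq}_{\pm}) \to \mathrm{L}(\Z[\pi], \Qoppa^{gq}_{\pm})$ of Calm\`es--Dotto--Harpaz--Hebestreit--Land--Moi--Nardin--Nikolaus--Steimle, this shows that $\pi_d(\mathrm{GW}(\Z[t,t^{-1}],\Qoppa^{gq}_{(-1)^n}))$ is \emph{finitely generated} in every degree. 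The paper proves exactly this (Lemma~\ref{lem:KThy}\,(iv)). The infinite generation arises instead from the homotopy fibre $\Delta$ of
\[
\hocolim_{g}B\blEmb^{\sfr,\cong}_{\partial/2}(X_g;\ell_{1/2\partial X_g})_\ell^+ \longrightarrow \hocolim_{g}(B\check{\Xi}_g^{\sfr,\ell})^+,
\]
which is computed via a Hurewicz-type lemma (Lemma~\ref{lem:PlusConst}) as the coinvariants of $\Z/p \otimes S^{(-1)^{n+1}}_X$ for the action of the arithmetic group $\Omega_g^{\min}$; this coinvariant calculation (Lemma~\ref{lem:coinvariants U_g}) produces the basis $\{t^a - t^{-a}\}_{a>0}$. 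In other words, the infinite generation is a feature of the homotopy-theoretic part of the embedding space, not of Hermitian $K$-theory.

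\textbf{Injectivity.} Even after correcting the above, your outline would only yield a map $\bigoplus_{a>0}\Z/p\{t^a-t^{-a}\} \to \pi_{2p-3}(B\Diff_\partial(S^1\times D^{2n-1}))$ with finitely generated $p$-local kernel, not an injection. The paper obtains genuine injectivity via the cyclotomic structure on $B\Diff_\partial(S^1\times D^{2n-1})$ coming from self-covering maps of $S^1$: the associated operators $F_d$ act invertibly after inverting $d$ on all the finitely generated pieces (they are ``tame''), whereas on $\bigoplus_{a>0}\Z/p\{t^a-t^{-a}\}$ the $F_d$ are locally nilpotent. Since a tame module has no nonzero locally nilpotent submodule, nothing in the infinite sum can be killed by a map from a tame group. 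This argument is essential and has no analogue in your proposal.
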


\begin{rem}
The elements in $\pi_{2p-3}(B\Diff_{\partial}(S^1\times D^{2n-1}))$ produced by our theorem have the following geometric interpretation. For $X_1 \simeq S^1 \vee S^n \vee S^n$ let us write $\pi_1(X_1) = \langle t \rangle$ and $x_1, x_2 \in \pi_n(X_1)$ for the inclusions of the two $n$-spheres. In the course of our proof, under the stated conditions we will show that for each $a>0$ there is a family
$$f_a : S^{2p-3} \to \Emb_{\partial/2}(X_1)$$
of self-embeddings of $X_1$ relative to $\tfrac{1}{2}\partial X_1 = S^1 \times D^{2n-2}_-\subset\partial X_g$, whose adjoint is in the homotopy class of
$$S^{2p-3} \times (S^1 \vee S^n \vee S^n) \to S^1 \vee S^{n+2p-3} \vee S^{n+2p-3} \xto{t \vee (- t^a x_1 \circ \alpha) \vee (t^{-a} x_2 \circ \alpha)} S^1 \vee S^n \vee S^n,$$ 
for $\alpha \in \pi_{n+2p-3}(S^n) \cong \pi_{2p-3}^s$ the standard generator of the $p$-torsion subgroup. The element $(-1)^n(t^a - t^{-a}) \in \pi_{2p-3}(B\Diff_{\partial}(S^1\times D^{2n-1}))$ in Theorem \ref{thm:B} then classifies the $S^1 \times D^{2n-1}$-bundle over $S^{2p-3}$ given by the family of complements of the embeddings $f_a$.
\end{rem}

\begin{rem}
In the pseudoisotopy stable range (currently $* \lesssim \tfrac{2n}{3}$) the analogue of Theorem \ref{thm:B} follows from the calculation of the $p$-local homotopy groups of the Whitehead spectrum of the circle, see \cite{GKM-Nil, hesselholt-WhS1}. In Appendix \ref{sec:classical} we explain how our calculation relates to this one.
\end{rem}

\begin{rem}
 In the case $p=2$, Farrell has given a construction of diffeomorphisms of $S^1 \times D^{2n-1}$ which are pseudoisotopic to the identity but not obviously isotopic to the identity. This construction is described in Hatcher's survey \cite[p.\ 9-10]{HatcherConc}, where it is claimed that the methods of \cite{HatcherWagoner} can be used to show that they are indeed nontrivial, but that ``a simpler proof of this would be quite welcome". It would be interesting to compare Farrell's geometric construction with ours.
\end{rem}

\begin{rem}
Our tool for showing that the map in Theorem \ref{thm:B} is injective, rather than merely having finitely-generated kernel, is the cyclotomic structure on $B\Diff_{\partial}(S^1\times D^{2n-1})$ corresponding to self-covering maps of $S^1\times D^{2n-1}$. We will show that the corresponding Frobenius operators $\{F_d\}_{d \in \mathbb{N}^\times}$ are given by
$$F_d(t^a - t^{-a}) = \begin{cases}
 t^{a/d} - t^{-a/d} & \text{if $d$ divides $a$},\\
0 & \text{if $d$ does not divide $a$}
\end{cases}$$
and so are (locally) nilpotent. On the other hand the various finitely-generated groups that arise in our analysis have the property that the operators $F_d \otimes \mathbb{Z}[d^{-1}]$ act invertibly.
\end{rem}

Although we have chosen to focus on the infinite-generation phenomenon, many of the intermediate results we obtain in this paper are of a more general nature and also serve as preparation for further study of $B\Diff_{\partial}(S^1\times D^{2n-1})$. As a first example, we have the following rational calculation in the same range of degrees. 

\begin{mainthm}\label{thm:A}
For all $n\geq 3$ and $0 < k<n-2$, we have
\beq
\pi_k(B\Diff_{\partial}(S^1\times D^{2n-1}))\otimes\Q = 0.
\eeq
\end{mainthm}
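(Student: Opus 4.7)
The plan is to exploit the homotopy fibre sequence
\beq
B\Diff_{\partial}(S^1\times D^{2n-1})\to B\Diff_{\partial}(X_g)\to B\Emb^{\cong}_{\partial/2}(X_g)
\eeq
from the introduction together with the long exact sequence of rational homotopy groups. Since the fibre is independent of $g$, stabilising in $g$ and applying the long exact sequence reduces the problem to showing that the stabilised map
\beq
\pi_k\bigl(\hocolim_g B\Diff_{\partial}(X_g)^+\bigr)\otimes\Q \to \pi_k\bigl(\hocolim_g B\Emb^{\cong}_{\partial/2}(X_g)^+\bigr)\otimes\Q
\eeq
is an isomorphism for $0<k<n-2$, after dealing with the (standard) compatibility of the plus-construction with the fibration.

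For the middle term I would invoke Galatius--Randal-Williams to identify the rational homotopy of $\hocolim_g B\Diff_{\partial}(X_g)^+$ with that of an appropriate Madsen--Tillmann spectrum for the tangential structure of $X_g$, which is rationally controlled by Borel's computation of $\mathrm{K}(\Z)\otimes\Q$. For the right-hand term, in the range $k<n-2$ I would use Morlet's disjunction lemma to replace $\Emb^{\cong}_{\partial/2}(X_g)$ by its block analogue $\blEmb^{\cong}_{\partial/2}(X_g)$, which surgery theory renders accessible. The linearisation map
\beq
\hocolim_g B\Emb_{\partial/2}^{\cong}(X_g)^+ \to \Omega^{\infty}_0 \mathrm{GW}(\Z[t,t^{-1}])
\eeq
from the introduction is the main tool; rationally the target splits by Bass--Heller--Swan plus Farrell's vanishing of Nil-terms as $(\mathrm{GW}(\Z)\oplus\mathrm{K}(\Z)[1])\otimes\Q$, and each factor is understood by Borel.

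The crucial final step is the rational matching of the two computations. The extra $\mathrm{K}(\Z)[1]$-summand on the embedding side---responsible for the infinite-generation obtained $p$-locally in Theorem \ref{thm:B}---must pair with an equivalent contribution on the diffeomorphism side arising from the $S^1$-factor of $X_g$. I expect this matching to be the most delicate step, requiring a careful comparison of the linearisation maps on both sides and their behaviour under stabilisation. Theorem \ref{thm:B}, which asserts that the infinite-generation is pure torsion, is already strong evidence: rationally the obstruction groups should collapse, giving $\pi_k(B\Diff_{\partial}(S^1\times D^{2n-1}))\otimes\Q=0$ throughout the stated range.
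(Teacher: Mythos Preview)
Your outline is in the right spirit, but what you have written is a strategy, not a proof: you correctly identify that the ``crucial final step'' is the rational matching of the two sides, and then you do not carry it out. That matching is exactly the content of the theorem, so the proposal as it stands has a genuine gap. Concretely, you need to exhibit, for each rational class on the embedding side in the range, a class on the diffeomorphism side mapping to it, and you give no mechanism for this. The remark that Theorem~\ref{thm:B} is ``strong evidence'' is not an argument.

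The paper's proof follows your outline in part but adds two ingredients you are missing. First, it works throughout with the \emph{stably framed} versions $B\Diff_{\partial}^{\sfr}$ and $B\Emb_{\partial/2}^{\cong,\sfr}$; this is what makes the Madsen--Tillmann and Grothendieck--Witt computations clean (the unframed versions you propose would involve $\mathrm{MTSO}(2n)$-type spectra on one side and a different quadratic form parameter on the other, and the bookkeeping is worse). In that setting the only nontrivial rational class to match in the range is in degree~$1$, and the paper produces an explicit stably framed diffeomorphism---``dragging a $W_{1,1}$ around a loop in $X_g$''---whose induced automorphism of $\pi_n(X_g)$ has determinant $t^2 \in \Z[\pi]^\times$, giving rational injectivity on $\pi_1$ (Lemma~\ref{lem:SfrRatCalc}). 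This is the missing matching argument.

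Second, having computed $\pi_k(B\Diff^{\sfr}_{\partial}(S^1\times D^{2n-1}))\otimes\Q$, the paper removes the stable framing via the fibration
\[
\Map_{\partial}(S^1\times D^{2n-1}, \SOn)\to B\Diff_{\partial}^{\sfr}(S^1\times D^{2n-1};\ell_{\partial})\to B\Diff_{\partial}(S^1\times D^{2n-1}),
\]
and shows the connecting map $\pi_k(\Diff_\partial)\otimes\Q \to \pi_k(\Map_\partial(-,\SOn))\otimes\Q$ is zero by factoring it through $\blTop_\partial(S^1\times D^{2n-1})$, which is contractible, together with the rational triviality of $\mathrm{STop}/\mathrm{SO}$. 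Your approach, working unframed from the start, would have to absorb this step into the matching itself, which is possible in principle but not addressed.
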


\begin{rem}
Budney and Gabai \cite{BudneyGabai}, and Watanabe \cite{watanabe2020thetagraph}, have recently shown that $\pi_{2n-3}(B\Diff_{\partial}(S^1\times D^{2n-1}))$ has infinite rank. In view of Theorem \ref{thm:A}, it would be interesting to know the rational homotopy groups of $B\Diff_{\partial}(S^1\times D^{2n-1})$ in the range of degrees $n-2\leq k\leq 2n-4$.
\end{rem}
In Section \ref{sec:Homeo} we will show that the statements of Theorems \ref{thm:B} and \ref{thm:A} continue to hold with diffeomorphisms replaced by homeomorphisms.
\subsection*{Structure of the paper}
In Section \ref{sect:mflds-quad} we introduce certain quadratic modules associated naturally to the manifolds $X_g$ and $W_{g,1}:=D^{2n}\#(S^n\times S^n)^{\# g}$. These quadratic modules and their groups of automorphisms are used in Section \ref{sec:hautXg} to describe the homotopy mapping class group of $X_g$, and also to compute some higher homotopy groups of the topological monoid of homotopy automorphisms of $X_g$. To carry out these calculations, we use several results in metastable homotopy theory which are collected in Appendix \ref{sec:metastable}. In Section \ref{sec:weiss+disjunction} we first describe the ``Weiss fibre sequence'' mentioned above, and then use surgery theory and disjunction techniques to replace the space of self-embeddings of $X_g$ (relative to half the boundary) by the analogous space of ``block'' self-embeddings. In Section \ref{sect:Emb mcg} we determine (up to extension) the set of path-components of these spaces of self-embeddings, using surgery theory and the results of Section \ref{sec:weiss+disjunction}. Section \ref{sec:sfr} concerns the higher homotopy groups of these spaces of self-embeddings, though we simplify the calculations by instead considering \textit{stably framed} self-embeddings. We must then revisit the results of Section \ref{sect:Emb mcg} and upgrade them to the stably framed setting. Section \ref{sec:proofs} then contains the proofs of the main theorems, using the calculations of the previous sections. A further ingredient is the calculation of certain modules of coinvariants of actions of unitary groups over $\Z[t,t^{-1}]$, which is done in Appendix \ref{sec:Coinv}. Finally, in Appendix \ref{sec:classical} we discuss the relation of this paper with the more classical methods of calculation via algebraic $K$-theory.
\subsection*{Acknowledgements} 
The authors would like to thank F.\ Hebestreit, M.\ Krannich, and M.\ Land for useful discussions. The authors were supported by the ERC under the European Union's Horizon 2020 research and innovation programme (grant agreement No.\ 756444), and ORW was supported by a Philip Leverhulme Prize from the Leverhulme Trust.


\section{Manifolds and quadratic modules}\label{sect:mflds-quad}

\subsection{Manifolds}

There are two families of manifolds we shall consider. The first is given by the manifolds
$$W_{g,1} := D^{2n} \# (S^n \times S^n)^{\# g}$$
obtained by connect sum of the $2n$-disc with $g$ copies of $S^n \times S^n$. These have boundary $\partial W_{g,1} = S^{2n-1}$, and we write $\tfrac{1}{2} \partial W_{g,1} = D^{2n-1}_-$ for the lower hemisphere. There is a homotopy equivalence $W_{g,1} \simeq \bigvee^{2g} S^n$.

The second family is given by the manifolds
$$X_g := (S^1 \times D^{2n-1}) \# (S^n \times S^n)^{\# g}$$
obtained by connect sum of the solid torus $S^1 \times D^{2n-1}$ with $g$ copies of $S^n \times S^n$. These have boundary $\partial X_{g} = S^1 \times S^{2n-2}$, and we write $\tfrac{1}{2} \partial X_{g} = S^1 \times D^{2n-2}_-$ for the circle times the lower hemisphere. There is a homotopy equivalence $X_g \simeq S^1 \vee \bigvee^{2g} S^{2n}$. We often think of $X_g$ as being obtained from $W_{g,1}$ by trivially attaching a 1-handle along an embedding $e : S^0 \times D^{2n-1} \hookrightarrow D^{2n-2}_+ \subset \partial W_{g,1}$, so that $\tfrac{1}{2} \partial W_{g,1} \subset \tfrac{1}{2} \partial X_g$. If we then attach a cancelling 2-handle, we obtain an embedding $X_g \hookrightarrow W_{g,1}$, such that $W_{g,1} \hookrightarrow X_g \hookrightarrow W_{g,1}$ is isotopic to the identity. These manifolds are depicted in Figure \ref{fig:Xg-Wg}.
\begin{figure}[h]
\centering
\begin{subfigure}{.5\textwidth}
  \centering
  \includegraphics[scale=0.3]{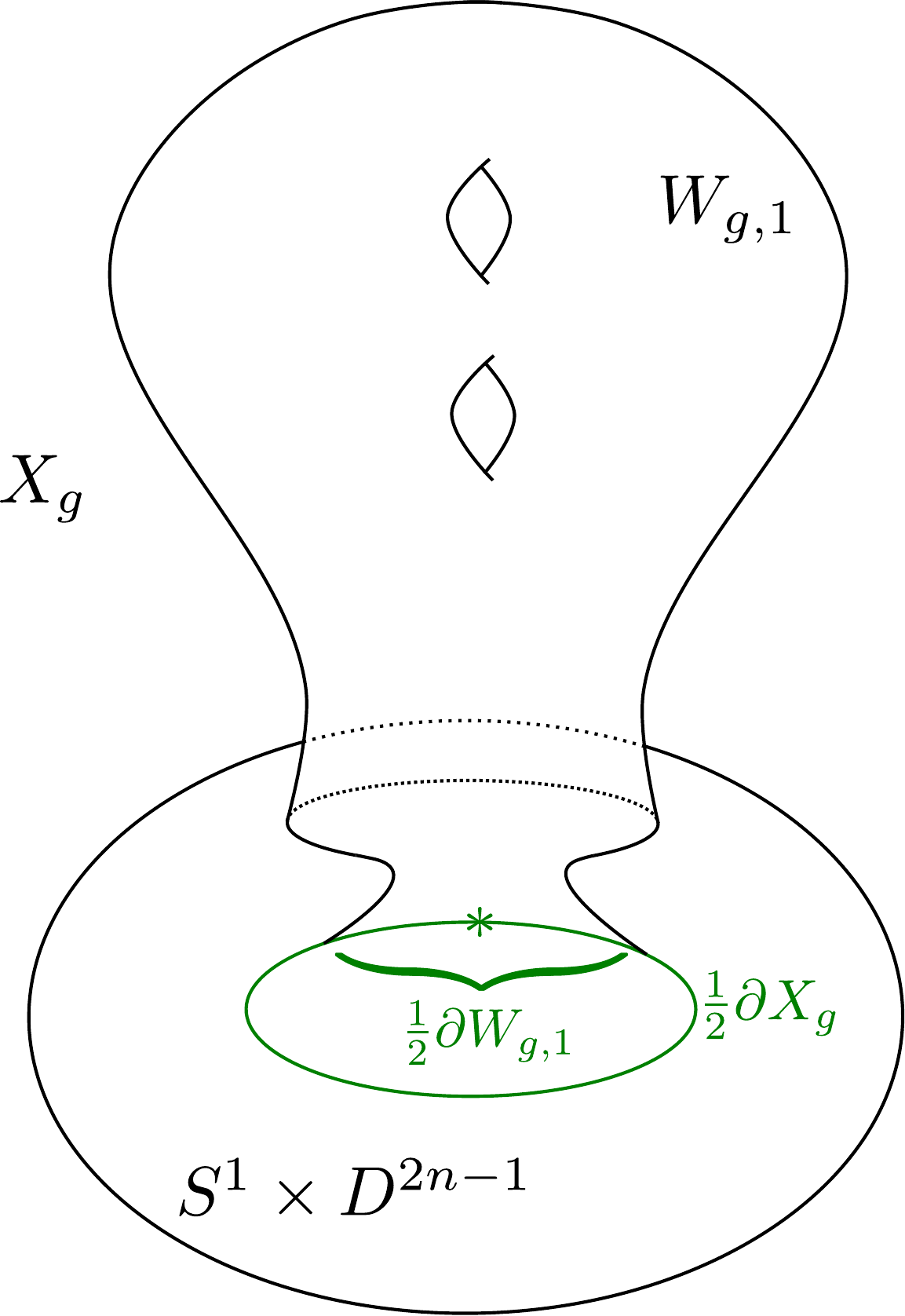}
  \label{fig:sub1}
\end{subfigure}%
\begin{subfigure}{.5\textwidth}
  \centering
  \includegraphics[scale=0.3]{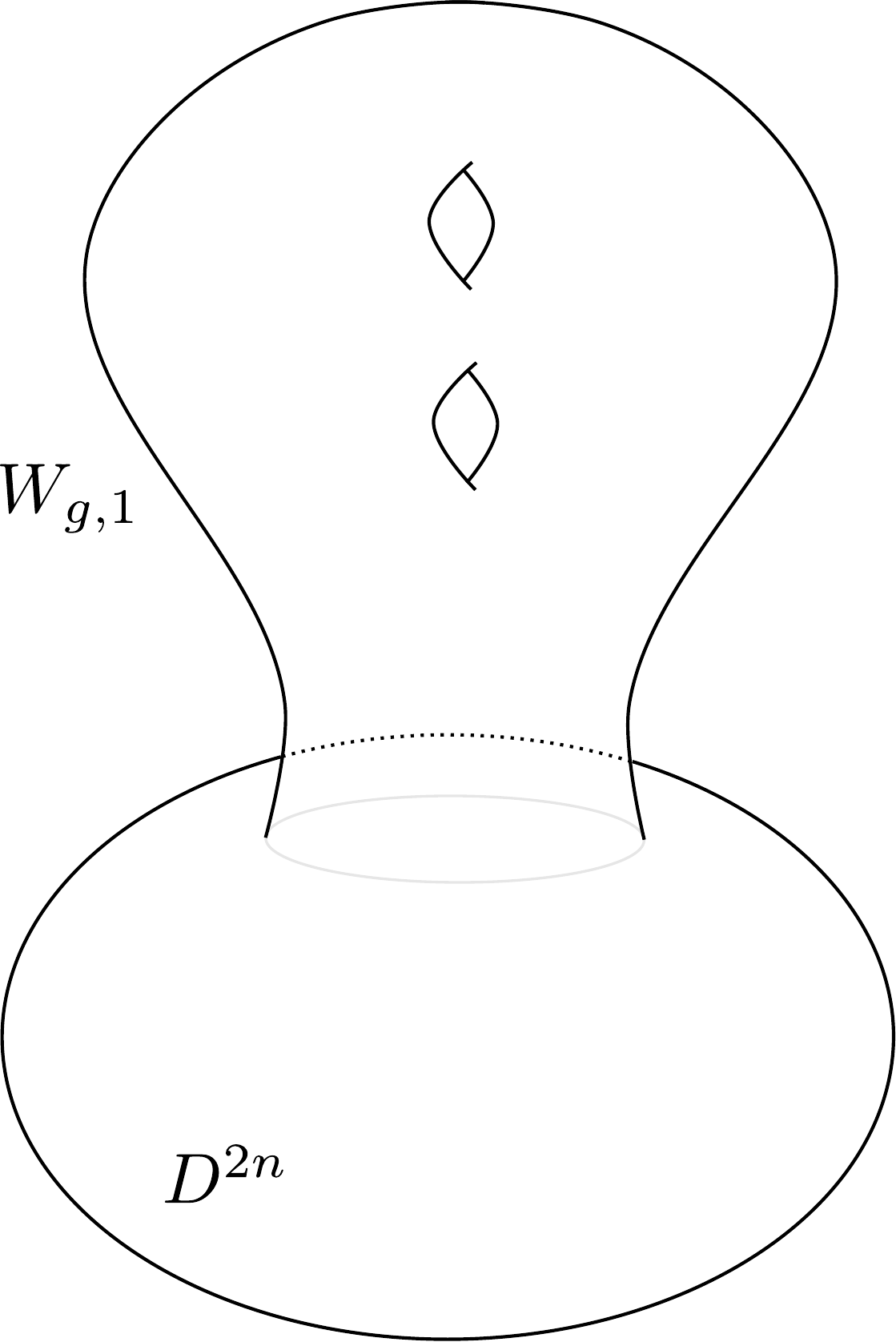}
  \label{fig:sub2}
\end{subfigure}
\caption{The manifolds $X_g$ (left) and $W_{g,1}$ (right) for $g=2$. Note that $W_{g,1}\hookrightarrow X_g\hookrightarrow W_{g,1}$.}
\label{fig:Xg-Wg}
\end{figure}

We choose a basepoint $* \in \tfrac{1}{2} \partial W_{g,1} \subset \tfrac{1}{2} \partial X_g$, and write $\pi = \pi_1(X_g, *) \cong \langle t \rangle$ for the infinite cyclic fundamental group. We take homotopy groups of $W_{g,1}$ and $X_g$ to be based at $*$, and consider those of $X_g$ as \emph{left} $\Z[\pi]$-modules. As usual, we use the antiinvolution $t \mapsto \bar{t} = t^{-1}$ of the ring $\Z[\pi]$ to convert between left and right $\Z[\pi]$-modules when necessary. As a matter of notation, we write
$$H := H_n(W_{g,1};\Z),$$
which is also isomorphic to $H_n(X_{g};\Z)$ via the natural map.

We write $a_i, b_i \in \pi_n(W_{g,1}, *) \subset \pi_n(X_g, *)$ for the homotopy classes of $S^n \times \{*\}$ and $\{*\} \times S^n$ inside the $i$th copy of $S^n \times S^n$ which, by the Hurewicz theorem, give $\Z$- or $\Z[\pi]$-module bases of $\pi_n(W_{g,1},*)$ or $\pi_n(X_g, *)$ respectively. In particular, we have natural identifications
$$\pi_n(W_{g,1}, *) \cong H \quad \text{ and } \quad \pi_n(X_g, *) \cong \Z[\pi] \otimes H$$
of left $\Z$- and $\Z[\pi]$-modules respectively.  We will write $x_i : S^n \to W_{g,1} \subset X_{g}$ for the inclusion of the $i$th $n$-sphere, which we choose by $x_i = a_i$ and $x_{g+i} = b_i$.

\subsection{Quadratic modules}\label{subsec:quadMod}
Following Wall \cite[Theorem 5.2]{wall-book}, in \cite[Section 3]{GRWHomStabI} and more precisely in \cite[Section 4.1]{Friedrich} it has been explained how to naturally associate a quadratic module to an oriented manifold $V$ of dimension $2n \geq 6$. Let us explain a variant of this, endowing $\pi_n(V)$ with a quadratic module structure when the tangent bundle of $V$ is oriented and trivial over the $n$-skeleton. We write $\pi = \pi_1(V, *)$, and denote the identity element in $\pi$ by $e$.

Firstly, there is an equivariant intersection form
\beq
\lambda : \pi_n(V, *) \otimes \pi_n(V, *) \longrightarrow \Z[\pi]
\eeq
defined by
\beq
\lambda(x\otimes y)=\sum_{g \in \pi} \langle g \cdot x,  y \rangle \cdot g^{-1},
\eeq
where we identify $\pi_n(V, *)\cong\pi_n(\widetilde{V}, *)$ and $\langle -, - \rangle$ denotes intersection form of the universal cover $\widetilde{V}$. It satisfies $\overline{\lambda(x,y)} = (-1)^n \lambda(y,x)$ and $\lambda(a \cdot x, b \cdot y) = a \cdot \lambda(x,y) \cdot \bar{b}$ for $a, b \in \Z[\pi]$, i.e.\ is a $(-1)^n$-hermitian form. 

Secondly, to obtain a quadratic refinement, choose a basepoint $b_V \in \Fr^+(TV)$ in the oriented frame bundle lying over $* \in V$. By the assumption that the tangent bundle of $V$ is trivial over the $n$-skeleton, there are exact sequences
\begin{align*}
0 \to \pi_1(\SOn(2n)) \xto{i} &\,\pi_1(\Fr^+(TV), b_V) \to \pi_1(V, *) \to 0\\
\pi_n(\SOn(2n)) \xto{i} &\,\pi_n(\Fr^+(TV), b_V) \to \pi_n(V, *) \to 0.
\end{align*}
A choice of framing of $S^n \times D^n$ determines a map from the set $\Imm^{\fr}_n(V)$, of regular homotopy classes of immersions $i: S^n\times D^n \looparrowright V$ equipped with a path from $Di(b_{S^n \times D^n})$ to the base point $b_V$ in $\Fr^+(TV)$, to the abelian group $\pi_n(\Fr^+(TV), b_V)$, and this is a bijection by Hirsch--Smale theory. Using the map $\Imm^{\fr}_n(V) \cong \pi_n(\Fr^+(TV), b_V) \to \pi_n(V,*)$ we may consider $\lambda$ as a pairing 
$$\lambda^{\fr} :\Imm^{\fr}_n(V) \otimes \Imm^{\fr}_n(V) \to \Z[\pi],$$ 
which is sesquilinear over $\pi_1(\Fr^+(TV), b_V) \to \pi_1(V, *)=\pi$.

Consider the following subgroup of $\Z[\pi]$
$$\Lambda_n^{\min} := \{a- (-1)^n\bar{a}\ |\ a\in\Z[\pi]\}.$$
As in \cite[Theorem 5.2]{wall-book} there is a function
$$q^{\fr} : \Imm^{\fr}_n(V) \to \Z[\pi]/\Lambda_n^{\min}$$
given by counting signed self-intersections of the core of a framed immersion, and if $a \in \pi_1(\Fr(TV), b_V)$ maps to $g \in \pi$, then it satisfies
\begin{equation*}
q^{\fr}(a \cdot x) = g \cdot q^{\fr}(x) \cdot \bar{g},
\end{equation*}
which is well-defined since  $g\Lambda_{n}^{\min}\bar{g}\subset\Lambda_n^{\min}$. Furthermore 
\begin{equation}\label{eq:Quad2}
q^{\fr}(x+y)- q^{\fr}(x) - q^{\fr}(y) \equiv \lambda^{\fr}(x,y)\ \mathrm{mod}\, \Lambda_n^{\min}.
\end{equation}
We wish to descend $q^{\fr}$ to a quadratic form on $\pi_n(V, *)$. As the pairing $\lambda^{\fr}$ on $\Imm^{\fr}_n(V) \cong\pi_n(\Fr^+(TV), b_V)$ is defined to factor over $\pi_n(V,*)$, the subgroup $i(\pi_n(\SOn(2n)))$ lies in the radical of $\lambda^{\fr}$, so by \eqref{eq:Quad2} the function $q^{\fr} \circ i$ is $\Z$-linear.

\begin{lem}\label{lem:HopfDims}
The image of the homomorphism $q^{\fr} \circ i : \pi_n(\SOn(2n)) \to \Z[\pi]/\Lambda_n^{\min}$ is
\begin{enumerate}[(i)]
\item trivial if $n\neq 3,7$, 
\item generated by $e \mod \Lambda_n^{\min}$ if $n$ is 3 or 7.
\end{enumerate}
\end{lem}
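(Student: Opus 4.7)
The plan is to reduce to a local calculation in a chart $V = \R^{2n}$ by naturality, identify $q^{\fr}\circ i$ with the classical self-intersection invariant of framed immersions of spheres in Euclidean space, and then evaluate it using Bott periodicity together with the Hopf invariant one theorem.

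First, I would exploit the local nature of the map. Any element in the image of $i\colon\pi_n(\SOn(2n))\to\pi_n(\Fr^+(TV),b_V)$ can be represented by a framed immersion supported in an arbitrarily small $2n$-disc around $*$, since $i$ is by construction the inclusion of the fibre of the bundle $\Fr^+(TV)\to V$. All self-intersections of such a representative lie in that disc and so are labelled by the identity $e\in\pi$. Consequently the image of $q^{\fr}\circ i$ lies in the cyclic subgroup $\Z\cdot e\subset\Z[\pi]/\Lambda_n^{\min}$, which is isomorphic to $\Z$ when $n$ is even and to $\Z/2$ when $n$ is odd. Using Smale--Hirsch theory the composition $q^{\fr}\circ i$ is then identified with the signed self-intersection homomorphism
\[
\mu_n\colon \pi_n(\SOn(2n))\longrightarrow \Z/(1-(-1)^n)\Z
\]
counting signed double points of a generic framed immersion $S^n\looparrowright\R^{2n}$ representing $\alpha$.

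Next, I would stabilise. By a standard connectivity argument the map $\pi_n(\SOn(2n))\to\pi_n(\SOn)$ is an isomorphism for $n\geq 2$, and by Bott periodicity $\pi_n(\SOn)$ is cyclic of order $1$, $2$, or is isomorphic to $\Z$. When $n$ is even, $\pi_n(\SOn)$ is $0$ or $\Z/2$, and any homomorphism from such a group into the torsion-free group $\Z$ vanishes, immediately giving $\mu_n=0$ and settling (i) in this subcase. When $n$ is odd the target is $\Z/2$, and I would argue that $\mu_n$ coincides with the mod-$2$ Hopf invariant of the ``twisted'' map $S^{2n+1}\to S^{n+1}$ obtained from $\alpha\in\pi_n(\SOn(2n))$ by the Hopf construction; Adams' Hopf invariant one theorem then implies that this invariant vanishes unless $n+1\in\{2,4,8\}$, i.e.\ $n\in\{1,3,7\}$. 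This completes (i).

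For (ii), when $n=3$ or $n=7$ the sphere $S^n$ is parallelizable, and the tangent framing defines a section $\tau_n\colon S^n\to\SOn(n+1)$ of the fibration $\SOn(n+1)\to S^n$; its stabilisation into $\SOn(2n)$ represents a generator of $\pi_n(\SOn(2n))\cong\Z$. I would construct an explicit framed immersion realising $\tau_n$---for instance by taking the standard small framed embedding $S^n\times D^n\hookrightarrow\R^{2n}$ and re-framing via $\tau_n$---and verify, after a transverse perturbation, that the resulting framed immersion has exactly one signed self-intersection, giving $\mu_n([\tau_n])=\pm e$. Combined with the previous step this identifies the image of $q^{\fr}\circ i$ with $\langle e\rangle\subset\Z[\pi]/\Lambda_n^{\min}$, proving (ii). The main obstacle is the Hopf invariant identification in the second step; a safer alternative is to verify $\mu_n=0$ case-by-case on explicit generators of $\pi_n(\SOn)$ for each odd $n\neq 3,7$, using the Smale--Hirsch classification and the explicit form of $\pi_n(\SOn)$ afforded by Bott periodicity.
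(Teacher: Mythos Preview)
Your reduction to a local picture and the $n$-even case via Bott periodicity are both fine, and this part parallels the paper's first sentence. The real issues are in steps 5 and 6.

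\textbf{Step 6 contains a concrete error.} Re-framing the standard embedding $S^n\times D^n\hookrightarrow\R^{2n}$ does not change the underlying core immersion $S^n\hookrightarrow\R^{2n}$; it only changes the identification of the normal disc bundle. The self-intersection count $q^{\fr}$ depends only on the core, so any re-framed embedding still has $q^{\fr}=0$, and no ``transverse perturbation'' will produce a double point. In fact for $n=3,7$ the inclusion $\pi_n(\SOn(n))\to\pi_n(\SOn(2n))\cong\Z$ is multiplication by $2$, so re-framed embeddings account exactly for the \emph{even} elements, on which $\mu_n$ vanishes. The generator you want is instead represented by the Whitney figure-eight immersion: it has a single transverse double point, and its normal bundle is (isomorphic to) $TS^n$, which is trivial precisely when $n\in\{1,3,7\}$; so for these $n$ it admits a framing and witnesses $\mu_n=1$. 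This is how the paper handles (ii).

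\textbf{Step 5 is too vague.} The Hopf construction on $\alpha\in\pi_n(\SOn(2n))$ lands in $\pi_{3n}(S^{2n})$, not $\pi_{2n+1}(S^{n+1})$, so the identification you state is not quite right. There \emph{is} a correct link to Hopf invariant one, but it runs through the parallelisability of $S^n$ (equivalently, triviality of the figure-eight's normal bundle), not through a direct Hopf-invariant formula. Your proposed fallback of checking generators ``case-by-case'' for each odd $n\neq 3,7$ is not a finite check.

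The paper's argument for (i) is different and cleaner: assume a framed immersion with $q^{\fr}=e$ exists; connect-sum its core with the (unframed) figure-eight and use the Whitney trick to cancel all double points. The resulting \emph{embedding} $S^n\hookrightarrow D^{2n}$ inherits a normal bundle isomorphic to $TS^n$, which is nontrivial for $n\neq 1,3,7$---but every embedding $S^n\hookrightarrow\R^{2n}$ with $n\geq 3$ has trivial normal bundle (Haefliger), a contradiction. This bypasses any explicit Hopf-invariant computation, using the parallelisability of spheres (hence Adams' theorem) only through the normal bundle of the figure-eight.
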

\begin{proof}
Interpreting $\pi_n(\SOn(2n))$ as the framed immersions $S^n \times D^n \looparrowright D^{2n} \subset V$ into a disc near the basepoint, it follows that this homomorphism has image in the subgroup spanned by $e \in \pi$. We then refer to \cite[Lemma 2.7]{KR-WFram}: briefly, if $n=3,7$ then the Whitney ``figure eight'' immersion is sent to $e \mod \Lambda_n^{\min}$, and if $n \neq 3,7$ then if there were an immersion sent to $e$ then connect-summing its core with the ``figure eight'' and removing double points would give an embedding $S^n \hookrightarrow D^{2n}$ with nontrivial normal bundle, which is impossible.
\end{proof}

In view of this lemma, letting $\Lambda_n = \Lambda_n^{\min}$ for $n \neq 3,7$, and $\Lambda_n$ be the subgroup of $\Z[\pi]$ generated by $\Lambda_n^{\min}$ together with $e$ if $n = 3,7$, the function $q^{\fr}$ descends to a well-defined function
$$q : \pi_n(V) \to \Z[\pi]/\Lambda_n,$$
which is a quadratic refinement of $\lambda: \pi_n(V) \otimes \pi_n(V) \to \Z[\pi]$ in the sense that it satisfies 
$$q(g \cdot x) = g \cdot q(x) \cdot \bar{g} \quad \text{ and } \quad q(x+y) - q(x) - q(y) \equiv \lambda(x,y) \, \mathrm{mod} \, \Lambda_n.$$

\begin{lem}\label{lem:QuadDetectsEmb}
As long as $n \geq 3$, an element $x \in \pi_n(V)$ may be represented by an embedding $S^n \times D^n \hookrightarrow V$ if and only if $q(x)=0$.
\end{lem}
\begin{proof}
The necessity is clear. For sufficiency, choose a lift $x' \in \Imm^{\fr}_n(V) \cong \pi_n(\Fr^+(TV), b_V)$ of $x$. It follows from \cite[Theorem 5.2]{wall-book} that the immersion $x'$ is regularly homotopic to an embedding if and only if $q^\mathrm{fr}(x')=0 \in \Z[\pi]/\Lambda_n^\mathrm{min}$. If $n \neq 3,7$ then $q^\mathrm{fr}(x') = q(x) \in \Z[\pi]/\Lambda_n^\mathrm{min}$ and we are done. If $n = 3, 7$ then $q(x)=0$ means that $q^\mathrm{fr}(x') \in \Lambda_n/\Lambda_n^\mathrm{min}$, so is a multiple of $e$. But then by Lemma \ref{lem:HopfDims} (ii) we may re-choose the lift $x'$ of $x$ by adding on a suitable element of $i(\pi_n(\SOn(2n)))$ to obtain an $x''$ for which $q^\mathrm{fr}(x'')=0$.
\end{proof}

Let us now specialise to the manifolds $W_{g,1}$ and $X_g$, which both have trivial tangent bundles, giving a quadratic module $(\pi_n(W_{g,1}), \lambda_W, q_W)$ over $\Z$ and a quadratic module $(\pi_n(X_{g}), \lambda_X, q_X)$ over $\Z[\pi] \cong \Z[t^{\pm 1}]$, with form parameters $((-1)^n, \Lambda_n)$ (though what $\Lambda_n$ denotes in the two cases is different). In both cases, the hermitian forms $\lambda_W$ and $\lambda_X$ are non-degenerate, the pairs $(a_i, b_i)$ are orthogonal hyperbolic pairs with respect to $\lambda_W$ or $\lambda_X$, and the quadratic refinements $q_W$ and $q_X$ vanish on the $a_i$ and $b_i$'s. This identifies $(\pi_n(W_{g,1}), \lambda_W, q_W)$ and $(\pi_n(X_{g}), \lambda_X, q_X)$ as genus $g$ hyperbolic quadratic modules over the appropriate rings and with the appropriate form parameters. 

We write
$$U_g(\Z, \Lambda_n) \quad \text{ and } \quad U_g(\Z[\pi], \Lambda_n)$$
for the associated unitary groups. By construction, any diffeomorphism of $W_{g,1}$ or $X_g$ relative to their boundaries, or more generally any smooth self-embedding of these manifolds relative to ``half their boundaries", induces a morphism of the associated quadratic module. 

\begin{rem}\label{rem:QuadTrivNEven}
If $n$ is even then we have $4 q_X(x) = q_X(2x) = q_X(x+x) = q_X(x)+q_X(x)+\lambda_X(x,x)$ and so $2q_X(x) = \lambda_X(x,x)$. Furthermore $\Lambda_n = \langle t^a - t^{-a} \, | \, a =1,2,3,\ldots \rangle_\Z$ is a direct summand of $\Z[\pi]$, so $\Z[\pi]/\Lambda_n$ is torsion-free: therefore $\lambda_X$ determines $q_X$ in this case. If $n$ is odd then $q_X$ is not determined by $\lambda_X$.
\end{rem}

\section{Homotopy automorphisms of $X_g$}\label{sec:hautXg}

In this section we will analyse the homotopy groups of the topological monoid $\hAut_{\partial}(X_g)$ of self-homotopy equivalences of the manifold 
$X_g$ which fix the boundary  $\partial X_g =S^1\times S^{2n-2}$ pointwise. For definitions and basic properties of Whitehead products we follow \cite{WhiteheadBook}. We continue using our notation from the previous section. In particular, the $\Z$- or $\Z[\pi]$-module bases of $\pi_n(W_{g,1},*)$ or $\pi_n(X_g, *)$ respectively are given by the elements $a_i, b_i \in \pi_n(W_{g,1}, *) \subset \pi_n(X_g, *)$ representing the homotopy classes of $S^n \times \{*\}$ and $\{*\} \times S^n$ inside the $i$th copy of $S^n \times S^n$. 
\subsection{A fibration sequence}

Our analysis is based on the fibration sequence
\begin{equation}\label{eq:XFibn}
\Map_{\partial}(X_g, X_g)\to\Map_{S^1}(X_g, X_g)\xto{\rho}\Map_{S^1}(S^1 \times S^{2n-2},X_g)
\end{equation}
given by restricting maps $f : X_g \to X_g$ (which are the identity on $S^1 \subset \partial X_g = S^1 \times S^{2n-1}$) along the inclusion $\iota : \partial X_g \to X_g$. We will completely describe the restriction map $\rho$ at the level of homotopy groups. To express our answer, note that the homotopy class of the inclusion
$$S^{2n-1} = \partial W_{g,1} \subset W_{g,1} \subset X_g$$
is the sum of Whitehead brackets $\omega := \sum_{i=1}^g [a_i, b_i] \in \pi_{2n-1}(X_g)$. The coinvariants for the action of $\pi$ on $\pi_j(X_g)$ will be denoted by $[\pi_j(X_g)]_{\pi}$. 
\begin{thm}\label{thm:HtyAut}
There are isomorphisms
\begin{align*}
\pi_k(\Map_{S^1}(X_g, X_g), \id) &\cong \Hom_{\Z[\pi]}(\pi_n(X_g), \pi_{k+n}(X_g))\\
\pi_k(\Map_{S^1}(S^1 \times S^{2n-2}, X_g), \iota) &\cong [\pi_{2n-1+k}(X_g)]_{\pi},
\end{align*}
under which the map $\pi_0(\rho)$ is given by
$$\phi \mapsto \sum_{i=1}^g [\phi(a_i), \phi(b_i)] - [a_i, b_i]$$
and, for $k>0$, the map  $\pi_k(\rho)$ is given by
$$\phi \mapsto \sum_{i=1}^g [\phi(a_i), b_i] + (-1)^{n k}[a_i, \phi(b_i)].$$
Under the identification 
\begin{align*}
\pi_n(X_g) \otimes_{\Z[\pi]} \pi_{k+n}(X_g) &\xlongrightarrow{\sim} \Hom_{\Z[\pi]}(\pi_n(X_g), \pi_{k+n}(X_g))\\
x \otimes y &\longmapsto \lambda_X(-, x) \cdot y
\end{align*}
(where we have used the antiinvolution to consider $\pi_n(X_g)$ as a right $\Z[\pi]$-module) the latter map is given by
$$x \otimes y \mapsto [y,x].$$
\end{thm}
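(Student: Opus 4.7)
The plan is to lift everything to the universal cover $\widetilde{X_g}$, which (since $X_g\simeq S^1\vee\bigvee^{2g}S^n$) is homotopy equivalent to the infinite wedge $\bigvee_{m\in\Z,\,i\in[2g]}S^n_{m,i}$ with $\pi=\langle t\rangle$ acting freely by shifting~$m$. A self-map of $X_g$ fixing $S^1$ lifts to a $\pi$-equivariant self-map of $\widetilde{X_g}$ fixing the lifted line $\R\subset\widetilde{X_g}$, and such a map is determined up to $\pi$-equivariant homotopy rel~$\R$ by a pointed map from the fundamental domain $\bigvee^{2g}S^n$ at $m=0$ into~$\widetilde{X_g}$. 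This yields $\Map_{S^1}(X_g,X_g)\simeq\prod^{2g}\Omega^n\widetilde{X_g}$, so
\[\pi_k(\Map_{S^1}(X_g,X_g),\id)\cong\pi_{k+n}(X_g)^{2g}\cong\Hom_{\Z[\pi]}(\pi_n(X_g),\pi_{k+n}(X_g)),\]
using that $\pi_n(X_g)\cong\Z[\pi]\otimes H$ is $\Z[\pi]$-free of rank $2g$ on $\{a_i,b_i\}$.

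Next, a $\pi$-equivariant extension of $\widetilde\iota|_\R$ to $\R\times S^{2n-2}\to\widetilde{X_g}$ is equivalent to a point of the homotopy fixed points $(\Omega^{2n-2}\widetilde{X_g})^{h\Z}$, where $\Z=\pi$ acts via deck transformation on $\widetilde{X_g}$ (inducing the standard $\pi_1(X_g)$-action on $\pi_*(X_g)$). The associated short exact sequence reads
\[0\to[\pi_{k+2n-1}(X_g)]_\pi\to\pi_k(\Map_{S^1}(S^1\times S^{2n-2},X_g),\iota)\to\pi_{k+2n-2}(X_g)^\pi\to 0,\]
and the main technical input is the vanishing of the invariants term. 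For this I would invoke Hilton's theorem, decomposing $\pi_j(\widetilde{X_g})$ as a direct sum indexed by Hall words in the generators $x_{m,i}$: since $\pi$ shifts the indices, every Hall word has an infinite free $\pi$-orbit, so each summand is an induced $\Z[\pi]$-module with vanishing invariants, and hence $\pi_j(X_g)^\pi=0$ for all $j\geq 1$.

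For the map $\rho$: the class $[\iota]\in\pi_{2n-1}(X_g)$, obtained by contracting the meridional sphere $\{*\}\times S^{2n-2}$ along the canonical disc $\{*\}\times D^{2n-1}$ in the solid-torus summand, equals $\omega=\sum_i[a_i,b_i]$---by the same geometric calculation that identifies the attaching sphere $\partial W_{g,1}$ with this Whitehead sum, since the disc meets each of the $g$ connect-summed $S^n\times S^n$'s once. At $\pi_0$, $\rho(\phi)=\phi_*(\omega)-\omega$ gives the stated formula. For $k>0$ the image is the first-order variation of $\phi\mapsto\phi_*(\omega)$ at $\phi=\id$; bilinearity of Whitehead products combined with graded anti-commutativity $[\alpha,\beta]=(-1)^{pq}[\beta,\alpha]$ yields $\sum_i[\phi(a_i),b_i]+(-1)^{nk}[a_i,\phi(b_i)]$, the Koszul sign arising from transposing $\phi(b_i)\in\pi_{n+k}$ past $a_i\in\pi_n$.

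The cleaner tensor description then follows from non-degeneracy of $\lambda_X$: under the hyperbolic isomorphism, $\phi$ corresponds to $\sum_i((-1)^n a_i\otimes\phi(b_i)+b_i\otimes\phi(a_i))$, and applying $x\otimes y\mapsto[y,x]$ recovers the above formula after one more use of graded anti-commutativity. The hard part of the whole argument will be the vanishing $\pi_j(X_g)^\pi=0$ from the second step, which really hinges on the Hilton decomposition and careful bookkeeping of the free $\pi$-orbit structure on Hall words.
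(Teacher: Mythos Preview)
Your proposal is essentially correct and follows the same overall strategy as the paper's proof, though with a different packaging of the second isomorphism and a sketchier treatment of $\pi_k(\rho)$.

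For $\pi_k(\Map_{S^1}(S^1\times S^{2n-2},X_g),\iota)$, your homotopy fixed-point description $(\Omega^{2n-2}\widetilde{X_g})^{h\Z}$ is equivalent to what the paper does: they write $S^1\times S^{2n-2}$ as the pushout of $S^1\vee S^{2n-2}\xleftarrow{[i_1,i_{2n-2}]}S^{2n-2}\to D^{2n-1}$ and identify the relevant map on homotopy as $x\mapsto t\cdot x-x$, i.e.\ exactly your $1-t$. Both approaches reduce to the same claim $\pi_j(X_g)^\pi=0$. You propose Hilton--Milnor plus the free $\pi$-orbit on Hall words; the paper gives a shorter argument (a $\pi$-invariant class in $\pi_j(\widetilde{X_g})$ must land in disjoint finite sub-wedges under all translates, hence vanishes after projection).

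Your derivation of $\pi_k(\rho)$ is where you are thinnest. Two points deserve care. First, the passage from ``restrict along $\iota$'' to ``send $\phi$ to $\phi_*(\omega)-\omega$'' is not automatic: one must exhibit the factorisation $\iota:S^1\times S^{2n-2}\to S^1\vee S^{2n-1}\xrightarrow{\id\vee\omega}X_g$ and check it is compatible with the pinch identification of $\pi_k$ with coinvariants. The paper does this explicitly. Second, your explanation of the sign $(-1)^{nk}$ as ``graded anti-commutativity'' is not quite right. The sign is \emph{not} the Whitehead anti-commutativity sign $(-1)^{n(n+k)}$; it arises instead from how the map $[a_i,-]:\Omega^n X_g\to\Omega^{2n-1}X_g$ behaves on $\pi_k$ under adjunction. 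The paper isolates this as a separate lemma (their Lemma~\ref{lem:WhiteheadMap}): $[s,-]$ induces $f\mapsto(-1)^{nk}[s,f]$ on $\pi_k$, obtained by applying anti-commutativity \emph{twice} (once on the loop-space map, once on the result). Your ``first-order variation'' heuristic also silently uses that the quadratic term $t\mapsto[\phi_t(a_i)-a_i,\phi_t(b_i)-b_i]$ contributes trivially, which follows because suspensions of Whitehead products vanish---the paper makes this step explicit.

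Your final tensor reformulation is correct and matches the paper's direct verification on the generators $a_j\otimes y$ and $b_j\otimes y$.
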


The following construction and lemma will be used in the proof of Theorem \ref{thm:HtyAut}. Let $X$ be a based space, $s : S^n \to X$ be a based map, and $m>0$. Then there is a map
$$[-,s] : \Omega^m_0 X \to \Omega_0^{m+n-1}X$$ 
given by sending $g\in\Omega^m_0 X$ to the element 
$[g,s]\in\Omega_0^{m+n-1} X$ given by
\beq
g\circ pr_1\cup s\circ pr_2: \partial(I^m \times I^n) = I^m\times\partial I^n\cup\partial I^m\times I^n\to X,
\eeq
and similarly a map $[s, -] : \Omega^m_0 X \to \Omega_0^{m+n-1}X$ given by sending $g\in\Omega^m_0 X$ to the map $s\circ pr_1\cup g\circ pr_2: \partial(I^n \times I^m) =  I^n\times\partial I^m\cup\partial I^n\times I^m\to X$.

\begin{lem}\label{lem:WhiteheadMap}
The map $[-,s] : \Omega^m_0 X \to \Omega_0^{m+n-1}X$ induces\footnote{This is not a based map, but the spaces are simple so it nonetheless induces a well-defined map on homotopy groups.} $f \mapsto [f,s] : \pi_{k+m}(X) = \pi_{k}(\Omega^m_0X) \to \pi_{k}(\Omega_0^{m+n-1}X) = \pi_{k+n+m-1}(X)$ on homotopy groups. 

Consequently, the map $[s, -] : \Omega^m_0 X \to \Omega_0^{n+m-1}X$ induces $f \mapsto (-1)^{nk}[s,f]$ on $\pi_k(-)$.
\end{lem}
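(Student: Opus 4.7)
The plan is to compute the adjoint of $\mu \circ F : S^k \to \Omega^{m+n-1}_0 X$ (where $\mu := [-, s]$) as a map $S^{k+m+n-1} \to X$ and identify it with $[F^\vee, s] = (F^\vee \vee s) \circ w'$, with $w' : S^{k+m+n-1} \to S^{k+m} \vee S^n$ the attaching map of the top cell of $S^{k+m} \times S^n$. Using the $H$-space structure on $\Omega^m_0 X$ I may arrange $F$ to be based at the constant map, so that $F^\vee : S^{k+m} \to X$ is pointed. The principal subtlety is that $\mu$ is not basepoint-preserving: it sends the constant map to $[*, s] = s \circ p_n \circ w$, where $p_n : S^m \vee S^n \to S^n$ is the wedge projection. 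This value is nullhomotopic since $p_n \circ w = [*, \iota_n] = 0$, so after choosing any nullhomotopy, $\mu \circ F$ determines a well-defined class in $\pi_k(\Omega^{m+n-1}_0 X)$; this class is independent of the choice because $\Omega^{m+n-1}_0 X$ is simple.

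Parametrising $S^{m+n-1} = \partial(D^m \times D^n) = (D^m \times S^{n-1}) \cup (S^{m-1} \times D^n)$ with $w$ the standard collapse to $S^m \vee S^n$, and $S^{k+m+n-1} = \partial(D^k \times D^m \times D^n)$ via $D^{k+m} = D^k \times D^m$ and $S^{k+m} = S^k \wedge S^m$, a direct unwinding shows that the lifted adjoint $(c_1, y) \mapsto (F([c_1]) \vee s)(w(y))$ on $D^k \times S^{m+n-1}$ and the restriction of $[F^\vee, s]$ to $D^k \times S^{m+n-1} \subset S^{k+m+n-1}$ are identical: both equal $F([c_1])([c_2]) = F^\vee([c_1] \wedge [c_2])$ on $D^k \times D^m \times S^{n-1}$, and both equal $s([b])$ on $D^k \times S^{m-1} \times D^n$. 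The only remaining piece of $S^{k+m+n-1}$ is $S^{k-1} \times D^m \times D^n$, on which $[F^\vee, s]$ evaluates to $s \circ \pi_n$, where $\pi_n : D^m \times D^n \to S^n = D^n/\partial D^n$ is the projection.

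The geometric identity that resolves the basepoint ambiguity, and is the main obstacle to overcome, is $p_n \circ w = \pi_n|_{\partial(D^m \times D^n)}$ as maps $S^{m+n-1} \to S^n$: both kill $D^m \times S^{n-1}$ and send $(a, b) \in S^{m-1} \times D^n$ to $[b]$. Consequently $\pi_n$ itself provides a preferred nullhomotopy of $p_n \circ w$, and using it to extend the lifted adjoint of $\mu \circ F$ across $S^{k-1} \times D^m \times D^n$ produces a map on $S^{k+m+n-1}$ that is literally equal to $[F^\vee, s]$, establishing the first assertion. For the second, pointwise graded commutativity $[s, F(x)] = (-1)^{nm}[F(x), s]$ in $\pi_{m+n-1}(X)$ yields a homotopy $[s, -] \circ F \simeq (-1)^{nm}\bigl([-, s] \circ F\bigr)$ of maps $S^k \to \Omega^{m+n-1}_0 X$, so $[s, -]_*[F] = (-1)^{nm}[F^\vee, s] = (-1)^{nm + n(k+m)}[s, F^\vee] = (-1)^{nk}[s, F^\vee]$ by a second application of graded commutativity.
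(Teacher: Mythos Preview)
Your proof is correct and follows essentially the same approach as the paper's: both unwind the adjoint of $[-,s]\circ F$ as a map on $\partial(D^k\times D^m\times D^n)$ and identify it literally with $[F^\vee,s]$, then deduce the second claim from graded commutativity of Whitehead products. The only difference is packaging: the paper works in cubical coordinates, where the defining formula $f\circ pr_1 \cup s\circ pr_2$ on $\partial(I^{k+m}\times I^n)$ already specifies the value $s\circ pr_2$ on the ``remaining piece'' $\partial I^k\times I^m\times I^n$, so the basepoint subtlety and the choice of nullhomotopy that you carefully make explicit are absorbed into the notation.
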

\begin{proof}
Using $\partial(I^s\times I^t)=I^s\times\partial I^t\cup\partial I^s\times I^t$, it follows that on $\pi_k$ the map $[-,s]$ is given by sending $f \in \pi_k(\Omega^m_0 X) \cong \pi_{k+m}(X)$ to the map $\partial (I^k \times I^m \times I^n) \to X$ defined by
\beq
f\circ pr_1\cup s\circ pr_2:(I^k \times I^m) \times \partial I^n\cup\partial (I^k \times I^m) \times I^n\to X
\eeq
which is $[f,s]$ by definition.

We deduce the second part from the first, using that the map $[s,-]$ is homotopic to $(-1)^{nm}[-,s]$ which by the first part induces the map $f \mapsto (-1)^{nm}[f,s]$ on homotopy groups, giving the required
\beq
(-1)^{nm} (-1)^{n(m+k)}[s,f] =  (-1)^{nk}[s,f].\qedhere
\eeq
\end{proof}
\begin{proof}[Proof of Theorem \ref{thm:HtyAut}]
Firstly, there is an equivalence $\Map_{S^1}(X_g, X_g) \simeq \Map_{*}(\vee^{2g} S^n, X_g)$, and taking homotopy groups based at the constant map $ct$ therefore gives
$$\pi_k(\Map_{*}(\vee^{2g} S^n, X_g), ct) \cong \Hom_{\Z}(H, \pi_{n+k}(X_g)).$$
Using the identification $\pi_n(X_g) \cong \Z[\pi] \otimes H$ we can write this as $\Hom_{\Z[\pi]}(\pi_n(X_g), \pi_{n+k}(X_g))$. Now the co-$H$-space structure on $\vee^{2g} S^n$ turns $\Map_{*}(\vee^{2g} S^n, X_g)$ into a group-like $H$-space, and therefore all its path components are homotopy equivalent. Hence adding on the inclusion map $inc : \vee^{2g} S^n \to X_g$ gives the required isomorphism
$$\pi_k(\Map_{*}(\vee^{2g} S^n, X_g), ct) \xto{\sim} \pi_k(\Map_{*}(\vee^{2g} S^n, X_g), inc) \cong \pi_k(\Map_{S^1}(X_g, X_g), \id).$$

Secondly, expressing $S^1 \times S^{2n-2}$ as the pushout of $(S^1 \vee S^{2n-2}) \xleftarrow{[i_1, i_{2n-2}]} S^{2n-2} \to D^{2n-1}$, where $i_1:S^1\to S^1\vee S^{2n-2}$ and $i_{2n-2}:S^{2n-2}\to S^1\vee S^{2n-2}$ are the inclusions,
there is a homotopy cartesian square
\beq
\xymatrix{
\Map_{S^1}(S^1 \times S^{2n-2}, X_g) \ar[d]\ar[r] & \Map_{*}( D^{2n-1}, X_g)\ar[d]\\
\Map_{S^1}(S^1 \vee S^{2n-2}, X_g) \ar[r]^-{w} & \Map_{*}(S^{2n-2}, X_g)
}
\eeq
whose top right-hand corner is contractible. The map $w$ is given by precomposition with the attaching map $[i_1, i_{2n-2}] : S^{2n-2} \to S^1 \vee S^{2n-2}$, so under the evident identification $\Map_{S^1}(S^1 \vee S^{2n-2}, X_g) = \Map_{*}( S^{2n-2}, X_g)$ it is given on $k$th homotopy groups by 
$$[t,-] : \pi_{k+2n-2}(X_g) \to \pi_{k+2n-2}(X_g).$$
In terms of the $\Z[\pi]$-module structure on homotopy groups this is the map $x \mapsto t\cdot x - x$. We claim that this is injective. As $X_g \simeq S^1 \vee \bigvee^{2g} S^n$, an element in the kernel of this map is represented by a map $x: S^{k+2n-2} \to \widetilde{X_g}$ whose (unbased) homotopy class is invariant with respect to the action of $\pi$ by deck transformations, but this means that $x$ is null (it is homotopic to maps lying in disjoint finite wedges of $n$-spheres; projecting to these finite summands shows it is trivial). The long exact sequence on homotopy groups therefore gives an isomorphism
$$[\pi_{k+2n-1}(X_g)]_\pi \xto{\sim} \pi_k(\Map_{S^1}(S^1 \times S^{2n-2}, X_g), \iota)$$
as required. By inspecting the vertical homotopy fibers in square above, we see that
this isomorphism is induced by acting on $\iota$ via the pinch map $p : S^1 \times S^{2n-2} \to S^1 \times S^{2n-2} \vee S^{2n-1}$. In fact the map from $\Map_*(S^{2n-1},X_g)$ to the homotopy fiber (over the inclusion) of the left vertical map, given by sending $\phi\in \Map_*(S^{2n-1},X_g)$ to the pair consisting of the composite
\beq
S^1\times S^{2n-2}\xto{p} S^1\times S^{2n-2}\vee S^{2n-1}\xto{\iota\vee\phi}X_g,
\eeq
and the constant homotopy of the inclusion $S^1\vee S^{2n-2}\hookrightarrow S^1\times S^{2n-2}\xto{\iota} X_g$, is a homotopy equivalence.

Thirdly, we determine the map $\pi_k(\rho)$. The inclusion $\iota: S^1 \times S^{2n-2} \to X_g$ has an evident nullhomotopy on $* \times S^{2n-2}$, giving a factorisation up to homotopy
$$\iota : S^1 \times S^{2n-2} \to \frac{S^1 \times S^{2n-2}}{* \times S^{2n-2}} \xleftarrow{\sim} S^1 \vee S^{2n-1} \xto{\iota_{S^1} \vee q}X_g$$
where $\iota_{S^1}$ is the restriction of the inclusion $\iota$ to $S^1\times\ast\subset S^1\times S^{2n-1}$, and $q : S^{2n-1} \to \bigvee_{j=1}^{2g} S^{n}_j$ is the inclusion of the boundary $S^{2n-1} \to W_{g,1}$, so represents the homotopy class $\omega = \sum_{i=1}^g [a_i, b_i]$. Thus the restriction map
$$\Map_{S^1}(X_g, X_g) \xto{\rho} \Map_{S^1}(S^1 \times S^{2n-2}, X_g)$$
along $\iota$ is homotopic to the map sending $f$ to
$$S^1 \times S^{2n-2} \xto{p} S^1 \times S^{2n-2} \vee S^{2n-1} \xto{pr_1}S^1 \vee S^{2n-1} \xto{\iota_{S^1} \vee q}X_g \xto{f} X_g.$$
Thus there is a factorisation
\begin{align*}
\Map_{S^1}(X_g, X_g) \simeq \Map_*(\vee^{2g} S^n, X_g) &\xto{q^*} \Map_*(S^{2n-1},X_g)\\
&\xto{-q}\Map_*(S^{2n-1},X_g)\to \Map_{S^1}(S^1 \times S^{2n-2}, X_g)
\end{align*}
where the first map is precomposition by $q$, the second map is given by translation by the inverse of $q$ using the $H$-space structure on $\Map_*(S^{2n-1},X_g)$, and the third map is given by acting on $\iota$ by the pinch map. Our calculation of the homotopy groups of $\Map_{S^1}(S^1 \times S^{2n-2}, X_g)$ shows that this pinch action induces an isomorphism on homotopy groups after taking coinvariants for the $\pi$-action on the domain.

On $\pi_0$ we now get the claimed formula: to a $\phi : \pi_n(X_g) \to \pi_n(X_g)$ representing a map $\alpha_\phi: X_g \to X_g$, $\pi_0(\rho)(\phi)$ is given by
$$S^{2n-1} \xto{\nabla} S^{2n-1} \vee S^{2n-1} \xto{S^{2n-1} \vee q}S^{2n-1} \vee X_g\xto{-\omega \vee \alpha_\phi} X_g$$
so is $\sum_{i=1}^{g} [\phi(a_i), \phi(b_i)] - [a_i, b_i] \in \pi_{2n-1}(X_g)$.

To analyse the effect on higher homotopy groups, we precompose with the equivalence
$$\Map_*(\vee^{2g} S^n, X_g) \xto{+inc} \Map_*(\vee^{2g} S^n, X_g)$$
given by translation by $inc$ using the $H$-space structure on $\Map_*(\vee^{2g} S^n, X_g)$ coming from $\vee^{2g} S^n$ being a co-$H$-space, as above. We therefore want to compute the effect on homotopy groups of the composition
\begin{equation}\label{eq:LongComp}
\Map_*(\vee^{2g} S^n, X_g) \xto{+inc} \Map_*(\vee^{2g} S^n, X_g) \xto{q^*} \Map_*(S^{2n-1},X_g)\xto{-q}\Map_*(S^{2n-1},X_g)
\end{equation}
based at the constant map. This composition sends a map $f : \vee^{2g} S^n \to X_g$ to 
\begin{align*}
S^{2n-1} \xto{\nabla} S^{2n-1} \vee S^{2n-1} &\xto{S^{2n-1} \vee q} S^{2n-1} \vee (\vee^{2g} S^n)\\
 &\xto{S^{2n-1} \vee \nabla} S^{2n-1} \vee (\vee^{2g} S^n) \vee (\vee^{2g} S^n) \xto{-q \vee inc \vee f} X_g.
\end{align*}
The first three of these maps represents the homotopy class
$$ i_{2n-1} + \sum_{i=1}^g [a_i^1 + a_i^2, b_i^1 + b_i^2] \in \pi_{2n-1}(S^{2n-1} \vee (\vee^{2g} S^n) \vee (\vee^{2g} S^n))$$
where $a_i^r$ is the map $a_i$ considered as a map to the $r$th wedge summand of $\vee^{2g} S^n$. We can expand this out as
$$i_{2n-1} + \sum_{i=1}^g [a_i^1, b_i^1] + [a_i^1, b_i^2] + [a_i^2, b_i^1] + [a_i^2, b_i^2].$$
The composition \eqref{eq:LongComp} is therefore homotopic to the map which sends $f$ to
$$-q + \sum_{i=1}^g [a_i, b_i] + [a_i, f \circ b_i] + [f \circ a_i,  b_i] + [f\circ a_i, f \circ b_i],$$
where $+$ denotes repeated cleaving maps $\nabla$. As $-q + \sum_{i=1}^g [a_i, b_i] \simeq 0$, we can remove these terms. If $f_s \in \Omega^k \Map_*(\vee^{2g} S^n, X_g)$ represents an element of $\pi_k(\Map_*(\vee^{2g} S^n, X_g))$, then $t \mapsto \sum_{i=1}^g[f_t \circ a_i, f_t \circ b_i]$ is adjoint to
$$S^k \wedge S^{2n-1} \xto{S^k \wedge q} S^{k} \wedge (\vee^{2g} S^n) \xto{f} X_g$$
but as suspensions of Whitehead products are trivial, this is nullhomotopic. This leaves
$$s \mapsto \sum_{i=0}^g [a_i, f_s \circ b_i] + [f_s \circ a_i, b_i] : S^k \to \Map_*(S^{2n-1},X_g).$$
Lemma \ref{lem:WhiteheadMap} above shows that this corresponds to
$$\sum_{i=0}^g [\phi(a_i), b_i] + (-1)^{n k}[a_i, \phi(b_i)] \in \pi_{k+2n-1}(X_g).$$

For the final statement, note that under the isomorphism $\pi_n(X_g) \otimes_{\Z[\pi]} \pi_{k+n}(X_g) \cong \Hom_{\Z[\pi]}(\pi_n(X_g), \pi_{k+n}(X_g))$ the class $b_j \otimes y$ corresponds to the function $\lambda_X(-, b_j) \cdot y$. Thus under $\pi_k(\rho)$ this maps to
$$\sum_{i=1}^g [\lambda_X(a_i, b_j) \cdot y, b_i] + (-1)^{n k}[a_i, \lambda_X(b_i, b_j) \cdot y] = [y, b_j];$$
similarly, $a_j \otimes y$ corresponds to $\lambda_X(-, a_j) \cdot y$ which maps to
\begin{align*}
\sum_{i=1}^g [\lambda_X(a_i, a_j) \cdot y, b_i] + (-1)^{nk} [a_i, \lambda_X(b_i, a_j) \cdot y] = (-1)^{nk + n}[a_j, y]= [y, a_j]
\end{align*}
As classes of the form $a_j \otimes y$ and $b_j \otimes y$ generate $\pi_n(X_g) \otimes_{\Z[\pi]} \pi_{k+n}(X_g)$, this proves the final statement.
\end{proof}

Similarly, for the manifolds $W_{g,1}$ there is a fibration sequence
\begin{equation*}
\Map_{\partial}(W_{g,1}, W_{g,1})\to\Map_{*}(W_{g,1}, W_{g,1})\xto{\rho}\Map_{*}(S^{2n-1},W_{g,1})
\end{equation*}
and an analogous, but much simpler argument proves the following.

\begin{thm}\label{thm:HtyAutWg}
There are isomorphisms
\begin{align*}
\pi_k(\Map_{*}(W_{g,1}, W_{g,1}), id) &\cong \Hom_{\Z}(\pi_n(W_{g,1}), \pi_{k+n}(W_{g,1}))\\
\pi_k(\Map_{*}(S^{2n-1}, W_{g,1}), \iota) &\cong \pi_{2n-1+k}(W_{g,1}),
\end{align*}
under which the map $\pi_0(\rho)$ is given by $$\phi \mapsto \sum_{i=1}^g [\phi(a_i), \phi(b_i)] - [a_i, b_i]$$ and, for $k>0$, the map  $\pi_k(\rho)$ is given by
$$\phi \mapsto \sum_{i=1}^g [\phi(a_i), b_i] + (-1)^{n k}[a_i, \phi(b_i)].$$ 
Under the identification 
\begin{align*}
\pi_n(W_{g,1}) \otimes_{\Z} \pi_{k+n}(W_{g,1}) &\xlongrightarrow{\sim} \Hom_{\Z}(\pi_n(W_{g,1}), \pi_{k+n}(W_{g,1}))\\
x \otimes y &\longmapsto \lambda_W(-, x) \cdot y
\end{align*}
the latter map is given by $x \otimes y \mapsto [y,x]$.\qed
\end{thm}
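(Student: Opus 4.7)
The plan is to mirror the proof of Theorem \ref{thm:HtyAut} step by step, exploiting two simplifications: $W_{g,1}$ is simply connected, so there are no $\Z[\pi]$-coefficients to track and no passage to $\pi$-coinvariants; and $\partial W_{g,1} = S^{2n-1}$ is already a single sphere, so no pushout decomposition of the boundary is needed in order to compute $\Map_*(\partial W_{g,1}, W_{g,1})$.

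First, using $W_{g,1} \simeq \bigvee^{2g} S^n$, I would identify $\pi_k(\Map_*(W_{g,1}, W_{g,1}), ct) = \Hom_\Z(H, \pi_{k+n}(W_{g,1}))$, and the co-$H$-space structure on $\bigvee^{2g} S^n$ makes this mapping space group-like, so translation by the inclusion $inc : \bigvee^{2g} S^n \hookrightarrow W_{g,1}$ identifies the $id$-component with the $ct$-component. Analogously $\pi_k(\Map_*(S^{2n-1}, W_{g,1}), ct) = \pi_{k+2n-1}(W_{g,1})$, and the loop-space structure on $\Omega^{2n-1}W_{g,1}$ allows translation by $\iota$ to base the computation at $\iota$. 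As in Theorem \ref{thm:HtyAut}, the geometric input is that $\iota : S^{2n-1} = \partial W_{g,1} \hookrightarrow W_{g,1}$ represents $\omega = \sum_{i=1}^g [a_i, b_i] \in \pi_{2n-1}(W_{g,1})$; write $q : S^{2n-1} \to \bigvee^{2g} S^n$ for the corresponding map into the wedge.

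Next I would compute $\pi_k(\rho)$. No pinch-map trick is needed here: $\rho$ is just precomposition with $\iota$, followed by the $H$-space translation by $-\iota$ in the target. On $\pi_0$ this sends $\phi$ to $\sum_i [\phi(a_i), \phi(b_i)] - [a_i, b_i]$. For $k>0$, after pre-translating the source by $inc$, expanding $\nabla : S^{2n-1} \to S^{2n-1} \vee S^{2n-1}$ through $q \vee q$ and $inc \vee f$ and invoking bilinearity of Whitehead products yields the familiar four-term expansion. The term $\sum [inc \circ a_i, inc \circ b_i]$ cancels the $-\omega$ translation, while $\sum [f \circ a_i, f \circ b_i]$ is adjoint to a map factoring through $\id_{S^k} \wedge q$, hence is nullhomotopic since suspensions of Whitehead brackets vanish. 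Applying Lemma \ref{lem:WhiteheadMap} to the two remaining terms gives the stated formula, and the reinterpretation as $x \otimes y \mapsto [y, x]$ on $\pi_n(W_{g,1}) \otimes_\Z \pi_{k+n}(W_{g,1})$ follows from the orthogonality relations $\lambda_W(a_i, b_j) = \delta_{ij}$ and $\lambda_W(a_i, a_j) = \lambda_W(b_i, b_j) = 0$, exactly as at the end of the proof of Theorem \ref{thm:HtyAut}.

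I do not anticipate any substantive obstacle: every step is either identical to or strictly simpler than the corresponding step for $X_g$. The only mild care required is the separation of $\pi_0$, where the quadratic expression $[\phi(a_i), \phi(b_i)]$ is genuinely nonlinear in $\phi$, from $\pi_{k>0}$, where it is killed by suspension to produce the linearised formula.
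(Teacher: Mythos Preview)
Your proposal is correct and follows exactly the approach the paper intends: the paper does not give a separate proof of this theorem at all, noting only that ``an analogous, but much simpler argument proves the following'' and marking the statement with \qed. Your outline correctly identifies the two simplifications (no $\pi$-coinvariants, and $\partial W_{g,1} = S^{2n-1}$ is already a sphere so no pushout decomposition of the boundary is needed) and tracks through the same four-term Whitehead expansion, Lemma~\ref{lem:WhiteheadMap}, and the final duality computation.
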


\subsection{The mapping class group}\label{sec:hAut-MCG}
We now restrict our attention to the groups $\pi_*(\hAut_\partial(X_g))$ and $\pi_*(\hAut_\partial(W_{g,1}))$ for degrees $* < n-1$, where the homotopy groups showing up in Theorems \ref{thm:HtyAut} and \ref{thm:HtyAutWg} are metastable. We use some tools from metastable homotopy theory to establish the calculations we will need. For the reader's convenience, we collect those tools in Appendix \ref{sec:metastable} at the end of the paper.

We begin analysing the mapping class group $\pi_0(\hAut_{\partial}(X_g))$. Restricting to those path components $\hAut_\partial(X_g) \subset \Map_\partial(X_g, X_g)$ consisting of homotopy equivalences, we have an exact sequence
$$\cdots \to \pi_0(\hAut_\partial(X_g)) \to GL_{\Z[\pi]}(\pi_n(X_g)) \xto{\phi \mapsto\sum [\phi(a_i),\phi(b_i)]-[a_i,b_i]} [\pi_{2n-1}(X_g)]_\pi,$$
showing that the image of $\pi_0 \hAut_\partial(X_g)$ in $GL_{\Z[\pi]}(\pi_n(X_g)) \cong GL_{2g}(\Z[\pi])$ are those automorphisms $\phi$  for which $\sum [\phi(a_i),\phi(b_i)] = \sum[a_i,b_i]$. The following lemma spells out what this means in concrete terms; it uses the notation $X^\dagger$ for the hermitian transpose of a matrix over $\Z[\pi]$.

\begin{lem}
Let $\phi \in GL_{2g}(\Z[\pi])$ given in block form by $\begin{psmallmatrix}
A & B \\
C & D
\end{psmallmatrix}$ with respect to the basis $a_1, a_2, \ldots, a_g, b_1, b_2, \ldots, b_g$. Then $\sum [\phi(a_i),\phi(b_i)]=\sum[a_i,b_i]$ if and only if
\begin{enumerate}[(i)]
\item $A D^\dagger + (-1)^n B C^\dagger = I$,
\item $AB^\dagger + (-1)^n (AB^\dagger)^\dagger$ and $CD^\dagger + (-1)^n (CD^\dagger)^\dagger$ vanish off the diagonal,
\item the diagonal entries of $AB^\dagger$ and $ CD^\dagger$ lie in $\Lambda_n$.
\end{enumerate}
\end{lem}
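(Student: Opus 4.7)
The plan is to compute $\phi(\omega)-\omega$ directly as an element of $[\pi_{2n-1}(X_g)]_\pi$ by expanding $\phi(a_i)$ and $\phi(b_i)$ in the $(a,b)$-basis and using bilinearity of the Whitehead product, and then to read off the three conditions from the vanishing of the result, using the metastable Hilton-style structure of $\pi_{2n-1}$ of a wedge of $n$-spheres to identify independent generators in $[\pi_{2n-1}(X_g)]_\pi$.

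The central computational device I would use is the identity
\begin{equation*}
[\alpha x, \beta y] \equiv [\alpha \bar\beta x, y] \quad \text{in } [\pi_{2n-1}(X_g)]_\pi
\end{equation*}
for $\alpha, \beta \in \Z[\pi]$ and $x, y \in \pi_n(X_g)$, which follows from $\pi$-equivariance of the Whitehead product on the universal cover (giving $[t^m x, t^p y] = t^p\cdot[t^{m-p}x, y]$) once one passes to $\pi$-coinvariants. Applied term by term to the expansion of $\sum_i[\phi(a_i), \phi(b_i)]$, and packaging the sums over $i$ into matrix products, this yields
\begin{align*}
\phi(\omega) \equiv \sum_{k,l}\Big(&[(AB^\dagger)_{kl}a_k, a_l] + [(AD^\dagger)_{kl}a_k, b_l] \\
&+ [(CB^\dagger)_{kl}b_k, a_l] + [(CD^\dagger)_{kl}b_k, b_l]\Big),
\end{align*}
where $(X^\dagger)_{ij} := \overline{X_{ji}}$.

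From this expression I would extract the three conditions in turn. Combining graded commutativity $[u,v] = (-1)^n[v,u]$ with the central identity gives $[cb_k, a_l] \equiv (-1)^n[\bar c a_l, b_k]$, so after swapping summation indices the $CB^\dagger$-term merges with the $AD^\dagger$-term to produce $\sum_{k,l}[(AD^\dagger + (-1)^n BC^\dagger)_{kl}a_k, b_l]$; matching this against $\omega = \sum_l[a_l, b_l]$ using $\Z$-linear independence of the Hilton generators $\{[t^m a_k, b_l]\}$ in $[\pi_{2n-1}(X_g)]_\pi$ forces (i). For the pure $aa$- and $bb$-terms, pairing the $(k,l)$ and $(l,k)$ summands via $[P_{lk}a_l, a_k] \equiv (-1)^n[\bar P_{lk}a_k, a_l]$ produces a coefficient $(P + (-1)^n P^\dagger)_{kl}$ that must vanish off the diagonal, applied to both $P = AB^\dagger$ and $P = CD^\dagger$, which is (ii). Finally, each diagonal contribution $[P_{kk}a_k, a_k]$ must lie in the kernel of $c \mapsto [ca_k, a_k]$: the relation $[ca_k, a_k] \equiv (-1)^n[\bar c a_k, a_k]$ places $\Lambda_n^{\min}$ in this kernel, and the vanishing $[\iota_n, \iota_n] = 0$ for $n=3,7$ (where $S^n$ is an $H$-space) contributes the extra generator $e$ in those cases, giving exactly $\Lambda_n$ and hence (iii).

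The main obstacle is verifying that these are the \emph{only} relations — that the kernel of $c \mapsto [ca_k, a_k]$ is no larger than $\Lambda_n$, and that the classes $[t^m a_k, a_l]$ for $k \neq l$, $[t^m a_k, b_l]$, and $[t^m b_k, b_l]$ for $k \neq l$ are genuinely $\Z$-linearly independent in $[\pi_{2n-1}(X_g)]_\pi$. For this I would decompose $\pi_{2n-1}(\widetilde{X_g})$ via Hilton's theorem applied to its underlying wedge of $n$-spheres, identify how the deck-translation action of $\pi$ permutes the resulting basic products, and then pass carefully to coinvariants, invoking the metastable-range tools collected in the appendix.
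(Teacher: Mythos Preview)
Your proposal is correct and follows essentially the same route as the paper: expand $\phi(\omega)$ bilinearly, use the coinvariant identity $[\alpha x,\beta y]\equiv[\alpha\bar\beta\,x,y]$ to package the result in terms of $AB^\dagger$, $AD^\dagger+(-1)^nBC^\dagger$, $CD^\dagger$, and read off the conditions from the Hilton--Milnor basis of $[\pi_{2n-1}(X_g)]_\pi$. The one point where the paper is more explicit than your outline is in pinning down the kernel of $c\mapsto[c\,a_k,a_k]$ exactly: beyond the inclusion $\Lambda_n\subset\ker$, one needs the precise order of $[\iota_n,\iota_n]$ (infinite for $n$ even, $2$ for $n$ odd and $\neq 1,3,7$, $1$ for $n=3,7$), which for $n$ odd invokes the Hopf Invariant One theorem via the EHP sequence; this is what your final paragraph is gesturing toward, so your plan would go through once that is filled in.
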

\begin{proof}
The element $\sum_{i=1}^g [a_i, b_i] = \sum_{i=1}^g [x_i, x_{g+i}]$ is supported on the second term of \eqref{eq:Hilton-X_g} in Appendix \ref{sec:Hilton-Milnor} with $k=0$. As $[x, t \cdot y] = t \cdot [t^{-1} \cdot x, y]$, we have $[x, t \cdot y]\sim [t^{-1} \cdot x, y]$ in the $\pi$-coinvariants, and hence $[ x, a \cdot y] = [\bar{a} \cdot x, y]$. Now 
\begin{align*}
\sum [\phi(a_i),\phi(b_i)] &=\sum_{i=1}^g [\sum_{j=1}^g A_{ji} a_j + C_{ji}b_j, \sum_{k=1}^g B_{ki} a_k + D_{ki}b_k]\\
 &= \sum_{r,s} \sum_i[A_{ri} a_r, B_{si} a_s] + [A_{si}a_s, D_{ri} b_r] + [C_{ri}b_r, B_{si}a_s]+ [C_{ri} b_r, D_{si} b_s]\\
&= \sum_{r,s} \sum_i[A_{ri} a_r, B_{si} a_s] + [A_{si}a_s, D_{ri} b_r] + (-1)^n [B_{si}a_s, C_{ri}b_r]+ [C_{ri} b_r, D_{si} b_s]\\
&= \sum_{r,s} \sum_i[A_{ri} \overline{B_{si}} a_r,  a_s] + [A_{si} \overline{D_{ri}}a_s,  b_r] + (-1)^n [B_{si}\overline{C_{ri}}a_s, b_r]+ [C_{ri} \overline{D_{si}} b_r,  b_s]\\
&= \sum_{r,s} [(AB^\dagger)_{rs} a_r,  a_s] + [(A D^\dagger + (-1)^n B C^\dagger)_{sr} a_s,  b_r] + [(C D^\dagger)_{rs} b_r,  b_s].
\end{align*}

The middle terms being $\sum_{i=1}^g [a_i, b_i]$ is equivalent to (i), and the outer terms vanishing for $r \neq s$ is equivalent to (ii). For the first terms with $r=s$, we need
$$0 =  [(AB^\dagger)_{rr} a_r,  a_r].$$
Writing $(AB^\dagger)_{rr} = \sum_{a \in \Z} u_a t^a \in \Z[\pi]$, this is
\begin{align*}
0= [\sum_{a \in \Z} u_a t^a a_r,  a_r] &= \sum_{a < 0} u_a [t^a a_r,  a_r] + u_0 [a_r, a_r] + \sum_{a > 0} u_a [t^a a_r,  a_r]\\
 &= \sum_{a < 0} u_a [a_r, t^{-a} a_r] + u_0 [a_r, a_r] + \sum_{a > 0} u_a [t^a a_r,  a_r]\\
 &= \sum_{a < 0} u_a(-1)^n [t^{-a} a_r,  a_r] + u_0 [a_r, a_r] + \sum_{a> 0} u_a [t^a a_r,  a_r]\\
 &= u_0 [a_r, a_r] + \sum_{a>0} (u_a + (-1)^n u_{-a}) [t^a a_r, a_r].
\end{align*}
Now $[a_r, a_r]$ has order 1 if $n=1,3,7$, order 2 if $n$ is odd otherwise, and infinite order if $n$ is even (this follows e.g.\ from the metastable EHP sequence as in Section \ref{sec:EHP} and the Hopf Invariant 1 Theorem); for $a > 0$ the element $[t^a a_r, a_r]$ has infinite order. A direct check then shows that this identity holds if and only if $(AB^\dagger)_{rr} = \sum_{a \in \Z} u_a t^a \in \Lambda_n$.

The last terms with $r=s$ shows $(CD^\dagger)_{rr} \in \Lambda_n$ in the same way.
\end{proof}

Furthermore we have $(AB^\dagger + (-1)^n (AB^\dagger)^\dagger)_{rr} = (AB^\dagger)_{rr} + (-1)^n \overline{(AB^\dagger)_{rr}}$ and as $\Lambda_n \subseteq \Lambda_n^{\mathrm{max}} := \{a \in \Z[\pi] \, | \, a + (-1)^n \bar{a} = 0\}$ it follows from (iii) that the diagonal entries of $AB^\dagger + (-1)^n (AB^\dagger)^\dagger$ are also zero; similarly those of $CD^\dagger + (-1)^n (CD^\dagger)^\dagger$. Thus we may replace (ii) by the condition
\begin{enumerate}[(i')]
\setcounter{enumi}{1}
\item $AB^\dagger + (-1)^n (AB^\dagger)^\dagger$ and $CD^\dagger + (-1)^n (CD^\dagger)^\dagger$ vanish.
\end{enumerate}

We may express the conditions (i) and (ii') as
\begin{equation*}
\begin{bmatrix}
A & B \\
C & D
\end{bmatrix} \cdot \begin{bmatrix}
0 & I \\
(-1)^n I & 0
\end{bmatrix} \cdot \begin{bmatrix}
A & B \\
C & D
\end{bmatrix}^\dagger = \begin{bmatrix}
0 & I \\
(-1)^n I & 0
\end{bmatrix},
\end{equation*}
which is the statement that $\phi$ preserves the hermitian form $\lambda_X$. Then $\phi^{-1}$ is represented by the matrix $\begin{bmatrix}
A & B \\
C & D
\end{bmatrix}^{-1} = \begin{bmatrix}
D^\dagger & (-1)^n B^\dagger \\
(-1)^n C^\dagger & A^\dagger
\end{bmatrix}$, so using the quadratic property of $q_X$ we have 
\begin{align*}
q_X(\phi^{-1}(a_i)) &= q_X(\sum_j \overline{D_{ij}} a_j + (-1)^n \overline{C_{ij}} b_j)\\
 &= (-1)^n \sum_j \overline{D_{ij}} C_{ij} = (-1)^n (C D^\dagger )_{ii}
\end{align*}
and similarly $q_X(\phi^{-1}(b_i)) 
= (-1)^n(A B^\dagger)_{ii}$. As a quadratic refinement is (freely) determined by its values on a basis, it follows that condition (iii) is equivalent---given (i) and (ii')---to the condition that $\phi^{-1}$ preserves the quadratic refinement $q_X$ of $\lambda_X$, which is turn is equivalent to $\phi$ preserving $q_X$. Thus this discussion shows that the action on $\pi_n(X_g)$ gives a surjective homomorphism
$$\pi_0( \hAut_\partial(X_{g})) \to U_g(\Z[\pi], \Lambda_n).$$

Similarly, any homotopy equivalence of $W_{g,1}$ fixing its boundary acts on $\pi_n(W_{g,1}) \cong H_n(W_{g,1};\Z)$ with its $(-1)^n$-symmetric intersection form $\lambda_W$, and also preserves the quadratic form $q_W$ (either by a parallel analysis to the above using Theorem \ref{thm:HtyAutWg}, or by \cite[Section 2.7]{BerglundMadsen1}). This gives a surjective homomorphism $\pi_0( \hAut_\partial(W_{g,1})) \to U_g(\Z, \Lambda_n)$.

\begin{thm}\label{thm:hAutComparison}
The squares
\beq
\xymatrix{
\pi_0 (\hAut_\partial(W_{g,1})) \ar[d]\ar[r] &\pi_0( \hAut_\partial(X_g))\ar[d] & \pi_0 (\hAut_\partial(X_{g})) \ar[d]\ar[r] &\pi_0( \hAut_\partial(W_{g,1}))\ar[d]\\
U_g(\Z,  \Lambda_n) \ar[r] & U_g(\Z[\pi],  \Lambda_n) & U_g(\Z[\pi], \Lambda_n) \ar[r] & U_g(\Z, \Lambda_n),
}
\eeq
induced by the codimension zero embeddings $W_{g,1} \subset X_g \subset W_{g,1}$, are cartesian. The vertical maps are surjective, and their (common) kernels are the finite abelian groups $H  \otimes \Sigma\pi_{2n}(S^n)$, where $\Sigma\pi_{2n}(S^n) := \mathrm{Im}(\Sigma : \pi_{2n}(S^n) \to \pi_{2n+1}(S^{n+1}))$.
\end{thm}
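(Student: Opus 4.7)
Since both vertical maps are surjective by the discussion preceding the theorem, and a square of (not necessarily abelian) groups with surjective vertical maps is cartesian if and only if the top horizontal map induces an isomorphism between the vertical kernels, my plan reduces to (i) identifying the kernel of each vertical map and (ii) checking that the horizontal maps induce isomorphisms between them.

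\textbf{Kernel for $X_g$.} From the long exact sequence of the fibration \eqref{eq:XFibn}, the kernel of $\pi_0\hAut_\partial(X_g)\to U_g(\Z[\pi],\Lambda_n)$ equals the cokernel of the map on $\pi_1$'s, which by Theorem \ref{thm:HtyAut} is
\[
\rho_* \colon \pi_n(X_g)\otimes_{\Z[\pi]}\pi_{n+1}(X_g) \longrightarrow [\pi_{2n}(X_g)]_\pi, \qquad x\otimes y \mapsto [y,x].
\]
I would analyse this via Hilton--Milnor applied to the universal cover $\widetilde{X_g}\simeq\bigvee_{\{1,\ldots,2g\}\times\Z} S^n$. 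For $n\geq 3$ only basic products of weight $\leq 2$ contribute to $\pi_{n+1}$ and $\pi_{2n}$: one has $\pi_{n+1}(X_g)\cong\pi_n(X_g)\otimes_\Z\Z/2\langle\eta\rangle$, and $\pi_{2n}(X_g)$ splits as a weight-$1$ part $\Z[\pi]\otimes H\otimes\pi_{2n}(S^n)$ plus a weight-$2$ part made of $\Z/2\langle\eta\rangle$'s indexed by basic pairs. Writing $y=y'\circ\eta$, the identity $[y'\circ\eta,x] = [y',x]\circ\Sigma^{n-1}\eta$, bilinearity of the Whitehead bracket, and the relation $[\alpha,\alpha] = \alpha\circ[\iota_n,\iota_n]$ together decompose the image of $\rho_*$ as follows. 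The off-diagonal terms $[x_{i,a},x_{j,b}]\circ\eta$ (with $(i,a)\neq(j,b)$) generate all of the weight-$2$ part after $\pi$-coinvariants, while the diagonal terms $[x_{i,a},x_{i,a}]\circ\eta = x_{i,a}\circ([\iota_n,\iota_n]\circ\Sigma^{n-1}\eta)$ generate $H\otimes P(\pi_{2n+2}(S^{2n+1})) = H\otimes\ker\Sigma$ inside the weight-$1$ coinvariants $H\otimes\pi_{2n}(S^n)$, where $P\colon\pi_{2n+2}(S^{2n+1})\to\pi_{2n}(S^n)$ is the EHP boundary. The cokernel is therefore $H\otimes\Sigma\pi_{2n}(S^n)$, a finite abelian group.

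\textbf{Conclusion.} The parallel argument using Theorem \ref{thm:HtyAutWg} (with $\pi$ trivial and no coinvariants) gives the analogous identification of the kernel of $\pi_0\hAut_\partial(W_{g,1})\to U_g(\Z,\Lambda_n)$. The horizontal maps are induced by the codimension-zero inclusions $W_{g,1}\hookrightarrow X_g\hookrightarrow W_{g,1}$, which act as the identity on $H=H_n(W_{g,1})=H_n(X_g)$, and hence as the identity on $H\otimes\Sigma\pi_{2n}(S^n)$; this yields the required isomorphism on kernels and so both squares are cartesian. The main technical point will be carefully tracking the Hilton--Milnor bookkeeping needed to isolate the weight-$1$ image (which reduces via the EHP identification to $H\otimes\ker\Sigma$) from the weight-$2$ image (which is full).
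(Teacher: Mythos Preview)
Your proposal is correct and follows essentially the same route as the paper: both reduce to computing the cokernel of $\pi_1(\rho)$ via Hilton--Milnor, showing that the weight-$2$ brackets $[x_i,x_j]\circ\eta$ (and their $\pi$-translates) fill the second summand while the diagonal terms contribute exactly the image of $[\iota_n,-]$, i.e.\ $\ker\Sigma$, leaving $H\otimes\Sigma\pi_{2n}(S^n)$. The only cosmetic difference is that the paper sets up the explicit map of fibration sequences from the $W_{g,1}$-sequence to the $X_g$-sequence and compares cokernels directly along that map, whereas you compute each cokernel separately and then observe the inclusions act as the identity on $H$; the paper also phrases the diagonal step via \cite[Corollary XII.2.6]{WhiteheadBook} rather than the EHP boundary $P$, but these are equivalent.
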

\begin{proof}
Consider the analogue of \eqref{eq:XFibn} for $W_{g,1}$, which is the fibration sequence
\beq
\Map_\partial(W_{g,1}, W_{g,1})\to\Map_\ast(W_{g,1}, W_{g,1})\to\Map_\ast(S^{2n-1}, W_{g,1}).
\eeq
This has a map to \eqref{eq:XFibn}, given in the evident way on the first two terms, and on the last given by sending a map $f : S^{2n-1} \to W_{g,1}$ to $S^1 \times S^{2n-2} \to S^1 \times S^{2n-2} \vee S^{2n-1} \xto{\iota \vee f} X_g$. The map of long exact sequences has a portion
\beq
\xymatrix{
\Hom_{\Z}(\pi_n(W_{g,1}), \pi_{n+1}(W_{g,1})) \ar[r] \ar[d]&\pi_{2n}(W_{g,1}) \ar[d]\ar[r] &\pi_0( \hAut_\partial(W_{g,1}))\ar[d] \ar[r] & GL_{2g}(\Z) \ar[d]\\
\Hom_{\Z[\pi]}(\pi_n(X_g), \pi_{n+1}(X_g)) \ar[r]^-{\pi_1(\rho)}& [\pi_{2n}(X_g)]_\pi \ar[r] & \pi_0( \hAut_\partial(X_g)) \ar[r] & GL_{2g}(\Z[\pi]),
}
\eeq
and we must show that the map between kernels of the two rightmost horizontal maps is an isomorphism; this is the same as the map between the cokernels of the two leftmost horizontal maps.

By Theorem \ref{thm:HtyAutWg}, the top leftmost map sends a $\phi \in \Hom_{\Z}(\pi_n(W_{g,1}), \pi_{n+1}(W_{g,1}))$ to $\sum_{i=1}^g [a_i, \phi(b_i)] + (-1)^{n k}[\phi(a_i), b_i]$, and so, from Appendix \ref{sec:Hilton-Milnor}, its image is spanned by the Whitehead products $[x_i, x_j \circ \eta]$. For $i \neq j$ these generate the second summand in \eqref{eq:Hilton-W_g}, and for $j=i$ we have $[x_i, x_i \circ \eta]$ in the first term. Now $[\iota_n, \iota_n \circ \eta] \in \pi_{2n}(S^n)$ generates the kernel of $\Sigma : \pi_{2n}(S^n) \to \pi_{2n+1}(S^{n+1})$, by \cite[Corollary XII.2.6]{WhiteheadBook}. Thus the cokernel of the top leftmost map is identified with $H \otimes \Sigma \pi_{2n}(S^n)$. (See \cite[Theorem 8.14]{Baues} for another derivation of this.)

Similarly, taking $\pi$-coinvariants in equation \eqref{eq:Hilton-X_g} gives
$$[\pi_{2n}(X_g)]_\pi \cong \bigoplus_{1 \leq i \leq 2g} \pi_{2n}(S^n)\{x_i\} \oplus \bigoplus_{\substack{1 \leq i , j \leq 2g\\a\in\Z\\(a,i) < (0,j)}}\pi_{2n}(S^{2n-1})\{t^a x_i \otimes x_j\}.$$
The description of $\pi_1(\rho)$ in Theorem \ref{thm:HtyAut} shows that the image of $\pi_1(\rho)$ is spanned by the (equivalence classes of the) Whitehead products $[t^a x_i,  t^b x_j \circ \eta]$. Using  \eqref{eq:WhiteheadCompFormula} from Appendix \ref{sec:WhiteheadComposition} and the fact that $\eta : S^{2n} \to S^{2n-1}$ is an $n$-fold suspension, we can write these as $[t^a x_i,  t^b x_j]\circ \eta$. As above for $j \neq i$ these span the second summand, and for $i=j$ they give $[\iota, \iota\circ\eta]\otimes x_i$, so the cokernel of $\pi_1(\rho)$ is again $H  \otimes \Sigma\pi_{2n}(S^n)$. Under these identifcations the map between these cokernels can then be seen to be the identity map. Finally, $\Sigma\pi_{2n}(S^n)$ is a finite group, by a theorem of Serre.
\end{proof}

\begin{rem}
In particular the extension
$$0 \to H \otimes \Sigma \pi_{2n}(S^n) \to \pi_0(\hAut_\partial(X_{g})) \to U_g(\Z[\pi], \Lambda_n) \to 0$$
is pulled back from the analogous extension for $\pi_0(\hAut_\partial(W_{g,1}))$. For $n$ either odd or 2 or 6, the latter extension has been shown to be split by Baues \cite[Theorem 8.14]{Baues} (see also \cite[Corollary F]{KrannichMCG}), observing that $\pi_0( \hAut_\partial(W_{g,1})) \to \pi_0 (\hAut^+(W_{g}))$ is an isomorphism.
\end{rem}

\subsection{Higher homotopy groups}\label{sec:hAutHigherHty}

We also wish to compute $\pi_{k-1}(\hAut_{\partial}(X_g),id)$ in degrees $k \leq n-1$, where the homotopy theory involved is metastable. In order to express the answer as a representation of $\pi_0(\hAut_{\partial}(X_g))$, we define
\begin{align*}
S^+_X &:= \langle x \otimes x \, | \, x \in \pi_n(X_g) \rangle_\Z \subset \pi_n(X_g)^{\otimes_{\Z[\pi]} 2},\\
S^-_X &:= \langle x \otimes y - y \otimes x \, | \, x, y \in \pi_n(X_g) \rangle_\Z \subset \pi_n(X_g)^{\otimes_{\Z[\pi]} 2}.
\end{align*}
These are subrepresentations as they are the invariants of the involutions $x \otimes y \mapsto y \otimes x$ and $x\otimes y \mapsto - y \otimes x$ respectively, which are morphisms of representations\footnote{But are not $\Z[\pi]$-linear: $S_X^+$ and $S_X^-$ are not $\Z[\pi]$-modules.}.
\begin{prop}\label{prop:pi_khAut}
For all $2 \leq k < n-1$, there are extensions of $\pi_0(\hAut_\partial(X_g))$-modules
\beq
0\to \frac{\pi_{2n-1+k}(S^n)}{[\iota_n, \pi_{n+k}(S^n)]} \otimes H \to\pi_{k-1}(\hAut_{\partial}(X_g),id)\to Q_X \to 0,
\eeq
and
$$0 \to K \otimes S^+_X \to Q_X \to \frac{\pi_{n+k-1}(S^n)}{K} \otimes S^-_X \to 0,$$
where $K := \mathrm{Ker}([\iota_n, -] : \pi_{n+k-1}(S^n) \to \pi_{2n-1+k-1}(S^n))$.
\end{prop}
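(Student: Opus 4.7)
The plan is to extract both exact sequences from the long exact sequence of the fibration \eqref{eq:XFibn} in degrees $k$ and $k-1$. Since $\pi_{k-1}(\Map_\partial(X_g, X_g), \id) = \pi_{k-1}(\hAut_\partial(X_g), \id)$ for $k \geq 2$, this sequence produces
$$0 \to \coker(\pi_k(\rho)) \to \pi_{k-1}(\hAut_\partial(X_g), \id) \to \ker(\pi_{k-1}(\rho)) \to 0,$$
and I will set $Q_X := \ker(\pi_{k-1}(\rho))$. By Theorem \ref{thm:HtyAut}, the maps $\pi_j(\rho)$ take the form $\pi_n(X_g) \otimes_{\Z[\pi]} \pi_{n+j}(X_g) \to [\pi_{2n+j-1}(X_g)]_\pi$, sending $x \otimes y \mapsto [y,x]$.

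In the metastable range $k < n - 1$, the universal cover $\widetilde{X_g}$ is (after collapsing the real line) equivalent to a wedge of $n$-spheres indexed by $\{t^a x_i\}_{a \in \Z, 1 \leq i \leq 2g}$. By Hilton--Milnor (Appendix \ref{sec:Hilton-Milnor}), and since length-$\geq 3$ basic products sit in degrees $\geq 3n-2$, one has $\pi_{n+j}(X_g) \cong \Z[\pi] \otimes H \otimes \pi_{n+j}(S^n)$ for $j \leq k$, and the coinvariants split canonically as $[\pi_{2n+j-1}(X_g)]_\pi \cong H \otimes \pi_{2n+j-1}(S^n) \oplus W_j$, with $W_j$ the Whitehead part from length-$2$ basic products. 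The source of $\pi_j(\rho)$ thus identifies with $T \otimes_\Z \pi_{n+j}(S^n)$ where $T := \pi_n(X_g) \otimes_{\Z[\pi]} \pi_n(X_g)$.

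To compute the cokernel of $\pi_k(\rho)$, I would apply the Whitehead composition formula of Appendix \ref{sec:WhiteheadComposition} to write $[t^a x_j \circ \alpha, x_i] = \pm [x_i, t^a x_j] \circ \Sigma^{n-1}\alpha$. For off-diagonal $(t^a x_j, x_i)$, the image sweeps out all of $W_k$ by surjectivity of $\Sigma^{n-1} : \pi_{n+k}(S^n) \twoheadrightarrow \pi^s_k$ in the stable range; for the diagonal case, naturality of the Whitehead bracket under $x_i : S^n \to X_g$ gives $[x_i \circ \alpha, x_i] = x_i \circ [\iota_n, \alpha]$, so the image in the composition part is exactly $H \otimes [\iota_n, \pi_{n+k}(S^n)]$. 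This yields the first extension with $\coker(\pi_k(\rho)) \cong \tfrac{\pi_{2n+k-1}(S^n)}{[\iota_n, \pi_{n+k}(S^n)]} \otimes H$.

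For the kernel $Q_X$, I would split $T = H \oplus (T/H)$ into the span of the diagonal elements $\{x_i \otimes x_i\}$ and the off-diagonal complement. This decomposition is compatible with the composition/Whitehead splitting of the target, the diagonal source mapping to the composition part and the off-diagonal to the Whitehead part, so that $Q_X \cong (H \otimes K) \oplus \ker(\Phi_W)$ where $\Phi_W : (T/H) \otimes \pi_{n+k-1}(S^n) \to W_{k-1}$ is the off-diagonal map. The swap involution $\sigma$ on $T$ restricts to $T/H$ whose $\pm 1$-eigenspaces correspond to the off-diagonal parts of $S^+_X$ and $S^-_X$; using graded-commutativity together with the composition formula yields $\Phi_W(\sigma v \otimes \alpha) = (-1)^n \Phi_W(v \otimes \alpha)$, which organises $\ker(\Phi_W)$ into the two layers, with $K \otimes S^+_X$ as the subobject (where $\alpha \in K$ kills both the composition contribution on the diagonal part of $S^+_X$ and any residual image from the off-diagonal) and $(\pi_{n+k-1}(S^n)/K) \otimes S^-_X$ as the quotient. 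The $\pi_0(\hAut_\partial(X_g))$-equivariance is automatic from the naturality of every identification.

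The main obstacle will be to combine the parity-dependent sign in $\Phi_W(\sigma v \otimes \alpha) = (-1)^n \Phi_W(v \otimes \alpha)$ with the EHP-type relationship between $K = \ker [\iota_n, -]$ and $\Sigma^{n-1}$, so as to present the filtration of $Q_X$ with graded pieces $K \otimes S^+_X$ and $(\pi_{n+k-1}(S^n)/K) \otimes S^-_X$ uniformly in $n$ and respecting the $2$-torsion present in the comparison between $S^+_X \oplus S^-_X$ and $T$.
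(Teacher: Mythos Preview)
Your overall strategy coincides with the paper's: both extract the two extensions from the long exact sequence of \eqref{eq:XFibn}, set $Q_X = \ker(\pi_{k-1}(\rho))$, and split source and target along the Hilton--Milnor ``composition/Whitehead'' decomposition. Your sign relation $\Phi_W(\sigma v \otimes \alpha) = (-1)^n \Phi_W(v \otimes \alpha)$ is correct and is precisely what drives the paper's computation as well. The cokernel analysis is also fine.

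There are two real gaps in the kernel analysis. First, a bookkeeping error: you write that $\ker(\Phi_W)$ itself has $K \otimes S^+_X$ as a subobject, but $S^+_X$ contains the purely diagonal classes $x_i \otimes x_i$, and those sit in your $H \otimes K$ summand, not in $\ker(\Phi_W)$. The correct statement is that the diagonal kernel $\bigoplus_i K\{x_i \otimes x_i\}$ together with the $K$-part of the off-diagonal kernel assemble into $K \otimes S^+_X$ inside $Q_X$; neither the diagonal/off-diagonal split nor the $\pm 1$-eigenspace split of $\sigma$ on $T$ is $\pi_0(\hAut_\partial(X_g))$-equivariant, so you cannot argue piece by piece.

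Second, and more substantively, your sign relation gives that the off-diagonal kernel is spanned by elements $t^a x_i \otimes x_j \circ g - (-1)^n t^{-a} x_j \otimes x_i \circ g$: these are $S^+$-type for $n$ odd and $S^-$-type for $n$ even, so a priori the filtration quotients depend on the parity of $n$. The way the paper removes this parity dependence is the input you flag as ``the main obstacle'' but do not supply: one shows (Lemma~\ref{lem:Ord2}, via the metastable EHP sequence and the Hopf invariant of $[\iota_n,\iota_n]$) that $K$ has exponent~$2$ when $n$ is even and $\pi_{n+k-1}(S^n)/K$ has exponent~$2$ when $n$ is odd. This lets one replace $-(-1)^n$ by $+1$ on the $K$-layer and by $-1$ on the quotient layer regardless of $n$, yielding $K\otimes S^+_X$ and $(\pi_{n+k-1}(S^n)/K)\otimes S^-_X$ uniformly. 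Without this 2-torsion argument the second extension cannot be established as stated.
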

 
\begin{proof}
By the description of $\pi_{m}(\rho)$ in Theorem \ref{thm:HtyAut}, we need to understand the kernel and cokernel of the map
\begin{align*}
\pi_m(\rho) :\pi_n(X_g) \otimes_{\Z[\pi]} \pi_{n+m}(X_g) &\to [\pi_{2n+m-1}(X_g)]_\pi\\
x \otimes y &\mapsto [y,x]
\end{align*}
for $m=k-1$ and $m=k$ respectively. The homotopy groups of $X_g$ may be described by the Hilton--Milnor theorem, and in the metastable range they are given by \eqref{eq:Hilton-X_g} in Section \ref{sec:Hilton-Milnor}. Thus we can write the domain as
\begin{equation}\label{eq:Source}
\bigoplus_{\substack{1 \leq i\leq 2g}}\pi_n(S^n)\{x_i\} \otimes \pi_{n+m}(S^n)\{x_i\} \oplus\bigoplus_{\substack{1 \leq i , j \leq 2g\\a \in \Z\\(a,i) \neq (0,j)}} \pi_n(S^n)\{t^a x_i\} \otimes \pi_{n+m}(S^n)\{x_j\}.
\end{equation}
On the other hand, taking $\pi$-coinvariants in \eqref{eq:Hilton-X_g} gives
\begin{equation}\label{eq:Target}
[\pi_{2n+m-1}(X_g)]_\pi \cong \bigoplus_{\substack{1 \leq i \leq 2g}} \pi_{2n+m-1}(S^n)\{x_i\} \oplus  \bigoplus_{\substack{1 \leq i , j \leq 2g\\a\in\Z\\(a,i) < (0,j)}} \pi_{2n+m-1}(S^{2n-1})\{t^a x_i \otimes x_j\}.
\end{equation}
We will use the following notational convention to refer to elements of these groups: we denote the element $[f] \in \pi_l(S^n)$ considered as the summand $\pi_l(S^n)\{t^a x_i\}$ by $[t^a x_i \circ f]$, because the composition $S^l \xto{f} S^n \xto{t^a x_i} X_g$ indeed represents the corresponding element of $\pi_l(X_g)$.

The map $\pi_m(\rho)$ sends the terms of \eqref{eq:Source} to the corresponding terms of \eqref{eq:Target}. On the first terms of \eqref{eq:Source} it sends $x_i \otimes x_i \circ f$, for $f\in\pi_{2n+m-1}(S^n)$, to 
\begin{align*}
[x_i \circ f, x_i] &= (-1)^{n(n+m)}[x_i, x_i \circ f],\\
&= (-1)^{n(n+m)}([ x_i , x_i]\circ \Sigma^{n-1} f ),
\end{align*}
lying in the first terms of \eqref{eq:Target}. On the second terms of \eqref{eq:Source}, it sends $t^a x_i \otimes x_j \circ g$, for $g\in\pi_{2n+m-1}(S^{2n-1})$, to 
\begin{align*}
[x_j \circ g, t^a x_i] &= (-1)^{n(n+m)}[t^a x_i, x_j \circ g]\\
 &= \begin{cases}
 (-1)^{n(n+m)}([t^a x_i , x_j]\circ \Sigma^{n-1} g) & \text{if $(a,i) < (0,j)$}\\
 (-1)^{nm} ([t^{-a} x_j , x_i]\circ \Sigma^{n-1} g) & \text{if $(a,i) > (0,j)$},
\end{cases}
\end{align*}
lying in the second terms of \eqref{eq:Target}, where in the second case we have used $[t^a x_i , x_j] = t^a[ x_i , t^{-a}x_j] = (-1)^n t^a [t^{-a} x_j, x_i]$. Here we have applied \eqref{eq:WhiteheadCompFormula} from Appendix \ref{sec:metastable}, using that $g \in \pi_{n+m}(S^n)$ is a suspension as $m \leq k < n-1$.

The cokernel of $\pi_k(\rho)$ is therefore
$$\bigoplus_{1 \leq i \leq 2g} \frac{\pi_{2n-1+k}(S^n)}{[\iota_n, \pi_{n+k}(S^n)]}\{x_i\} \oplus \bigoplus_{\substack{1 \leq i , j \leq 2g\\a\in\Z\\(a,i) < (0,j)}} \frac{\pi_{2n+k-1}(S^{2n-1})}{\Sigma^{n-1} \pi_{n+k}(S^n)}\{t^a x_i \otimes x_j\},$$
and the second term vanishes as $\Sigma^{n-1} : \pi_{n+k}(S^n) \to \pi_{2n+k-1}(S^{2n-1})$ is an epimorphism by our assumption that $k < n-1$. One easily verifies that as a representation of $\pi_0(\hAut_\partial(X_g))$ this is $\frac{\pi_{2n-1+k}(S^n)}{[\iota_n, \pi_{n+k}(S^n)]} \otimes H$.

To understand the kernel of $\pi_{k-1}(\rho)$, first observe  the kernel on the first terms is 
$$\bigoplus_{\substack{1 \leq i \leq 2g}} K \{ x_i\otimes x_i\},$$
for $K := \mathrm{Ker}([\iota_n, -] : \pi_{n+k-1}(S^n) \to \pi_{2n-1+k-1}(S^n))$. For the second terms, observe that for each triple $(i,j,a)$ the map
\begin{align*}
\pi_n(S^n)\{t^a x_i\} \otimes \pi_{n+k-1}(S^n)\{x_j\} &\to \pi_{2n-1+k-1}(S^{2n-1})\{t^a x_i \otimes x_j\}\\
t^a x_i \otimes x_j \circ g & \mapsto \begin{cases}
 (-1)^{n(n+k-1)}([t^a x_i , x_j]\circ \Sigma^{n-1} g) & \text{if $(a,i) < (0,j)$}\\
 (-1)^{n(k-1)}([t^{-a} x_j , x_i]\circ \Sigma^{n-1} g) & \text{if $(a,i) > (0,j)$},
\end{cases}
\end{align*}
is an isomorphism, by our assumption that $k < n-1$. Thus on the second terms the kernel is the span of the elements
$$t^a x_i \otimes x_j \circ g -(-1)^n t^{-a} x_j \otimes x_i \circ g \in \bigoplus_{\substack{1 \leq i < j \leq 2g\\a \in \Z}} \pi_n(S^n)\{t^a x_i\} \otimes \pi_{n+k-1}(S^n)\{x_j\}.$$
Now if $n$ is even then $K$ consists of elements of order 2 by Lemma \ref{lem:Ord2} so, whatever the parity of $n$, for $g \in K$ we have
$$t^a x_i \otimes x_j \circ g -(-1)^n t^{-a} x_j \otimes x_i \circ g = t^a x_i \otimes x_j \circ g + t^{-a} x_j \otimes x_i \circ g.$$
Thus the terms
$$\bigoplus_{\substack{1 \leq i \leq 2g}} K \{x_i\otimes x_i\} \oplus \bigoplus_{\substack{1 \leq i, j \leq 2g\\a \in \Z\\(a,i) < (0,j)}} K\{t^a x_i \otimes x_j  + t^{-a} x_j \otimes x_i \}$$
form a subrepresentation of the kernel, which is isomorphic to $K \otimes S^+_X$. Quotienting out by this subrepresentation leaves the terms
$$\bigoplus_{\substack{1 \leq i< j \leq 2g\\a \in \Z}} \frac{\pi_{n+k-1}(S^n)}{K}\{t^a x_i \otimes x_j -(-1)^n t^{-a} x_j \otimes x_i\}.$$
If $n$ is odd then $\frac{\pi_{n+k-1}(S^n)}{K}$ consists of elements of order 2 by Lemma \ref{lem:Ord2} so, whatever the parity of $n$, for $g \in \frac{\pi_{n+k-1}(S^n)}{K}$ we have
$$t^a x_i \otimes x_j \circ g -(-1)^n t^{-a} x_j \otimes x_i \circ g = t^a x_i \otimes x_j \circ g - t^{-a} x_j \otimes x_i \circ g.$$
Thus we identify this representation as $\frac{\pi_{n+k-1}(S^n)}{K} \otimes S^-_X$.
\end{proof}
An analogous but much simpler argument using Theorem \ref{thm:HtyAutWg} establishes the corresponding result for the homotopy groups of $\hAut(W_{g,1})$. Define 
\begin{align*}
S^+_W &:= \langle x \otimes x \, | \, x \in \pi_n(W_{g,1}) \rangle_\Z \subset \pi_n(W_{g,1})^{\otimes 2},\\
S^-_W &:= \langle x \otimes y - y \otimes x \, | \, x, y \in \pi_n(W_{g,1}) \rangle_\Z \subset \pi_n(W_{g,1})^{\otimes 2}.
\end{align*}

\begin{prop}\label{cor:pi_khAutWg}
For all $2 \leq k < n-1$, there are extensions of $\pi_0(\hAut_\partial(W_{g,1}))$-modules
\beq
0\to \frac{\pi_{2n-1+k}(S^n)}{[\iota_n, \pi_{n+k}(S^n)]} \otimes H \to\pi_{k-1}(\hAut_{\partial}(W_{g,1}),id)\to Q_W \to 0,
\eeq
and
$$0 \to K \otimes S^+_W \to Q_W \to \frac{\pi_{n+k-1}(S^n)}{K} \otimes S^-_W \to 0,$$
where $K = \mathrm{Ker}([\iota_n, -] : \pi_{n+k-1}(S^n) \to \pi_{2n-1+k-1}(S^n))$.\qed
\end{prop}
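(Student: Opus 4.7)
The plan is to follow the proof of Proposition \ref{prop:pi_khAut} verbatim, substituting the fibration sequence for $W_{g,1}$ and its analysis provided by Theorem \ref{thm:HtyAutWg}. The argument becomes strictly simpler because there is no $\pi$-action to track and no coinvariants to take: one works directly over $\Z$ rather than over $\Z[\pi]$.

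First, I would set up the map
\beq
\pi_m(\rho) : \pi_n(W_{g,1}) \otimes_\Z \pi_{n+m}(W_{g,1}) \longrightarrow \pi_{2n-1+m}(W_{g,1}), \qquad x \otimes y \longmapsto [y,x],
\eeq
and compute its cokernel for $m = k$ (to get the outer quotient in the first extension) and its kernel for $m = k-1$ (which is $\pi_{k-1}(\hAut_\partial(W_{g,1}),id)$). Using the basis $x_1,\ldots,x_{2g}$ of $H$ the source decomposes as $\bigoplus_{i,j} \pi_n(S^n)\{x_i\} \otimes \pi_{n+m}(S^n)\{x_j\}$, and the Hilton--Milnor formula \eqref{eq:Hilton-W_g} in the metastable range decomposes the target as a sum of a diagonal part $\bigoplus_i \pi_{2n-1+m}(S^n)\{x_i\}$ and an off-diagonal part $\bigoplus_{i<j} \pi_{2n-1+m}(S^{2n-1})\{x_i \otimes x_j\}$. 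As in the proof of Proposition \ref{prop:pi_khAut}, the formula \eqref{eq:WhiteheadCompFormula} from the appendix lets me rewrite $[x_j \circ g, x_i] = (-1)^{n(n+m)}[x_j, x_i] \circ \Sigma^{n-1} g$, so $\pi_m(\rho)$ preserves these direct sum decompositions.

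For $m = k$ the diagonal summands contribute $\frac{\pi_{2n-1+k}(S^n)}{[\iota_n, \pi_{n+k}(S^n)]} \otimes H$ to the cokernel, while the off-diagonal summands contribute nothing, since $\Sigma^{n-1} : \pi_{n+k}(S^n) \to \pi_{2n+k-1}(S^{2n-1})$ is surjective in the range $k < n-1$. This identifies the right hand term in the first extension. For $m = k-1$ the diagonal part contributes $K \otimes \bigoplus_i \Z\{x_i \otimes x_i\}$ to the kernel, and on each off-diagonal block $\pi_m(\rho)$ is an isomorphism of infinite cyclic groups of the appropriate coefficients; its kernel on the combined $(i,j)$ and $(j,i)$ blocks is spanned by the symmetrizations $x_i \otimes x_j \circ g + (-1)^{n+1} x_j \otimes x_i \circ g$, with $g \in \pi_{n+k-1}(S^n)$ constrained by whether we need to invoke $K$ or $\pi_{n+k-1}(S^n)/K$.

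Finally I would apply Lemma \ref{lem:Ord2} exactly as before: if $n$ is even then $K$ is $2$-torsion and if $n$ is odd then $\pi_{n+k-1}(S^n)/K$ is $2$-torsion, so in either case the sign $(-1)^n$ in the symmetrization can be absorbed, identifying the symmetric sublattice $K \otimes S^+_W$ and the antisymmetric quotient $\frac{\pi_{n+k-1}(S^n)}{K} \otimes S^-_W$. Because there is no $\pi_1$ in play, the equivariance with respect to $\pi_0(\hAut_\partial(W_{g,1}))$ comes directly from naturality of the construction. The only step where I would need to be careful is checking that the decomposition into symmetric and antisymmetric parts is $\pi_0(\hAut_\partial(W_{g,1}))$-equivariant; this is immediate because the swap involution on $\pi_n(W_{g,1})^{\otimes 2}$ is natural in endomorphisms of $\pi_n(W_{g,1})$, but it is the main bookkeeping point worth verifying explicitly.
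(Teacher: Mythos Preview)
Your proposal is correct and is exactly the argument the paper has in mind: it states this proposition with a \qed, remarking only that ``an analogous but much simpler argument using Theorem~\ref{thm:HtyAutWg} establishes the corresponding result''. Your outline reproduces the proof of Proposition~\ref{prop:pi_khAut} with the $\pi$-equivariance stripped out, which is precisely what is intended.
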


\begin{example}\label{ex:oddprimes}
Work $p$-locally at an odd prime $p$. By the discussion in Section \ref{sec:QuadApprox} there is a short exact sequence
$$0 \to \begin{cases}
0 & \text{ if $n$ is odd}\\
\pi_{k}^s & \text{ if $n$ is even}\end{cases} \to \pi_{2n+k-1}(S^n) \to \pi_{n+k-1}^s \to 0,$$
where the right-hand map is stabilisation, and the left-hand map is 
$\pi_k^s \cong \pi_{k+n}(S^n) \xto{[\iota_n, -]} \pi_{2n+k-1}(S^n)$. Thus for $n$ both odd or even we have $\frac{\pi_{2n-1+k}(S^n)}{[\iota_n, \pi_{n+k}(S^n)]} \cong \pi_{n+k-1}^s$, and we see, in agreement with Lemma \ref{lem:Ord2}, that localised at an odd prime
$$K = \mathrm{Ker}([\iota_n, -] : \pi_{n+k-1}(S^n) \to \pi_{2n-1+k-1}(S^n)) = \begin{cases}
\pi_{k-1}^s & \text{ if $n$ is odd}\\
0 & \text{ if $n$ is even}.
\end{cases}$$

Therefore localised at an odd prime we have short exact sequences
\beq
0\to \pi_{n+k-1}^s \otimes H \to\pi_{k-1}(\hAut_{\partial}(X_g),id)\to \begin{cases}
\pi_{k-1}^s \otimes S^+_X & \text{ if $n$ is odd}\\
\pi_{k-1}^s \otimes S^-_X & \text{ if $n$ is even}.
\end{cases}\to 0.
\eeq
\end{example}

\begin{example}\label{ex:pi_1hAut}
To calculate $\pi_{1}(\hAut_{\partial}(X_g),id)$, we need $K = \mathrm{Ker}([\iota_n, -] : \pi_{n+1}(S^n) \to \pi_{2n}(S^n))$ and $\mathrm{Coker}([\iota_n, -] : \pi_{n+2}(S^n) \to \pi_{2n+1}(S^n))$. The second part of Lemma \ref{lem:Hopf} identifies this cokernel with $\Sigma \pi_{2n+1}(S^n)$, and the first part of that lemma shows that $K$ is $\Z/2$ if $n \equiv 3 \mod 4$ or $n=6$, and is zero otherwise. In the first case we have $Q_X \cong \Z/2 \otimes S^+_X$, and in the second we have $Q_X \cong \Z/2 \otimes S^-_X$. Thus there is an extension
$$0 \to \Sigma \pi_{2n+1}(S^n) \otimes H \to \pi_{1}(\hAut_{\partial}(X_g),id) \to \begin{cases}
\Z/2 \otimes S^+_X & \text{ if $n$ is 6 or is $3\!\!\!\! \mod 4$}\\
\Z/2 \otimes S^-_X & \text{ otherwise }
\end{cases} \to 0.$$
\end{example}

\section{The Weiss sequence, disjunction, and surgery}\label{sec:weiss+disjunction}
The following space of embeddings is central to our arguments.
\begin{deftn}\label{def:emb}
Let $\Emb^{\cong}_{\partial/2}(X_g)$ be the topological monoid of embeddings $X_g\to X_g$ (with the $C^{\infty}$-topology), which fix a neighbourhood of $S^1\times D^{2n-2}_-=1/2\partial X_g\subset\partial X_g$ pointwise, and are isotopic (through embeddings with the same boundary condition) to a diffeomorphism of $X_g$ which restricts to the identity on a neighbourhood of $\partial X_g$.
\end{deftn}
From now on, we will freely use the notions of ``block'' embeddings and automorphisms as defined in \cite{BLR}.

\begin{deftn}
Let $\blEmb_{\partial/2}(X_g)$ be the semi-simplicial monoid of block self-embeddings of $X_g$ fixing a neighbourhood of $1/2\partial X_g$, and $\blEmb_{\partial/2}^{\cong}(X_g)$ be the maximal sub-semi-simplicial monoid containing the $0$-simplices which belong to $\Emb_{\partial/2}^{\cong}(X_g)$.
\end{deftn}

In this section we will relate the homotopy groups of $B\Emb^{\cong}_{\partial/2}(X_g)$ to those of $B\hAut^{\cong}_{\partial}(X_g)$. Our strategy will be based on the map of fibration sequences
\begin{equation}\label{eq:WeissMap}
\begin{gathered}
\xymatrix{
B\Diff_{\partial}(S^1\times D^{2n-1}) \ar[r] \ar[d] & B\Diff_{\partial}(X_g)\ar[r] \ar[d] & B\Emb_{\partial/2}^{\cong}(X_g) \ar[d]\\
B\blDiff_{\partial}(S^1\times D^{2n-1}) \ar[r] & B\blDiff_{\partial}(X_g)\ar[r] & B\blEmb_{\partial/2}^{\cong}(X_g),
}
\end{gathered}
\end{equation}
which we now explain.

\subsection{The Weiss fibre sequence}\label{sec:WeissFib}

A fibration sequence analogous to the top row of \eqref{eq:WeissMap}, but with leftmost term $B\Diff_\partial(D^d)$ instead, appears first in work of Weiss \cite[Remark 2.1.3]{weiss-dalian}, and has been developed in detail by Kupers \cite[Section 4]{kupers-finiteness}, who showed that this sequence may be delooped. Their arguments may be straightforwardly generalised to the following situation: $M$ is a compact $d$-manifold with boundary, and $N \subset \partial M$ is a codimension 0 submanifold, then the delooped fibration sequence then takes the form
\beq
B\Diff_{\partial}(M)\to B\Emb^{\cong}_{\partial M\setminus{\rm{int }}(N)}(M)\to B^2\Diff_{\partial}(N\times I),
\eeq
where the subscript $\partial M\setminus\mathrm{int }(N)$ in the middle term refers to those self-embedding which fix a neighbourhood of $\partial M\setminus\mathrm{int }(N)$ pointwise.
There is an analogous fibration sequence for block diffeomorphisms and block embeddings, or for (block) homeomorphisms and topological (block) embeddings. We will be interested in the case when $M=X_g$ and $N=S^1\times D^{2n-2}_+\subset\partial X_g$, which gives the map of fibration sequences displayed in \eqref{eq:WeissMap}. 

It will be convenient for calculations to work not just with diffeomorphisms and embeddings but with those preserving a stable framing. Recall that a tangential structure is a fibration $\theta : B \to B\On(2n)$, and a $\theta$-structure on a $2n$-manifold $M$ is a fibrewise lienar isomorphism $\ell:TM\to\theta^*\gamma$, where $\gamma$ is the canonical $2n$-plane bundle over $B\On(2n)$. We define a tangential structure $\mathrm{sfr}_{2n}$ as the pullback
\begin{equation}\label{eq:sfr}
\begin{gathered}
\xymatrix{
B \ar[r] \ar[d]_{\mathrm{sfr}_{2n}} & E\On \ar[d]\\
B\On(2n) \ar[r] & B\On,
}
\end{gathered}
\end{equation}
so that $B \simeq \SOn/\SOn(2n)$. Choose a boundary condition $\ell_{\partial X_g} : TX_g \vert_{\partial X_g} \to \mathrm{sfr}_{2n}^*\gamma$, and its restriction $\ell_{1/2\partial X_g} : TX_g \vert_{1/2\partial X_g} \to \mathrm{sfr}_{2n}^*\gamma$. Observe that $\Diff_{\partial}(X_g)$ and $\Emb^{\cong}_{\partial/2}(X_g)$ act, via the derivative, on the spaces $\Bun_\partial(TX_g, \mathrm{sfr}_{2n}^*\gamma; \ell_{\partial X_g})$ and $\Bun_{\partial/2}(TX_g, \mathrm{sfr}_{2n}^*\gamma; \ell_{{1/2\partial X_g}})$ of bundle maps extending the appropriate boundary condition. We define
\begin{align*}
B\Diff^\mathrm{sfr}_\partial(X_g ; \ell_{\partial X_g}) & := \Bun_\partial(TX_g, \mathrm{sfr}_{2n}^*\gamma; \ell_{\partial X_g}) \sslash \Diff_\partial(X_g)\\
B\Emb^{\cong, \mathrm{sfr}}_{\partial/2 }(X_g ; \ell_{\partial/2 X_g}) & := \Bun_{\partial/2}(TX_g, \mathrm{sfr}_{2n}^*\gamma; \ell_{{1/2\partial X_g}}) \sslash \Emb^{\cong}_{\partial/2}(X_g).
\end{align*}
For a stable framing $\ell$ of $X_g$ we write $B\Diff_{\partial}^{\sfr}(X_g;\ell_{\partial X_g})_\ell$ and $B\Emb_{\partial/2}^{\cong,\sfr}(X_g;\ell_{\partial/2})_\ell$ for the path-components of $\ell$.

As in \cite[Proposition 8.8]{KR-WAlg}, there is a delooped stably framed Weiss fibre sequence
\beq
B\Diff_{\partial}^{\sfr}(X_g;\ell_{\partial X_g})_\ell\to B\Emb_{\partial/2}^{\cong,\sfr}(X_g;\ell_{\partial/2})_\ell \to B\left(B\Diff_{\partial}^{\sfr}(S^1\times D^{2n-1};\ell_{\partial})_L\right),
\eeq
where $\ell_{\partial}:T(S^1\times D^{2n-1})|_{\partial(S^1\times D^{2n-1})}\to\sfr_{2n}^*\gamma$ is given by the restriction of the standard stable framing of $\R^{2n}$ to $\partial(S^1\times D^{2n-1})$, 
\beq
B\Diff^\mathrm{sfr}_\partial(S^1\times D^{2n-1} ; \ell_{\partial}):= \Bun_\partial(T(S^1\times D^{2n-1}), \mathrm{sfr}_{2n}^*\gamma; \ell_{\partial}) \sslash \Diff_\partial(S^1\times D^{2n-1}),
\eeq
and $L \subset \pi_0(B\Diff^\mathrm{sfr}_\partial(S^1\times D^{2n-1} ; \ell_{\partial}))$ is the ``inertia" subgroup: those stably-framed manifolds $(S^1 \times D^{2n-1}, \ell')$ which when glued to $(X_g, \ell)$ give a stably framed manifold equivalent to $(X_g, \ell)$, up to diffeomorphism and homotopy of stable framings.

As the tangential structure $\mathrm{sfr}_{2n}$ is defined using the stable tangent bundle, it also makes sense for block diffeomorphisms and block embeddings, giving analogous spaces $B\blDiff^\mathrm{sfr}_\partial(X_g ; \ell_{\partial X_g})$ and $B\blEmb^{\mathrm{sfr}, \cong}_{\partial/2 }(X_g ; \ell_{\partial/2 X_g})$. See e.g.\ Sections 1.7-1.9 of \cite{krannich-concordances} for a detailed discussion of how this may be implemented.

\subsection{Disjunction and surgery}

The main simplification we make in this paper is to replace the study of $B\Emb_{\partial/2}^{\cong}(X_g)$ by its block analogue $B\blEmb_{\partial/2}^{\cong}(X_g)$, which is valid in a range of degrees by the following.

\begin{prop}\label{prop:EMCG up to concordance}
Let $n\geq 3$. Then the map $B\Emb_{\partial/2}^{\cong}(X_g)\to B\blEmb_{\partial/2}^{\cong}(X_g)$ is $(n-1)$-connected.
\end{prop}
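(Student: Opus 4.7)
The approach is to combine the map of Weiss fibre sequences \eqref{eq:WeissMap} with Morlet's lemma of disjunction. First, I would apply the 5-lemma (or a direct diagram chase in homotopy) to the long exact sequences in homotopy groups coming from the two horizontal fibrations in \eqref{eq:WeissMap}. The induced map on homotopy fibres gives a fibration sequence
\[
\mathrm{hofib}\bigl(B\Diff_\partial(S^1\times D^{2n-1})\to B\blDiff_\partial(S^1\times D^{2n-1})\bigr)\to \mathrm{hofib}(\cdots(X_g)\cdots) \to \mathrm{hofib}\bigl(B\Emb_{\partial/2}^{\cong}(X_g)\to B\blEmb_{\partial/2}^{\cong}(X_g)\bigr),
\]
so the right-hand fibre is $(n-2)$-connected as soon as the left-hand and middle ones are. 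This reduces the claim to the statement that the comparison maps
\[
B\Diff_\partial(M)\to B\blDiff_\partial(M)
\]
are $(n-1)$-connected for both $M=X_g$ and $M=S^1\times D^{2n-1}$.

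Second, I would invoke Morlet's lemma of disjunction to obtain these connectivity bounds. For a compact smooth $d$-manifold $M$, the relative homotopy groups $\pi_k(\blDiff_\partial(M),\Diff_\partial(M))$ are analysed by representing a block diffeomorphism as a $\Delta^k$-family of diffeomorphisms of faces that need to be made compatible; the obstruction to smoothing such a family is controlled by a disjunction problem for families of embedded handles. For our two manifolds, both of dimension $2n$ and both built out of handles of index at most $n$ (a single $1$-handle attached to $D^{2n}$ for $S^1\times D^{2n-1}$, together with $g$ cancelling pairs of $n$-handles for $X_g$), the disjunction obstructions have the appropriate codimension to vanish in the range $k\leq n-2$.

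The principal obstacle in this plan is that the target range $(n-1)$ exceeds the classical smooth pseudoisotopy stable range of roughly $2n/3$ as soon as $n\geq 6$, so one cannot simply identify $\blDiff/\Diff$ with (a concordance-theoretic object) stabilised in this range and conclude. Instead, Morlet's disjunction must be applied directly to families of embeddings of the handle cores, exploiting the fact that the handles have index at most $n$ inside a $2n$-dimensional ambient, to produce compatible smoothings through the indicated degrees. This step is essentially the content of Morlet's lemma of disjunction as used in the embedding-calculus framework of Kupers and Kupers--Randal-Williams, and is what allows the paper to push past the classical pseudoisotopy range into the range $* < n-2$ featuring in Theorems~\ref{thm:B} and~\ref{thm:A}.
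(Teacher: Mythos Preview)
Your proposal has the right ingredients but applies Morlet's lemma of disjunction in the wrong place, and the resulting reduction is false.

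You reduce the claim to showing that the individual maps $B\Diff_\partial(M)\to B\blDiff_\partial(M)$ are $(n-1)$-connected for $M=X_g$ and $M=S^1\times D^{2n-1}$. This is not true: for $M=S^1\times D^{2n-1}$ the space $\blDiff_\partial(M)/\Diff_\partial(M)$ is controlled by pseudoisotopy theory, and already $\pi_0$ of this quotient is infinitely generated by Hatcher--Wagoner. Indeed the whole paper is devoted to exhibiting the resulting infinite generation in $\pi_*(B\Diff_\partial(S^1\times D^{2n-1}))$, so these quotients are certainly not $(n-2)$-connected. Morlet's lemma of disjunction is a \emph{relative} statement: it bounds the connectivity of the map
\[
\frac{\blDiff_{\partial}(S^1\times D^{2n-1})}{\Diff_{\partial}(S^1\times D^{2n-1})} \longrightarrow \frac{\blDiff_{\partial}(X_g)}{\Diff_{\partial}(X_g)},
\]
i.e.\ of the homotopy cofibre $F$, in terms of the handle indices of the complement $X_g\setminus (S^1\times D^{2n-1})$. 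It gives no absolute bound on either source or target.

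The paper's proof takes exactly the vertical-homotopy-fibres step you describe, obtaining the fibration
\[
\frac{\blDiff_{\partial}(S^1\times D^{2n-1})}{\Diff_{\partial}(S^1\times D^{2n-1})} \to \frac{\blDiff_{\partial}(X_g)}{\Diff_{\partial}(X_g)} \to F,
\]
and then applies Morlet's disjunction \cite[Theorem 3.1]{BLR} directly to conclude that $F$ is $(n-2)$-connected. No knowledge of the individual quotients is needed, and this is precisely why the argument works beyond the pseudoisotopy stable range.
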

\begin{proof}
By taking vertical homotopy fibres in \eqref{eq:WeissMap} the homotopy fibre $F$ of this map fits into a fibration sequence
$$\frac{\blDiff_{\partial}(S^1\times D^{2n-1})}{\Diff_{\partial}(S^1\times D^{2n-1})} \to \frac{\blDiff_{\partial}(X_g)}{\Diff_{\partial}(X_g)} \to F.$$
By Morlet's lemma of disjunction \cite[Theorem 3.1]{BLR}, the space $F$ is $(n-2)$-connected, so the map in question is $(n-1)$-connected.
\end{proof}
\begin{rem}\label{rem:morlet-Wg}
By mapping the Weiss fibre sequence
\beq
B\Diff_{\partial}(D^{2n})\to B\Diff_{\partial}(W_{g,1})\to B\Emb_{\partial/2}^{\cong}(W_{g,1})
\eeq
to its block version, the same argument shows that the map $B\Emb_{\partial/2}^{\cong}(W_{g,1})\to B\blEmb_{\partial/2}^{\cong}(W_{g,1})$ is $(2n-3)$-connected. 
\end{rem}

We now define maps
\begin{equation}\label{eq:blEmb to hAut}
B\blEmb_{\partial/2}^{\cong}(X_g)\to B\hAut_{\partial}^{\cong}(X_g)\ \ \ \text{and}\ \ \ B\blEmb_{\partial/2}^{\cong}(W_{g,1})\to B\hAut_{\partial}^{\cong}(W_{g,1})
\end{equation}  
as follows: forgetting the smoothness gives a map between Weiss' fibre sequences
\begin{equation*}
\xymatrix{
B\blDiff_{\partial}(S^1\times D^{2n-1})\ar[d] \ar[r]& B\blDiff_{\partial}(X_g)\ar[r]\ar[d] & B\blEmb_{\partial/2}^{\cong}(X_g)\ar[d] \\
B\blTop_{\partial}(S^1\times D^{2n-1}) \ar[r] & B\blTop_{\partial}(X_g)\ar[r] & B\blEmb_{\partial/2}^{\cong,{\rm{TOP}}}(X_g)
}
\end{equation*}
where the decoration $\mathrm{TOP}$ on the right-hand term of the lower fibration sequence refers to locally flat topological embeddings. By \cite[Corollary 2.3]{burghelea-blockTop} there is a decomposition
$$\blTop_{\partial}(S^1\times D^{2n-1}) \simeq \blTop_{\partial}(D^{2n-1}) \times \Omega \blTop_{\partial}(D^{2n-1})$$
which is therefore contractible by the Alexander trick. Thus the map $B\blTop_{\partial}(X_g) \to B\blEmb_{\partial/2}^{\cong,{\rm{TOP}}}(X_g)$ is an equivalence and so we have a canonical (up to homotopy) lift $B\blEmb_{\partial/2}^{\cong}(X_g)\to B\blTop_{\partial}(X_g)$ which we compose with $B\blTop_{\partial}(X_g)\to B\hAut_{\partial}(X_g)$. The resulting map has image in $B\hAut^{\cong}_{\partial}(X_g)$, giving the first map in \eqref{eq:blEmb to hAut}. The second map in \eqref{eq:blEmb to hAut} is obtained similarly, using that $\blTop_{\partial}(D^{2n})$ is contractible.

\begin{prop}\label{prop:BEmb to BhAut}
For all $n\geq 3$, the squares
\beq
\xymatrix{
B\blEmb_{\partial/2}^{\cong}(W_{g,1})\ar[r]\ar[d] & B\blEmb_{\partial/2}^{\cong}(X_{g})\ar[d] &  B\blEmb_{\partial/2}^{\cong}(X_{g})\ar[r]\ar[d] & B\blEmb_{\partial/2}^{\cong}(W_{g,1})\ar[d]\\
B\hAut^{\cong}_{\partial}(W_{g,1})\ar[r] & B\hAut^{\cong}_{\partial}(X_g) & B\hAut^{\cong}_{\partial}(X_{g})\ar[r] & B\hAut^{\cong}_{\partial}(W_{g,1})
}
\eeq
given by the codimension zero embeddings $W_{g,1} \subset X_g \subset W_{g,1}$, are homotopy cartesian. 
\end{prop}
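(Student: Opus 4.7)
The plan is to identify the vertical homotopy fibres of both squares via surgery theory and verify they are compatible under the codimension 0 inclusions $W_{g,1} \subset X_g \subset W_{g,1}$. I would first focus on the first square; the second then follows either by the analogous argument applied to $X_g \subset W_{g,1}$, or from the first by using that the composition $W_{g,1} \subset X_g \subset W_{g,1}$ is smoothly isotopic to the identity, exhibiting the second square as a retract of the first.

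Using the factorisation from \eqref{eq:blEmb to hAut},
\beq
B\blEmb^{\cong}_{\partial/2}(M) \to B\blEmb^{\cong,\mathrm{TOP}}_{\partial/2}(M) \xleftarrow{\sim} B\blTop_{\partial}(M) \to B\hAut^{\cong}_{\partial}(M),
\eeq
and combining with the Weiss fibre sequence $B\blDiff_{\partial}(\mathrm{handle}) \to B\blDiff_{\partial}(M) \to B\blEmb^{\cong}_{\partial/2}(M)$ (where the handle is $D^{2n}$ for $M = W_{g,1}$ and $S^1 \times D^{2n-1}$ for $M = X_g$), I would interpret the vertical homotopy fibre of $B\blEmb^{\cong}_{\partial/2}(M) \to B\hAut^{\cong}_{\partial}(M)$ as a relative block structure space of $M$ modulo the contribution of the handle. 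A key geometric input is that the quotients $W_{g,1}/(\tfrac{1}{2}\partial W_{g,1})$ and $X_g/(\tfrac{1}{2}\partial X_g)$ are both canonically homotopy equivalent to $\bigvee^{2g} S^n$---the first because $D^{2n-1}_-$ is contractible, the second because the inclusion $S^1 \times D^{2n-2}_- \hookrightarrow S^1 \times D^{2n-1}$ is a homotopy equivalence (both sides being homotopy $S^1$'s), so collapsing $\tfrac{1}{2}\partial X_g$ in $X_g \simeq S^1 \vee \bigvee^{2g} S^n$ kills precisely the $S^1$-summand---with the codimension 0 inclusion inducing an equivalence between them.

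Consequently the normal invariants $\Map_*(M/(\tfrac{1}{2}\partial M), G/\mathrm{TOP})$ agree for $W_{g,1}$ and $X_g$. The $L$-theories $L^{\bullet}(\Z[\pi_1 M])$ differ---$L^{\bullet}(\Z)$ versus $L^{\bullet}(\Z[t^{\pm 1}])$---but the extra contribution to the latter under the Bass--Heller--Swan splitting should be exactly captured by the larger Weiss fibre $B\blDiff_{\partial}(S^1 \times D^{2n-1})$ compared to $B\blDiff_{\partial}(D^{2n})$, whose block topological structure spaces differ by the same $L$-theoretic amount. These two ``extras'' should cancel in the fibre of $B\blEmb^{\cong}_{\partial/2} \to B\hAut^{\cong}_{\partial}$, giving equivalent vertical fibres for $W_{g,1}$ and $X_g$. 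The main obstacle is making this cancellation precise, which requires a careful analysis of the interaction between block surgery theory for manifolds with $\pi_1 = \Z$ and the Bass--Heller--Swan decomposition, together with verifying that the maps of surgery fibre sequences induced by the codimension 0 inclusions are compatible with these splittings.
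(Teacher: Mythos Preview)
Your surgical instincts are sound, and the observation that $W_{g,1}/(\tfrac{1}{2}\partial W_{g,1}) \simeq X_g/(\tfrac{1}{2}\partial X_g) \simeq \bigvee^{2g} S^n$ is exactly the geometric fact that drives the paper's proof. But the route through Bass--Heller--Swan is an unnecessary detour, and the ``cancellation'' you worry about never needs to occur.

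The paper avoids comparing $L(\Z)$ to $L(\Z[t^{\pm 1}])$ altogether. Rather than analysing the vertical fibre of $B\blEmb^{\cong}_{\partial/2}(M) \to B\hAut^{\cong}_\partial(M)$ directly, it unfolds each column using the Weiss sequence, reducing to the comparison between $\tfrac{\hAut^{\cong}_\partial}{\blDiff_\partial}$ for $M$ and for its \emph{handle} $H$ (namely $H = D^{2n}$ when $M = W_{g,1}$, and $H = S^1 \times D^{2n-1}$ when $M = X_g$). The crucial point is that in each case $H \hookrightarrow M$ is a $\pi_1$-isomorphism, so Quinn's surgery fibration
\[
\widetilde{\mathcal{S}}^s(-) \to \Map_\partial(-, \mathrm{G}/\On) \to \Omega^{\infty+2n} L^s(\Z[\pi_1])
\]
has \emph{identical} $L$-theory term for $H$ and for $M$. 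Hence the square comparing $\tfrac{\hAut^{\cong}_\partial(H)}{\blDiff_\partial(H)} \to \tfrac{\hAut^{\cong}_\partial(M)}{\blDiff_\partial(M)}$ with the normal invariant map $\Map_\partial(H, \mathrm{G}/\On)_0 \to \Map_\partial(M, \mathrm{G}/\On)_0$ is homotopy cartesian for each of $M = W_{g,1}$ and $M = X_g$ separately. One is then left with checking that the square of normal invariant spaces is homotopy cartesian, which follows immediately from your quotient observation: both rows extend to fibre sequences with common base $\Map_{\partial/2}(W_{g,1}, \mathrm{G}/\On)_0$.

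So the discrepancy between $L(\Z)$ and $L(\Z[t^{\pm 1}])$ is absorbed into the handle \emph{before} one ever compares $W_{g,1}$ with $X_g$, and no splitting theorem is required. (A minor point: since the paper stays in the smooth block category, the relevant target is $\mathrm{G}/\On$ rather than $\mathrm{G}/\mathrm{TOP}$.)
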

\begin{proof}
It suffices to consider the first square, as the square obtained by gluing them is certainly homotopy cartesian (and all the spaces are connected).

Consider the diagrams
\beq
\xymatrix{
B\blDiff_{\partial}(D^{2n})\ar[r]\ar[d] &
 B\blDiff_{\partial}(W_{g,1})\ar[r]\ar[d]  &
 B\blEmb_{\partial/2}^{\cong}(W_{g,1})\ar[d] \\
B\hAut_{\partial}^{\cong}(D^{2n})\ar[r] &
 B\hAut_{\partial}^{\cong}(W_{g,1})\ar[r]  &
 B\hAut_{\partial}^{\cong}(W_{g,1})
}
\eeq 
and
\beq
\xymatrix{
B\blDiff_{\partial}(S^1\times D^{2n-1})\ar[r]\ar[d] &
 B\blDiff_{\partial}(X_g)\ar[r]\ar[d]  &
 B\blEmb_{\partial/2}^{\cong}(X_g)\ar[d] \\
B\hAut_{\partial}^{\cong}(S^1\times D^{2n-1})\ar[r] &
 B\hAut_{\partial}^{\cong}(X_g)\ar[r]  &
 B\hAut_{\partial}^{\cong}(X_g)
}
\eeq 
given by maps of homotopy fibre sequences, where the first diagram maps to the second. We wish to show that the square given by the right-hand vertical maps is homotopy cartesian: as the homotopy fibres of these maps are connected by definition, as we only take those components of $\hAut_{\partial}(X_g)$ or $\hAut_{\partial}(W_{g,1})$ represented by diffeomorphisms, it is equivalent to show that the cube
\beq
\xymatrix{
B\blDiff_{\partial}(D^{2n})\ar[rr]\ar[dd] \ar[rd] & &
 B\blDiff_{\partial}(W_{g,1}) \ar[dd]|\hole  \ar[rd] \\
 & B\blDiff_{\partial}(S^1 \times D^{2n-1}) \ar[rr] \ar[dd] & & B\blDiff_{\partial}(X_{g}) \ar[dd]\\
B\hAut_{\partial}^{\cong}(D^{2n})\ar[rr]|\hole \ar[rd] & &
 B\hAut_{\partial}^{\cong}(W_{g,1}) \ar[rd]\\
& B\hAut_{\partial}^{\cong}(S^1 \times D^{2n-1})\ar[rr] & &
 B\hAut_{\partial}^{\cong}(X_{g})
}
\eeq
is homotopy cartesian.

The $s$-cobordism theorem identifies $\frac{\hAut_{\partial}^{\cong}(X_g)}{\blDiff_{\partial}(X_g)}$ with the component of the identity map in the block simple structure space $\widetilde{\mathcal{S}}^s(X_g)$, and Quinn's geometric formulation of surgery theory \cite{Quinn, Nicas} places this space into a homotopy fibre sequence
$$\widetilde{\mathcal{S}}^s(X_g) \xto{\nu} \Map_{\partial}(X_g, \mathrm{G}/\On) \xto{\sigma} \Omega^{\infty+2n}L^s(\Z[\pi]).$$
Similarly, $\frac{\hAut_{\partial}^{\cong}(S^1\times D^{2n-1})}{\blDiff_{\partial}(S^1\times D^{2n-1})}$ is the identity component of $\widetilde{\mathcal{S}}^s(S^1 \times D^{2n-1})$, fitting in a homotopy fibre sequence
$$\widetilde{\mathcal{S}}^s(S^1 \times D^{2n-1}) \xto{\nu} \Map_{\partial}(S^1 \times D^{2n-1}, \mathrm{G}/\On) \xto{\sigma} \Omega^{\infty+2n} L^s(\Z[\pi]),$$
which maps to the sequence above by naturality. The map on $L$-theory terms is an equivalence, giving a homotopy cartesian square
\beq
\xymatrix{
\frac{\hAut_{\partial}^{\cong}(S^1\times D^{2n-1})}{\blDiff_{\partial}(S^1\times D^{2n-1})} \ar[r]\ar[d] &
\frac{\hAut_{\partial}^{\cong}(X_g)}{\blDiff_{\partial}(X_g)} \ar[d]\\
\Map_{\partial}(S^1\times D^{2n-1}, \mathrm{G}/\On)_0 \ar[r]  &  
\Map_{\partial}(X_g, \mathrm{G}/\On)_0
}
\eeq
Similarly, there is a homotopy cartesian square
\beq
\xymatrix{
\frac{\hAut_{\partial}^{\cong}(D^{2n})}{\blDiff_{\partial}(D^{2n})} \ar[r]\ar[d] &
\frac{\hAut_{\partial}^{\cong}(W_{g,1})}{\blDiff_{\partial}(W_{g,1})} \ar[d]\\
\Map_{\partial}(D^{2n}, \mathrm{G}/\On)_0 \ar[r]  &  
\Map_{\partial}(W_{g,1}, \mathrm{G}/\On)_0.
}
\eeq
The argument is completed by observing that
\beq
\xymatrix{
\Map_{\partial}(D^{2n}, \mathrm{G}/\On)_0 \ar[r] \ar[d]  &  
\Map_{\partial}(W_{g,1}, \mathrm{G}/\On)_0 \ar[d]\\
\Map_{\partial}(S^1\times D^{2n-1}, \mathrm{G}/\On)_0 \ar[r]  &  
\Map_{\partial}(X_g, \mathrm{G}/\On)_0
}
\eeq
is homotopy cartesian, as both rows extend to fibre sequence by taking the natural map (to the right) to the component of the constant map in the space $\Map_{\partial/2}(W_{g,1}, \mathrm{G}/\On)$ of mappings $W_{g,1}\to \mathrm{G}/\On$ which are constant on $1/2\partial W_{g,1}$. 
\end{proof}

\begin{rem}
This argument also identifies the vertical homotopy fibres in the square of Proposition \ref{prop:EMCG up to concordance} to be $\Map_{\partial/2}(W_{g,1}, \mathrm{G}/\On)_0$, so that there is a fibration sequence
\begin{equation*}
\Map_{\partial/2}(W_{g,1}, \mathrm{G}/\On)_0 \to B\blEmb_{\partial/2}^{\cong}(X_{g}) \to B\hAut_{\partial}^{\cong}(X_{g}).
\end{equation*}
\end{rem}

\section{Self-embeddings of $X_g$}\label{sect:Emb mcg}
Recall from Section \ref{sect:mflds-quad} that $U_g(\Z,\Lambda_n)$ and $U_g(\Z[\pi],\Lambda_n)$ are the unitary groups associated to the genus $g$ hyperbolic quadratic modules $(\pi_n(W_{g,1}),\lambda_W,q_W)$ and $(\pi_n(X_g),\lambda_X,q_X)$ over $\Z$ and $\Z[\pi]$ respectively with the appropriate form parameters.

In this section we study the map 
$$\pi_0(\Emb_{\partial/2}^{\cong}(X_g))\to U_g(\Z[\pi], \Lambda_n)$$ 
which takes an isotopy class of embeddings to the induced automorphism on $\pi_n(X_g)$. We will show that for large enough $g$ the image of this map is a certain subgroup $\Omega_g$ of index $|bP_{2n}|$, the order of the (finite) group of homotopy $(2n-1)$-spheres which bound a parallelisable manifold. Using this we determine the group $\pi_0(\Emb_{\partial/2}^{\cong}(X_g))$ up to an extension. 

Throughout we use the conventions of Section \ref{sect:mflds-quad}.

\subsection{Realising automorphisms of a quadratic module by self-embeddings}
We denote by $\Emb_{\partial/2}(X_g)$ the space of all embeddings $X_g\to X_g$ whose restriction to $\frac{1}{2}\partial X_g$ is the identity. Recall that $\Emb^{\cong}_{\partial/2}(X_g)\subset \Emb_{\partial/2}(X_g)$ denotes the union of the path components of $\Emb_{\partial/2}(X_g)$ in the image of the map $\pi_0(\Diff_{\partial}(X_g))\to\pi_0(\Emb_{\partial/2}(X_g))$.

We will often implicitly use that $\pi = \Z$ has trivial Whitehead group \cite[Theorem 15]{Higman}, to avoid any discussion of ordinary versus simple homotopy equivalences.

\begin{lem}\label{lem:emb-are-he}
Every embedding $e\in\Emb_{\partial/2}(X_g)$ is a (simple) homotopy equivalence.
\end{lem}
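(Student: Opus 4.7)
The plan is to verify that $e$ induces isomorphisms on all homotopy groups of $X_g$; Whitehead's theorem then yields a homotopy equivalence, and simplicity follows from the vanishing $\mathrm{Wh}(\pi) = \mathrm{Wh}(\Z) = 0$. Since $X_g \simeq S^1 \vee \bigvee^{2g} S^n$, its universal cover $\widetilde{X_g}$ is simply connected and $(n-1)$-connected, with $H_n(\widetilde{X_g}) = \Z[\pi]^{2g}$. It therefore suffices to check two things: that $e_* = \id$ on $\pi_1(X_g)$, and that the induced map $\widetilde{e}_*$ is an isomorphism on $H_n(\widetilde{X_g}) = \pi_n(X_g)$. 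Together with Whitehead's theorem for simply connected CW complexes, these imply that $\widetilde{e}$ is a homotopy equivalence, hence that $e$ is a weak equivalence.

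For $\pi_1$, the embedding $e$ fixes a neighbourhood of $\tfrac{1}{2}\partial X_g = S^1 \times D^{2n-2}_-$ pointwise, so it fixes the loop $S^1 \times \{*\}$ which represents the generator $t$ of $\pi_1(X_g) = \langle t \rangle$. Hence $e_* = \id$ on $\pi_1$. The same boundary condition forces $e$ to be orientation-preserving (it coincides with the identity on a nonempty open subset, and $X_g$ is connected) and produces a canonical $\pi$-equivariant lift $\widetilde{e}: \widetilde{X_g} \to \widetilde{X_g}$.

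For $\pi_n$, let $M \in M_{2g}(\Z[\pi])$ denote the matrix of $e_*$ on $\pi_n(X_g) \cong \Z[\pi]^{2g}$ in the basis $(a_i, b_i)$. The key input is that $e$ preserves the equivariant intersection form $\lambda_X$ from Section \ref{subsec:quadMod}, because $\lambda_X$ is defined by summing, over deck transformations, signed counts of transverse intersection points of embedded representatives in $\widetilde{X_g}$, and these counts are preserved by any orientation-preserving $\pi$-equivariant embedding. In the basis $(a_i, b_i)$ the form has the hyperbolic matrix $J = \begin{psmallmatrix} 0 & I \\ (-1)^n I & 0 \end{psmallmatrix}$, which is invertible over $\Z[\pi]$; so from $M^\dagger J M = J$, taking determinants yields $\overline{\det M} \cdot \det M = 1$ in $\Z[t^{\pm 1}]$. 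The only elements of norm $1$ in this ring are the units $\pm t^k$, so $\det M$ is a unit in $\Z[\pi]$ and $M$ is invertible.

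The step I expect to require the most care is verifying that $\lambda_X$ is indeed preserved by $e$: the embedding need not preserve $\partial X_g$ (only $\tfrac{1}{2}\partial X_g$), so one must confirm that the intersection-theoretic definition is respected by any based, orientation-preserving, $\pi$-equivariant self-map of $\widetilde{X_g}$ --- which is precisely the data delivered by the first step.
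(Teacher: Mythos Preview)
Your proof is correct and follows essentially the same architecture as the paper's: show $e_* = \id$ on $\pi_1$ via the boundary condition, show $e_*$ is an isomorphism on $\pi_n(X_g)$ using that $e$ preserves the equivariant intersection form $\lambda_X$, and conclude via Whitehead (simplicity following from $\mathrm{Wh}(\Z)=0$). The only difference is in the linear-algebra step: the paper argues abstractly that an isometry out of a nonsingular form is a split monomorphism, and then invokes the invariant basis number property of the commutative ring $\Z[\pi]$ to get surjectivity; you instead take determinants in $M^\dagger J M = J$ and use the explicit description of units of norm $1$ in $\Z[t^{\pm 1}]$. Both routes exploit the commutativity of $\Z[\pi]$, and yours is a perfectly good (and slightly more concrete) substitute.
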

\begin{proof}
We use the following general fact: if $f : A\to B$  is an isometry of quadratic left $R$-modules for some ring $R$, and $A$ is nonsingular, then $f$ is a split monomorphism. This is because the isomorphism $A\to A^{\vee}$ induced by the  form on $A$ factors as
\beq
A\xto{f} B\to B^{\vee}\xto{f^{\vee}}A^{\vee}.
\eeq
In our situation, the embedding $e$ induces an isometry on $\pi_n(X_g)=\pi_n(X_g,\ast)$ (i.e. it preserves the bilinear form $\lambda_X$). Hence, by the general fact, $e$ induces a split injection on $\pi_n$, and so a splitting provides a surjective endomorphism  of the finitely generated $\Z[\pi]$-module $\pi_n(X_g)$. Because $\Z[\pi]$ is a commutative ring, this endomorphism is an isomorphism by Nakayama's lemma. Thus by the Hurewicz and Whitehead theorems, the lift $\widetilde{e}:\widetilde{X}_g\to \widetilde{X}_g$ of $e$ to the universal cover is a homotopy equivalence. The result follows by noticing that 
$e$ also induces an isomorphism on $\pi_1$, and applying Whitehead's theorem once more.
\end{proof}
A consequence of the previous lemma is that every self-embedding of $X_g$ which is the identity on $1/2\partial X_g$ induces an \emph{automorphism} of $\pi_n(X_g)$ which preserves the quadratic module structure. In other words we have a homomorphism of monoids
\beq
\pi_0(\Emb_{\partial/2}(X_g))\to U_g(\Z[\pi], \Lambda_n).
\eeq
In the rest of this section we will show that this map is surjective, and characterise the subgroup $\pi_0(\Emb_{\partial/2}^{\cong}(X_g)) \subset \pi_0(\Emb_{\partial/2}(X_g))$ of embeddings isotopic to diffeomorphisms in terms of this map.

\begin{prop}\label{prop:realizing}
The map $\pi_0(\Emb_{\partial/2}(X_g))\to U_g(\Z[\pi], \Lambda_n)$ is surjective, provided $n\geq 3$.
\end{prop}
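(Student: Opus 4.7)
The plan is to realize each element of $U_g(\Z[\pi], \Lambda_n)$ by an explicit self-embedding $X_g \to X_g$ fixing $\tfrac{1}{2}\partial X_g$ pointwise.

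First, I would fix a generating set for $U_g(\Z[\pi], \Lambda_n)$, following Wall's description \cite{wall-book}. A convenient one consists of: (a) the subgroup $U_g(\Z, \Lambda_n) \hookrightarrow U_g(\Z[\pi], \Lambda_n)$; (b) elementary $\Z[\pi]$-transvections $a_j \mapsto a_j + t^k a_i$ (with compensating changes on the symplectic partners) for $k \in \Z$ and $i \neq j$, together with the analogous transvections mixing $a$'s and $b$'s; and (c) diagonal quadratic transvections $a_i \mapsto a_i + \mu b_i$ for $\mu \in \Lambda_n$.

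Second, I would realize each type of generator geometrically. Elements of $U_g(\Z, \Lambda_n)$ are induced by diffeomorphisms of $W_{g,1}$ rel boundary; for $n \geq 3$ this is surjective by the standard realization theorems of Kreck--Wall. Extending such a diffeomorphism by the identity on the $1$-handle used to build $X_g$ from $W_{g,1}$ produces a self-embedding of $X_g$ realizing the given element. For the generators in (b), I would realize $a_j \mapsto a_j + t^k a_i$ by a geometric handle slide of the core $n$-sphere representing $a_j$ over that representing $a_i$ along an arc $\gamma \subset X_g$ winding $k$ times around the $S^1$-factor; the factor $t^k$ appears precisely because $\gamma$ crosses a cross-section of that $S^1$-factor with winding number $k$. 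For the diagonal transvections in (c), a representative $\mu \in \Lambda_n$ can be realized either as a commutator of generators of types (a) and (b) or directly by constructing an immersed $n$-sphere with prescribed self-intersections and invoking the Whitney trick (valid since $2n \geq 6$) to obtain an embedded representative.

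The main obstacle will be the bookkeeping: ensuring that signs, orientations, and winding numbers of the sliding arcs combine to give precisely the intended action on $\pi_n(X_g) = \Z[\pi] \otimes H$, and that the form parameter $\Lambda_n$ (with its subtle correction term in dimensions $n=3,7$) is matched correctly. A related subtlety is that these handle slides will in general move parts of $\partial X_g \setminus \tfrac{1}{2}\partial X_g = S^1 \times D^{2n-2}_+$; allowing this is exactly why we work with $\Emb_{\partial/2}$ rather than with embeddings fixing all of $\partial X_g$. One must therefore check that every generator admits a realization whose sliding arcs can be arranged to terminate on, or be pushed into, the free half-boundary $S^1 \times D^{2n-2}_+$.
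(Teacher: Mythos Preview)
Your approach is correct in outline but takes a genuinely different route from the paper's proof.

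The paper does \emph{not} use a generating set for $U_g(\Z[\pi], \Lambda_n)$. Instead, given an arbitrary $\rho \in U_g(\Z[\pi], \Lambda_n)$, it directly constructs a single embedding realizing $\rho$, as follows. The classes $\rho(a_i), \rho(b_i) \in \pi_n(X_g)$ are represented by framed immersions $S^n \times D^n \looparrowright X_g$; since $q_X(\rho(a_i)) = q_X(\rho(b_i)) = 0$ and $n \geq 3$, the Whitney trick makes these embeddings. Since the pairs $(\rho(a_i), \rho(b_i))$ are orthogonal hyperbolic pairs, further Whitney moves arrange that the cores of $\rho(a_i)$ and $\rho(b_i)$ meet transversally in exactly one point and are disjoint from all other cores. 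Plumbing these together yields an embedding $W_{g,1} \hookrightarrow X_g$ sending $\tfrac{1}{2}\partial W_{g,1}$ into $\tfrac{1}{2}\partial X_g$, and adding a collar of $\tfrac{1}{2}\partial X_g$ extends this to a self-embedding of $X_g$ with the correct boundary condition.

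Your generator-by-generator approach should also work, and has the virtue of producing very explicit embeddings (handle slides along arcs with prescribed winding number). The paper's approach buys uniformity: no case analysis, no need to identify a generating set, and the form-parameter subtlety at $n=3,7$ is absorbed automatically since one only uses $q_X(\rho(a_i))=0$, which holds for the form parameter $\Lambda_n$ as defined. Your approach would require separately checking that the extra generator of $\Lambda_n$ at $n=3,7$ is realized, and would need the observation that inverses of your generators are again of the same form (so that monoid surjectivity suffices without yet knowing $\pi_0(\Emb_{\partial/2}(X_g))$ is a group).
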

\begin{proof}
As in Section \ref{sect:mflds-quad}, we write $a_i, b_i$ for the homotopy classes of $S^n\times\{\ast\}$ and $\{\ast\}\times S^n$ inside the $i$th copy of $S^n\times S^n\subset W_{g,1}\subset X_g$, so that the pairs $(a_i,b_i)$ are orthogonal with respect to  $\lambda_W$ and $\lambda_X$, and hence form a $\Z$-basis for $\pi_n(W_{g,1})$ and a $\Z[\pi]$-basis for $\pi_n(X_g)$. Let $\rho\in U_g(\Z[\pi], \Lambda_n)$. The images $\rho(a_i), \rho(b_i) \in \pi_n(X_g)$ may be represented by embeddings $S^n \times D^n \hookrightarrow X_g$ (together with a framed path to the base point $\ast\in\partial X_g$) by Lemma \ref{lem:QuadDetectsEmb}, as $n\geq 3$ and $q_X(\rho(a_i)) = q_X(a_i)=0=q_X(b_i) = q_X(\rho(b_i)))$. 
Furthermore, since the pairs $(a_i,b_i)$ are orthogonal, we can use the Whitney trick to isotope the embeddings $\rho(a_i)$ and $\rho(b_i)$ so that so that their cores intersect transversely in exactly one point, and are disjoint from the other embeddings. Plumbing them together gives an embedding $e:W_{g,1}\to X_g$ which maps $\frac{1}{2}\partial W_{g,1}$ into $\frac{1}{2}\partial X_g$. To extend this embedding to an embedding $X_g\to X_g$ with the required boundary condition, it suffices to add a collar of $1/2\partial X_g$ to $e(W_{g,1})$.
\end{proof}

\subsection{Realising automorphisms of a quadratic module by diffeomorphisms}

We now investigate the subgroup of $U_g(\Z[\pi], \Lambda_n)$ of those automorphisms realised by elements of $\Emb^{\cong}_{\partial/2}(X_g)$. We will do this using tools from surgery theory which we recall briefly. We remind the reader that we use the fact that the Whitehead group of $\pi \cong \Z$ is trivial to neglect any discussion of bases or simple maps.

\subsubsection{The structure set}
The (smooth) structure set $\calS_{\partial}(S^1\times D^{2n-1})$ has elements equivalence classes of pairs $[N,f]$, where $N$ is a compact smooth $2n$-manifold, and $f:(N,\partial N)\to (S^1\times D^{2n-1},S^1\times S^{2n-2})$ is a homotopy equivalence whose restriction to the boundary is a diffeomorphism. Two such pairs $[N_0,f_0]$ and $[N_1,f_1]$ are equivalent if there exists an $h$-cobordism $W$ between $N_0$ and $N_1$ with $\partial W=N_0\cup V\cup N_1$, for some smooth $n$-manifold $V$ such that $\partial V=\partial N_0\sqcup \partial N_1$, together with a map $F:W\to S^1\times D^{2n-1}\times [0,1]$ such that $F|_{N_j}=f_j$, for $j=0,1$, and $F|_{V}$ maps $V$ diffeomorphically onto $S^1\times S^{2n-2}\times I$.
\subsubsection{L-groups}
The even dimensional $L$-groups $L_{2n}(\Z[\pi])$ are defined as the quotient of the monoid of equivalence classes of finitely generated free $(-1)^n$-quadratic modules over $\Z[\pi]$ by the submonoid of hyperbolic forms. The monoid operation is orthogonal sum, and this quotient is in fact a group.

The odd dimensional $L$-groups $L_{2n+1}(\Z[\pi])$ are given by the quotient 
\beq
U(\Z[\pi])/RU(\Z[\pi])
\eeq
where $U(\Z[\pi])=\varinjlim U_g(\Z[\pi], \Lambda_n^{\min})$ is the stabilisation of the unitary groups described in Section \ref{subsec:quadMod} with---crucially---form parameter $\Lambda_n^{\min}$. Here $RU(\Z[\pi])$ denotes the subgroup of $U(\Z[\pi])$ generated by the commutator subgroup $[U(\Z[\pi]),U(\Z[\pi])]$ and the element $\sigma$ represented by the matrix $\begin{bsmallmatrix}
0 & 1\\
(-1)^n & 0
\end{bsmallmatrix} \in U_1(\Z[\pi], \Lambda_n)$.

By Wall's realisation theorem \cite[Theorem 6.5]{wall-book} to each $x\in L_{2n+1}(\Z[\pi])$ there is associated a manifold triad $(W,\partial_-W,\partial_+W)$ with a degree $1$ normal map $\Phi_x$ to
\beq
(S^1\times D^{2n-1}\times I,S^1\times D^{2n-1}\times\{0\}\; \cup\; \partial (S^1\times D^{2n-1})\times I,S^1\times D^{2n-1}\times\{1\})
\eeq
such that $\Phi_x$ is the identity on $\partial_-W$ and restricts to a homotopy equivalence on $\partial_+W$. This gives a map $$\partial:L_{2n+1}(\Z[\pi])\to\calS_\partial(S^1\times D^{2n-1})$$ defined by
\beq
\partial (x):=[\partial_+W,\Phi_x|_{\partial W_+}].
\eeq

\subsubsection{The complement of a self-embedding}
We define a map 
\begin{equation}\label{eq:kappa}
\kappa:\pi_0(\Emb_{\partial/2}(X_g))\to\calS_{\partial}(S^1\times D^{2n-1})
\end{equation} 
as follows. Let $e:X_g\to X_g$ be an embedding which fixes $S^1\times D^{2n-2}_+\subset\partial X_g$, and let $C$ denote the closure of the complement of $e(X_g)$. The boundary $\partial C$ has a decomposition
\beq
\partial C=S^1\times D^{2n-2}_-\cup e(S^1\times D^{2n-2}_-),
\eeq
which, after smoothing corners, is diffeomorphic to $S^1\times S^{2n-2}$. Thus we have an identification $e(X_g) \cup_{e(S^1 \times D^{2n-2}_-)} C = X_g$. By the Seifert--van Kampen theorem the natural map
$$\pi_1(e(X_g)) *_{\pi_1(e(S^1 \times D_-^{2n-2}))} \pi_1(C) \to \pi_1(X_g)$$
is an isomorphism; as $\pi_1(e(S^1 \times D_-^{2n-2})) \to \pi_1(e(X_g))$ is an isomorphism, we deduce that $\pi_1(C) \to \pi_1(X_g)$ is an isomorphism as well, and so $\pi_1(C)\cong\pi$ too. In particular, there is a map $C \to S^1$ extending the projection map $\partial C = S^1 \times S^{2n-2} \to S^1$, giving a map of pairs
$$\sigma_e : (C, \partial C) \to (S^1 \times D^{2n-1}, S^1 \times S^{2n-2})$$
extending the identification on the boundaries, well-defined up to homotopy. We claim that $\sigma_e$ is a homotopy equivalence of pairs: as it is a homeomorphism on the boundary, it suffices to show that $C \to S^1$ is an equivalence, or equivalently that the universal cover $\widetilde{C}$ is contractible. But we have 
$$e(\widetilde{X}_g) \cup_{\R \times D_-^{2n-2}} \widetilde{C} \simeq \widetilde{X}_g$$
and $e : X_g \to X_g$ is an equivalence, so $\widetilde{C}$ is indeed contractible. The map $\kappa$ sends the isotopy class of the embedding $e$ to the structure $[C,\sigma_e]\in\calS_{\partial}(S^1\times D^{2n-1})$.

\begin{prop}\label{prop:ses Emb-structure set}
The map \eqref{eq:kappa} is a monoid homomorphism with kernel $\pi_0(\Emb_{\partial/2}^{\cong}(X_g))$.
\end{prop}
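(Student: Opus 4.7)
The plan has two parts: checking that $\kappa$ is a monoid homomorphism, then identifying its kernel.

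\textbf{Homomorphism property.} I would first equip $\calS_{\partial}(S^1\times D^{2n-1})$ with a ``stacking'' monoid structure coming from the canonical identification of $S^1\times D^{2n-1}$ with the union of two copies of itself glued along a common $S^1\times D^{2n-2}$ sitting as half of each boundary; this lets one combine two structures $[N_1,f_1]$ and $[N_2,f_2]$ into a third structure on $S^1 \times D^{2n-1}$. Given $e_1,e_2\in\Emb_{\partial/2}(X_g)$ with complements $C_1,C_2$, I would apply $e_2$ to the decomposition $X_g=e_1(X_g)\cup C_1$ and then glue on $C_2$ using $X_g=e_2(X_g)\cup C_2$, yielding
$$C_{12} \ :=\ \overline{X_g\setminus(e_2\circ e_1)(X_g)} \ =\ C_2\ \cup_{e_2(S^1\times D^{2n-2}_+)}\ e_2(C_1).$$
Under the structure maps $\sigma_{e_i}$ this is precisely the stacking of $\kappa(e_2)$ and $\kappa(e_1)$, so $\kappa$ is a monoid homomorphism.

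\textbf{Kernel (forward inclusion).} If $e$ is isotopic through $\Emb_{\partial/2}(X_g)$ to some $\phi\in\Diff_\partial(X_g)$, then $\kappa(e)=\kappa(\phi)$ since complements are carried along isotopies, so it suffices to evaluate $\kappa$ on a diffeomorphism. Although $\phi(X_g)=X_g$ has empty complement, a small isotopy through $\Emb_{\partial/2}(X_g)$ pushes the complementary half of the boundary slightly inward, giving a representative whose complement is a standard collar of $S^1\times D^{2n-2}_+$ with structure map a diffeomorphism to $S^1\times D^{2n-1}$; this represents the trivial element of $\calS_\partial(S^1\times D^{2n-1})$.

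\textbf{Kernel (reverse inclusion).} If $\kappa(e)$ is trivial, then there is an $h$-cobordism $W$ from $(C,\sigma_e)$ to $(S^1\times D^{2n-1},\id)$ extending the boundary identification. Since $\mathrm{Wh}(\Z)=0$, $W$ is in fact an $s$-cobordism, and the $s$-cobordism theorem produces a diffeomorphism $\psi : C \xto{\cong} S^1\times D^{2n-1}$ rel $\partial$. Using $\psi$ together with a chosen collar of $1/2\partial X_g = S^1 \times D^{2n-2}_-$, I would assemble a self-diffeomorphism $\Phi \in \Diff_{\partial}(X_g)$ of $X_g = e(X_g) \cup C$ which equals $e$ on $e(X_g)$ and absorbs $C$ into a collar piece. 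A controlled collar isotopy then produces an isotopy from $e$ to $\Phi$ inside $\Emb_{\partial/2}(X_g)$, placing $[e]$ in $\pi_0(\Emb^\cong_{\partial/2}(X_g))$.

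\textbf{Main obstacle.} The subtlest step is the reverse direction: promoting the abstract $s$-cobordism diffeomorphism $\psi$ to an actual isotopy of $e$ to a diffeomorphism fixed near all of $\partial X_g$. One must match $\psi$ carefully with a chosen collar of the fixed half of the boundary so that the resulting self-map of $X_g$ is genuinely isotopic to $e$ within $\Emb_{\partial/2}(X_g)$, rather than merely abstractly diffeomorphic to its image.
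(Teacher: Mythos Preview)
Your proposal is correct and follows essentially the same approach as the paper. The paper's proof in fact omits any verification of the homomorphism property and only treats the kernel; your stacking argument fills in what the paper leaves implicit. For the reverse inclusion the paper does exactly what you outline---uses $\mathrm{Wh}(\Z)=0$ and the $s$-cobordism theorem to obtain a diffeomorphism $\varphi:C\to S^1\times D^{2n-2}\times[-1,0]$ respecting the boundary decomposition---and then makes your ``controlled collar isotopy'' concrete by writing down the explicit 1-parameter family $e(X_g)\cup_{e(S^1\times D^{2n-2}_-)}\varphi^{-1}\bigl(\partial(S^1\times D^{2n-2}_-)\times[-1,0]\cup S^1\times D^{2n-2}_-\times[-1,t]\bigr)$, which is precisely the construction you anticipate in your ``main obstacle'' paragraph.
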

\begin{proof}
If $\kappa(e)=[S^1\times D^{2n-1},\id]$, then, by the $s$-cobordism theorem and the fact that the Whitehead group of $\pi$ is trivial, $\sigma_e$ is homotopic (relative to the boundary) to a diffeomorphism $\varphi:C\to S^1\times D^{2n-1}\cong S^1\times D^{2n-2}\times [-1,0]$ which maps $S^1\times D^{2n-2}_-\subset\partial C$ identically onto $S^1\times D^{2n-2}\times\{0\}$, and $e(S^1\times D^{2n-2}_-)=(S^1\times D^{2n-2}\times\{-1\})\cup\partial(S^1\times D^{2n-2})\times [-1,0]$. With this we define a $1$-parameter family of submanifolds of $X_g$:
\beq
e(X_g)\cup_{e(S^1\times D^{2n-2}_-)} \varphi^{-1}\left(\partial (S^1\times D^{2n-2}_-)\times [-1,0]\cup S^1\times D^{2n-2}_-\times[-1,t]\right)\subset X_g.
\eeq
which gives an isotopy from $e$ to a diffeomorphism that is the identity on the boundary of $X_g$. This isotopy is through embeddings that are the identity on $S^1\times D^{2n-2}_+\subset\partial X_g$. Thus the kernel of $\kappa$ is contained in $\pi_0(\Emb^{\cong}_{\partial/2}(X_g))$.

For the other inclusion, if $e$ is a diffeomorphism that restricts to the identity on $\partial X_g$, we can push $S^1\times D^{2n-1}_-$ slightly to the interior of $X_g$ to get an isotopy to an embedding whose complement is diffeomorphic to $S^1\times D^{2n-1}$. 
\end{proof}

\subsubsection{The map $\kappa$ and $L$-theory} 

We shall now identify the image of the map $\kappa$. Our analysis will be based on the following proposition. 
\begin{prop}\label{prop:kappa}
If $n \geq 3$ but $n \neq 3,7$, then the following diagram commutes
\begin{equation}\label{eq:Emb-surgery}
\begin{gathered}
\xymatrix{
\pi_0(\Emb_{\partial/2}(X_g)) \ar[r]^-{\kappa}\ar[d]_{\pi_n} &  \calS_{\partial}(S^1\times D^{2n-1})\\
U_g(\Z[\pi], \Lambda_n^{\min})\ar[r]^{s}& L_{2n+1}(\Z[\pi])\ar[u]_{\partial}
}
\end{gathered}
\end{equation}
where the map $s$ is stabilisation and projection.
\end{prop}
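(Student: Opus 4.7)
The plan is to identify both $\kappa(e)$ and $\partial(s(\pi_n(e)))$ as classes of the same explicit manifold, by matching a handle decomposition of the complement $C$ with the attaching data of a Wall realisation cobordism for $u := \pi_n(e)$.

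First I would unpack the Wall realisation map $\partial$ geometrically. Given $u \in U_g(\Z[\pi], \Lambda_n^{\min})$, Wall's construction produces a normal triad $(V^{2n+1}; \partial_- V, \partial_+ V)$ with $\partial_- V = S^1 \times D^{2n-1}$, whose middle-dimensional surgery kernel realises the hyperbolic automorphism $u$. Concretely, $V$ can be built from $(S^1 \times D^{2n-1}) \times I$ by first attaching $g$ handles of index $n+1$ along standard unknotted attaching spheres (producing a copy of $X_g$ as an intermediate level), and then attaching $g$ further such handles whose attaching $n$-spheres, framings, and mutual intersections are read off from $u$ applied to the standard hyperbolic basis $(a_i, b_i)$. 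The upper face $\partial_+ V$ comes with a canonical homotopy equivalence to $S^1 \times D^{2n-1}$, and this is the pair representing $\partial(s(u))$ in the structure set.

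Next I would give $C$ a matching handle decomposition. Fix a handle decomposition of $X_g$ on $S^1 \times D^{2n-1}$ in which the $2g$ $n$-handles have cores representing the basis $a_1, b_1, \dots, a_g, b_g$ of $\pi_n(X_g)$, arranged so that the $g$ handles corresponding to the $a_i$ cancel the $g$ handles corresponding to the $b_i$. Applying $e$ carries this to a handle decomposition of $e(X_g) \subset X_g$ whose core classes are $\pi_n(e)(a_i)$ and $\pi_n(e)(b_i)$. Dualising inside $X_g$ presents $C$ as a manifold obtained from a collar of $1/2\partial X_g$ by attaching $2g$ $n$-handles whose cocores are the spheres dual to the handles of $e(X_g)$. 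Thickening $C$ by a trivial product with $I$ promotes it to a $(2n+1)$-dimensional cobordism on $S^1 \times D^{2n-1}$ with exactly the handle data appearing in the previous paragraph.

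Finally I would verify that these two handle presentations coincide. The attaching spheres of the dual handles in $C$ represent $\pi_n(e)(b_i) \in \pi_n(X_g)$, and their framings are governed by the quadratic refinement $q_X$; this matches Wall's construction exactly, because under the hypothesis $n \neq 3, 7$ the form parameter $\Lambda_n$ coincides with $\Lambda_n^{\min}$ by the lemma of Section \ref{subsec:quadMod}, so that $\pi_n(e)$ genuinely lands in the unitary group $U_g(\Z[\pi], \Lambda_n^{\min})$ used to define $s$. The stabilisation $s$ is automatic, since the image class in $\calS_{\partial}$ is unchanged by trivially adding extra cancelling handle pairs. The main obstacle is making this correspondence precise: one must make compatible choices of basepoints, paths to the basepoint, and stable framings so that the geometric attaching data of the handles of $C$ matches the algebraic data encoded by $\pi_n(e)$, rather than some stably equivalent but a priori different automorphism.
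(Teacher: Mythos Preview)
Your overall strategy---identify the complement $C$ with the output of Wall realisation for $u = \pi_n(e)$---is the same as the paper's, but your execution has two genuine gaps.

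First, the handle picture in your second step is not correct as stated. The complement $C$ is homotopy equivalent to $S^1$, so it cannot be built from a collar of $\tfrac{1}{2}\partial X_g$ by attaching $2g$ non-cancelling $n$-handles, and ``thickening $C$ by $I$'' gives a trivial cobordism from $C$ to $C$, not a normal cobordism from $S^1 \times D^{2n-1}$ to $C$. What one actually needs is a $(2n{+}1)$-dimensional cobordism $W$ from $S^1 \times D^{2n-1}$ to $C$: the paper builds this as $W_1 \cup W_2 \cup W_3$, where $W_1$ is the trace of trivial surgeries turning $S^1 \times D^{2n-1}$ into $X_g$, $W_2$ is the mapping cylinder of the identification $X_g \cong e(X_g) \cup C$, and $W_3$ is the trace of surgeries killing the $n$-handles of $e(X_g)$. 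Only then does Wall's Theorem 6.5 identify the surgery obstruction of $W$ as $s(\pi_n(e))$.

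Second, and more seriously, you have not addressed the normal bundle data. To define a class in $L_{2n+1}(\Z[\pi])$ one needs $W$ to carry a degree-one \emph{normal} map, i.e.\ a trivialisation of $\Psi^*(\nu)\oplus TW$. The paper shows that a stable framing chosen on $W_3$ and propagated through $W_2$ need not extend over $W_1$: there is an obstruction $\mathfrak{o} \in H^{n+1}(W_1, X_g; \pi_n\SOn)$. This is precisely where the hypothesis $n \neq 3,7$ enters in a substantive way (beyond merely making sense of the map $\pi_n$): it guarantees that $H^n(X_g; \pi_n\SOn(n))$ surjects onto the obstruction group, so one can kill $\mathfrak{o}$ by regluing along a Kreck-type diffeomorphism $\varphi$ of $W_{g,1}$ which acts trivially on $\pi_n$. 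Without this correction the cobordism is not a normal cobordism and the comparison with $\partial \circ s$ does not even make sense. Your final sentence (``one must make compatible choices of \ldots\ stable framings'') is exactly the point, but it is the heart of the proof, not a routine bookkeeping matter.
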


We emphasise that here the unitary group is defined with form parameter $\Lambda_n^{\min}$, as required in the definition of $L$-theory, so if $n=3,7$ then the left-hand vertical map would not exist in any natural way, as the quadratic modules constructed in Section \ref{subsec:quadMod} have a slightly larger form parameter in these cases. 

\begin{proof}[Proof of Proposition \ref{prop:kappa}]
Let $e : X_g \to X_g$ be an embedding which fixes half the boundary, $C$ be the closure of its complement and $\sigma_e : (C, \partial C) \to (S^1 \times D^{2n-1}, S^1 \times S^{2n-2})$ be the structure map constructed above. We will show how to construct a manifold triad $(W,\partial_-W,\partial_+W)$, a degree $1$ map $\Psi$ from this triad to
$$(S^1\times D^{2n-1}\times I, S^1\times D^{2n-1}\times\{0\}\; \cup\; \partial(S^1\times D^{2n-1})\times I,S^1\times D^{2n-1}\times\{1\}),$$
and a trivialisation $F$ of $\Psi^\ast(\nu\times I)\oplus TW$, where $\nu$ is the stable normal bundle of $S^1\times D^{2n-1}$, such that: 
\begin{enumerate}[(i)]
\item $\Psi$ is the identity on $\partial_-W$ and $F$ is the standard trivialisation here, 
\item $\Psi$ is a homotopy equivalence on $\partial_+W$,
\item $\Psi$ is $n$-connected,
\item $[\partial_+W,\Psi|_{\partial_+W}]=[C,\sigma_e]\in\calS_{\partial}(S^1\times D^{2n-1})$, and
\item its surgery obstruction is $s(\pi_n(e)) \in  L_{2n+1}(\Z[t,t^{-1}])$.
\end{enumerate}
By the description of Wall realisation, such a manifold triad shows that the square commutes when evaluated on the embedding $e$, as required.

The manifold $W$ will be obtained as a composition of cobordisms $W_i$. As $\nu$ is trivial, $F$ is the same as a stable framing of the tangent bundle of $W$: this will be obtained by inductively extending stable framings over each $W_i$. Firstly we trivially attach $2g$-handles to the interior of $S^1\times D^{2n-1}$ so that the resulting manifold is $X_g$. The trace of these surgeries is the cobordism $W_1$, with 
\begin{align*}
\partial_-W_1 &= (S^1\times D^{2n-1}\times\{0\}) \cup(\partial (S^1\times D^{2n-1})\times [0,1])\\
\partial_+W_1 &=X_g\times\{1\},
\end{align*}
and the standard stable framing of $S^1 \times D^{2n-1}$ extends over this cobordism, as the handles were attached trivially. Secondly, by definition of the complement $C$ there is a diffeomorphism $e(X_g) \cup_{e(S^1\times D^{2n-1}_-)} C \to X_g$, and we let $W_2$ be the cobordism given by  the trace of the inverse of this dffeomorphism, with
\begin{align*}
\partial_-W_2 &=(X_g\times\{1\})\cup(\partial (S^1\times D^{2n-1}) \times [1,2])\\
\partial_+W_2 &=(e(X_g)\cup_{e(S^1\times D^{2n-1}_-)} C) \times \{2\}.
\end{align*}
As this is the trace of a diffeomorphism, the stable framing on $\partial_+ W_1$ extends over it. This induces a stable framing $F'$ on $W_{g,1} \cong e(W_{g,1}) \subset e(X_g) \subset \partial_+ W_2$. 

This stable framing induces (after destabilising) a trivialisation of the normal bundles of each of the cores $S^n \times \{*\}, \{*\} \times S^n \subset W_{g,1} = D^{2n} \# (S^n \times S^n)^{\# g}$, but these trivialisations need not be the same as the standard trivialisations. However, we claim that there is a diffeomorphism $\varphi : W_{g,1} \to W_{g,1}$, which is the identity on the boundary and acts as the identity on $\pi_n(W_{g,1})$, such that the stable framing $\varphi_* F'$ induces the standard trivialisation of the normal bundles of each of the cores. This follows from Kreck's description of the mapping class group $\pi_0(\Diff_\partial(W_{g,1}))$, which in particular shows that for any homomorphism $\psi : \pi_n(W_{g,1}) \to \pi_n(\SOn(n))$ there is a diffeomorphism of $W_{g,1}$ (trivial on the boundary and acting trivially on $\pi_n(W_{g,1})$) which re-trivialises the normal bundle of each embedded $n$-sphere $a$ with trivial normal bundle by $\psi([a])$. We extend this diffeomorphism from $W_{g,1} \cong e(W_{g,1})$ to $e(X_g)$ and hence to $\partial_+W_2 =e(X_g)\cup_{e(S^1\times D^{2n-1}_-)} C$, and its trace gives a cobordism $W_3$ with
\begin{align*}
\partial_-W_3 &=(e(X_g)\cup_{e(S^1\times D^{2n-1}_-)} C)\cup(\partial (S^1\times D^{2n-1})\times [2,3])\\
\partial_+W_3 &=(e(X_g)\cup_{e(S^1\times D^{2n-1}_-)} C) \times \{3\},
\end{align*}
and the stable framing on $\partial_+ W_2$ extends over it. By construction, the induced stable framing on $W_{g,1} \cong e(W_{g,1}) \subset e(X_g) \subset \partial_+ W_3$ induces the standard normal framing of the cores, so we may do surgery along each of the $S^n \times \{*\}$ to obtain a stably framed cobordism from $W_{g,1}$ to $D^{2n}$. Extending this trivially, we obtain a cobordism $W_4$ with
\begin{align*}
\partial_-W_4 &=(e(X_g)\cup_{e(S^1\times D^{2n-1}_-)} C)\cup(\partial (S^1\times D^{2n-1})\times [3,4])\\
\partial_+W_4 &= C \times \{4\},
\end{align*}
and equipped with a stable framing extending that on $\partial_+ W_3$. We then take
$$W=W_1\cup W_2\cup W_3 \cup W_4$$
with the combined stable framing. This cobordism has
\begin{align*}
\partial_-W &=(S^1\times D^{2n-1}\times\{0\})\cup(\partial(S^1\times D^{2n-1})\times [0,4])\\
\partial_+W &= C \times \{4\}.
\end{align*}

By construction $W$ has a degree $1$ $n$-connected map $\Psi:(W,\partial_-W,\partial_+W)\to (S^1\times D^{2n-1}\times [0,4], S^1\times D^{2n-1}\times\{0\}\cup\partial(S^1\times D^{2n-1})\times [0,4], S^1\times D^{2n-1}\times\{4\})$, and the discussion of stable framings shows how it is covered by a trivialisation $F$ of $\Psi^\ast(\nu\times I)\oplus TW$ extending the standard trivalisation on $\partial_- W$. Also $[\partial_+W,\Psi|_{\partial_+W}] = [C,\sigma_e]$. Thus $\Psi$ satisfies (i), (ii), (iii), and (iv).

It remains to determine the surgery obstruction of $\Psi$. By definition the stably framed cobordism $W_2\cup W_3 \cup W_4$ is obtained as the trace of the surgery on the normal map $X_g \times \{1\} \to S^1 \times D^{2n-1} \times \{1\}$ given by the lagrangian $L := \mathrm{Ker}(\pi_n(X_g) \to \pi_n(W_2 \cup W_3 \cup W_4))$ in the surgery kernel
$$\pi_{n+1}(S^1 \times D^{2n-1} \times \{1\}, X_g \times \{1\}) \cong \pi_n(X_g).$$
As $W_4$ is the trace of the surgery along the standard lagrangian $L_0$ in $\pi_n(X_{g}) \cong \pi_n(e(X_g)) \cong \pi_n(e(X_g)\cup_{e(S^1\times D^{2n-1}_-)} C)$, the diffeomorphism $\varphi$ used to build $W_3$ acts trivially on $\pi_n(W_{g,1})$, and the diffeomorphism used to build $W_2$ acts as $e_*^{-1}$ on $\pi_n(X_g)$, it follows that $L$ is the image of $L_0$ under $e_* : \pi_n(X_g) \to \pi_n(X_g)$. By the description \cite[Theorem 6.5]{wall-book} of Wall realisation, it follows that $\Psi$ has surgery obstruction $s(\pi_n(e))$, proving (v).
\end{proof}

\subsubsection{The image of the map $\kappa$}
\begin{prop}\label{prop:kappa37}
If $n=3,7$ the map
$$\kappa : \pi_0(\Emb_{\partial/2}(X_g)) \to  \calS_{\partial}(S^1\times D^{2n-1})$$
is trivial.
\end{prop}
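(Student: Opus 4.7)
My plan is to prove this by following the template of Proposition \ref{prop:kappa} as closely as possible, then using the special features of Hopf-invariant-$1$ dimensions to eliminate the remaining obstruction.

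\textbf{Step 1 (Redo the cobordism construction).} Given an embedding $e\in \Emb_{\partial/2}(X_g)$, I would repeat verbatim the construction of the three cobordisms $W_1,W_2,W_3$ from the proof of Proposition \ref{prop:kappa} and glue them (after composition with some diffeomorphism of $X_g$ needed for framings) into the manifold triad $(W,\partial_-W,\partial_+W)$ with degree $1$ map $\Psi$ to the model triad. The property $[\partial_+W, \Psi|_{\partial_+W}]=[C,\sigma_e]$ from (iii) continues to hold. The only step where the argument needs modification is the resolution of the stable framing obstruction $\mathfrak{o}\in H^{n+1}(W_1,X_g;\pi_n\SOn)$, because in dimensions $n=3,7$ the stabilisation map $\pi_n\SOn(n)\to\pi_n\SOn$ fails to be surjective (its cokernel is an infinite cyclic group generated by the Hopf-invariant-$1$ class).

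\textbf{Step 2 (Expand the class of twisting diffeomorphisms).} To kill the residual part of $\mathfrak{o}$ not lying in the image of $\pi_n\SOn(n)$, I would enlarge Kreck's family of diffeomorphisms by allowing ``Hopf twists'' supported on a handle $S^n\times D^n\subset W_{g,1}\subset X_g$: these are diffeomorphisms obtained from the Hopf map $S^{2n-1}\to S^n$, which exist precisely when $n=3,7$, and by construction twist the frame bundle by the generator of the missing cokernel. Combined with the usual Kreck diffeomorphisms, they let us modify $e$ by a diffeomorphism $\varphi:X_g\to X_g$ (trivial on the boundary) so that the new embedding $e\circ\varphi$ admits an extension of the stable framing over $W_1$, exactly as in Proposition \ref{prop:kappa}. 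This is precisely the step that uses the exotic features of $n=3,7$.

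\textbf{Step 3 (Identify the surgery obstruction and show it vanishes).} After Step 2, the construction produces a legitimate stably framed surgery problem. Wall's realisation identifies $\kappa(e)$ with $\partial(\sigma)$, where $\sigma$ is the surgery obstruction. Unlike in Proposition \ref{prop:kappa}, the Hopf twist used in Step 2 acts nontrivially on $\pi_n(X_g)$, by a class whose diagonal entries lie in $\Lambda_n\setminus\Lambda_n^{\min}$. I would show that this element is precisely $\sigma \in L_{2n+1}(\Z[\pi])$, meaning the surgery obstruction lies in the image of the subgroup of ``Hopf-twist classes''. Finally, this subgroup is tautologically realised by actual diffeomorphisms of $X_g$ (the Hopf twists themselves), and therefore its image in $L_{2n+1}(\Z[\pi])$ lies in the image of $\pi_n\bigl(\Diff_\partial(X_g)\bigr)$, which, by Proposition \ref{prop:ses Emb-structure set} applied to these diffeomorphisms, maps to the trivial structure $[S^1\times D^{2n-1},\id]$ under $\partial$. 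Hence $\kappa(e)=\ast$.

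\textbf{Main obstacle.} The hardest part is Step 3: carefully tracking the interaction between the Hopf-twist correction needed for the framing and the resulting element of $L_{2n+1}(\Z[\pi])$, and in particular verifying that it maps trivially under Wall's boundary $\partial$. One way to bypass this is to prove directly that the Hopf twists are isotopic to self-diffeomorphisms of $X_g$ whose complement is standard—i.e.\ that the special ``$\Lambda_n\setminus\Lambda_n^{\min}$''-part of $U_g(\Z[\pi],\Lambda_n)$ is realised by diffeomorphisms (not merely embeddings) of $X_g$, putting the entire proof inside $\pi_0(\Emb^{\cong}_{\partial/2}(X_g))=\ker(\kappa)$.
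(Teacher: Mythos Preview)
Your proposal takes a very different route from the paper, and as written it has a genuine gap in Step 3.

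\textbf{The paper's approach.} The paper does not attempt to salvage the cobordism construction of Proposition \ref{prop:kappa} at all. Instead it uses the surgery exact sequence to identify
\[
\calS_{\partial}(S^1\times D^{2n-1}) \cong \Theta_{2n-1} \times \Theta_{2n}, \qquad (\Sigma_A,\Sigma_B)\mapsto (S^1\times(D^{2n-1}\#\Sigma_A))\#\Sigma_B,
\]
and then argues geometrically that any element $(\Sigma_A,\Sigma_B)$ in the image of $\kappa$ must be trivial. Closing $X_g$ off by $S^1\times D^{2n-1}$ and passing to the universal cover produces a stably parallelisable cobordism from $\Sigma_A$ to $S^{2n-1}$, so $\Sigma_A\in bP_{2n}$; but $bP_6=bP_{14}=0$, so $\Sigma_A\cong S^{2n-1}$. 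Closing $X_g$ off instead by $D^2\times S^{2n-2}$ gives $\Sigma_B\#(S^n\times S^n)^{\#g}\cong (S^n\times S^n)^{\#g}$, and the inertia group of the latter is trivial (Kosinski, Wall), so $\Sigma_B\cong S^{2n}$. This is short and requires no bookkeeping of framing obstructions or surgery kernels.

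\textbf{The gap in your Step 3.} The argument you give is circular. You want to deduce $\partial(\sigma)=\ast$ from the fact that the Hopf twists, being diffeomorphisms, lie in $\ker(\kappa)$. But $\kappa$ and $\partial$ are different maps to $\calS_{\partial}(S^1\times D^{2n-1})$: the only link between them is the commutative square of Proposition \ref{prop:kappa}, which is precisely what fails for $n=3,7$. Knowing $\kappa(\text{Hopf twist})=\ast$ does not tell you that $\partial$ vanishes on the image of the Hopf twist in $L_{2n+1}(\Z[\pi])$, because you have no diagram relating the two for those classes. (There is also no meaningful map from ``$\pi_n(\Diff_\partial(X_g))$'' to $L_{2n+1}(\Z[\pi])$.)

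\textbf{How your approach could be rescued.} If your Steps 1--2 really do produce a normal cobordism with $[\partial_+W,\Psi|_{\partial_+W}]=\kappa(e)$, then you have $\kappa(e)=\partial(\sigma)$ for \emph{some} $\sigma\in L_{2n+1}(\Z[\pi])$, and you do not need to identify $\sigma$ at all: the Shaneson splitting used in Corollary \ref{cor:Emb-bP} shows that $\partial$ factors through $bP_{2n}$, and $bP_6=bP_{14}=0$, so $\partial=0$ and you are done. This is the observation your Step 3 should contain. That said, Step 2 still needs work: you should specify precisely which diffeomorphisms you mean by ``Hopf twists'', and verify that composing with them genuinely changes the framing obstruction $\mathfrak{o}$ by an arbitrary element of the cokernel of $\pi_n\SOn(n)\to\pi_n\SOn$ (which is $\Z/2$, not infinite cyclic). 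This is not obvious, since a diffeomorphism acting nontrivially on $\pi_n$ changes the boundary identification $\partial_+W_1 = X_g$ in a more complicated way than a simple translation of $\mathfrak{o}$.
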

\begin{proof}
Calculating with the surgery exact sequence, one finds that there is a bijection
\begin{align*}
\Theta_{2n-1} \times \Theta_{2n} &\to \calS_{\partial}(S^1\times D^{2n-1}) \\
(\Sigma_A, \Sigma_B) &\mapsto (S^1 \times (D^{2n-1} \# \Sigma_A)) \# \Sigma_B.
\end{align*}
Suppose that $\kappa(e) = (\Sigma_A, \Sigma_B)$ under this bijection.

By closing the manifold $X_g$ off by gluing in $S^1 \times D^{2n-1}$ we find that there is a diffeomorphism
$$(S^1 \times \Sigma_A) \# \Sigma_B \# (S^n \times S^n)^{\# g} \cong (S^1 \times S^{2n-1}) \# (S^n \times S^n)^{\# g}.$$
Take universal covers, and consider disjoint lifts of $\{*\} \times \Sigma_A$ from the left-hand side and $\{*\} \times S^{2n-1}$ from the right-hand side. As this manifold is stably parallelisable, the compact region bounded by these gives a stably parallelisable cobordism from $\Sigma_A$ to $S^{2n-1}$, which therefore shows that $\Sigma_A \in bP_{2n} \leq	\Theta_{2n-1}$. Now $bP_6 = bP_{14}=0$, so under the given assumptions $\Sigma_A \cong S^{2n-1}$.

On the other hand, by closing the manifold $X_g$ off by gluing in $D^2 \times S^{2n-1}$ we find that there is a diffeomorphism $\Sigma_B \#  (S^n \times S^n)^{\# g} \cong  (S^n \times S^n)^{\# g}$. The manifold $ (S^n \times S^n)^{\# g}$ has trivial inertia group, by \cite[Theorem 3.1]{Kosinski}, \cite{WallInertia}, so $\Sigma_B \cong S^{2n}$.
\end{proof}

\begin{cor}\label{cor:Emb-bP}
For $n \geq 3$ there is a monoid homomorphism
$$\bar{\kappa} : \pi_0(\Emb_{\partial/2}(X_g)) \to bP_{2n}$$
whose kernel is the group $\pi_0(\Emb^{\cong}_{\partial/2}(X_g))$.

If $n$ is even then $\bar{\kappa}$ is surjective as long as $g \geq 8$; if $n$ is odd then it is surjective as long as $g \geq 2$.
\end{cor}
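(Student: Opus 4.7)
The plan has two parts: first I refine $\kappa$ to a map $\bar\kappa$ landing in $bP_{2n}$ and identify its kernel; then I establish surjectivity via the diagram \eqref{eq:Emb-surgery} together with the ranks of the canonical generators of $bP_{2n}$.

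For the first part, the proof of Proposition \ref{prop:kappa37} already does most of the work, and uses the hypothesis $n=3,7$ only in its final sentence. That argument constructed, for every $n\geq 3$, a bijection $\calS_{\partial}(S^1\times D^{2n-1})\cong \Theta_{2n-1}\times\Theta_{2n}$, and showed that if $\kappa(e)=(\Sigma_A,\Sigma_B)$ then $\Sigma_A\in bP_{2n}$ (by closing $X_g$ off with $S^1\times D^{2n-1}$ and using the stably parallelisable bordism between $\Sigma_A$ and $S^{2n-1}$ in the universal cover) and $\Sigma_B=0$ (by the triviality of the inertia group of $(S^n\times S^n)^{\#g}$). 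Only the final sentence invoked $n=3,7$ to further conclude $\Sigma_A=0$. Omitting that sentence produces a canonical factorisation $\bar\kappa:\pi_0(\Emb_{\partial/2}(X_g))\to bP_{2n}$ of $\kappa$ through $bP_{2n}\hookrightarrow \calS_{\partial}(S^1\times D^{2n-1})$. Since this inclusion is injective, Proposition \ref{prop:ses Emb-structure set} gives $\ker\bar\kappa=\ker\kappa=\pi_0(\Emb^{\cong}_{\partial/2}(X_g))$.

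For surjectivity, when $n=3,7$ there is nothing to prove since $bP_{6}=bP_{14}=0$. Assume $n\neq 3,7$. Proposition \ref{prop:kappa} supplies the commutative square \eqref{eq:Emb-surgery}, whose left vertical map is surjective by Proposition \ref{prop:realizing}. Thus it suffices to show that the composition
$$U_g(\Z[\pi],\Lambda_n^{\min})\xrightarrow{s}L_{2n+1}(\Z[\pi])\xrightarrow{\partial}\calS_{\partial}(S^1\times D^{2n-1})\twoheadrightarrow bP_{2n}$$
is surjective. Using the Shaneson splitting $L_{2n+1}(\Z[t^{\pm 1}])\cong L_{2n+1}(\Z)\oplus L_{2n}(\Z)$, together with the classical identification of $\partial$ restricted to the $L_{2n}(\Z)$-summand with the Kervaire--Milnor realisation $L_{2n}(\Z)\twoheadrightarrow bP_{2n}$, the problem reduces to hitting a generator of $L_{2n}(\Z)$ in the image of $s$. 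Wall's formation construction does exactly this: a nonsingular $(-1)^n$-quadratic form $(F,\lambda,q)$ of rank $r$ over $\Z$ is realised in $U_r(\Z[t^{\pm 1}],\Lambda_n^{\min})$ by the automorphism of the standard rank-$2r$ hyperbolic form which fixes one Lagrangian and twists the other by $(F,\lambda,q)$; the identity embedding $U_r\hookrightarrow U_g$ is available whenever $g\geq r$.

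The bounds then come from the ranks of the minimal generators. For $n$ odd, the Arf-invariant-$1$ form over $\Z$ has rank $2$ and projects to a generator of $L_{2n}(\Z)\cong \Z/2$, giving $g\geq 2$; for $n$ even, the $E_8$ form has rank $8$ and generates the image in $bP_{2n}$ of $L_{2n}(\Z)\cong \Z$ (no lower-rank even unimodular form of nonzero signature exists), giving $g\geq 8$. The main subtlety is the identification of the $L_{2n}(\Z)$ Shaneson summand under $\partial$ with the Kervaire--Milnor realisation: geometrically, Wall-realising a form on the triad $S^1\times D^{2n-1}\times I$ should produce a normal bordism whose upper boundary is $S^1\times(D^{2n-1}\#\Sigma)$ for $\Sigma$ the Kervaire--Milnor sphere bounding a parallelisable manifold realising the form. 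This is classical but requires tracking several compatibilities between the formation description of odd $L$-groups and the corresponding geometric surgery.
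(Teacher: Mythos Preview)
Your construction of $\bar\kappa$ in the first part is correct and in fact cleaner than the paper's. The paper treats $n=3,7$ separately via Proposition~\ref{prop:kappa37}, and for $n\neq 3,7$ it instead shows that $\mathrm{Im}(\partial:L_{2n+1}(\Z[\pi])\to\calS_\partial(S^1\times D^{2n-1}))$ equals $bP_{2n}$ using the Shaneson square
\[
\xymatrix{
L_{2n}(\Z)\ar[r]^-{\partial}\ar[d]_{\cong} & \calS_{\partial}(D^{2n-1})\ar[d]^{S^1\times}\\
L_{2n+1}(\Z[\pi])\ar[r]^-{\partial} & \calS_{\partial}(S^1\times D^{2n-1}),
}
\]
and then invokes Proposition~\ref{prop:kappa}. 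Your observation that the inertia and stably-parallelisable-cobordism arguments of Proposition~\ref{prop:kappa37} already force $\mathrm{Im}(\kappa)\subset bP_{2n}\times\{0\}$ for all $n\geq 3$ gives a uniform definition of $\bar\kappa$ without any case distinction.

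The surjectivity argument, however, has a genuine gap. Your proposed automorphism of the rank-$2r$ hyperbolic form over $\Z[t^{\pm 1}]$ ``fixes one Lagrangian and twists the other by $(F,\lambda,q)$'': as written this makes no reference to the variable $t$, so it is the base-change of an element of $U_r(\Z,\Lambda_n^{\min})$ and hence lands in the $L_{2n+1}(\Z)=0$ summand of the Shaneson splitting, not the $L_{2n}(\Z)$ summand. The element you need is the one the paper writes down: Shaneson's isomorphism $L_{2n}(\Z)\xrightarrow{\sim} L_{2n+1}(\Z[\pi])$ sends a form $M$ to the automorphism $\mathrm{Id}_M\oplus\, t\cdot\mathrm{Id}_{-M}$ of the hyperbolic form $(M\oplus -M)\otimes\Z[t^{\pm 1}]$, which for $M=E_8$ (rank $8$, $n$ even) or $M$ the Arf form (rank $2$, $n$ odd) lies in $U_8$ or $U_2$ respectively. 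The ``main subtlety'' you flag---compatibility of Shaneson's splitting with $\partial$---is dispatched by the paper in one line: Shaneson's map is geometrically given by crossing normal cobordisms with $S^1$, which immediately yields the commutative square above, and the right-hand vertical map is injective by the $h$-cobordism theorem.
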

\begin{proof}
If $n=3, 7$ then $bP_{2n}=0$ so we take $\bar{\kappa}=0$, and the statement follows from Propositions \ref{prop:ses Emb-structure set} and \ref{prop:kappa37}.

If $n \neq 3,7$ then we consider Propositions  \ref{prop:ses Emb-structure set} and \ref{prop:kappa} instead, which show that it is enough to show that the image of $\partial:L_{2n+1}(\Z[t,t^{-1}])\to\calS_{\partial}(S^1\times D^{2n-1})$ is isomorphic to $bP_{2n}$. Using the fact that $L_{2n+1}(\Z)=0$ (and that the Whitehead group of $\pi$ vanishes), work of Shaneson \cite[Theorem 5.1]{Shaneson} gives an isomorphism
$$L_{2n}(\Z) \xto{\sim} L_{2n+1}(\Z[\pi]).$$
In terms of normal cobordisms Shaneson shows that this is given by taking products with the circle, so it interacts with Wall realisation to give a commutative square
\beq
\xymatrix{
L_{2n}(\Z)\ar[r]^-{\partial}\ar[d]_{\cong} & \calS_{\partial}(D^{2n-1})\ar[d]^{S^1\times}\\
L_{2n+1}(\Z[\pi])\ar[r]^-{\partial} & \calS_{\partial}(S^1\times D^{2n-1}).
}
\eeq
Now $\calS_{\partial}(D^{2n-1}) = \Theta_{2n-1}$, the top map is a surjection onto $bP_{2n} \leq \Theta_{2n-1}$, and it follows from the $h$-cobordism theorem that the right-hand map is injective. Proposition \ref{prop:kappa} therefore gives a map
$$\bar{\kappa} : \pi_0(\Emb_{\partial/2}(X_g)) \to bP_{2n}$$
whose kernel is the same as that of $\kappa$, which is $\pi_0(\Emb^{\cong}_{\partial/2}(X_g))$ by Proposition \ref{prop:ses Emb-structure set}.

For the surjectivity statement, first recall that $L_{2n}(\Z)$ is isomorphic to $\Z$ (generated by the $(+1)$-quadratic form $E_8$ of rank 8) if $n$ is even and to $\Z/2$ (generated by the $(-1)$-quadratic form $K=\left(\Z\{e,f\}, \lambda(e,f)=1, q(e)=q(f)=1\right)$ of rank 2) if $n$ is odd. Shaneson's isomorphism $L_{2n}(\Z) \xto{\sim} L_{2n+1}(\Z[\pi])$ sends a $(-1)^n$-quadratic form $M$ over $\Z$ to the automorphism $\mathrm{Id} \oplus t$ of the $(-1)^n$-quadratic form
$$M \otimes\Z[t^{\pm 1}] \oplus -{M} \otimes \Z[t^{\pm 1}] \cong (M \oplus -{M}) \otimes \Z[t^{\pm 1}]$$
over $\Z[t^{\pm 1}]$, which is a hyperbolic form. When $M$ is $E_8$ this quadratic form is isomorphic to $(\pi_n(X_8), \lambda_X, q_X)$; when $M$ is $K$ it is isomorphic to $(\pi_n(X_2), \lambda_X, q_X)$. Thus for $g$ in the indicated range the map $U_g(\Z[\pi], \Lambda_n^{\min}) \to L_{2n+1}(\pi)$ is surjective. This, together with Proposition \ref{prop:realizing}, and the square \eqref{eq:Emb-surgery} give the surjectivity of the map $\bar{\kappa}:\pi_0(\Emb_{\partial/2}(X_g))\to bP_{2n}$.
\end{proof}

\begin{rem}
As a submonoid of a finite group is a group, Corollary \ref{cor:Emb-bP} implies that the monoid $\pi_0(\Emb_{\partial}(X_g))$, under composition of self-embeddings, is in fact a group.
\end{rem}

\subsubsection{Realising automorphisms by diffeomorphisms}
From diagram \eqref{eq:Emb-surgery} for $n \neq 3,7$ we have a map $U_g(\Z[\pi], \Lambda_n)\to\calS_{\partial}(S^1\times D^{2n-1})$ whose image, by Corollary \ref{cor:Emb-bP}, is isomorphic to $bP_{2n}$ for large enough $g$. By Proposition \ref{prop:kappa37} we also have such a map for $n=3,7$, trivially. Let $\Omega_g$ denote the kernel of this map, giving for large enough $g$ an exact sequence
\begin{equation}\label{eq:Ug-bP}
0\to\Omega_g\to U_g(\Z[\pi], \Lambda_n)\to bP_{2n}\to 0.
\end{equation}
Combining Proposition \ref{prop:realizing}, Corollary \ref{cor:Emb-bP}, and the sequence \eqref{eq:Ug-bP}, we obtain the following.
\begin{cor}\label{cor:Realising}
Let $g\geq 8$ if $n$ is even, and $g \geq 2$ if $n$ is odd. There is a map of short exact sequences
\beq
\xymatrix{
0\ar[r] &\pi_0(\Emb_{\partial/2}^{\cong}(X_g))\ar[r]\ar@{->>}[d]
&\pi_0(\Emb_{\partial/2}(X_g))\ar[r]\ar@{->>}[d] 
& bP_{2n}\ar[r]\ar@{=}[d]
& 0\\
0\ar[r] &\Omega_g\ar[r]
& U_g(\Z[\pi], \Lambda_n) \ar[r]
& bP_{2n}\ar[r]
&0
}
\eeq
whose vertical maps are surjections.\qed
\end{cor}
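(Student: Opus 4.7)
The plan is to package together three earlier ingredients rather than do new work. First I would write down the top row as the content of Corollary \ref{cor:Emb-bP}: it identifies $\pi_0(\Emb^{\cong}_{\partial/2}(X_g))$ as the kernel of $\bar{\kappa}$, and in the stated range of $g$ gives surjectivity of $\bar{\kappa}$ onto $bP_{2n}$. The bottom row is the defining sequence \eqref{eq:Ug-bP}; the only nontrivial point is surjectivity of $U_g(\Z[\pi],\Lambda_n)\to bP_{2n}$ in the same range, which is automatic because this map factors $\bar{\kappa}$ through the surjection $\pi_n$ supplied by Proposition \ref{prop:realizing}, and $\bar{\kappa}$ itself is surjective.

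Next I would install the middle vertical map as $\pi_n$ from Proposition \ref{prop:realizing}, which is surjective for $n\geq 3$. Commutativity of the right square is then the content of Proposition \ref{prop:kappa} when $n\neq 3,7$, combined with the identification, inside the proof of Corollary \ref{cor:Emb-bP}, of the image of $\partial : L_{2n+1}(\Z[\pi]) \to \calS_\partial(S^1 \times D^{2n-1})$ with $bP_{2n}$; for $n=3,7$ both right-hand maps to $bP_{2n}$ vanish by Proposition \ref{prop:kappa37}, so commutativity is automatic.

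The remaining task is to verify that the induced left vertical map $\pi_0\Emb^{\cong}_{\partial/2}(X_g)\to \Omega_g$ is surjective, which is a one-line diagram chase. Given $\omega\in\Omega_g\subset U_g(\Z[\pi],\Lambda_n)$, Proposition \ref{prop:realizing} supplies a lift $\tilde{\omega}\in\pi_0\Emb_{\partial/2}(X_g)$ with $\pi_n(\tilde{\omega})=\omega$; commutativity of the right square then forces $\bar{\kappa}(\tilde{\omega})=0$, so exactness of the top row places $\tilde{\omega}$ in $\pi_0\Emb^{\cong}_{\partial/2}(X_g)$, where it maps to $\omega$. I do not anticipate any genuine obstacle, since the substantive work has all been carried out upstream: realising elements of $U_g(\Z[\pi],\Lambda_n)$ by self-embeddings via the Whitney trick and plumbing, and identifying the complement structure invariant through Shaneson's splitting of $L_{2n+1}(\Z[\pi])$ and Wall realisation.
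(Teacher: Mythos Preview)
Your proposal is correct and follows essentially the same approach as the paper: the paper's own proof is the single sentence ``Combining Proposition \ref{prop:realizing}, Corollary \ref{cor:Emb-bP}, and the sequence \eqref{eq:Ug-bP}, we obtain the following,'' and you have unpacked exactly this combination with the appropriate diagram chase for the left vertical surjection.
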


\subsection{Isotopy classes of embeddings}\label{sec:MCG}

With the results of the last section we may determine the group $\pi_0(\Emb_{\partial/2}^{\cong}(X_g))$ up to an extension, and it is surprisingly simple. Write $S\pi_n\SOn(n) :={\rm{im}}(S: \pi_n\SOn(n)\to\pi_n\SOn(n+1))$.

\begin{cor}
For $n\geq 3$ there is an extension
$$1 \to \Hom_\Z(H,S\pi_n\SOn(n)) \to \pi_0({\Emb}_{1/2\partial}^{\cong}(X_g))\to \Omega_g \to 1.$$
\end{cor}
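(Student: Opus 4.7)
The plan is to reduce the statement to a computation about the full embedding monoid, and then implement a Kreck-style twist analysis for self-embeddings of $X_g$. The key preliminary observation is that in the diagram of Corollary~\ref{cor:Realising} both vertical arrows are surjections and the right-hand vertical is the identity on $bP_{2n}$, so the snake lemma gives an isomorphism
\beq
\ker\bigl(\pi_0\Emb^{\cong}_{\partial/2}(X_g) \to \Omega_g\bigr) \xrightarrow{\;\sim\;} \ker\bigl(\pi_0\Emb_{\partial/2}(X_g) \to U_g(\Z[\pi], \Lambda_n)\bigr).
\eeq
Hence it suffices to identify the right-hand kernel with $\Hom_\Z(H,S\pi_n\SOn(n))$.

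To produce the map from $\Hom_\Z(H,S\pi_n\SOn(n))$, I would adapt the Kreck twist construction already invoked in the proof of Proposition~\ref{prop:kappa}: given $\phi\colon H\to S\pi_n\SOn(n)$, choose a lift $\tilde\phi\colon H\to\pi_n\SOn(n)$, twist the tubular neighbourhood $S^n\times D^n$ of the $i$-th handle core $x_i\subset W_{g,1}\subset X_g$ by $\tilde\phi(x_i)$, and extend by the identity outside $W_{g,1}$ to a self-diffeomorphism $\tau_{\tilde\phi}$ of $X_g$. A direct check against the $\Z[\pi]$-basis $\{a_i,b_i\}$ of $\pi_n(X_g)$ shows that $\tau_{\tilde\phi}$ acts as the identity on $\pi_n(X_g)$, so its class lies in $\ker\beta$.

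The two substantive points are then (i) that $[\tau_{\tilde\phi}]\in\pi_0\Emb_{\partial/2}(X_g)$ depends only on $\phi$ and not on the lift, and (ii) that the resulting assignment is a bijection onto $\ker\beta$. For (i), if $\tilde\phi$ takes values in the kernel of stabilisation, which by the fibration $\SOn(n)\to\SOn(n+1)\to S^n$ is $\mathrm{im}\bigl(\partial\colon\pi_{n+1}(S^n)\to\pi_n\SOn(n)\bigr)$, then the twist on a single handle is trivialised by an explicit embedding isotopy: the extra framing direction supplied by the Hirsch--Smale $h$-principle for immersions lets one unwind the twist through a null-regular-homotopy of $\partial\beta$, and in the metastable range $n\geq 3$ this regular homotopy can be realised through embeddings (not through diffeomorphisms, which is why one only sees the stable group). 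For (ii), given $[e]\in\ker\beta$, the condition $e_*=\id$ on $\pi_n(X_g)$ implies, via the Whitney and Haefliger disjunction theorems available for $n\geq 3$, that $e$ is isotopic to an embedding which fixes each handle core $x_i$ together with a tubular neighbourhood; the residual data is a framing rotation of each $S^n\times D^n$, giving an element of $\pi_n\SOn(n)$ per handle, well-defined only modulo stabilisation by the same $h$-principle argument.

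The main obstacle is establishing (i) precisely, namely constructing the embedding isotopies that trivialise Kreck twists by elements of $\ker(\pi_n\SOn(n)\to\pi_n\SOn(n+1))$. This step is what distinguishes self-embeddings of $X_g$ from self-diffeomorphisms and is the whole reason the extension on the left is $\Hom_\Z(H,S\pi_n\SOn(n))$ rather than the larger $\Hom_\Z(H,\pi_n\SOn(n))$; the same metastable input underlies the identification of the residual framing data in (ii), so the two points should ultimately be packaged together by a single h-principle calculation.
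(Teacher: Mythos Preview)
Your reduction to identifying $\ker\bigl(\pi_0\Emb_{\partial/2}(X_g) \to U_g(\Z[\pi],\Lambda_n)\bigr)$ is sound, and your plan to compute this kernel by a direct Kreck-twist and $h$-principle analysis for $X_g$ is plausible in outline. But as you yourself flag, steps (i) and (ii) are only sketches: the embedding-isotopy argument untwisting a Kreck twist by an element of $\ker(\pi_n\SOn(n)\to\pi_n\SOn(n+1))$, and the Haefliger-type statement that an embedding acting trivially on $\pi_n$ is isotopic to one fixing the handle cores with tubular neighbourhoods, are genuine content. For $X_g$ the latter is not immediate, since the ambient manifold is not simply connected, so the usual Haefliger uniqueness statement needs an extra argument (e.g.\ passing to the universal cover or working relative to $W_{g,1}$). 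This is not a fatal gap, but it means your proposal is a programme rather than a proof.

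The paper takes a quite different and much shorter route: it avoids redoing any of this analysis for $X_g$ by instead showing that the square
\[
\xymatrix{
\pi_0\Emb^{\cong}_{\partial/2}(X_g) \ar[r]\ar[d] & \pi_0\Emb^{\cong}_{\partial/2}(W_{g,1}) \ar[d]\\
\Omega_g \ar[r] & U_g(\Z,\Lambda_n)
}
\]
is cartesian with surjective vertical maps. This is assembled from the two cartesian squares already established earlier in the paper (Proposition~\ref{prop:BEmb to BhAut} on block embeddings versus homotopy automorphisms, and Theorem~\ref{thm:hAutComparison} on the mapping class groups), together with Proposition~\ref{prop:EMCG up to concordance} to pass from block embeddings to embeddings. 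The kernel is then read off from the right-hand column, where it is known to be $\Hom_\Z(H,S\pi_n\SOn(n))$ by the analysis of $W_{g,1}$ due to Kreck and recorded in \cite[eq.~(5)]{KR-WAlg}. In effect, the paper reduces to the simply-connected case rather than re-running the Kreck argument in the presence of a fundamental group; this both shortens the proof and sidesteps exactly the delicate points you identify as obstacles.
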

\begin{proof}
Consider the commutative diagram
\beq
\xymatrix{
\pi_0(\widetilde{\Emb}_{1/2\partial}^{\cong}(X_g)) \ar[r]\ar[d] &
\pi_0(\widetilde{\Emb}_{1/2\partial}^{\cong}(W_{g,1}))\ar[d]\\
\pi_0(\hAut_{\partial}^{\cong}(X_g)) \ar[r]\ar[d] &
\pi_0(\hAut_{\partial}^{\cong}(W_{g,1}))\ar[d]\\
\Omega_g \ar[r] & U_g(\Z, \Lambda_n).
}
\eeq
By Proposition \ref{prop:BEmb to BhAut} the top square is cartesian, and its vertical maps are surjective (by definition of the decoration $\cong$). By Theorem \ref{thm:hAutComparison} and the results of the last section the bottom square is cartesian and the vertical maps are surjective\footnote{The surjectivity of the right vertical maps is a consequence of \cite[Theorem 2]{kreckisotopy}.}. Thus the outer square is cartesian with surjective vertical maps. Furthermore, by Proposition \ref{prop:EMCG up to concordance} and Remark \ref{rem:morlet-Wg} it remains so when the spaces of block embedding are replaced with spaces of embeddings.

 In \cite[eq.\ (5)]{KR-WAlg}, using the work of Kreck \cite{kreckisotopy} the kernel of the right-hand vertical map is identified with $\Hom_\Z(H,S\pi_n\SOn(n))$. This gives the claimed extension.
\end{proof}

\begin{rem}
It follows from \cite[Theorem A]{KrannichMCG} that this extension is split for $n$ odd when $n \neq 3,7$, and is not split when $n=3,7$ unless $g \leq 1$.
\end{rem}

\section{Stably framed self-embeddings of $X_g$}\label{sec:sfr}
As we have described in Section \ref{sec:WeissFib}, given a boundary condition $\ell_{\partial X_g} : TX_g\vert_{\partial X_g} \to \sfr_{2n}^*\gamma$ there is a space $B\blEmb^{\mathrm{sfr}, \cong}_{\partial/2}(X_g ; \ell_{\partial/2 X_g})$ fitting into a fibration sequence
\begin{equation*}
\Bun_{\partial/2}(TX_g, \mathrm{sfr}_{2n}^*\gamma; \ell_{1/2\partial X_g}) \to B\blEmb^{\mathrm{sfr}, \cong}_{\partial/2}(X_g ; \ell_{1/2\partial X_g}) \to B\blEmb^{\cong}_{\partial/2}(X_g).
\end{equation*} 
Let $\Map_{\partial/2}(X_g,\SOn)$ denote the space of mappings $X_g\to \SOn$ which are constant on $1/2\partial X_g$. Choosing a stable framing $\ell$ which extends $\ell_{\partial X_g}$ gives an identification
$$\Bun_{\partial/2}(TX_g, \mathrm{sfr}_{2n}^*\gamma; \ell_{1/2\partial X_g})  \simeq \Map_{\partial/2}(X_g, \SOn).$$
There is an analogous fibration sequence, and identification, for $W_{g,1}$.

\begin{prop}\label{prop:BEmbSfr to BhEmb}
The square
\beq
\xymatrix{
B\blEmb^{\mathrm{sfr}, \cong}_{\partial/2}(X_g ; \ell_{1/2\partial X_g}) \ar[r] \ar[d]& B\blEmb^{\mathrm{sfr}, \cong}_{\partial/2}(W_{g,1} ; \ell_{1/2\partial W_{g,1}}) \ar[d]\\
B\blEmb^{\cong}_{\partial/2}(X_g) \ar[r] & B\blEmb^{\cong}_{\partial/2}(W_{g,1})
}
\eeq
is homotopy cartesian.
\end{prop}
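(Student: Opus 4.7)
The plan is to show the square is homotopy cartesian by computing vertical homotopy fibres and verifying that the induced map between them is a weak equivalence. Using the fibration sequence recalled at the start of Section \ref{sec:sfr}, after choosing a base stable framing $\ell_W$ of $W_{g,1}$ extending $\ell_{\partial W_{g,1}}$ and setting $\ell_X := \ell_W|_{X_g}$, the vertical fibres over the classes of the identity block embeddings are identified with $\Map_{\partial/2}(X_g, \SOn)$ and $\Map_{\partial/2}(W_{g,1}, \SOn)$ respectively.

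Next I would identify the induced map on vertical fibres as the natural extension
\beq
\mathrm{ext}\colon \Map_{\partial/2}(X_g, \SOn) \longrightarrow \Map_{\partial/2}(W_{g,1}, \SOn),
\eeq
sending $f$ to $\tilde{f}$ with $\tilde{f}|_{X_g} = f$ and $\tilde{f}$ trivial both on the canonically framed 2-handle $h_2 = W_{g,1}\setminus\mathrm{int}(X_g)$ and on $1/2\partial W_{g,1}$. Since this extension is a right inverse to the restriction
\beq
\mathrm{res}\colon \Map_{\partial/2}(W_{g,1}, \SOn) \longrightarrow \Map_{\partial/2}(X_g, \SOn)
\eeq
along the codimension zero inclusion $X_g \hookrightarrow W_{g,1}$ (which carries $1/2\partial X_g$ into $1/2\partial W_{g,1}$), and a right inverse to a weak equivalence is itself a weak equivalence, it suffices to show that $\mathrm{res}$ is a weak equivalence.

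The map $\mathrm{res}$ is precomposition along the induced map on pointed quotients $q\colon X_g/(1/2\partial X_g) \to W_{g,1}/(1/2\partial W_{g,1})$, so it is enough to show that $q$ is a weak equivalence. Since $1/2\partial W_{g,1} = D^{2n-1}_-$ is contractible, $W_{g,1}/(1/2\partial W_{g,1}) \simeq \bigvee^{2g} S^n$. For $X_g$, the subspace $1/2\partial X_g = S^1 \times D^{2n-2}_-$ deformation retracts onto its core circle, which represents the $S^1$ wedge summand of $X_g \simeq S^1 \vee \bigvee^{2g}S^n$; collapsing it yields $X_g/(1/2\partial X_g) \simeq \bigvee^{2g}S^n$. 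Both quotients are simply connected with homology concentrated in degree $n$ with natural basis $\{a_i, b_i\}$, and $q$ is the identity on $H_n$ because the composition $W_{g,1} \hookrightarrow X_g \hookrightarrow W_{g,1}$ is isotopic to the identity. By Whitehead's theorem, $q$ is a weak equivalence.

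The main obstacle I anticipate is the first step: pinning down precisely the identification of the induced map on vertical fibres with $\mathrm{ext}$, since the extension over the 2-handle requires compatibility between $\ell_W$ and $\ell_X$ on the attaching region (so that the extension is literally a map of spaces, not merely one up to homotopy). Once this bookkeeping is handled, the remainder of the argument reduces to elementary obstruction theory for a map between simply connected wedges of $n$-spheres.
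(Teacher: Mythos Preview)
Your approach is essentially the same as the paper's: identify the vertical homotopy fibres as $\Map_{\partial/2}(X_g,\SOn)$ and $\Map_{\partial/2}(W_{g,1},\SOn)$ and show the induced map between them is a weak equivalence. The paper dispatches this in one sentence; your elaboration via the homotopy equivalence of pointed quotients $X_g/(\tfrac12\partial X_g)\simeq W_{g,1}/(\tfrac12\partial W_{g,1})\simeq\bigvee^{2g}S^n$ is a correct and natural way to justify that sentence.

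One small comment: the detour through a restriction map $\mathrm{res}$ is slightly delicate, since it requires the inclusion $X_g\hookrightarrow W_{g,1}$ to carry $\tfrac12\partial X_g$ into $\tfrac12\partial W_{g,1}$, and the precise handle description in Section~\ref{sect:mflds-quad} does not make this entirely transparent. You can sidestep this by arguing directly that $\mathrm{ext}$ is an equivalence: both mapping spaces are identified with $\Map_*\!\big(\bigvee^{2g}S^n,\SOn\big)$ via your quotient computation, and extending by the constant map over the contractible $2$-handle clearly induces the identity under these identifications. This avoids the bookkeeping concern you flag at the end and makes the argument cleaner.
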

\begin{proof}
The map on vertical homotopy fibres is $\Map_{\partial/2}(X_g, \SOn) \to \Map_{\partial/2}(W_{g,1}, \SOn)$ which is an equivalence.
\end{proof}

\subsection{Higher homotopy groups}\label{sec:higherpisfrEmb}

Combining Proposition  \ref{prop:BEmbSfr to BhEmb} with Proposition \ref{prop:BEmb to BhAut} gives the following relation between spaces of stably framed embeddings and of homotopy automorphisms.

\begin{cor}
For all $n\geq 3$, the square
\begin{equation}\label{eq:SfrEmbPullback}
\begin{gathered}
\xymatrix{
B\blEmb^{\mathrm{sfr}, \cong}_{\partial/2}(X_g ; \ell_{1/2\partial X_g}) \ar[r] \ar[d]& B\blEmb^{\mathrm{sfr}, \cong}_{\partial/2}(W_{g,1} ; \ell_{1/2\partial W_{g,1}}) \ar[d]\\
B\hAut^{\cong}_{\partial}(X_g) \ar[r] & B\hAut^{\cong}_{\partial}(W_{g,1})
}
\end{gathered}
\end{equation}
is homotopy cartesian, and has vertical homotopy fibres identified with
$$\Map_{\partial/2}(X_g, \mathrm{G})_J \xto{\sim} \Map_{\partial/2}(W_{g,1}, \mathrm{G})_J,$$
where the subscript $J$ denotes those path components of maps which factor up to homotopy through $J : \On \to \mathrm{G}$ (or, equivalently, which are null when composed with $\mathrm{G} \to \mathrm{G}/\On$). \qed
\end{cor}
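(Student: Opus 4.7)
The plan is to paste the homotopy cartesian squares already in hand and then identify the common vertical fibre by comparing with the sequence coming from the $J$-homomorphism.

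First, I would observe that \eqref{eq:SfrEmbPullback} is the vertical composite of the homotopy cartesian squares provided by Propositions \ref{prop:BEmbSfr to BhEmb} and \ref{prop:BEmb to BhAut}, glued along the common horizontal row $B\blEmb^{\cong}_{\partial/2}(X_g) \to B\blEmb^{\cong}_{\partial/2}(W_{g,1})$. The pasting lemma for homotopy pullbacks thus immediately yields the first assertion.

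For the vertical fibre, I would combine two fibration sequences sharing the middle term $B\blEmb^{\cong}_{\partial/2}(X_g)$: the Weiss-style fibration from Section \ref{sec:WeissFib}, whose fibre $\Bun_{\partial/2}(TX_g, \sfr_{2n}^*\gamma; \ell_{1/2\partial X_g})_\ell$ is equivalent to $\Map_{\partial/2}(X_g, \SOn)$ via translation by the chosen stable framing $\ell$; and the fibre sequence $\Map_{\partial/2}(W_{g,1}, \mathrm{G}/\On)_0 \to B\blEmb^{\cong}_{\partial/2}(X_g) \to B\hAut^{\cong}_\partial(X_g)$ from the remark after Proposition \ref{prop:BEmb to BhAut}. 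Their vertical composite gives a fibration sequence $\Map_{\partial/2}(X_g, \SOn) \to F \to \Map_{\partial/2}(W_{g,1}, \mathrm{G}/\On)_0$, where $F$ denotes the vertical fibre of $B\blEmb^{\mathrm{sfr}, \cong}_{\partial/2}(X_g; \ell_{1/2\partial X_g}) \to B\hAut^{\cong}_\partial(X_g)$ to be identified.

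Finally, I would compare this with the fibration obtained by applying $\Map_{\partial/2}(X_g, -)$ to the $J$-homomorphism fibration $\On \to \mathrm{G} \to \mathrm{G}/\On$ and restricting to the components of $\Map_{\partial/2}(X_g,\mathrm{G})$ that factor through $J$ up to homotopy, producing $\Map_{\partial/2}(X_g, \SOn) \to \Map_{\partial/2}(X_g, \mathrm{G})_J \to \Map_{\partial/2}(X_g, \mathrm{G}/\On)_0$ (only the identity component of $\On$ contributes, since maps out of the connected $X_g$ land in a single component). The main obstacle is identifying the bases of the two fibrations, that is, producing an equivalence $\Map_{\partial/2}(X_g, \mathrm{G}/\On)_0 \simeq \Map_{\partial/2}(W_{g,1}, \mathrm{G}/\On)_0$; this can be extracted from the surgery diagram at the end of the proof of Proposition \ref{prop:BEmb to BhAut}, where both rows are shown to extend to fibration sequences with common right-hand term $\Map_{\partial/2}(W_{g,1}, \mathrm{G}/\On)_0$, yielding the required identification via restriction along $W_{g,1} \hookrightarrow X_g$. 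Granted this, a map of fibration sequences identifies $F$ with $\Map_{\partial/2}(X_g, \mathrm{G})_J$, and the horizontal equivalence $\Map_{\partial/2}(X_g, \mathrm{G})_J \xto{\sim} \Map_{\partial/2}(W_{g,1}, \mathrm{G})_J$ then follows from the homotopy cartesianness established in the first step.
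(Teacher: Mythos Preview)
Your approach is correct and matches the paper's, which treats the corollary as immediate from pasting Propositions \ref{prop:BEmbSfr to BhEmb} and \ref{prop:BEmb to BhAut} (the statement ends with \qed\ and no proof is given beyond the introductory sentence). For the homotopy cartesianness your pasting argument is exactly what is intended.

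One point in your fibre identification deserves a sentence more justification. You produce two fibration sequences with equivalent fibres $\Map_{\partial/2}(X_g,\SOn)$ and equivalent bases $\Map_{\partial/2}(-,\mathrm{G}/\On)_0$, and then invoke ``a map of fibration sequences'' to identify the total spaces; but you do not say where this map comes from, and matching fibre and base alone does not determine the total space. The map is supplied by the passage from smooth to spherical tangential data: a stable framing of $TX_g$ yields, via $J:\On\to\mathrm{G}$, a trivialisation of the underlying stable spherical fibration, and this is exactly a lift of the normal invariant through $\mathrm{G}\to\mathrm{G}/\On$. Concretely, the identification of the fibre of $B\blEmb^{\cong}_{\partial/2}(X_g)\to B\hAut^{\cong}_\partial(X_g)$ with $\Map_{\partial/2}(W_{g,1},\mathrm{G}/\On)_0$ in Proposition \ref{prop:BEmb to BhAut} is via the normal invariant map $\nu$, and precomposing with the forgetful map from stable framings gives a map $F\to\Map_{\partial/2}(X_g,\mathrm{G})$ over the base, restricting to the identity on the fibre $\Map_{\partial/2}(X_g,\SOn)$. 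With this said, your argument goes through.
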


In Propositions \ref{prop:pi_khAut} and  \ref{cor:pi_khAutWg} we have calculated $\pi_k(B\hAut_{\partial}^{\cong}(X_g))$ and $\pi_k(B\hAut_{\partial}^{\cong}(W_{g,1}))$ for $2 \leq k < n-1$, and we also have
$$\pi_{k-1}(\Map_{\partial/2}(X_g, \mathrm{G})_J) \cong \pi_{n+k-1}^s \otimes H.$$
The following lemma relates these calculations.

\begin{lem}\label{lem:ConnectingMapNormalInv}
For $2 \leq k < n-1$ the composition
$$\frac{\pi_{2n-1+k}(S^n)}{[\iota_n, \pi_{n+k}(S^n)]} \otimes H \to\pi_{k}(B\hAut_{\partial}^{\cong}(W_{g,1})) \xto{\partial}\pi_{k-1}(\Map_{\partial/2}(W_{g,1}, \mathrm{G})_J) \cong \pi_{n+k-1}^s \otimes H$$
is induced by the stabilisation map on the first factor. Similarly with $W_{g,1}$ replaced by $X_{g}$.
\end{lem}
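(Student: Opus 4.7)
The plan is to compute the composition by exhibiting an explicit geometric representative of the first-factor class and tracing it through the connecting map. By the proof of Proposition \ref{cor:pi_khAutWg}, a class $\beta \otimes x_i$ with $\beta \in \pi_{2n-1+k}(S^n)$ arises in $\pi_k(B\hAut^{\cong}_{\partial}(W_{g,1}))$ as the image under the connecting homomorphism $\partial^{\rho}$ of the fibration sequence
$$\Map_\partial(W_{g,1}, W_{g,1}) \to \Map_*(W_{g,1}, W_{g,1}) \xto{\rho} \Map_*(S^{2n-1}, W_{g,1})$$
of the element $x_i \circ \beta \in \pi_{k}(\Map_*(S^{2n-1}, W_{g,1})) \cong \pi_{2n-1+k}(W_{g,1})$. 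First I would produce a concrete family $\phi : S^{k-1} \to \hAut_\partial(W_{g,1})$ realising this class: by choosing a tubular neighbourhood $\tau_i : S^n \times D^n \hookrightarrow W_{g,1}$ of the $i$-th core sphere disjoint from the other handles, together with a collar of $\partial W_{g,1}$, the family of null-homotopies coming from $\beta$ can be used to push part of the collar through $\tau_i$, producing $\phi_s$ which agrees with the identity outside the union of $\tau_i$ and the collar.

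Next I would compute the connecting map
$$\partial : \pi_k(B\hAut^{\cong}_{\partial}(W_{g,1})) \to \pi_{k-1}(\Map_{\partial/2}(W_{g,1}, \mathrm{G})_J)$$
of the fibration sequence $\Map_{\partial/2}(W_{g,1}, \mathrm{G})_J \to B\blEmb^{\mathrm{sfr},\cong}_{\partial/2}(W_{g,1}; \ell_{1/2\partial W_{g,1}}) \to B\hAut^{\cong}_{\partial}(W_{g,1})$ directly on this representative. Under the identification $\Map_{\partial/2}(W_{g,1}, \mathrm{G})_J \simeq \prod^{2g} \Omega^n \mathrm{G}$ coming from $W_{g,1} \simeq \bigvee^{2g} S^n$ and the contractibility of $1/2\partial W_{g,1}$, the value $\partial(\phi)$ is determined by its restrictions to the core spheres $x_j$. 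Because $\phi_s$ is supported near $x_i$, these restrictions vanish for $j \neq i$, so $\partial(\phi)$ lies in the summand $\pi^s_{n+k-1}\{x_i\}$. Its value there is the local $J$-homomorphism applied to the family of self-maps of $S^n$ encoded by $\beta$, which under the adjunctions $\pi_{k-1}(\Omega^{2n} S^n) \cong \pi_{2n-1+k}(S^n)$ and $\pi_{k-1}(\Omega^n \mathrm{G}) \cong \pi^s_{n+k-1}$ coincides with the stabilisation $\Sigma^\infty \beta$.

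The main obstacle will be the local computation in the previous paragraph: one has to verify that the normal invariant of the locally supported family $\phi_s$ is exactly $\Sigma^\infty \beta$ and not a framing-twisted variant arising from the choice of $\tau_i$ or $\ell$. This reduces to the classical identification of $J : \pi_\ast(\SOn(n)) \to \pi_\ast^s$ with iterated suspension $\Sigma : \pi_\ast(S^n) \to \pi^s_{\ast - n}$, together with the fact that the normal bundle of the embedded core $S^n \hookrightarrow \tau_i$ is trivially framed compatibly with $\ell$. The argument for $X_g$ is parallel: the same construction is carried out inside the codimension-zero submanifold $W_{g,1} \subset X_g$ disjoint from the circle factor, producing a family of self-equivalences of $X_g$ representing $\beta \otimes x_i$ in Proposition \ref{prop:pi_khAut}; the local normal invariant calculation is unchanged, and the $\pi$-coinvariants in the statement of Proposition \ref{prop:pi_khAut} reflect the $\pi = \langle t \rangle$-action by translation around the circle.
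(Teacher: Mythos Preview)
Your approach is to construct an explicit local family representing $\beta \otimes x_i$ and compute the connecting map by restricting to cores. This is a reasonable strategy in principle, and genuinely different from the paper's proof, but as written it has a gap at the key step.

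The paper's argument is indirect: it first observes, by unwinding the surgery-theoretic construction of the fibration (Propositions \ref{prop:BEmb to BhAut} and \ref{prop:BEmbSfr to BhEmb}), that the connecting homomorphism $\partial$ is the \emph{tangential normal invariant} $\nu^t$, followed by relaxing the boundary condition. Then it cites a general result of Madsen--Taylor--Williams which says that for self-equivalences obtained by acting via the pinch map $W_{g,1} \to W_{g,1} \vee S^{2n-1}$, the tangential normal invariant agrees, under Atiyah duality, with the stabilisation map $\pi_{2n-1+k}(W_{g,1}) \to \pi_{2n-1+k}^s(W_{g,1})$. This is a nontrivial calculation with normal invariants and is exactly the content your ``local computation'' would have to reproduce.

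Your proposal does not supply that computation. You assert that the value of $\partial(\phi)$ on $x_i$ is ``the local $J$-homomorphism applied to the family of self-maps of $S^n$ encoded by $\beta$'' and that this ``reduces to the classical identification of $J : \pi_\ast(\SOn(n)) \to \pi_\ast^s$ with iterated suspension $\Sigma : \pi_\ast(S^n) \to \pi^s_{\ast - n}$''. But these two maps have different sources and are not the same thing; there is no such identification. The normal invariant of a pinch-type self-equivalence is \emph{not} computed by the $J$-homomorphism --- it is computed by Freudenthal stabilisation, and establishing this is precisely the substance of the Madsen--Taylor--Williams result. Your construction of $\phi_s$ (``push part of the collar through $\tau_i$'') is also too vague to pin down the homotopy class of $\phi_s\vert_{x_i}$ or to see why the framing and tubular-neighbourhood choices do not introduce a twist. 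If you want to make your approach work, you would need either to redo the Madsen--Taylor--Williams calculation in this special case, or to independently identify $\partial$ with $\nu^t$ and then cite their result --- which is what the paper does.
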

\begin{proof}
By naturality using \eqref{eq:SfrEmbPullback}, the case of $X_g$ follows from that of $W_{g,1}$.

Theorem \ref{thm:HtyAut} and Proposition \ref{prop:pi_khAut} show that the first map is induced by the pinch action $\pi_{2n-1+k}(W_{g,1}) \to \pi_{k-1}(\hAut_{\partial}^{\cong}(W_{g,1}),\id)$. 
Inspecting the proofs of Propositions \ref{prop:BEmb to BhAut} and \ref{prop:BEmbSfr to BhEmb}, we see that the connecting homomorphism 
\beq
\partial:\pi_{k-1}(\hAut_{\partial}^{\cong}(W_{g,1}),\id)\to\pi_{k-1}(\Map_{\partial/2}(W_{g,1}, \mathrm{G})_J)
\eeq
of the long exact sequence for the leftmost fibre sequence in \eqref{eq:SfrEmbPullback} is given by the composite
\beq
\pi_{k-1}(\hAut_{\partial}^{\cong}(W_{g,1}),\id)\xto{\nu^t}\pi_{k-1}(\Map_{\partial}(W_{g,1}, \mathrm{G})_J)\to\pi_{k-1}(\Map_{\partial/2}(W_{g,1}, \mathrm{G})_J)
\eeq
where the map $\nu^t$ is given by assigning to a self-homotopy equivalence $W_{g,1}\to W_{g,1}$ its \textit{tangential normal invariant}, and the second by relaxing the boundary condition. By \cite[Theorem 4.7 and Lemma 4.8]{madsen-taylor-williams} the composition
\beq
\pi_{2n-1+k}(W_{g,1})\to\pi_{k-1}(\hAut_{\partial}^{\cong}(W_{g,1}),\id) \to \pi_{k-1}(\Map_{\partial/2}(W_{g,1}, \mathrm{G})_J) \cong\pi_{2n+k-1}^s(W_{g,1}),
\eeq
where the isomorphism is by Atiyah duality, agrees with the stabilisation map. The result follows from the commutativity of the square
\beq
\xymatrix{
\pi_{2n-1+k}(W_{g,1})\ar[r]\ar[d] & \pi^s_{2n-1+k}(W_{g,1})\ar[d]^-{\cong}\\
\frac{\pi_{2n-1+k}(S^n)}{[\iota_n,\pi_{n+k(S^n)}]}\otimes H \ar[r] & \pi_{n+k-1}^s\otimes H
}
\eeq
where the lower horizontal map is the stabilisation on the first factor, and the leftmost vertical map is 
$$\pi_{2n-1+k}(W_{g,1}) \to\mathrm{Coker}(\pi_k(\rho))\cong \frac{\pi_{2n-1+k}(S^n)}{[\iota_n,\pi_{n+k(S^n)}]}\otimes H$$ 
where the last isomorphism arises from a proof of Proposition \ref{cor:pi_khAutWg} analogous to that of Proposition \ref{prop:pi_khAut}.
\end{proof}
\begin{example}\label{ex:EmbeddingsOddprimes}
Working $p$-locally at an odd prime, combining this with Example \ref{ex:oddprimes} we have 
$$\pi_k(B\blEmb^{\mathrm{sfr}, \cong}_{\partial/2}(X_g ; \ell_{1/2\partial X_g})) \cong_{(p)} \begin{cases}
\pi_{k-1}^s \otimes S^+_X & \text{ if $n$ is odd}\\
\pi_{k-1}^s \otimes S^-_X & \text{ if $n$ is even}
\end{cases}$$
 for $2 \leq k < n-1$. In particular the lowest non-trivial $p$-local homotopy group is
$$\pi_{2p-2}(B\blEmb^{\mathrm{sfr}, \cong}_{\partial/2}(X_g ; \ell_{1/2\partial X_g}))_{(p)} \cong \Z/p \otimes S^{(-1)^{n+1}}_X.$$
\end{example}

\begin{example}\label{ex:EmbeddingsPi2}
Combining this with Example \ref{ex:pi_1hAut} we have extensions
$$0 \to \mathrm{Coker}(\Sigma \pi_{2n+2}(S^n) \to \pi_{n+2}^s) \otimes H\to \pi_2(B\blEmb^{\mathrm{sfr}, \cong}_{\partial/2}(X_g ; \ell_{1/2\partial X_g}), \ell) \to Y \to 0$$
and 
$$0 \to \mathrm{Ker}(\Sigma \pi_{2n+1}(S^n) \to \pi_{n+1}^s) \otimes H \to Y \to \begin{cases}
\Z/2 \otimes S^+_X & \text{ if $n$ is 6 or is $3\!\!\!\! \mod 4$}\\
\Z/2 \otimes S^-_X & \text{ otherwise }
\end{cases} \to 0.$$
\end{example}

\subsection{Isotopy classes of stably framed embeddings}\label{sec:sfrMCG}
We wish to describe the group
$$\check{\Xi}_g^{\sfr, \ell} := \pi_1(B\widetilde{\Emb}_{1/2\partial}^{\mathrm{sfr}, \cong}(X_g ; \ell_{1/2\partial X_g}), \ell)$$
up to extensions. To express the result, note that a stable framing $\ell$ can be uniquely destabilised, relative to ${1/2\partial X_g}$, to a framing, and hence following the discussion in Section \ref{subsec:quadMod} it gives a map
$$\ell_* : \pi_n(X_g) \to \pi_n(\mathrm{Fr}^+(TX_g)) \cong \mathrm{Imm}_n^{\fr}(X_g).$$
We let $q^\ell_X$ be the restriction of $q^{\fr}$ along this map: then $(\pi_n(X_g), \lambda_X, q^\ell_X)$ is a quadratic module over $\Z[\pi]$ with form parameter $\Lambda_n^{\min}$. Base changing along $\Z[\pi] \to \Z$ gives a quadratic module $(\pi_n(W_{g,1}), \lambda_W, q^\ell_W)$ over $\Z$. 

\begin{lem}\label{lem:HypBdyCond}
The boundary condition $\ell_{1/2\partial X_g}$ and stable framing $\ell$ may be chosen so that $q_X^\ell(a_i) = q_X^\ell(b_i)=0$.
\end{lem}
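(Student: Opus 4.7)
The plan is to construct the stable framing $\ell$ first (exploiting stable parallelisability of $X_g$), verify that it may be chosen so that $q^\ell_X(a_i)=q^\ell_X(b_i)=0$, and then define $\ell_{1/2\partial X_g}$ as its restriction. First I would observe that $X_g$ is stably parallelisable: both $S^1 \times D^{2n-1}$ and the punctured $S^n \times S^n$ are, and stable parallelisability is preserved by boundary connect sum. Fix some stable framing $\ell_0$ of $X_g$.

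The key geometric input is that each class $a_i \in \pi_n(X_g)$ is represented by the embedded sphere $S^n \times \{\ast\}$ inside the $i$th copy of $S^n \times S^n \subset X_g$, with normal bundle canonically trivialised by the product structure. This gives a framed embedding and hence a canonical lift $\tilde a_i \in \pi_n(\Fr^+(TX_g))$ of $a_i$; because the core is embedded there are no self-intersections, so $q^{\fr}(\tilde a_i) = 0 \in \Z[\pi]/\Lambda_n^{\min}$. Similarly we construct $\tilde b_i$ with $q^{\fr}(\tilde b_i) = 0$. Since both $\tilde a_i$ and $(\ell_0)_\ast(a_i)$ lift $a_i$, they differ by an element of $i(\pi_n(\SOn(2n)))$, and analogously for $b_i$.

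The main step is to modify $\ell_0$ so that these differences vanish. Stable framings of $X_g$ form a torsor over $[X_g, \SOn]$, and modification by $\phi \in [X_g, \SOn]$ changes the splitting via
\[
\ell_\ast(x) = (\ell_0)_\ast(x) + i(\phi \circ x),
\]
where $\phi \circ x \in \pi_n(\SOn)$ is destabilised to $\pi_n(\SOn(2n))$ using the isomorphism $\pi_n(\SOn(2n)) \xrightarrow{\sim} \pi_n(\SOn)$, valid for $n \geq 2$ since the Stiefel fibre $V_k(\R^{2n+k})$ is $(2n-1)$-connected. Exploiting $X_g \simeq S^1 \vee \bigvee^{2g} S^n$, the evaluation $\phi \mapsto ((\phi \circ a_i)_i, (\phi \circ b_i)_i)$ from $[X_g, \SOn]$ to $\pi_n(\SOn)^{2g}$ is surjective, so one can choose $\phi$ with the new $\ell$ satisfying $\ell_\ast(a_i) = \tilde a_i$ and $\ell_\ast(b_i) = \tilde b_i$ for every $i$. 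Then $q^\ell_X(a_i) = q^{\fr}(\tilde a_i) = 0$ and likewise for $b_i$, and we take $\ell_{1/2\partial X_g}$ to be the restriction of $\ell$.

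The main subtlety will be verifying the transformation formula for $\ell_\ast$ under a gauge modification $\phi$; this is essentially the statement that $[X_g,\SOn(2n)]$ acts on sections of the frame bundle compatibly with the splitting of $\pi_n(\Fr^+(TX_g))$ induced by a framing, and it is standard. Everything else reduces to obstruction-theoretic bookkeeping in the range where the relevant stabilisation maps are isomorphisms.
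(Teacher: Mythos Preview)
Your argument is correct and complete. It differs from the paper's proof in strategy: the paper builds the framing \emph{constructively}, handle by handle. Since $X_g$ is a boundary connect-sum of $S^1\times D^{2n-1}$ with $g$ copies of $W_{1,1}=(S^n\times D^n_-)\cup(D^n_-\times S^n)$, the authors equip each half of $W_{1,1}$ with the very reference framing of $S^n\times D^n$ that underlies the Hirsch--Smale identification $\Imm^{\fr}_n(V)\cong\pi_n(\Fr^+(TV))$; the cores then tautologically have $q^{\fr}=0$, and the only work is checking the two framings agree (up to a reflection) on the overlap $D^n_-\times D^n_-$. Your approach instead starts from an arbitrary stable framing $\ell_0$, observes that the embedded cores already furnish lifts $\tilde a_i,\tilde b_i$ with $q^{\fr}=0$, and then corrects $\ell_0$ by an element of $[X_g,\SOn]$ to match these lifts, using that $\pi_n(\SOn(2n))\to\pi_n(\SOn)$ is an isomorphism and that evaluation $[X_g,\SOn]\to\pi_n(\SOn)^{2g}$ is onto. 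The paper's argument is shorter and yields an explicit framing; yours is a clean obstruction-theoretic argument that would transplant more readily to situations where a handle decomposition is less convenient.
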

\begin{proof}
As $X_g$ is the boundary connect-sum of $S^1 \times D^{2n-1}$ and $g$ copies of $W_{1,1}$, it suffices to give a framing on $W_{1,1} \cong (S^n \times D^n_-) \cup (D^n_- \times S^n)$ for which the associated quadratic form vanishes on the cores $S^n \times \{0\}$ and $\{0\} \times S^n$. Recall from Section \ref{subsec:quadMod} that the identification $\pi_n(\mathrm{Fr}^+(T -)) \cong \mathrm{Imm}_n^{\fr}(-)$ via Hirsch--Smale theory depends on a choice of framing of $S^n \times D^n$: if we take this framing on each of $S^n \times D^n_-$ and $D^n_- \times S^n \cong S^n \times D^n_-$ (perhaps combined with a reflection in $D^n_-$ on the latter so that the framings agree up to homotopy on $D^n_- \times D^n_-$) then tautologically the associated quadratic form indeed vanishes on the cores.
\end{proof}

\emph{From now on we make this choice}, so that $(\pi_n(X_g), \lambda_X, q^\ell_X)$ and $(\pi_n(W_{g,1}), \lambda_W, q^\ell_W)$ are hyperbolic quadratic modules.

If $n \neq 3, 7$ then $\Lambda_n^{\min} = \Lambda_n$ and so $q^\ell_X = q_X$ (and $q^\ell_W = q_W$), but if $n = 3, 7$ then $\Lambda_n^{\min}$ is properly contained in $\Lambda_n$ and so $q_X^\ell$ is a slight refinement of $q_X$. In either case we have
$$ \mathrm{Aut}(\pi_n(X_g), \lambda_X, q^{\ell}_X) = U_g(\Z[\pi], \Lambda_n^{\min}) \leq  \mathrm{Aut}(\pi_n(X_g), \lambda_X, q_X) = U_g(\Z[\pi], \Lambda_n)$$
and the composition with $U_g(\Z[\pi], \Lambda_n) \to bP_{2n}$ is surjective (because if $n \neq 3,7$ then this inclusion is an equality, and if $n=3,7$ then $bP_{2n}=0$). Define the group $\Omega_g^{\min}$ to be the kernel of this map, so there is an extension
\begin{equation}\label{eq:DefnOmegaMin}
0 \to \Omega_g^{\min} \to U_g(\Z[\pi], \Lambda_n^{\min}) \to bP_{2n} \to 0.
\end{equation}

\begin{prop}\label{prop:sfrMCG}
Let $g \geq 8$ if $n$ is even and $g \geq 2$ if $n$ is odd. For the choice of stable framing $\ell$ from Lemma \ref{lem:HypBdyCond} there are extensions
$$1 \to L^{\sfr,\ell}_g \to \check{\Xi}_g^{\sfr, \ell} \to \Omega_g^{\min} \to 1$$
and
$$0 \to \begin{cases}
\mathrm{Hom}(H, \Z/4) & \text{$n=6$}\\
0 & \text{$n \neq 6$}
\end{cases} \to L^{\sfr,\ell}_g \to \begin{cases}
0 & \text{$n$ odd}\\
\mathrm{Hom}(H, \Z/2) & \text{$n$ even}
\end{cases} \to 0.$$
\end{prop}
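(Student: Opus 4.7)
The plan is to adapt the argument for the $W_{g,1}$-analogue from \cite{KR-WAlg} to the $X_g$ setting, combining the Kreck-type extension of Section \ref{sec:MCG} with the long exact sequence of homotopy groups of the fibration
\[
\Map_{\partial/2}(X_g, \SOn) \to B\widetilde{\Emb}^{\sfr, \cong}_{\partial/2}(X_g; \ell_{1/2\partial X_g}) \to B\widetilde{\Emb}^{\cong}_{\partial/2}(X_g).
\]

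\textit{First}, I would construct the map $\check{\Xi}_g^{\sfr, \ell} \to \Omega_g^{\min}$ and verify surjectivity. A stably framed block diffeomorphism $(\varphi, h)$ acts on the quadratic module $(\pi_n(X_g), \lambda_X, q_X^\ell)$ and preserves the refined form $q_X^\ell$ with parameter $\Lambda_n^{\min}$: the homotopy $h$ of stable framings provides precisely the extra data, by the construction of $q^{\fr}$ in Section \ref{subsec:quadMod}, to lift the action of $\varphi$ to $\pi_n(\Fr^+(TX_g))$. The resulting map $\check{\Xi}_g^{\sfr, \ell} \to U_g(\Z[\pi], \Lambda_n^{\min})$ factors through $\Omega_g^{\min}$ by Proposition \ref{prop:kappa}, applied after forgetting the framing. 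Surjectivity is obtained by revisiting the plumbing construction of Proposition \ref{prop:realizing}: because the hyperbolic framing of Lemma \ref{lem:HypBdyCond} makes $q_X^\ell$ vanish on the generators, the plumbing of framed immersions automatically carries a compatible stable framing, furnishing the required lift.

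\textit{Next}, I would analyse $L_g^{\sfr, \ell}$ via the long exact sequence
\[
\pi_1\Map_{\partial/2}(X_g, \SOn) \xrightarrow{a} \check{\Xi}_g^{\sfr, \ell} \xrightarrow{b} \pi_0\widetilde{\Emb}^{\cong}_{\partial/2}(X_g) \xrightarrow{c} \pi_0\Map_{\partial/2}(X_g, \SOn),
\]
with outer groups $\Hom(H, \pi_{n+1}\SOn)$ and $\Hom(H, \pi_n\SOn)$ via $X_g/(1/2\partial X_g) \simeq \bigvee_{2g} S^n$, and $c(\varphi) = [\varphi^*\ell - \ell]$. On the Kreck subgroup $\Hom(H, S\pi_n\SOn(n)) \subseteq \pi_0\widetilde{\Emb}^{\cong}_{\partial/2}(X_g)$ from Section \ref{sec:MCG}, the map $c$ is the stabilisation map. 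A diagram chase combining this with Kreck's extension presents $L_g^{\sfr, \ell}$ as an extension of a subquotient of $\Hom(H, S\pi_n\SOn(n))$ by a subquotient of $\Hom(H, \pi_{n+1}\SOn)$.

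\textit{Finally}, a case analysis using Bott periodicity and the sphere-bundle long exact sequences identifies these subquotients. For $n$ odd, both vanish and $L_g^{\sfr, \ell} = 0$. For even $n \neq 6$, the clutching class $\tau_{n+1}$ of $TS^{n+1}$ generates a $\Z/2$ in the kernel of $S\pi_n\SOn(n) \to \pi_n\SOn$, contributing the quotient $\Hom(H, \Z/2)$, while the $\pi_1\Map$-contribution is killed by a differential from $\pi_2 B\widetilde{\Emb}^{\cong}_{\partial/2}(X_g)$ whose source is described in Example \ref{ex:EmbeddingsPi2}. The case $n = 6$ is exceptional: $S\pi_6\SOn(6)$ is trivial (since $TS^7$ is trivial) so the Kreck piece contributes nothing, while $\pi_7\SOn = \Z$ is large and contributes, through the same differential and the $J$-homomorphism, a cokernel presented as the stated extension of $\Hom(H, \Z/2)$ by $\Hom(H, \Z/4)$.

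The main obstacle will be precisely this last identification for $n = 6$: matching the Hopf-invariant data in $\pi_7\SOn = \Z$ with the differential from $\pi_2 B\widetilde{\Emb}^{\cong}_{\partial/2}(X_g)$ requires a careful unstable calculation that has no analogue in the more generic even-$n$ case.
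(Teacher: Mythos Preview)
Your overall strategy---work directly with $X_g$ using the long exact sequence of the fibration
\[
\Map_{\partial/2}(X_g,\SOn)\to B\blEmb^{\sfr,\cong}_{\partial/2}(X_g;\ell_{1/2\partial X_g})\to B\blEmb^{\cong}_{\partial/2}(X_g)
\]
combined with the Kreck-type extension of Section~\ref{sec:MCG}---is in principle viable, but it is genuinely different from the paper's route and your execution has gaps. The paper does \emph{not} work directly with $X_g$: instead it uses the cartesian squares of Propositions~\ref{prop:BEmb to BhAut} and~\ref{prop:BEmbSfr to BhEmb} (together with Theorem~\ref{thm:hAutComparison}) to reduce the entire statement to the analogous claims for $W_{g,1}$, and then invokes the existing analysis of \cite{KR-WFram} and \cite[Lemma~3.10]{KR-WDisc}. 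This buys a great deal: the image and kernel computations for $W_{g,1}$ are already in the literature, and the reduction is painless because the relevant squares are cartesian with identifiable vertical fibres. A second structural difference is that the paper passes through the \emph{unstable} framing $\fr$ (with fibre $\Map_{\partial/2}(W_{g,1},\SOn(2n))$) and compares it to the fibration over $B\hAut^{\cong}_\partial(W_{g,1})$ with fibre $\Map_{\partial/2}(W_{g,1},\mathrm{G})_J$; this lets the crucial cokernel be computed via the tangential normal invariant (Lemma~\ref{lem:ConnectingMapNormalInv}) and the destabilisation Lemma~\ref{lem:Epi}, rather than via ad hoc Bott-periodicity arguments.

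Your case analysis, by contrast, is too loose to be a proof. The assertion ``for $n$ odd, both vanish'' is not justified: $\pi_n\SOn$ is nonzero for $n\equiv 1,3,7\bmod 8$, and $S\pi_n\SOn(n)$ is generally nonzero for odd $n$ as well, so you would need a real argument that the relevant differentials kill these contributions. For even $n$ your identification of the $\Hom(H,\Z/2)$ quotient with ``the clutching class $\tau_{n+1}$ in the kernel of $S\pi_n\SOn(n)\to\pi_n\SOn$'' is imprecise---$\tau_{n+1}$ lies in $\pi_n\SOn(n+1)$ and by exactness is precisely \emph{not} in $S\pi_n\SOn(n)$---and in any case for $n=6$ one has $\pi_6\SOn(7)=0$, so $S\pi_6\SOn(6)=0$ and the Kreck kernel vanishes; your sketch does not explain how the $\Hom(H,\Z/2)$ quotient then arises. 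The paper obtains that piece from \cite[Remark~2.4 and \S3.4]{KR-WFram}, and the $\Hom(H,\Z/4)$ sub for $n=6$ from the cokernel of the stabilisation map $\Sigma\pi_{13}(S^6)\to\pi_7^s$ via Lemma~\ref{lem:Epi}; reproducing this in your framework would require substantially more detail than you have given.
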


\begin{rem}
It would be interesting to resolve the extension describing $L_g^{\sfr, \ell}$ in the case $n=6$, though it is not necessary for our argument.
\end{rem}

\begin{proof}
Using the square \eqref{eq:SfrEmbPullback} and the fact that its horizontal maps are (compatibly) split surjections up to homotopy, we obtain a pullback of groups
\begin{equation*}
\xymatrix{
\pi_1(B\blEmb^{\mathrm{sfr}, \cong}_{\partial/2}(X_g ; \ell_{1/2\partial X_g}), \ell) \ar[r] \ar[d]& \pi_1(B\blEmb^{\mathrm{sfr}, \cong}_{\partial/2}(W_{g,1} ; \ell_{1/2\partial W_{g,1}}), \ell_{W_{g,1}}) \ar[d]\\
\pi_0(\hAut^{\cong}_{\partial}(X_g)) \ar[r] & \pi_0(\hAut^{\cong}_{\partial}(W_{g,1}))
}
\end{equation*}
which we may combine with that of Theorem \ref{thm:hAutComparison} to obtain a pullback of groups
\begin{equation*}
\xymatrix{
\pi_1(B\blEmb^{\mathrm{sfr}, \cong}_{\partial/2}(X_g ; \ell_{1/2\partial X_g}), \ell) \ar[r] \ar[d]& \pi_1(B\blEmb^{\mathrm{sfr}, \cong}_{\partial/2}(W_{g,1} ; \ell_{1/2\partial W_{g,1}}), \ell_{W_{g,1}}) \ar[d]\\
\Omega_g \ar[r] & U_g(\Z, \Lambda_n).
}
\end{equation*}
The square
\begin{equation*}
\xymatrix{
U_g(\Z[\pi], \Lambda_n^{\min}) \ar[r] \ar@{^(->}[d] & U_g(\Z, \Lambda_n^{\min}) \ar@{^(->}[d]\\
U_g(\Z[\pi], \Lambda_n) \ar[r] & U_g(\Z, \Lambda_n).
}
\end{equation*}
is also a pullback of groups, by direct inspection (as $\Lambda_n/\Lambda^{\min}_n$ is the same for $\Z$ and $\Z[\pi]$).

Therefore the claims in the proposition are equivalent to claims purely about the manifolds $W_{g,1}$, namely that the map
$$\pi_1(B\blEmb^{\mathrm{sfr}, \cong}_{\partial/2}(W_{g,1} ; \ell_{1/2\partial W_{g,1}}), \ell_{W_{g,1}}) \to U_g(\Z, \Lambda_n)$$ 
\begin{enumerate}[(i)]
\item has image the subgroup $\mathrm{Aut}(\pi_n(W_{g,1}), \lambda_W, q_W^\ell) = U_g(\Z, \Lambda_n^{\min})$ determined by the stable framing $\ell$,
\item has kernel $L^{\sfr,\ell}_g$ fitting in to the claimed extension.
\end{enumerate}
Consider the tangential structure $\mathrm{fr}:E\On(2n)\to B\On(2n)$. The maps
$$B\Emb^{\fr, \cong}_{\partial/2}(W_{g,1} ; \ell_{1/2\partial W_{g,1}}) \to B\Emb^{\mathrm{sfr}, \cong}_{\partial/2}(W_{g,1} ; \ell_{1/2\partial W_{g,1}}) \to B\blEmb^{\mathrm{sfr}, \cong}_{\partial/2}(W_{g,1} ; \ell_{1/2\partial W_{g,1}})$$
are respectively $(n-1)$-connected (as $\Map_{\partial/2}(W_{g,1}, \Omega \SOn/\SOn(2n))$ is $(n-2)$-connected) and $(2n-4)$-connected (by Morlet's lemma of disjunction). We will exploit the corresponding map of fibration sequences
\begin{equation}\label{eq:FrToSfr}
\begin{gathered}
\xymatrix{
\Map_{\partial/2}(W_{g,1}, \SOn(2n)) \ar[r] \ar[d]& \Map_{\partial/2}(W_{g,1}, \mathrm{G})_J \ar[d]\\
B\Emb^{\fr, \cong}_{\partial/2}(W_{g,1} ; \ell_{1/2\partial W_{g,1}}) \ar[r] \ar[d]& B\blEmb^{\mathrm{sfr}, \cong}_{\partial/2}(W_{g,1} ; \ell_{1/2\partial W_{g,1}}) \ar[d]\\
B\Emb^{\cong}_{\partial/2}(W_{g,1}) \ar[r] & B\hAut^{\cong}_{\partial}(W_{g,1})
}
\end{gathered}
\end{equation}
and the analysis of framings of $W_{g,1}$ given in \cite{KR-WFram} and \cite[Section 3.2]{KR-WDisc}.

In the notation of those papers, we have $\check{\Lambda}_g^{\fr, \ell}  = \pi_1(B\Emb^{\fr, \cong}_{\partial/2}(W_{g,1} ; \ell_{1/2\partial W_{g,1}}), \ell)$ and ${\Lambda}_g^{\fr, [[\ell]]} = \mathrm{Im}(\check{\Lambda}_g^{\fr, \ell} \to \pi_1(B\Emb^{\cong}_{\partial/2}(W_{g,1})))$, and the image of these groups in $U_g(\Z, \Lambda_n)$ is denoted $G_g^{\fr, [[\ell]]}$. These groups are identified with $U_g(\Z, \Lambda_n^{\min})$  in \cite[Proposition 3.5]{KR-WFram}, which proves (i). 

For (ii) we revisit the proof of \cite[Lemma 3.10]{KR-WDisc}. The long exact sequence for the left-hand column of \eqref{eq:FrToSfr} gives an exact sequence
$$\pi_1(\Emb^{\cong}_{\partial/2}(W_{g,1})) \xto{\alpha} \mathrm{Hom}(H, \pi_{n+1}(\SOn(2n))) \xto{\beta}\check{\Lambda}_g^{\fr, \ell} \to {\Lambda}_g^{\fr, [[\ell]]} \to 1.$$
Combining Remark 2.4 and Section 3.4 of \cite{KR-WFram} gives an extension
$$0 \to \begin{cases}
0 & \text{$n$ odd}\\
\mathrm{Hom}(H, \Z/2) & \text{$n$ even}
\end{cases} \to{\Lambda}_g^{\fr, [[\ell]]} \to G_g^{\fr, [[\ell]]} \to 0.$$
Thus to prove (ii) it remains to show that the map $\alpha$ is surjective for $n \neq 6$, and has cokernel $\mathrm{Hom}(H, \Z/4)$ for $n=6$. For this we consider the portion of the map of long exact sequences given by the map of fibrations \eqref{eq:FrToSfr}, which has the form
\begin{equation*}
\xymatrix{
\pi_1(\Emb^{\cong}_{\partial/2}(W_{g,1})) \ar[r]^-{\alpha} \ar[d] & \mathrm{Hom}(H, \pi_{n+1}(\SOn(2n))) \ar[r]^-{\beta} \ar[d]^{J}& \check{\Lambda}_g^{\fr, \ell} \ar@{=}[d]\\
\pi_1(\hAut_\partial^{\cong}(W_{g,1})) \ar[r]^-{\gamma} & \mathrm{Hom}(H, \pi_{n+1}(\mathrm{G})) \ar[r]^-{\delta} & \pi_1(B\blEmb^{\mathrm{sfr}, \cong}_{\partial/2}(W_{g,1} ; \ell_{1/2\partial W_{g,1}}), \ell_{W_{g,1}}).
}
\end{equation*}

By Lemma \ref{lem:ConnectingMapNormalInv} the composition
$$\Sigma \pi_{2n+1}(S^n) \otimes H \cong \frac{\pi_{2n+1}(S^n)}{[\iota_n, \pi_{n+2}(S^n)]} \otimes H \to \pi_1(\hAut_\partial^{\cong}(W_{g,1})) \xto{\gamma} \mathrm{Hom}(H, \pi_{n+1}(\mathrm{G}))$$
is, after identifying $\mathrm{Hom}(H, \pi_{n+1}(\mathrm{G})) \cong \pi_{n+1}^s \otimes H$ using the duality on $H$, given by  the stabilisation map $\Sigma \pi_{2n+1}(S^n) \to \pi_{n+1}^s$ tensored with $H$. By Lemma \ref{lem:Epi} this stabilisation map is surjective for $n \neq 6$, and has cokernel $\Z/4$ for $n=6$. Thus $\beta=0$ for $n \neq 6$, and for $n=6$ we use that $J : \pi_7(\SOn(12)) \cong \pi_7(\SOn) \to \pi_7^s$ is onto to deduce that the image of $\beta$ is $\mathrm{Hom}(H, \Z/4)$.
\end{proof}

\section{Proofs of the main theorems}\label{sec:proofs}

\subsection{Strategy}\label{sec:Strategy}

Let $\ell$ be the stable framing from Lemma \ref{lem:HypBdyCond}, so that $(\pi_n(X_g),\lambda_X,q_X^{\ell})$ is a hyperbolic quadratic module. The stably framed Weiss sequence from Section \ref{sec:WeissFib} is a fibration sequence
\begin{equation*}
B\Diff_{\partial}^{\sfr}(X_g; \ell_{\partial X_g})_\ell \to B\Emb_{\partial/2}^{\cong, \sfr}(X_g ; \ell_{1/2\partial X_g})_\ell \to B \left(B\Diff_{\partial}^{\sfr}(S^1\times D^{2n-1}; \ell_{\partial})_L\right),
\end{equation*}
for some $L \subset \pi_0(B\Diff_{\partial}^{\sfr}(S^1\times D^{2n-1}; \ell_{\partial}))$ (the argument will not require anything about the structure of this group $L$). The base of this fibration has abelian fundamental group, so the fibration is plus-constructible by \cite[Theorem 1.1 (a)]{Berrick}. After plus-constructing it (always with respect to the maximal perfect subgroups) we may loop the fibration back up to give
\begin{equation}\label{eq:sfr-weiss-plus}
B\Diff_{\partial}^{\sfr}(S^1\times D^{2n-1}; \ell_{\partial})_L \to B\Diff_{\partial}^{\sfr}(X_g; \ell_{\partial X_g})_\ell^+ \to B\Emb_{\partial/2}^{\cong, \sfr}(X_g ; \ell_{1/2\partial X_g})_\ell^+.
\end{equation}
Our strategy will be to analyse the homotopy groups of the middle and rightmost spaces in degrees $< n-1$. The leftmost space differs only slightly from the space $B\Diff_{\partial}(S^1\times D^{2n-1})$ we wish to study, via the homotopy fibre sequence
\begin{equation}\label{eqn:sfr vs nosfr diff}
\Map_{\partial}(S^1\times D^{2n-1}, \SOn)\to B\Diff_{\partial}^{\sfr}(S^1\times D^{2n-1};\ell_{\partial})\to B\Diff_{\partial}(S^1\times D^{2n-1}).
\end{equation}

In Section \ref{sec:GRW} we will use the work of Galatius and the second-named author \cite{GRWActa}, to identify the middle space of \eqref{eq:sfr-weiss-plus} in the limit as $g \to \infty$ as the infinite loop spaces associated to certain Thom spectrum.

For the rightmost space of \eqref{eq:sfr-weiss-plus} we proceed as follows. It follows from Proposition \ref{prop:EMCG up to concordance} that the map
$$B\Emb^{\mathrm{sfr}, \cong}_{\partial/2}(X_g ; \ell_{1/2\partial X_g}) \to B\blEmb^{\mathrm{sfr}, \cong}_{\partial/2}(X_g ; \ell_{1/2\partial X_g})$$
is $(n-1)$-connected, so it remains so after taking the path-component of $\ell$ and plus-constructing. In Section \ref{sec:sfrMCG} we have described the fundamental group $\check{\Xi}_g^{\sfr,\ell}$ of the space $B\blEmb_{\partial/2}^{\cong, \sfr}(X_g ; \ell_{1/2\partial X_g})_\ell$, and in Section \ref{sec:higherpisfrEmb} we have described the higher homotopy groups in degrees $< n-1$.  Define the space $\Delta$ by the fibration
\begin{equation}\label{eq:Delta}
\Delta\to
\hocolim\limits_{g \to \infty}B\blEmb^{\mathrm{sfr}, \cong}_{\partial/2}(X_g ; \ell_{1/2\partial X_g})_\ell^+\to \hocolim\limits_{g \to \infty}(B\check{\Xi}_g^{\sfr,\ell})^+.
\end{equation} 
Let us write
$$\check{\Xi}_\infty^{\sfr,\ell} := \colim_{g \to \infty} \check{\Xi}_g^{\sfr,\ell} \quad \text{ and }\quad \Omega_\infty^{\min} := \colim_{g \to \infty} \Omega_g^{\min}.$$
We will study the homotopy groups of $(B\check{\Xi}_\infty^{\sfr,\ell})^+$ using  Proposition \ref{prop:sfrMCG}, which gives a surjection (for $g$ large enough) $\check{\Xi}_g^{\sfr,\ell} \to \Omega_g^{\min}$. By \eqref{eq:DefnOmegaMin} the space $(B\Omega_\infty^{\min})^+$ is closely related to the Hermitian $K$-theory of $\Z[\pi]$, and in Section \ref{sec:HermKThy} below we will explain how to access its homotopy groups.

For those of $\Delta$ we will use the following general lemma of Hurewicz flavour.
\begin{lem}\label{lem:PlusConst}
Let $X$ be a path-connected space with fundamental group $\Gamma$, $S \subset \Z$ be a multiplicative subset, and suppose that $\pi_i(X) \otimes \Z[S^{-1}]=0$ for $2 \leq i < d$. Let $\calP(\Gamma)$ denote the maximal perfect subgroup of $\Gamma$. Then the homotopy fibre $F$ of $X^+ \to B\Gamma^+$  is simply-connected, satisfies $\pi_i(F) \otimes \Z[S^{-1}]=0$ for $2 \leq i < d$, and there is a map
$$H_0(\mathcal{P}(\Gamma); \pi_d(X)) \to \pi_d(F)$$
which becomes an isomorphism on applying $-\otimes \Z[S^{-1}]$.
\end{lem}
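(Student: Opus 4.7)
Set $P := \calP(\Gamma)$ and write $\widetilde X \to X$ for the regular $P$-cover. Since $\pi_1(X^+) = \pi_1(B\Gamma^+) = \Gamma/P$, the map $X^+ \to B\Gamma^+$ is an isomorphism on $\pi_1$, so its homotopy fibre $F$ coincides with the fibre of the induced map on universal covers. By standard properties of the plus-construction, these universal covers are $\widetilde X^+$ (the plus of $\widetilde X$ with respect to the perfect group $P = \pi_1(\widetilde X)$) and $BP^+$; both are simply connected. Thus $F \simeq \mathrm{fib}(\widetilde X^+ \to BP^+)$, the fibre of a map between simply connected spaces.

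Write $\widetilde{\widetilde X}$ for the universal cover of $X$ (and of $\widetilde X$). The hypothesis together with the $S$-local Hurewicz theorem applied to the simply connected $\widetilde{\widetilde X}$ gives $H_q(\widetilde{\widetilde X}; \Z[S^{-1}]) = 0$ for $1 \le q < d$ and $H_d(\widetilde{\widetilde X}; \Z[S^{-1}]) \cong \pi_d(X) \otimes \Z[S^{-1}]$. Plugging these into the Serre spectral sequence
\beq
E^2_{p,q} = H_p(P;\, H_q(\widetilde{\widetilde X}; \Z[S^{-1}])) \Longrightarrow H_{p+q}(\widetilde X; \Z[S^{-1}])
\eeq
shows that only row $q = 0$ contributes in total degree $< d$, so $\widetilde X \to BP$ is a $\Z[S^{-1}]$-homology equivalence in that range. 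As plus preserves homology, the same holds for $\widetilde X^+ \to BP^+$; Serre's $S$-local Whitehead theorem (for simply connected spaces) then gives an isomorphism on $\pi_q(-) \otimes \Z[S^{-1}]$ for $q < d$. The long exact sequence of the fibration $F \to \widetilde X^+ \to BP^+$ yields $\pi_q(F) \otimes \Z[S^{-1}] = 0$ for $2 \le q < d$, while the integral 5-term exact sequence gives surjectivity of $H_2(\widetilde X) \to H_2(P)$, which (via Hurewicz) implies $\pi_1(F) = 0$. Hurewicz then produces $\pi_d(F) \otimes \Z[S^{-1}] \cong H_d(F; \Z[S^{-1}])$.

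Naturality of plus-construction gives a map of fibration sequences $\widetilde{\widetilde X} \to F$ over $\widetilde X \to \widetilde X^+$ and $BP \to BP^+$, which on $\pi_d$ yields a homomorphism $\pi_d(X) = \pi_d(\widetilde{\widetilde X}) \to \pi_d(F)$. Since monodromy on $F$ factors through $\pi_1(B\Gamma^+) = \Gamma/P$, and the composite $P \hookrightarrow \Gamma \twoheadrightarrow \Gamma/P$ is trivial, the group $P$ acts trivially on $\pi_d(F)$; hence this map is $P$-equivariant with trivial action on the target and factors through $H_0(P; \pi_d(X)) = (\pi_d(X))_P$, giving the claimed homomorphism.

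Finally, to show the induced map $\phi \colon (\pi_d(X))_P \otimes \Z[S^{-1}] \to H_d(F; \Z[S^{-1}])$ is an isomorphism, I would compare the spectral sequence above with the Serre spectral sequence for $F \to \widetilde X^+ \to BP^+$ (which has trivial local coefficients as $BP^+$ is simply connected); both converge to $H_*(\widetilde X; \Z[S^{-1}])$. After $\otimes \Z[S^{-1}]$ each is concentrated in rows $q = 0$ and $q \ge d$, so the only nontrivial differential in the range $p + q \le d+1$ is the transgression $d_{d+1} \colon E^{d+1}_{0,d} \to H_{d+1}(P; \Z[S^{-1}])$. The map of fibrations induces a map of spectral sequences acting as the identity on row $q = 0$ and as $\phi$ on $E^2_{0,d}$; the matching of abutments (via the plus-construction homology equivalence) forces isomorphisms on every $E^\infty_{p,q}$. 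Via $E^\infty_{d+1,0}$ this identifies the images of the two transgressions as subgroups of $H_{d+1}(P; \Z[S^{-1}])$, and via $E^\infty_{0,d}$ it gives that $\phi$ restricts to an isomorphism on the kernels. A five-lemma applied to the associated maps of short exact sequences
\beq
0 \to \ker(d_{d+1}) \to E^2_{0,d} \to \mathrm{im}(d_{d+1}) \to 0
\eeq
then yields that $\phi$ is an isomorphism. The main technical subtlety will be carefully verifying the naturality needed to align the two spectral sequences; the rest is routine.
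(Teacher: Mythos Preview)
Your approach is essentially the same as the paper's: pass to the $P$-cover, recognise $F$ as the fibre of $\widetilde X^+ \to BP^+$, and compare the Serre spectral sequences of $\widetilde{\widetilde X} \to \widetilde X \to BP$ and $F \to \widetilde X^+ \to BP^+$ with $\Z[S^{-1}]$-coefficients. The paper streamlines this by first performing fibrewise $\Z[S^{-1}]$-localisation of $X$ over $B\Gamma$, so as to assume the universal cover is genuinely $(d-1)$-connected, and then simply asserts the spectral sequence comparison without details; your write-up is actually more explicit about that comparison.

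There is, however, a direction error in your spectral sequence step. In the homological Serre spectral sequence one has $d_r : E^r_{p,q} \to E^r_{p-r,\,q+r-1}$, so the relevant differential is
\[
d_{d+1} : E^{d+1}_{d+1,0} = H_{d+1}(P;\Z[S^{-1}]) \longrightarrow E^{d+1}_{0,d},
\]
not the reverse. Hence $E^\infty_{0,d} = E^2_{0,d}/\mathrm{im}(d_{d+1})$ is a quotient and $E^\infty_{d+1,0} = \ker(d_{d+1})$ is a subgroup of $H_{d+1}(P;\Z[S^{-1}])$. Your five-lemma strategy survives once kernels and images are swapped accordingly.

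There is also a small gap: the assertion that ``matching of abutments forces isomorphisms on every $E^\infty_{p,q}$'' is not automatic, since a filtered isomorphism need not induce one on associated graded. In total degree $d$ the two-step filtration handles $E^\infty_{0,d}$ and $E^\infty_{d,0}$ directly. For $E^\infty_{d+1,0}$, which you need to match the two $d_{d+1}$'s, note that it is simultaneously a subobject of $E^2_{d+1,0} = H_{d+1}(P;\Z[S^{-1}])$ (where the comparison map is the identity, hence injective) and the top quotient of the abutment $H_{d+1}$ (where the comparison map is the plus-construction isomorphism, hence surjective); together these give the needed isomorphism on $E^\infty_{d+1,0}$, and the rest of your argument goes through.
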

\begin{proof}
Consider the fibration $X \to B\Gamma$, with fibre the universal cover $\widetilde{X}$. By performing fibrewise $\Z[S^{-1}]$-localisation \cite{MayFibrewise} of $X$ over $B\Gamma$ we may suppose without loss of generality that $\widetilde{X}$ is $(d-1)$-connected and that $\pi_d(\widetilde{X})\cong \pi_d(X)$ is a $\Z[S^{-1}]$-module. 

Form the pullback $X' \to B\mathcal{P}(\Gamma)$ of this fibration to the maximal perfect subgroup of $\Gamma$, whose fibre is still $\widetilde{X}$. The plus-construction of this fibration is the universal cover of $X^+ \to B\Gamma^+$, so consider the map of fibrations
\beq
\xymatrix{
\widetilde{X} \ar[r] \ar[d]& X' \ar[r] \ar[d]& B\mathcal{P}(\Gamma) \ar[d]\\
F \ar[r] & \widetilde{X^+} \ar[r]& \widetilde{B\Gamma^+}
}
\eeq
where the two rightmost vertical maps are plus-constructions with respect to the full fundamental group. A comparison of $\Z[S^{-1}]$-homology Serre spectral sequences, using that $\widetilde{B\Gamma^+}$ is simply-connected, then gives the result.
\end{proof}

\subsection{The diffeomorphism group of $X_g$}\label{sec:GRW}

In Section \ref{sec:WeissFib} we defined the tangential structure $\sfr_{2n} : B \to B\SOn(2n)$ as the pullback in the square \eqref{eq:sfr}, so $B\simeq \SOn/\SOn(2n)$ which is a $(2n-1)$-connected space. The manifold $X_g$ is parallelisable, thus it admits a $\sfr_{2n}$-structure $\ell: TX_g\to\sfr_{2n}^*\gamma_{2n}$, where $\gamma_{2n}\to B\SOn(2n)$ denotes the universal oriented $2n$-plane bundle. This $\sfr_{2n}$-structure induces a map $\ell: X_g\to B$ whose Moore--Postnikov decomposition has $n$th stage
\beq
X_g\xto{\ell'} S^1\times B\xto{u} B,
\eeq
where the first map is given by the generator of $H^1(X_g;\Z) \cong \Z$ and $\ell$, and the second map is projection to $B$. Writing $\theta := \sfr_{2n} \circ u : S^1 \times B \to B\SOn(2n)$, there is a forgetful map
$$B\Diff_{\partial}^{\theta}(X_g;\ell'_{\partial})_{\ell'} \to B\Diff_{\partial}^{\sfr}(X_g;\ell_{\partial})_{\ell},$$
which is an equivalence as the pair $(X_g, \partial X_g)$ is $(n-1)$-connected and the map $u : S^1 \times B \to B$ is $1$-co-connected (cf.\ \cite[Lemma 9.2]{GRWHomStabII} or \cite[Lemma 4.15]{GRW-handbook}). The Pontrjagin--Thom construction gives rise to a map
\begin{equation}\label{eq:PT-map}
B\Diff_{\partial}^{\theta}(X_g;\ell_{\partial}')_{\ell'}\to
\Omega_0^{\infty}\mathrm{MT}\theta
\end{equation}
which by \cite[Theorem 1.8]{GRWActa} becomes a homology equivalence after passing to the homotopy colimit as $g\to\infty$. Here $\mathrm{MT}\theta$ denotes the Madsen--Tillmann spectrum associated to the tangential structure $\theta$, i.e.\ the Thom spectrum of $-\theta^*\gamma_{2n}$. Thus after plus constructing there is a weak homotopy equivalence
\begin{equation}\label{eq:PTmap+}
\left(\hocolim_{g \to \infty}B\Diff^{\sfr}_{\partial}(X_g;\ell_{\partial})_\ell\right)^+\to\Omega_0^{\infty}\mathrm{MT}\theta.
\end{equation}
From this we extract the following consequences.

\begin{lem}\label{lem:GRW}\hfill
\begin{enumerate}[(i)]
\item For all $k$, the groups $\pi_k(\left(\hocolim_{g \to \infty}B\Diff^{\sfr}_{\partial}(X_g;\ell_{\partial})_\ell\right)^+)$ are finitely generated.
\item In degrees $k < 4n+3$ the groups $\pi_k(\left(\hocolim_{g \to \infty}B\Diff^{\sfr}_{\partial}(X_g;\ell_{\partial})_\ell\right)^+)\otimes\Q$ are isomorphic to $\Q$ for $k=1, 2n+3, 2n+4, 2n+7, 2n+8,\ldots$ and are trivial otherwise.
\end{enumerate}
\end{lem}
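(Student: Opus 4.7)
The starting point is \eqref{eq:PTmap+}, which identifies this plus-construction with $\Omega^\infty_0\mathrm{MT}\theta$, reducing the problem to the stable homotopy of this Madsen--Tillmann spectrum. Since $\theta=\sfr_{2n}\circ u$ with $u\colon S^1\times B\to B$ the projection, one has $\mathrm{MT}\theta\simeq \Sigma^{\infty}_+S^1\wedge \mathrm{MT}\sfr_{2n}$. The defining pullback \eqref{eq:sfr} shows that the composite $B\to B\On(2n)\to B\On$ is nullhomotopic, so $\sfr_{2n}^*\gamma_{2n}$ is stably trivial and hence $\mathrm{MT}\sfr_{2n}\simeq \Sigma^{-2n}B_+$. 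Using $S^1_+\simeq S^0\vee S^1$ this yields
\[
\mathrm{MT}\theta\simeq \Sigma^{-2n}B_+\vee \Sigma^{-2n+1}B_+,
\]
so that for $k\geq 1$
\[
\pi_k\bigl(\Omega^\infty_0\mathrm{MT}\theta\bigr)\;\cong\; \pi^s_{k+2n}(B_+)\oplus \pi^s_{k+2n-1}(B_+).
\]

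Part (i) then follows quickly: the space $B\simeq \SOn/\SOn(2n)$ is a CW complex of finite type (a filtered colimit of finite-dimensional oriented Stiefel manifolds), so Serre's finiteness theorem gives that each $\pi^s_j(B_+)$ is finitely generated, and the displayed formula transports this to all $\pi_k$ of the plus-construction.

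For part (ii), the rational Hurewicz theorem identifies $\pi^s_j(B_+)\otimes\Q$ with $H_j(B;\Q)$, so the task reduces to computing the rational cohomology of $B$ in the relevant range. The plan is to apply the Eilenberg--Moore spectral sequence to the fibration $B\to B\On(2n)\to B\On$: on rational cohomology the structure map $\Q[p_1,p_2,\ldots]\to \Q[p_1,\ldots,p_{n-1},e]$ sends $p_i\mapsto p_i$ for $i<n$, $p_n\mapsto e^2$, and $p_i\mapsto 0$ for $i>n$, so a Koszul computation gives
\[
H^*(B;\Q)\;\cong\; \Q[e]/(e^2)\otimes \Lambda(y_{n+1},y_{n+2},\ldots),\qquad |e|=2n,\ |y_k|=4k-1.
\]
Substituting this into the displayed formula and tracking which $H_j(B;\Q)$ contribute for $k<4n+3$ produces the asserted rational classes: $e$ accounts for $k=1$, $y_{n+1}$ for $k=2n+3,\,2n+4$, and $y_{n+2}$ for $k=2n+7,\,2n+8$.

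The main step is the Eilenberg--Moore computation of $H^*(B;\Q)$; while a standard exercise, the bookkeeping of the transgressive generators $y_k$ and verification that their degrees fall outside the relevant window is the only non-routine part.
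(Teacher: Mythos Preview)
Your argument is essentially the paper's: identify $\mathrm{MT}\theta \simeq \Sigma^{\infty-2n}(S^1 \times B)_+$, deduce (i) from finite type, and for (ii) compute $H^*(B;\Q)$ by an Eilenberg--Moore spectral sequence. One slip worth fixing: you name the fibration as $B \to B\On(2n) \to B\On$ but then write down $\Q[p_1,\ldots,p_{n-1},e]$, which is $H^*(B\SOn(2n);\Q)$ rather than $H^*(B\On(2n);\Q) = \Q[p_1,\ldots,p_n]$. The paper instead uses $\SOn/\SOn(2n) \to B\SOn(2n) \to B\SOn$, whose base is simply-connected (so Eilenberg--Moore converges without caveat) and whose total-space cohomology is indeed the ring you wrote. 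Over $B\On$ the fundamental group $\Z/2$ acts nontrivially on $H^*(\text{fibre};\Q)$ by negating $e$, so the naive spectral sequence there would miss the Euler class. Replace $\On$ by $\SOn$ and your argument goes through verbatim.

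A side remark: your assertion that the $y_k$ for $k \geq n+3$ fall outside the window $k < 4n+3$ only holds for $n \leq 4$; when $n \geq 5$ the class $y_{n+3}$ in degree $4n+11$ contributes at $k = 2n+11,\, 2n+12 < 4n+3$. The lemma as stated shares this omission, but it is only applied in the paper for $k < 2n+3$, where the list is correct.
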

\begin{proof}
We use the equivalence \eqref{eq:PTmap+}. By definition the bundle $\theta^*\gamma_{2n}$ is stably trivial, so the Madsen--Tillmann spectrum is a suspension spectrum.
\beq
\mathrm{MT}\theta\simeq \Sigma^{\infty-2n}(S^1\times \SOn/\SOn(2n))_+
\eeq
and so $\Omega_0^{\infty}\mathrm{MT}\theta\simeq\Omega_0^{2n}Q((S^1\times \SOn/\SOn(2n))_+)$. Therefore 
\beq
\pi_k(\Omega_0^{\infty}\mathrm{MT}\theta)\cong \pi_{k+{2n}}^s((S^1\times \SOn/\SOn(2n))_+)
\eeq
These groups are finitely generated for all $k$, because the homology groups of $S^1\times \SOn/\SOn(2n)$ are; this shows part (i). 

For part (ii) we observe that
\beq
\pi_*^s((S^1\times \SOn/\SOn(2n))_+)\otimes\Q\cong H_*(S^1\times \SOn/\SOn(2n);\Q)\cong H_*(S^1;\Q)\otimes H_*(\SOn/\SOn(2n);\Q).
\eeq
Now the rational homology of the space $\SOn/\SOn(2n)$ can be obtained (by the Universal Coefficient Theorem) from its cohomology, which is given by
\beq
H^*(\SOn/\SOn(2n);\Q)\cong\Lambda[x_{4n+3},x_{4n+7,\ldots}]\otimes\Q[e]/(e^2)
\eeq
where the $x_i$'s have degree $i$, and $e$ has degree $2n$. This can be seen using the Eilenberg--Moore spectral sequence
\beq
E_2^{*,*}=\mathrm{Tor}^{*,*}_{H^*(B\SOn;\Q)}(\Q,H^*(B\SOn(2n);\Q))\Longrightarrow H^*(\SOn/\SOn(2n);\Q)
\eeq
of the fibration $\SOn/\SOn(2n)\to B\SOn(2n)\xto{\rho} B\SOn$ where $H^*(B\SOn;\Q) \cong \Q[p_1, p_2, \ldots]$ acts on $H^*(B\SOn(2n);\Q) \cong \Q[e, p_1, p_2, \ldots, p_{n-1}]$ through $\rho^*$, and on $\Q$ through the augmentation.
\end{proof}

\subsection{Hermitian $K$-theory}\label{sec:HermKThy}

We shall need to know something about the homology of the groups $U_g(\Z[\pi], \Lambda_n^{\min})$ in the limit as $g \to \infty$. Using that finitely-generated projective $\Z[t, t^{-1}]$-modules are stably free (e.g.\ by the Bass--Heller--Swan theorem), recent work of Hebestreit--Steimle \cite[Theorem A]{HebestreitSteimle} identifies the plus-construction $BU_\infty(\Z[\pi], \Lambda_n^{\min})^+$ with the basepoint component $\Omega^\infty_0 \mathrm{GW}(\Z[\pi], \Qoppa^{(-1)^n gq})$ of the infinite loop space of the Grothendieck--Witt spectrum of $\Z[\pi]$ with respect to a certain quadratic functor $\Qoppa^{(-1)^n gq}$, the ``(skew) genuine quadratic" functor. On the other hand, recent work of Calmès--Dotto--Harpaz--Hebestreit--Land--Moi--Nardin--Nikolaus--Steimle \cite{CDHHLMNNSII} describes this Grothendieck--Witt spectrum in terms of a fibration sequence
\begin{equation}\label{eq:CDHHLMNNS}
\mathrm{K}(\Z[\pi])_{hC_2} \to \mathrm{GW}(\Z[\pi], \Qoppa^{\pm gq}) \to \mathrm{L}(\Z[\pi], \qoppa^{\pm gq})
\end{equation}
where the left-hand term denotes the $C_2$-homotopy orbits for a certain involution on the $K$-theory spectrum $\mathrm{K}(\Z[\pi])$, and the right-hand term denotes an appropriate form of $L$-theory. 

\begin{lem}\label{lem:KThy}\mbox{}
\begin{enumerate}[(i)]
\item The fibration \eqref{eq:CDHHLMNNS} admits a splitting after inverting 2.

\item The groups $\pi_d(\mathrm{K}(\Z[\pi])_{hC_2}) \otimes \Q$ are $\Q$ for $d=0,1$ and zero otherwise.
\item The map
$$\Q = \pi_1(\mathrm{K}(\Z[\pi])_{hC_2})\otimes \Q \to \pi_1(\mathrm{GW}(\Z[\pi], \Qoppa^{(-1)^n gq}))\otimes \Q $$
may be split by the map which to a class represented by $A \in U_g(\Z[\pi], \Lambda_n^{\min})$ assigns half the exponent of $t$ in $\det(A) \in \Z[\pi]^\times \cong\{\pm t^i \,| \, i \in \Z\}$.

\item The groups $\pi_d(\mathrm{GW}(\Z[\pi], \Qoppa^{(-1)^n gq}))$ are finitely-generated.

\item The groups $\pi_d(\mathrm{L}(\Z[\pi], \Qoppa^{(-1)^n gq}))\otimes\Q$ are $\Q$ if $d+2n \equiv 0, 1 \mod 4$ and zero otherwise.
\end{enumerate}
\end{lem}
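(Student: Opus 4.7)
Parts (i) and (iv) are formal consequences of \eqref{eq:CDHHLMNNS}. For (i), after inverting $2$ the Tate construction $\widehat{H}(C_2; \mathrm{K})$ vanishes, so the norm $\mathrm{K}(\Z[\pi])_{hC_2} \to \mathrm{K}(\Z[\pi])^{hC_2}$ is an equivalence; composing the forgetful map $\mathrm{GW} \to \mathrm{K}^{hC_2}$ with its inverse yields the desired section of $\mathrm{K}_{hC_2} \to \mathrm{GW}$. For (iv), finite generation of $\pi_* \mathrm{K}(\Z[\pi])$ follows from Bass' fundamental theorem (using regularity of $\Z$) together with Quillen's finite generation theorem for $K_*(\Z)$, and this passes to $\pi_* \mathrm{K}(\Z[\pi])_{hC_2}$ via the descent spectral sequence $H_s(C_2; \pi_t \mathrm{K}) \Rightarrow \pi_{s+t} \mathrm{K}_{hC_2}$; finite generation of $\pi_* \mathrm{L}(\Z[\pi], \qoppa^{gq}_{\pm})$ follows from Shaneson's splitting (applicable since $\mathrm{Wh}(\pi) = 0$) and the finiteness of the Wall $L$-groups of $\Z$.

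For (v), the Shaneson splitting gives $\pi_d \mathrm{L}(\Z[\pi], \qoppa^{gq}_{(-1)^n})_\Q \cong L^q_{d+2n}(\Z)_\Q \oplus L^q_{d+2n-1}(\Z)_\Q$, where the $2n$-shift encodes the $\epsilon = (-1)^n$ convention via $\mathrm{L}(R, \qoppa^{gq}_-) \simeq \Omega^2 \mathrm{L}(R, \qoppa^{gq}_+)$. Since $L^q_j(\Z)_\Q = \Q$ for $j \equiv 0 \pmod 4$ (detected by the signature) and vanishes otherwise, the claimed pattern follows immediately.

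For (ii), Bass' splitting $\mathrm{K}(\Z[\pi]) \simeq \mathrm{K}(\Z) \oplus \Sigma \mathrm{K}(\Z)$ combined with Borel's theorem yields $\pi_* \mathrm{K}(\Z[\pi])_\Q$ concentrated in degrees $0, 1, 5, 6, 9, 10, \ldots$. Rationally $(-)_{hC_2}$ agrees with the $C_2$-invariants, so (ii) reduces to computing the duality involution on each Bass summand: it is trivial on the two rank-$1$ pieces in degrees $0$ and $1$ (rank and the $[t]$-winding number), and acts as $-1$ on the higher Borel classes of $\mathrm{K}(\Z)$ and their shifted companions, by Karoubi's analysis of duality on higher algebraic $K$-theory of $\Z$. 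This isolates degrees $0, 1$ as claimed. Part (iii) is then a direct check: $A \mapsto \tfrac12 e(A)$, where $\det A = \pm t^{e(A)} \in \Z[\pi]^\times$, defines a stable homomorphism $U_\infty(\Z[\pi], \Lambda_n^{\min}) \to \Q$, and unwinding the identifications in (ii) shows that it provides a section of the natural map $\pi_1 \mathrm{K}(\Z[\pi])_{hC_2} \otimes \Q \to \pi_1 \mathrm{GW}(\Z[\pi], \Qoppa^{gq}_{(-1)^n}) \otimes \Q$, with the factor of $\tfrac12$ accounting for the doubling inherent in the $hC_2$-construction.

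The main obstacle is the sign bookkeeping in (ii): verifying that the duality involution is trivial on the degree-$1$ summand generated by $[t]$ and $-1$ on the higher Borel classes requires careful reconciliation between Karoubi's classical calculation of the involution on $K_*(\Z)$ and the quadratic-functor conventions governing $\Qoppa^{gq}_{(-1)^n}$ in \cite{CDHHLMNNSII}.
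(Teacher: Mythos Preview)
Your proposal is correct and follows essentially the same architecture as the paper's proof: the norm argument for (i), Bass--Heller--Swan plus Borel for (ii), the determinant splitting for (iii), and Shaneson-type decompositions for (iv) and (v). Two small points of comparison: the ``main obstacle'' you flag in (ii) is handled in the paper by a direct citation to Farrell--Hsiang \cite[Lemma 2.4]{farrell-hsiang} for the sign of the involution on the higher rational $K$-groups of $\Z$, so no delicate bookkeeping is needed; and for (iv) the paper is more careful than your sketch about the fact that $\mathrm{L}(\Z[\pi], \Qoppa^{gq}_{\pm})$ is \emph{genuine} quadratic $L$-theory rather than the $4$-periodic Wall theory, handling this by invoking \cite[Proposition 3.5.8, Corollary 4.5.6]{CDHHLMNNSII} to pass to (a shift of) genuine symmetric $L$-theory, identifying that with Ranicki's non-periodic symmetric $L$-groups via \cite[Theorem 1.2.18]{CDHHLMNNSIII}, and then computing those explicitly.
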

\begin{proof}
Item (i) is part of the Main Theorem of \cite{CDHHLMNNSII}: briefly, the forgetful map from Grothendieck-Witt theory to $K$-theory factors through $\mathrm{K}(\Z[\pi])^{hC_2}$, and the composition
$$\mathrm{K}(\Z[\pi])_{hC_2} \to \mathrm{GW}(\Z[\pi], \Qoppa^{\pm gq}) \to \mathrm{K}(\Z[\pi])^{hC_2}$$
is the norm map, which is split after inverting $2$ by the composite
$$\mathrm{K}(\Z[\pi])^{hC_2}\xto{\text{forget}}\mathrm{K}(\Z[\pi])\to\mathrm{K}(\Z[\pi])_{hC_2}.$$

For (ii), it follows from the Bass--Heller--Swan theorem \cite{BHS} that the algebraic $K$-theory assembly map $B\pi_+\wedge \mathrm{K}(\Z)\to \mathrm{K}(\Z[\pi])$ is an equivalence, and this map is also equivariant with respect to the involution which acts by the identity on $B\pi$ and is the usual involution on $\mathrm{K}(\Z)$. By work of Borel we have
$$K_d(\Z) \otimes \Q \cong \begin{cases}
\Q & d=0 \text{ and } d=5,9,13,\ldots\\
0 & \text{ otherwise}.
\end{cases}$$
The involution on these groups is known from Farrell--Hsiang \cite[Lemma 2.4]{farrell-hsiang}: it is trivial on $K_0(\Z)\otimes\Q$, and it acts by $-1$ on the higher rational $K$-groups. Thus we have
\beq
\pi_d(\mathrm{K}(\Z[\pi])_{hC_2})\otimes\Q\cong
\begin{cases}
\Q &\text{if}\ d=0,1\\
0  &\text{otherwise}. 
\end{cases}
\eeq

For (iii), the determinant map $\det :  U_g(\Z[\pi], \Lambda_n^{\min}) \to \Z[\pi]^\times $ is a homomorphism and commutes with stabilisation, so gives a map
$$\det : \pi_1(\mathrm{GW}(\Z[\pi], \Qoppa^{(-1)^n gq})) \to \Z[\pi]^\times \cong\{\pm t^i \,| \, i \in \Z\} \cong \Z^\times \times \Z.$$
The class in $\pi_1(\mathrm{K}(\Z[\pi]))$ represented by $(t) \in GL_1(\Z[\pi])$ has image in $\pi_1(\mathrm{GW}(\Z[\pi], \Qoppa^{(-1)^n gq}))$ represented by the unitary matrix $(t) \oplus (t^{-1})^\dagger$, whose determinant is $t^2$. This shows that the claimed map provides a rational splitting.

For (iv), we first observe that the homotopy groups of $\mathrm{K}(\Z)$ are finitely-generated by a theorem of Quillen \cite{QuillenFG}, and hence those of $\mathrm{K}(\Z)_{hC_2}$ are too: thus it suffices to show that those of $\mathrm{L}(\Z[\pi], \Qoppa^{\pm gq})$ are. We will do so by first expressing these homotopy groups in terms of those of $\mathrm{L}(\Z[\pi], \Qoppa^{gs})$. We apply Corollary R.10 of \cite{CDHHLMNNSIII} twice to identify $\mathrm{L}(\Z[\pi], \Qoppa^{gq}) \simeq S^4 \wedge \mathrm{L}(\Z[\pi], \Qoppa^{gs})$. For the skew case, we first apply the same result once to identify $\mathrm{L}(\Z[\pi], \Qoppa^{-gq}) \simeq S^2 \wedge \mathrm{L}(\Z[\pi], \Qoppa^{ge})$. Then, we observe that the group-ring involution on $\Z[\pi] \cong \Z[t, t^{-1}]$ satisfies $\widehat{H}^{-1}(C_2 ; \Z[\pi])=0$ and so by Remark R.4 of \cite{CDHHLMNNSIII} we have $\Qoppa^{ge} = \Qoppa^{gq}$, whence a further application of Corollary R.10 gives $\mathrm{L}(\Z[\pi], \Qoppa^{-gq}) \simeq S^6 \wedge \mathrm{L}(\Z[\pi], \Qoppa^{gs})$. Thus 
it is enough to show that the groups $\pi_d(\mathrm{L}(\Z[\pi], \Qoppa^{gs}))$ are finitely-generated for all $d \in \Z$. By Theorem 1.2.18 of \cite{CDHHLMNNSIII} these are the classical (non-periodic) symmetric $L$-groups $L^d(\Z[\pi])$ of Ranicki \cite{ranicki-foundations}. As finitely-generated projective $\Z[t, t^{-1}]$-modules are stably free, and the Whitehead group of $\pi \cong \langle t \rangle$ vanishes too, there is no difference between based, free, and projective $L$-theory. In particular Proposition 19.2 (ii) of \cite{ranicki-lower} (see also Theorem 4.1 of \cite{milgram-ranicki}) gives a Shaneson splitting
$$\pi_d(\mathrm{L}(\Z[\pi], \Qoppa^{gs})) \cong L^d(\Z[\pi]) \cong L^d(\Z) \oplus L^{d-1}(\Z).$$
Now we have (see \cite[Proposition 4.3.1]{RanickiLES}, \cite[p.\ 21]{HLN})
$$\pi_d(\mathrm{L}(\Z, \Qoppa^{gs})) \cong L^d(\Z) \cong \begin{cases}
\Z & d \equiv 0 \mod 4\\
\Z/2 & d \equiv 1 \mod 4 \text{ and } d > 0\\
\Z/2 & d \equiv 2 \mod 4 \text{ and } d < -4\\
0 & \text{ otherwise}.
\end{cases}$$
With the splitting above this shows the groups $\pi_d(\mathrm{L}(\Z[\pi], \Qoppa^{gs}))$ are finitely-generated, proving (iv); it also gives the calculation in (v) upon inverting $2$.
\end{proof}

\subsection{Cyclotomic structure}

The group $\mathrm{O}(2)$ acts smoothly on $S^1$ and hence on $S^1 \times D^{2n-1}$, and therefore it acts by conjugation on $\Diff_\partial(S^1 \times D^{2n-1})$ and hence on $B\Diff_\partial(S^1 \times D^{2n-1})$. For each cyclic group $C_d \leq \mathrm{SO}(2) \leq \mathrm{O}(2)$ we have an identification of the $C_d$-fixed points
$$B\Diff_\partial(S^1 \times D^{2n-1})^{C_d} \simeq B\Diff_\partial(S^1/C_d \times D^{2n-1})$$
which in turn may be identified with $B\Diff_\partial(S^1 \times D^{2n-1})$ via $[z] \mapsto z^d :S^1/C_d \xto{\sim} S^1$. This endows $B\Diff_\partial(S^1 \times D^{2n-1})$ with the structure of a cyclotomic space in the sense of \cite[Definition 4.3]{Schlichtkrull}. In particular there are commuting endomorphisms
$$F_d : B\Diff_\partial(S^1 \times D^{2n-1}) \to B\Diff_\partial(S^1 \times D^{2n-1})$$
corresponding under this identification to the inclusion of the $C_d$-fixed points. In terms of diffeomorphisms $F_d$ is given by associating to a diffeomorphism $\varphi$ of $S^1 \times D^{2n-1}$ the induced diffeomorphism of the $d$-fold cyclic cover $f_d \times D^{2n-1}: S^1 \times D^{2n-1} \to S^1 \times D^{2n-1}$. These maps endow the homotopy (or homology) groups of $B\Diff_\partial(S^1 \times D^{2n-1})$ with the structure of a $\Z[\mathbb{N}^\times]$-module.

\begin{deftn}
Say that a $\Z[\mathbb{N}^\times]$-module $M$ is \emph{tame} if the operation $F_d$ acts invertibly on $M \otimes \Z[d^{-1}]$ for all $d \in \mathbb{N}$.
\end{deftn}

\begin{lem}\label{lem:tame-criterion}\mbox{}
\begin{enumerate}[(i)]
\item Let $A$ be a finitely-generated abelian group and $F : A \to A$ be an endomorphism. If $F \otimes \Z[d^{-1}]$ is an epimorphism or a split monomorphism, then it is an isomorphism.

\item Let  $0 \to A \to B \to C \to 0$ be a short exact sequence of $\Z[\mathbb{N}^\times]$-modules.
\begin{enumerate}[(a)]
\item If $A$ and $C$ are tame then so is $B$.
\item If $B$ is tame and $C$ is a finitely-generated abelian group, then $A$ and $C$ are tame.
\end{enumerate}

\end{enumerate}
\end{lem}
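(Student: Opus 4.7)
The plan is to derive (i) from the Noetherian property of $\Z[d^{-1}]$ and then bootstrap to (ii) via the snake lemma.

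For (i), set $M := A \otimes \Z[d^{-1}]$ and $\bar F := F \otimes \Z[d^{-1}]$. Since $A$ is a finitely-generated abelian group and $\Z[d^{-1}]$ is Noetherian, $M$ is a Noetherian $\Z[d^{-1}]$-module. I would invoke the standard fact that any surjective endomorphism of a Noetherian module is an isomorphism: the ascending chain $\ker \bar F \subseteq \ker \bar F^2 \subseteq \cdots$ stabilises at some $n$, so if $\bar F(x) = 0$ then surjectivity of $\bar F^n$ writes $x = \bar F^n(y)$ with $y \in \ker \bar F^{n+1} = \ker \bar F^n$, forcing $x = 0$. If instead $\bar F$ is a split monomorphism, a retraction is a surjective endomorphism of $M$, hence an isomorphism by what was just shown, and therefore $\bar F$ is an isomorphism as well.

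For (ii), tensor the short exact sequence with the flat ring $\Z[d^{-1}]$ to obtain a short exact sequence of $\Z[d^{-1}]$-modules with compatible actions of $F_d$, and apply the snake lemma to get a six-term exact sequence
\[ 0 \to \ker F_d|_{A[d^{-1}]} \to \ker F_d|_{B[d^{-1}]} \to \ker F_d|_{C[d^{-1}]} \to \coker F_d|_{A[d^{-1}]} \to \coker F_d|_{B[d^{-1}]} \to \coker F_d|_{C[d^{-1}]} \to 0. \]
In case (a), the outer kernels and cokernels vanish by the tameness of $A$ and $C$, forcing the middle ones to vanish, so $B$ is tame. In case (b), the middle kernel and cokernel vanish by tameness of $B$, and the sequence degenerates to $\ker F_d|_{A[d^{-1}]} = 0$, $\coker F_d|_{C[d^{-1}]} = 0$, together with an isomorphism $\ker F_d|_{C[d^{-1}]} \cong \coker F_d|_{A[d^{-1}]}$. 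The vanishing of $\coker F_d|_{C[d^{-1}]}$ says $F_d$ is surjective on $C[d^{-1}]$; since $C$ is finitely-generated, part (i) applies and yields that $F_d$ is invertible there. Then $\ker F_d|_{C[d^{-1}]} = 0$, hence $\coker F_d|_{A[d^{-1}]} = 0$, and $F_d$ is invertible on $A[d^{-1}]$, so $A$ is tame.

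The argument is essentially formal diagram-chasing combined with one application of the Noetherian trick, so there is no real obstacle. The only point worth flagging is that in (b) the finite-generation hypothesis is required on $C$ rather than $A$, because the snake lemma automatically produces surjectivity of $F_d$ on $C[d^{-1}]$ (from the surjection $B[d^{-1}] \twoheadrightarrow C[d^{-1}]$), whereas only injectivity on $A[d^{-1}]$ comes for free; finite generation is then what converts surjectivity into invertibility via (i).
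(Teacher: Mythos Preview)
Your proof is correct. For part (ii) your snake-lemma argument is essentially the paper's argument written out more explicitly; the paper just says ``5-lemma'' for (a) and argues directly from the map of short exact sequences for (b), but the content is identical.

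The genuine difference is in part (i). The paper argues by hand: it passes to $A/\mathrm{tors}(A)$, observes that a surjective endomorphism of a finitely-generated free $\Z[d^{-1}]$-module has invertible determinant, and then compares cardinalities of torsion subgroups to rule out kernel. Your argument via the Noetherian property of $\Z[d^{-1}]$ is cleaner and more robust---it works verbatim for finitely-generated modules over any Noetherian ring, whereas the paper's argument leans on the structure of abelian groups. Both approaches handle the split-monomorphism case the same way, by passing to a one-sided inverse.
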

\begin{proof}
For (i), we may as well consider the more general situation of a finitely-generated $\Z[d^{-1}]$-module $A'$ and a $\Z[d^{-1}]$-module endomorphism $F' : A' \to A'$. If $F'$ is an epimorphism, then by Nakayama's lemma it is an isomorphism. If $F'$ is a split monomorphism then we choose a splitting for it, which is an epimorphism, and apply the previous case: then use that a one-sided inverse to an isomorphism is an isomorphism.



For (ii), part (a) follows from the 5-lemma. For part (b) observe that as $B$ is tame the map $F_d \otimes \Z[d^{-1}] : C\otimes \Z[d^{-1}] \to C \otimes \Z[d^{-1}]$ is an epimorphism and so, by Nakayama's lemma, an isomorphism and so $C$ is tame. It then follows from the induced maps of short exact sequences that $F_d \otimes \Z[d^{-1}] : A\otimes \Z[d^{-1}] \to A \otimes \Z[d^{-1}]$ is also an isomorphism so $A$ is tame.
\end{proof}

We will now explain how analogous $\Z[\mathbb{N}^\times]$-module structures may be defined on the spaces in \eqref{eq:sfr-weiss-plus} and \eqref{eq:Delta}. The $d$-fold cyclic cover of $X_g$ may be identified with $X_{dg}$, so lifting homotopy equivalences, or diffeomorphisms, or embeddings, along this cover gives maps
\begin{align*}
F_d : B\hAut_\partial(X_g) &\to B\hAut_\partial(X_{dg})\\
F_d : B\Diff_\partial(X_g) &\to B\Diff_\partial(X_{dg})\\
F_d : B\Emb_{\partial/2}(X_g) &\to B\Emb_{\partial/2}(X_{dg}),
\end{align*}
fitting together to give a map of Weiss fibre sequences. Similarly with block embeddings and when equipped with stable framings. 

\begin{lem}\label{lem:PT-transfer}
The following diagram commutes up to homotopy
\beq
\xymatrix{
B\Diff_{\partial}^{\sfr}(X_g;\ell_{\partial X_g})_\ell \ar[d]_-{F_d} &B\Diff_{\partial}^{\theta}(X_g;\ell'_{\partial X_g})_{\ell'} \ar[l]_-{\sim}\ar[r]\ar[d]_-{F_d} & \Omega_0^{\infty}\mathrm{MT}\theta\ar[d]\\
B\Diff_{\partial}^{\sfr}(X_{dg};\ell_{\partial X_{dg}})_\ell & B\Diff_{\partial}^{\theta}(X_{dg};\ell'_{\partial X_{dg}})_{\ell'} \ar[l]_-{\sim}\ar[r] & \Omega_0^{\infty}\mathrm{MT}\theta
}
\eeq
where, under the identification $\mathrm{MT}\theta\simeq\Sigma^{\infty}S^1_+\wedge\Sigma^{\infty-2n}\SOn/\SOn(2n)_+$, the right vertical map is given by $\mathrm{trf}_{f_d}\wedge\Sigma^{\infty-2n}\SOn/\SOn(2n)_+$, whose first factor is the Becker--Gottlieb transfer for the cyclic $d$-fold covering $f_d:S^1\to S^1$.
\end{lem}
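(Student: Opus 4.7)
The plan is to treat the left and right squares separately, with the left one being largely formal and the right one containing the real content.

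For the left square, both $F_d$ operators are defined by lifting diffeomorphisms and tangential data along the $d$-fold cyclic cover $p:X_{dg}\to X_g$, while the equivalence $B\Diff_\partial^{\theta}(X_g;\ell'_{\partial X_g})_{\ell'}\xto{\simeq}B\Diff_\partial^{\sfr}(X_g;\ell_{\partial X_g})_\ell$ is induced by post-composition with $u:S^1\times B\to B$. On the $\theta$-side, $F_d$ replaces a bundle map $L:X_g\to S^1\times B$ by the Moore--Postnikov $1$-equivalence of the pulled-back stable framing on $X_{dg}$; applying $u$ to this recovers exactly the pulled-back stable framing on $X_{dg}$, which by definition is the $\sfr$-side $F_d$. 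Hence the left square commutes up to the canonical homotopy witnessing the Moore--Postnikov factorisation.

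For the right square, the key is to interpret the parameterised Pontryagin--Thom map \eqref{eq:PT-map} in families: a $\theta$-bundle $\pi:E\to Y$ with fibre $X_g$ and bundle map $L:E\to S^1\times B$ is sent to the composite $Y\xto{\mathrm{PT}_\pi}\Omega^\infty E^{-T_\pi E}\xto{\mathrm{Th}(L)}\Omega^\infty\mathrm{MT}\theta$. Applying $F_d$ produces the fibrewise $d$-fold cyclic cover $p:E'\to E$ with fibre $X_{dg}$ and an induced $\theta$-structure $L':E'\to S^1\times B$. Because the cyclic cover of $E$ is classified by the $S^1$-coordinate of $L$, we obtain a pullback square of fibre bundles over $Y$
$$
\xymatrix{
E' \ar[r]^-{L'} \ar[d]_p & S^1\times B \ar[d]^{f_d\times\id_B}\\
E \ar[r]^-{L} & S^1\times B.
}
$$

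From here, two standard naturality properties finish the argument. First, the parameterised Pontryagin--Thom collapse is natural under fibrewise finite coverings: $\mathrm{PT}_{\pi'}=\mathrm{trf}_p\circ\mathrm{PT}_\pi$ as parameterised maps to $\Omega^\infty E'^{-T_{\pi'}E'}$, where $\mathrm{trf}_p:E^{-T_\pi E}\to E'^{-T_{\pi'}E'}$ is the Becker--Gottlieb transfer for $p$ in its Thom-spectrum form (using $T_{\pi'}E'\cong p^*T_\pi E$). Second, the Becker--Gottlieb transfer is natural in pullback squares of fibre bundles, so $\mathrm{Th}(L')\circ\mathrm{trf}_p\simeq\mathrm{trf}_{f_d\times\id_B}\circ\mathrm{Th}(L)$. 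Combining, the PT map for $(\pi',L')$ equals $\mathrm{trf}_{f_d\times\id_B}\circ\mathrm{PT}(\pi,L)$. Finally, the product multiplicativity of the Becker--Gottlieb transfer identifies $\mathrm{trf}_{f_d\times\id_B}$ with $\mathrm{trf}_{f_d}\wedge\id_{\Sigma^{\infty-2n}\SOn/\SOn(2n)_+}$ under the identification $\mathrm{MT}\theta\simeq\Sigma^\infty S^1_+\wedge\Sigma^{\infty-2n}\SOn/\SOn(2n)_+$, which is the required statement.

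The main obstacle will be to make the two naturality statements precise in the parameterised and ``half-boundary'' setting. This can be handled either by unpacking the classical fibrewise tubular neighbourhood construction of the Pontryagin--Thom map, or more conceptually by invoking the cobordism-category foundations of \cite{GRWActa}, in which both the PT map and the fibrewise transfers admit canonical extensions that are compatible by construction.
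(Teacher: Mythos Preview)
Your proposal is correct and follows essentially the same strategy as the paper: both arguments reduce the right-hand square to the compatibility of the Pontryagin--Thom construction with passage to a fibrewise $d$-fold cover, and then identify this with the Becker--Gottlieb transfer for $f_d$. The paper works directly in the ``fatly embedded $\theta$-manifolds'' model of \cite[Remark 1.10]{GRWActa}, where $F_d$ is given explicitly by embedding the pullback $\widehat{W}$ of $W$ along $f_d$ into a couple of extra ambient dimensions; in that model both the scanning map and the transfer are literal Pontryagin--Thom collapses, so the comparison is immediate and the naturality statements you isolate become tautologies rather than lemmas to be invoked.
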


\begin{proof}
We use the description of $B\Diff_{\partial}^{\sfr}(X_g;\ell_{\partial X_g})_{\ell} \simeq B\Diff_{\partial}^{\theta}(X_g;\ell'_{\partial X_g})_{\ell'}$ as a space of \textit{fatly embedded} $\theta$-submanifolds \cite[Remark 1.10]{GRWActa}. 
Let $(W\subset (-\infty,0]\times\R^{q-1},\ell')$ be a fatly embedded $\theta$-submanifold. Let $f_d:S^1\to S^1$ be the cyclic $d$-fold covering, and $(f_d, h) : S^1\to S^1\times\R^2$ be an embedding representing it. The $\theta$-structure $ \ell'=(c, \ell) : W \to S^1 \times B$ includes the data of a classifying map $c : W \to S^1$ for the universal cover of $W$. Define $\widehat{W}$ to be the pull-back
\beq
\xymatrix{
\widehat{W}\ar[r]^-{p}\ar[d]^-{\hat{c}} & W\ar[d]^-{c}\\
S^1\ar[r]^-{f_d} & S^1
}
\eeq
The map $F_d$ takes $W$ to the fatly embedded $\theta$-submanifold given by the image of
$$\widehat{W} \xto{p \times \hat{c}} W \times S^1 \xto{\mathrm{inc} \times h} (-\infty,0]\times\R^{q-1} \times \R^2$$
equipped with the $\theta$-structure $\ell' \circ Dp : T\widehat{W} \to TW \to \theta^*\gamma_{2n}$. As both the map $\eqref{eq:PT-map}$ and the Becker--Gottlieb transfer are defined by means of the Pontrjagin--Thom construction, the commutativity of the diagram is then straightforward.
\end{proof}

Consider then the maps
\begin{equation*}
\xymatrix{
B\Diff_{\partial}^{\sfr}(X_g; \ell_{\partial X_g})_\ell^+ \ar[r] \ar[d]^-{F_d} & B\blEmb_{\partial/2}^{\cong, \sfr}(X_g ; \ell_{1/2\partial X_g})_\ell^+ \ar[d]^-{F_d} \ar[r] & (B\check{\Xi}_g^{\sfr,\ell})^+ \ar[r] \ar[d]^-{F_d} & (B\Omega_g^{\min})^+ \ar[d]^-{F_d}\\
B\Diff_{\partial}^{\sfr}(X_{dg}; \ell'_{\partial X_{dg}})_{\ell'}^+ \ar[r] & B\blEmb_{\partial/2}^{\cong, \sfr}(X_{dg} ; \ell'_{1/2\partial X_{dg}})_{\ell'}^+ \ar[r] & (B\check{\Xi}_{dg}^{\sfr,\ell})^+ \ar[r] & (B\Omega_{dg}^{\min})^+.
}
\end{equation*}

In the limit as $g \to \infty$ the left- and two rightmost vertical maps give endomorphisms
\begin{align*}
F_d : \Omega^\infty_0 \mathrm{MT}\theta &\to \Omega^\infty_0 \mathrm{MT}\theta\\
F_d : (B\check{\Xi}_{\infty}^{\sfr,\ell})^+ &\to (B\check{\Xi}_{\infty}^{\sfr,\ell})^+\\
F_d : (B\Omega_{\infty}^{\min})^+ &\to (B\Omega_{\infty}^{\min})^+.
\end{align*}

\begin{lem}\label{lem:tame}
These maps make the homotopy groups of $\Omega^\infty_0 \mathrm{MT}\theta$, $(B\check{\Xi}_{\infty}^{\sfr,\ell})^+$ and $(B\Omega_{\infty}^{\min})^+$ into tame $\Z[\mathbb{N}^\times]$-modules.
\end{lem}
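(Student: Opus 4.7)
My strategy is the same in each of the three cases: combine finite generation of the homotopy groups with a ``co-Frobenius'' operator $V_d$ satisfying $V_d\circ F_d = d\cdot\mathrm{id}$, so that Lemma~\ref{lem:tame-criterion}(i) delivers tameness. For $\Omega^\infty_0\mathrm{MT}\theta$ finite generation is Lemma~\ref{lem:GRW}(i); and Lemma~\ref{lem:PT-transfer} identifies $F_d$, under the equivalence $\mathrm{MT}\theta\simeq \Sigma^{\infty-2n}(S^1\times\SOn/\SOn(2n))_+$, with $\mathrm{trf}_{f_d}\wedge\mathrm{id}$ for $f_d:S^1\to S^1$ the $d$-fold cyclic cover. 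Taking $V_d := (f_d)_*\wedge\mathrm{id}$, the defining property $(f_d)_*\circ\mathrm{trf}_{f_d} = \chi(f_d^{-1}(*))\cdot\mathrm{id} = d\cdot\mathrm{id}$ of the Becker--Gottlieb transfer of a $d$-sheeted covering supplies the required relation on stable homotopy.

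For $(B\Omega_\infty^{\min})^+$, the extension \eqref{eq:DefnOmegaMin} together with the Hebestreit--Steimle identification of Section~\ref{sec:HermKThy} produces a fibration $(B\Omega_\infty^{\min})^+ \to \Omega^\infty_0\mathrm{GW}(\Z[\pi],\Qoppa^{gq}_{(-1)^n}) \to BbP_{2n}$ identifying $\pi_k(B\Omega_\infty^{\min})^+$ with $\pi_k\Omega^\infty_0\mathrm{GW}$ for $k\geq 2$ and, in degree $1$, with a subgroup of index $|bP_{2n}|$; finite generation is then Lemma~\ref{lem:KThy}(iv), and the degree-$1$ discrepancy is absorbed by Lemma~\ref{lem:tame-criterion}(ii)(a) since $bP_{2n}$ is finite. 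The operator $F_d$ on $\mathrm{GW}(\Z[\pi])$ corresponds to restriction of scalars along the ring inclusion $\Z[t^{\pm d}]\hookrightarrow\Z[t^{\pm 1}]$, whose target is free of rank $d$; the accompanying extension of scalars $V_d := -\otimes_{\Z[t^{\pm d}]}\Z[t^{\pm 1}]$, suitably refined on the genuine quadratic functor, satisfies $V_d\circ F_d(M)\cong M^{\oplus d}$ on underlying modules and hence acts as $d\cdot\mathrm{id}$ on $\pi_*\mathrm{GW}$. The main obstacle is verifying this compatibility with $\Qoppa^{gq}_{(-1)^n}$; in practice this can be bypassed after inverting $2$ via the splitting of Lemma~\ref{lem:KThy}(i), handling the $\mathrm{L}$-theory summand separately via the Shaneson splitting used in the proof of Lemma~\ref{lem:KThy}(iv), and using the Bass--Heller--Swan identification on the $\mathrm{K}(\Z[\pi])_{hC_2}$ summand to reduce that factor to the preceding Becker--Gottlieb argument.

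For $(B\check{\Xi}_\infty^{\sfr,\ell})^+$, the extension $1\to L^{\sfr,\ell}_g\to\check{\Xi}^{\sfr,\ell}_g\to\Omega_g^{\min}\to 1$ from Proposition~\ref{prop:sfrMCG} has abelian kernel, so after taking colimits and plus-constructions one obtains isomorphisms $\pi_k(B\check{\Xi}_\infty^{\sfr,\ell})^+ \cong \pi_k(B\Omega_\infty^{\min})^+$ for $k\geq 2$ and a short exact sequence $0\to L_\infty^{\sfr,\ell}\to \pi_1(B\check{\Xi}_\infty^{\sfr,\ell})^+\to \pi_1(B\Omega_\infty^{\min})^+\to 0$. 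Tameness in degrees $\geq 2$ is then immediate from the previous step, and in degree $1$ follows via Lemma~\ref{lem:tame-criterion}(ii)(a) once $L_\infty^{\sfr,\ell}$ is shown to be tame. By Proposition~\ref{prop:sfrMCG} the latter is $2$-primary torsion, so tameness is vacuous at even $d$; and at odd $d$ the $F_d$-action on the explicit description of $L_\infty^{\sfr,\ell}$ in terms of $\mathrm{Hom}(H_\infty,\Z/2)$ and $\mathrm{Hom}(H_\infty,\Z/4)$ by precomposition with the covering $H_{dg}\to H_g$ is a direct check of invertibility in the colimit $g\to\infty$.
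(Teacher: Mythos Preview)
Your treatment of $\Omega^\infty_0\mathrm{MT}\theta$ is correct and matches the paper's: both use Lemma~\ref{lem:PT-transfer} to identify $F_d$ with a Becker--Gottlieb transfer and then invoke finite generation and Lemma~\ref{lem:tame-criterion}(i).

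For $(B\Omega_\infty^{\min})^+$ and $(B\check{\Xi}_\infty^{\sfr,\ell})^+$ the paper's argument is far simpler than yours, and different in kind. The paper observes that $\coprod_{g\ge 0} B\Omega_g^{\min}$ and $\coprod_{g\ge 0} B\check{\Xi}_g^{\sfr,\ell}$ are $E_\infty$-monoids whose group-completions have basepoint components $(B\Omega_\infty^{\min})^+$ and $(B\check{\Xi}_\infty^{\sfr,\ell})^+$, and that on these monoids the operations $F_d$ are, before group-completion, given by $d$-fold multiplication in the $E_\infty$-structure; after group-completion this is multiplication by $d$ in the loop-space structure, hence multiplication by $d$ on homotopy groups, which is invertible after inverting $d$. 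No finite-generation input, no Hermitian $K$-theory, no induction/restriction is needed.

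Your route via $\mathrm{GW}$-theory is workable in spirit but has two issues. First, a minor one: with $F_d$ given by restriction of scalars along $\Z[t^{\pm d}]\hookrightarrow\Z[t^{\pm 1}]$ and $V_d$ by extension of scalars, the natural identity is $F_d\circ V_d = d\cdot\mathrm{id}$ (from $\mathrm{Res}\circ\mathrm{Ind}(N)\cong N^{\oplus d}$ naturally), not $V_d\circ F_d = d\cdot\mathrm{id}$ as you wrote; either relation suffices for Lemma~\ref{lem:tame-criterion}(i), so this does not break the argument. Your ``bypass after inverting $2$'' is also incomplete as stated: it only yields invertibility of $F_d$ on $\pi_*\otimes\Z[\tfrac{1}{2d}]$, whereas tameness for odd $d$ requires invertibility on $\pi_*\otimes\Z[\tfrac{1}{d}]$, which still sees $2$-torsion.

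The more serious gap is in your argument for $(B\check{\Xi}_\infty^{\sfr,\ell})^+$. The extension $1\to L_g^{\sfr,\ell}\to\check{\Xi}_g^{\sfr,\ell}\to\Omega_g^{\min}\to 1$ gives a fibration $BL_\infty\to B\check{\Xi}_\infty^{\sfr,\ell}\to B\Omega_\infty^{\min}$, but you cannot conclude that this survives plus-construction: the base has non-abelian fundamental group, and the action of $\Omega_\infty^{\min}$ on $L_\infty^{\sfr,\ell}\cong\mathrm{Hom}(H_\infty,\Z/2^{?})$ via the standard representation is far from trivial, so Berrick's criterion does not apply. Consequently your claimed isomorphisms $\pi_k(B\check{\Xi}_\infty^{\sfr,\ell})^+\cong\pi_k(B\Omega_\infty^{\min})^+$ for $k\ge 2$ and the short exact sequence on $\pi_1$ are unjustified. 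Even granting a fibration, the long exact sequence would give an injection on $\pi_2$ and a four-term sequence through $\pi_1$, not the short exact sequence you state; and your ``direct check'' of the $F_d$-action on $L_\infty^{\sfr,\ell}$ would address the wrong object. The paper sidesteps all of this by working with the $E_\infty$-monoid structure directly.
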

\begin{proof}
Recall that $\mathrm{MT}\theta \simeq \Sigma^\infty S^1_+ \wedge \Sigma^{\infty -2n} \SOn/\SOn(2n)_+$. By Lemma \ref{lem:PT-transfer} the first of these maps is homotopic to $\mathrm{trf}_{f_d} \wedge \Sigma^{-2n} \SOn/\SOn(2n)_+$, whose first factor is the Becker--Gottlieb transfer for the cyclic $d$-fold covering $f_d : S^1 \to S^1$. Functoriality of the transfer then shows that the homotopy groups of $\Omega^\infty_0 \mathrm{MT}\theta$ are $\Z[\mathbb{N}^{\times}]$-modules. Furthermore, it follows from \cite[Theorem 5.5]{BG} that $f_d \circ \mathrm{trf}_{f_d}$ is an equivalence after inverting $d$, and $f_d$ is too, so $F_d$ induces a split monomorphism on homotopy groups after inverting $d$, and as the homotopy groups of $\mathrm{MT}\theta$ are finitely-generated Lemma \ref{lem:tame-criterion} (i) finishes the argument.

The classifying spaces of the groups $\check{\Xi}_{g}^{\sfr,\ell}$ and $\Omega_g^{\min}$ both assemble into $E_\infty$-algebras
$$\coprod_{g \geq 0} B\check{\Xi}_{g}^{\sfr,\ell} \quad\quad\text{ and }\quad\quad \coprod_{g \geq 0} B\Omega_g^{\min}$$
so that $(B\check{\Xi}_{\infty}^{\sfr,\ell})^+$ and $(B\Omega_{\infty}^{\min})^+$ are (the basepoint components of) their group-completions. In both cases the maps $F_d$ are defined before group-completion, and are given by multiplication by $d$ using the $E_\infty$-structure. Thus after group-completion these maps are given by multiplication by $d$ using the loop-space structure, so induce multiplication by $d$ on homotopy groups. Thus these homotopy groups are $\Z[\mathbb{N}^{\times}]$-modules, which are tame simply because multiplication by $d$ is an isomorphism after inverting $d$. 
\end{proof}

\subsection{$p$-local homology: Theorem \ref{thm:B}}

In this section we will prove Theorem \ref{thm:B}, formulated again below as Theorem \ref{thm:Tors}. For a prime number $p$ let $\mathscr{C}_p$ denote the Serre class of abelian groups $A$ such that $A\otimes\Z_{(p)}$ is finitely generated as a $\Z_{(p)}$-module. 

\begin{thm}\label{thm:Tors}
For $n \geq 3$, all primes $p$, and $k< \min(2p-3, n-2)$ we have
\beq
\pi_k(B\Diff_{\partial}(S^1\times D^{2n-1}))\in\mathscr{C}_p
\eeq
and if $2p-3 < n-2$ then there is a injective map
$$\bigoplus_{a> 0} \Z/p\{t^a - t^{-a}\} \to \pi_{2p-3}(B\Diff_{\partial}(S^1\times D^{2n-1}))$$
whose cokernel is in $\mathscr{C}_p$. If $p=2$ then the cokernel is a finitely-generated abelian group.
\end{thm}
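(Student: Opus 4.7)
The plan is to exploit the plus-constructed Weiss fibration \eqref{eq:sfr-weiss-plus} together with the fibration \eqref{eq:Delta}, following the strategy outlined in Section \ref{sec:Strategy}. First I reduce to the stably framed setting: the fibration \eqref{eqn:sfr vs nosfr diff} has fibre $\Map_\partial(S^1\times D^{2n-1}, \SOn) \simeq \Omega\SOn$, whose homotopy groups are finitely generated, and the inertia subgroup $L$ has finite index in $\pi_0$. It therefore suffices to establish the analogous statements for $D := B\Diff_\partial^{\sfr}(S^1\times D^{2n-1};\ell_\partial)_L$.

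Passing to the limit $g\to\infty$ in \eqref{eq:sfr-weiss-plus}, the middle term is identified with $\Omega_0^\infty \mathrm{MT}\theta$ by Galatius--Randal-Williams, whose homotopy groups are finitely generated by Lemma \ref{lem:GRW}(i). A long exact sequence argument reduces the theorem to controlling $\pi_*(E^+_\infty)$ where $E^+_\infty := \hocolim_g (B\Emb_{\partial/2}^{\cong,\sfr}(X_g;\ell_{1/2\partial X_g})_\ell)^+$. By Proposition \ref{prop:EMCG up to concordance}, in the range $\ast \leq n-2$ I may replace this with the block analogue $\tilde E^+_\infty$.

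The fibration \eqref{eq:Delta} then reads $\Delta \to \tilde E^+_\infty \to \Xi := \hocolim_g (B\check{\Xi}_g^{\sfr,\ell})^+$. For $\Xi$, Proposition \ref{prop:sfrMCG} gives an extension of $\Omega_\infty^{\min}$ by at most 2-torsion, and \eqref{eq:DefnOmegaMin} together with the Hebestreit--Steimle identification $BU_\infty(\Z[\pi], \Lambda_n^{\min})^+ \simeq \Omega^\infty_0 \mathrm{GW}(\Z[\pi], \Qoppa^{gq}_{(-1)^n})$ and Lemma \ref{lem:KThy}(iv) show that $\pi_*(\Xi)$ is finitely generated. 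For $\Delta$, Lemma \ref{lem:PlusConst} combined with Example \ref{ex:EmbeddingsOddprimes} (and Example \ref{ex:EmbeddingsPi2} at $p=2$) yields $\pi_k(\Delta)_{(p)} = 0$ for $0 < k < 2p-3$, and identifies $\pi_{2p-3}(\Delta)_{(p)}$ with the $\mathcal{P}$-coinvariants of the first non-trivial $p$-local homotopy group of $\tilde E_\infty$. A careful tracking of the $\Omega_\infty^{\min}$-action on $S^{(-1)^{n+1}}_X$ should then identify this coinvariant group with $\bigoplus_{a>0} \Z/p\{t^a - t^{-a}\}$, via the hermitian form $\lambda_X$ landing in $\Z[\pi]/\Lambda_n^{\min}$.

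Combining the long exact sequences of \eqref{eq:sfr-weiss-plus} and \eqref{eq:Delta} gives finite generation of $\pi_k(D)_{(p)}$ for $k < \min(2p-3, n-2)$ and the claimed map in degree $2p-3$ with $p$-locally finitely generated cokernel (and actually finitely generated when $p=2$, since there the contribution of $L^{\sfr,\ell}_\infty$ is finitely generated without localisation). For the injectivity, I invoke the cyclotomic structure: by Lemmas \ref{lem:PT-transfer} and \ref{lem:tame}, the Frobenius operators $F_d$ make $\pi_*(\Omega^\infty_0 \mathrm{MT}\theta)$ and $\pi_*(\Xi)$ tame $\Z[\mathbb{N}^\times]$-modules, whereas the formula $F_d(t^a - t^{-a}) = t^{a/d} - t^{-a/d}$ if $d \mid a$ and $0$ otherwise exhibits the candidate summand as locally nilpotent. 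Any kernel element would lie in a finitely generated $\Z[\mathbb{N}^\times]$-subquotient that is simultaneously tame (inherited from the finitely generated pieces appearing in the long exact sequences) and locally nilpotent (inherited from the summand); by Lemma \ref{lem:tame-criterion} such a group must vanish, yielding the desired injectivity. The principal obstacle will be the explicit identification of the $\mathcal{P}$-coinvariants of $\Z/p \otimes S^{(-1)^{n+1}}_X$ with $\bigoplus_{a>0}\Z/p\{t^a - t^{-a}\}$, which demands a delicate translation between the metastable topology of Proposition \ref{prop:pi_khAut} and the Hermitian $K$-theoretic description of the unitary group action.
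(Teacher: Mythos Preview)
Your proposal follows the paper's argument essentially step for step: reduce via \eqref{eqn:sfr vs nosfr diff}, use \eqref{eq:sfr-weiss-plus} and \eqref{eq:Delta}, control $\Delta$ via Lemma \ref{lem:PlusConst} and the Examples, control $\Xi$ via Hermitian $K$-theory, and separate the infinite part from the finitely-generated tame part using the cyclotomic structure.

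A few slips to fix. First, the fibre of \eqref{eqn:sfr vs nosfr diff} is not $\Omega\SOn$: one has $\pi_k(\Map_\partial(S^1\times D^{2n-1},\SOn)) \cong \pi_{2n+k}(\SOn)\oplus\pi_{2n-1+k}(\SOn)$, though your conclusion (finite generation, and tameness under covers) is unaffected. Second, there is an off-by-one: the first nonvanishing $p$-local homotopy of $\Delta$ is in degree $2p-2$, not $2p-3$; the shift down to $2p-3$ only occurs via the connecting map of \eqref{eq:sfr-weiss-plus}. Third, and more substantively, your claim that $\pi_*(\Xi)$ is finitely generated in all degrees does not follow from the references you cite: the $2$-group $L^{\sfr,\ell}_\infty$ in Proposition \ref{prop:sfrMCG} is not finite, and the paper only establishes (Lemma \ref{lem:epsilon}) that $\pi_*(\Xi)\otimes\Z[\tfrac12]$ is finitely generated, together with integral finite generation of $\pi_i(\Xi)$ for $i\leq 2$ via a separate homological-stability argument---this is exactly what is needed, since the $p=2$ case of the theorem only concerns degree $1$. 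Finally, the coinvariant calculation you flag as the main obstacle is precisely Lemma \ref{lem:coinvariants U_g}, and the formula \eqref{eq:AlgFrob} for the $F_d$-action on these coinvariants is established in Lemma \ref{lem:delta}.
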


The following lemma refers to the space $\Delta$ defined in \eqref{eq:Delta}.

\begin{lem}\label{lem:delta}
We have $\pi_i(\Delta)_{(p)}=0$ for $i < \min(2p-2,n-1)$ and if $2p-2 < n-1$ there is an injective map
$$\bigoplus_{a> 0} \Z/p\{t^a - t^{-a}\}\to \pi_{2p-2}(\Delta)$$
whose cokernel is $p$-locally trivial if $p$ is odd, and is $\Z/2$ if $p=2$. The $\Z[\mathbb{N}^\times]$-module structure on the left-hand term is given by the rule
\begin{equation}\label{eq:AlgFrob}
F_d(t^a - t^{-a}) := \begin{cases}
t^{a/d} - t^{-a/d} & \text{ if $d$ divides $a$}\\
0 & \text{ otherwise,}
\end{cases}
\end{equation}
and linearity.
\end{lem}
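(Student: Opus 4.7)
The plan is to apply Lemma~\ref{lem:PlusConst} to the defining fibration \eqref{eq:Delta} with $X = \hocolim_g B\blEmb^{\mathrm{sfr},\cong}_{\partial/2}(X_g;\ell_{1/2\partial X_g})_\ell$ and $\Gamma = \check{\Xi}_\infty^{\sfr,\ell}$. Since $X^+ \to B\Gamma^+$ induces an isomorphism on $\pi_1$ (both sides equal $\Gamma/\mathcal{P}(\Gamma)$), the fibre $\Delta$ is simply-connected, which handles the vanishing claims at $i=0,1$. For odd $p$, Example~\ref{ex:EmbeddingsOddprimes} gives $\pi_i(X)_{(p)} = 0$ for $2 \leq i < 2p-2$ and $\pi_{2p-2}(X)_{(p)} \cong \Z/p \otimes S^{(-1)^{n+1}}_X$, so Lemma~\ref{lem:PlusConst} applied with $S$ the set of primes different from $p$ yields $\pi_i(\Delta)_{(p)} = 0$ for $i < 2p-2$ together with
\[
\pi_{2p-2}(\Delta)_{(p)} \;\cong\; \Z/p \otimes \bigl(S^{(-1)^{n+1}}_X\bigr)_{\mathcal{P}(\Gamma)}.
\]

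To compute these coinvariants I will use that the action of $\Gamma$ on $\pi_n(X_g)$ factors, by Proposition~\ref{prop:sfrMCG}, through $\Omega_\infty^{\min} \subseteq U_\infty(\Z[\pi], \Lambda_n^{\min})$. The isomorphism $\pi_n(X_g) \otimes_{\Z[\pi]} \pi_n(X_g) \xrightarrow{\sim} \mathrm{End}_{\Z[\pi]}(\pi_n(X_g))$ induced by $\lambda_X$ turns the swap $\tau(x\otimes y) = y\otimes x$ into $(-1)^n$ times the hermitian adjoint, so in either parity of $n$ the subgroup $S^{(-1)^{n+1}}_X$ is identified with $\{A - A^\dagger : A \in M_\infty(\Z[\pi])\}$. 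Composing with the trace $M_\infty(\Z[\pi]) \to \Z[\pi]$ then yields a $U_\infty$-equivariant surjection onto $\{c - \bar c : c \in \Z[\pi]\} = \bigoplus_{a>0}\Z\{t^a - t^{-a}\}$. A Hermitian-Hochschild-style argument---using that the elementary unitary subgroup $EU_\infty \subseteq \mathcal{P}(\Gamma)$ is perfect and that trace is invariant under conjugation by unitary elementary matrices---then shows that the trace descends to an isomorphism
\[
\bigl(S^{(-1)^{n+1}}_X\bigr)_{\mathcal{P}(\Gamma)} \xrightarrow{\;\sim\;} \bigoplus_{a>0}\Z\{t^a - t^{-a}\};
\]
tensoring with $\Z/p$ gives the desired map for $p$ odd, and its integral injectivity follows from the general fact that a $p$-torsion group injects into its $p$-localization.

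The $\Z[\mathbb{N}^\times]$-module structure follows because the Frobenius $F_d$ on $\Delta$ is induced by the $d$-fold cyclic covering $X_{dg}\to X_g$; on $\pi_n$ this amounts to restriction of scalars along $\Z[t^{\pm d}] \hookrightarrow \Z[t^{\pm 1}]$, which under the trace identification transfers to the operator on $\Z[\pi]$ sending $t^a$ to $t^{a/d}$ if $d\mid a$ and to $0$ otherwise, matching \eqref{eq:AlgFrob}.

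For $p=2$ one follows the same recipe but uses Example~\ref{ex:EmbeddingsPi2} in place of Example~\ref{ex:EmbeddingsOddprimes}. The pieces of $\pi_2(X)_{(2)}$ tensored with $H$ appearing in its extensions have vanishing $\mathcal{P}$-coinvariants for large $g$ (since $H_{U_\infty(\Z,\Lambda_n)}$ is trivial in the stable range), so the $\Z/2\otimes S^{\pm}_X$ contribution produces $\bigoplus \Z/2\{t^a - t^{-a}\}$ by the same trace argument (noting that modulo $2$ the sums $\{c+\bar c\}$ and $\{c-\bar c\}$ reduce to the same group), while an additional $\Z/2$ contribution coming from the $\mathrm{Hom}(H,\Z/2)$ summand of $L_g^{\sfr,\ell}$ in Proposition~\ref{prop:sfrMCG} feeds into $\pi_2(\Delta)$ via the long exact sequence and yields the claimed $\Z/2$ cokernel. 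The main obstacle will be verifying the trace-coinvariants isomorphism---a careful hermitian Hochschild homology computation for $\Z[\pi]$---and pinning down the residual $\Z/2$ at $p=2$.
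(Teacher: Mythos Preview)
Your overall strategy matches the paper's: apply Lemma~\ref{lem:PlusConst} to the fibration defining $\Delta$, use Examples~\ref{ex:EmbeddingsOddprimes} and~\ref{ex:EmbeddingsPi2} to identify the relevant $p$-local homotopy groups of the total space, and then compute the coinvariants. The treatment of the $H$-terms at $p=2$ and the general outline of the Frobenius computation are also in line with the paper.

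However, the coinvariant computation contains a genuine error. Your claimed integral isomorphism
\[
\bigl(S^{(-1)^{n+1}}_X\bigr)_{\mathcal{P}(\Gamma)} \xrightarrow{\;\sim\;} \bigoplus_{a>0}\Z\{t^a - t^{-a}\}
\]
via the trace (i.e.\ via $\lambda_X$) is false: the paper's Lemma~\ref{lem:coinvariants U_g} shows that the $EU_g$-coinvariants of $S^{(-1)^{n+1}}_X$ carry an additional $\Z/2$ summand, not detected by $\lambda_X$ but by an auxiliary equivariant map $\phi^{\pm}: S^{\pm}_X \to \Z/2$ built from a quadratic refinement. Your ``Hermitian-Hochschild-style argument'' would have to produce this extra $\Z/2$, and a pure trace argument cannot. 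For $p$ odd this happens not to matter, since $\Z/2 \otimes \Z/p = 0$, so your conclusion there is correct even though the stated integral isomorphism is not.

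At $p=2$ this error becomes visible, and your explanation of the residual $\Z/2$ is wrong. It does \emph{not} come from the $\mathrm{Hom}(H,\Z/2)$ piece of $L_g^{\sfr,\ell}$: that group is part of the kernel of $\check{\Xi}_g^{\sfr,\ell} \to \Omega_g^{\min}$ and acts trivially on $\pi_2(X)$, so it plays no role in $H_0(\mathcal{P}(\check{\Xi}_g^{\sfr,\ell}); \pi_2(X))$. The extra $\Z/2$ is intrinsic to the coinvariants of $\Z/2 \otimes S^{\pm}_X$ themselves: in each of the cases of Example~\ref{ex:EmbeddingsPi2}, the relevant $S^{\pm}_X$ has either a $\Z/2$ or a $\Z\{2t^0\}$ summand in its $EU_g$-coinvariants (again by Lemma~\ref{lem:coinvariants U_g}), and tensoring with $\Z/2$ turns either into $\Z/2$. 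So the missing ingredient is precisely the explicit elementary-matrix coinvariant calculation carried out in Appendix~\ref{sec:Coinv}, together with the quadratic-form invariant $\phi^{\pm}$ that witnesses the surviving $\Z/2$.
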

\begin{proof}
We apply Lemma \ref{lem:PlusConst} to the unstable analogue
$$\Delta_g \to B\blEmb^{\mathrm{sfr}, \cong}_{\partial/2}(X_{g} ; \ell_{1/2\partial X_{g}})_\ell^+ \to (B\check{\Xi}_g^{\sfr,\ell})^+$$
of the fibration sequence \eqref{eq:Delta}.

If $p$ is odd then by Example \ref{ex:EmbeddingsOddprimes} the groups $\pi_k(B\blEmb^{\mathrm{sfr}, \cong}_{\partial/2}(X_{g} ; \ell_{1/2\partial X_{g}}))_{(p)}$ vanish for $2 \leq k < \min(2p-2,n-1)$ and if $2p-2 < n-1$ we have
$$\pi_{2p-2}(B\blEmb^{\mathrm{sfr}, \cong}_{\partial/2}(X_g ; \ell_{1/2\partial X_g}))_{(p)} \cong \Z/p \otimes S^{(-1)^{n+1}}_X.$$
Thus by Lemma \ref{lem:PlusConst} we have $\pi_k(\Delta_g)_{(p)}=0$ for $1 \leq k < \min(2p-2,n-1)$, and if $2p-2 < n-1$ then the map
$$H_0(\check{\Xi}_g^{\sfr,\ell} ; \Z/p \otimes S^{(-1)^{n+1}}_X) \to \pi_{2p-2}(\Delta_g)_{(p)}$$
is an isomorphism. The action of $\check{\Xi}_g^{\sfr,\ell}$ on $\Z/p \otimes S^{(-1)^{n+1}}_X$ is via the surjection $\check{\Xi}_g^{\sfr,\ell} \to \Omega_g^{\min}$, so the coinvariants are the same as those for the latter group, which we compute in Lemma \ref{lem:coinvariants U_g} of Appendix \ref{sec:Coinv}. Taking colimits as $g \to \infty$ we find that $\pi_k(\Delta)_{(p)}=0$ for $1 \leq k < \min(2p-2,n-1)$, and if $2p-2 < n-1$ then
$$ \bigoplus_{a> 0} \Z/p\{t^a - t^{-a}\} \xto{\sim}\pi_{2p-2}(\Delta)_{(p)}.$$

If $p=2$ then we instead use the two extensions described in Example \ref{ex:EmbeddingsPi2}, using again that $\check{\Xi}_g^{\sfr,\ell}$ acts on these groups via the surjection $\check{\Xi}_g^{\sfr,\ell} \to \Omega_g^{\min}$ so that we may compute coinvariants for the latter group. The terms
$$\mathrm{Coker}(\Sigma \pi_{2n+2}(S^n) \to \pi_{n+2}^s) \otimes H \quad\text{ and }\quad \mathrm{Ker}(\Sigma \pi_{2n+1}(S^n) \to \pi_{n+1}^s) \otimes H$$
have trivial $\Omega_g^{\min}$-coinvariants by Lemma \ref{lem:coinvariantsH}, so by Lemma \ref{lem:coinvariants U_g} there is an isomorphism
$$H_0(\Omega_g^{\min} ; \pi_2(B\blEmb^{\mathrm{sfr}, \cong}_{\partial/2}(X_g ; \ell_{1/2\partial X_g}), \ell)) \cong \Z/2 \otimes \begin{cases}
\bigoplus_{a> 0} \Z\{t^a + (-1)^n t^{-a}\} \\
\bigoplus_{a> 0} \Z\{t^a - (-1)^n t^{-a}\} 
\end{cases} \oplus \Z/2 $$
with the same two cases as  in Example \ref{ex:EmbeddingsPi2}. In either case we can write the first term as $\bigoplus_{a> 0} \Z/2\{t^a - t^{-a}\}$, because it is tensored with $\Z/2$. As above, taking colimits as $g \to \infty$ we find an isomorphism
\begin{equation*}
\left(\bigoplus_{a> 0} \Z/2\{t^a - t^{-a}\}\right) \oplus \Z/2 \xto{\sim} \pi_2(\Delta)_{(2)}.
\end{equation*}

We determine the $\Z[\mathbb{N}^\times]$-module structure as follows. It is induced by the map on coinvariants of $S^{\pm}_X$. The map of $S^\pm_X$'s induced by the cyclic cover $X_{dg} \to X_g$ comes from the map on homotopy automorphisms, and in terms of our description of the homotopy groups of $\hAut_\partial(X_g)$ in Section \ref{sec:hAutHigherHty} it is induced by the isomorphism 
$$\tau_d: \pi_n(X_g) \to \pi_n(X_{dg})$$
inverse to that given by viewing $X_{dg}$ as a covering space of $X_g$. Identifying $\pi_1(X_{dg})$ as the subgroup of $\pi_1(X_g) = \langle t \rangle$ generated by $t^d$, $\tau_d$ is the map of $\Z[(t^d)^{\pm}]$-modules given in terms of the hyperbolic bases by
$$\tau_d(t^e a_i) = a_{i+eg} \quad\quad \tau_d(t^e b_i) = b_{i+eg}$$
for $0 \leq e < d$. Now the coinvariant $t^a-t^{-a} \in \Z[\pi]$ of $S^{(-1)^{n+1}}_X$ may be represented by $\xi := t^a a_1 \otimes b_1 - (-1)^{n} t^{-a} b_1 \otimes a_1 \in S^{(-1)^{n+1}}_X$. If $d$ does not divide $a$, so $a = kd+r$ with $0 < r < d$, then by the formula for $\tau_d$ above the first term of $\xi$ is sent to $(t^{d})^k a_{1+rg} \otimes b_1$, which vanishes under the equivariant intersection form, as $1+rg \neq 1$. The second term vanishes for a similar reason, so $F_d(t^a-t^{-a})=0$. On the other hand if $a=kd$ then $\xi$ is sent to
$$(t^{d})^k a_{1} \otimes b_1 - (-1)^{n} (t^{d})^{-k} b_{1} \otimes a_1$$
which under the equivariant intersection form goes to $(t^{d})^k - (t^{d})^{-k}$. Bearing in mind that the generator of the fundamental group of $X_{dg}$ is $t^d$ for the purposes of this calculation, identifying it with $t$ this element corresponds to $t^k - t^{-k}$. Thus the map is indeed given by the formula \eqref{eq:AlgFrob}.
\end{proof}

\begin{lem}\label{lem:epsilon}\mbox{}
\begin{enumerate}[(i)]
\item The map 
$\pi_i((B\check{\Xi}_{\infty}^{\sfr,\ell})^+)\otimes\Z[\tfrac{1}{2}]\to \pi_i((B\Omega_{\infty}^{\min})^+)\otimes\Z[\tfrac{1}{2}]$
is an isomorphism for all $i$, and
\item for $i\leq 2$, the groups $\pi_i((B\check{\Xi}_{\infty}^{\sfr,\ell})^+)$ are finitely-generated abelian groups.
\end{enumerate}
\end{lem}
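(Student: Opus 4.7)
The plan rests on the extension $1 \to L^{\sfr,\ell}_g \to \check{\Xi}^{\sfr,\ell}_g \to \Omega^{\min}_g \to 1$ from Proposition \ref{prop:sfrMCG}, which after stabilization yields a fibration sequence
$$BL^{\sfr,\ell}_\infty \to B\check{\Xi}^{\sfr,\ell}_\infty \to B\Omega^{\min}_\infty.$$
The structural input I need is that $L^{\sfr,\ell}_g$ is an abelian group of exponent dividing $4$, so the same holds for $L^{\sfr,\ell}_\infty := \colim_g L^{\sfr,\ell}_g$.

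For part (i), I would apply the Serre spectral sequence of this fibration with $\Z[\tfrac{1}{2}]$-coefficients. Since $L^{\sfr,\ell}_\infty$ is $2$-torsion, $\widetilde{H}_*(BL^{\sfr,\ell}_\infty; \Z[\tfrac{1}{2}]) = 0$, so $B\check{\Xi}^{\sfr,\ell}_\infty \to B\Omega^{\min}_\infty$ is a $\Z[\tfrac{1}{2}]$-homology equivalence. Plus-construction preserves integral homology, so the induced map $(B\check{\Xi}^{\sfr,\ell}_\infty)^+ \to (B\Omega^{\min}_\infty)^+$ is also a $\Z[\tfrac{1}{2}]$-homology equivalence. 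Both targets are basepoint components of the group completions of the $E_\infty$-algebras $\coprod_g B\check{\Xi}^{\sfr,\ell}_g$ and $\coprod_g B\Omega^{\min}_g$, hence are infinite loop spaces, hence are nilpotent. A $\Z[\tfrac{1}{2}]$-homology equivalence between nilpotent spaces induces $\pi_i \otimes \Z[\tfrac{1}{2}]$-isomorphisms, giving (i).

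For part (ii), combining (i) with Lemma \ref{lem:KThy}(iv) (which gives $\pi_i((B\Omega^{\min}_\infty)^+)$ finitely generated), it suffices to verify that the $2$-primary part of $\pi_i((B\check{\Xi}^{\sfr,\ell}_\infty)^+)$ is finitely generated for $i \leq 2$. My approach is to first establish finite generation of $H_i(\check{\Xi}^{\sfr,\ell}_\infty; \Z)$ for $i \leq 2$ via the integral Serre spectral sequence of the same fibration. The $E_2$-entries $H_p(\Omega^{\min}_\infty; H_q(BL^{\sfr,\ell}_\infty))$ with $p+q \leq 2$ split into: $H_p(\Omega^{\min}_\infty)$ for small $p$, which is finitely generated since $(B\Omega^{\min}_\infty)^+$ is an infinite loop space with finitely generated homotopy groups by Lemma \ref{lem:KThy}; and $\Omega^{\min}_\infty$-coinvariants of $L^{\sfr,\ell}_\infty$ and of $H_2(BL^{\sfr,\ell}_\infty)$. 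Since the $\Omega^{\min}_\infty$-action on $L^{\sfr,\ell}_\infty$ factors through the standard action on $H_\infty$, these coinvariants are controlled by Lemma \ref{lem:coinvariantsH}. Then $\pi_1((B\check{\Xi}^{\sfr,\ell}_\infty)^+) = H_1(\check{\Xi}^{\sfr,\ell}_\infty)$ is immediately finitely generated; and for $\pi_2$, I use the infinite loop space structure of $(B\check{\Xi}^{\sfr,\ell}_\infty)^+$, which fibres over $K(\pi_1, 1)$ with $2$-connected fibre $F$, so $\pi_2 = H_2(F)$ by Hurewicz, extracted via the Serre spectral sequence of this Postnikov fibration from $H_2(\check{\Xi}^{\sfr,\ell}_\infty)$ together with $H_*(K(\pi_1, 1))$, all of which are finitely generated.

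The principal obstacle will be the coinvariant term $H_0(\Omega^{\min}_\infty; H_2(BL^{\sfr,\ell}_\infty))$: this involves coinvariants of essentially the exterior square of an infinite-rank $2$-torsion group under the symplectic action, requiring an application of Lemma \ref{lem:coinvariantsH} beyond its direct use for $H_\infty$, to quadratic tensors in $L^{\sfr,\ell}_\infty$.
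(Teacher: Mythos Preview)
Your argument for (i) is correct and matches the paper's: the kernel $L^{\sfr,\ell}_g$ is a finite $2$-group, so the map is a $\Z[\tfrac{1}{2}]$-homology equivalence, and both sides are loop spaces.

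For (ii) there is a gap. In your Serre spectral sequence for $BL^{\sfr,\ell}_\infty \to B\check{\Xi}^{\sfr,\ell}_\infty \to B\Omega^{\min}_\infty$, the entries with $p+q \leq 2$ you list comprise the $q=0$ row and the $p=0$ column, but you omit
\[
E^2_{1,1} = H_1(\Omega^{\min}_\infty; L^{\sfr,\ell}_\infty).
\]
This is not a coinvariants term, and since $L^{\sfr,\ell}_\infty$ is an infinite direct sum of copies of $\Z/2$ (or $\Z/4$), there is no immediate reason this degree-$1$ group homology with infinite-rank coefficients should be finitely generated. Lemma \ref{lem:coinvariantsH} only controls $H_0$.

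The paper avoids this difficulty by first establishing homological stability for the groups $\check{\Xi}^{\sfr,\ell}_g$ (the $H_q(L^{\sfr,\ell}_g;\Z)$ form polynomial coefficient systems, so \cite{RWW} applies), reducing to showing $H_i(\check{\Xi}^{\sfr,\ell}_g;\Z)$ is finitely generated for $i \leq 2$ at a \emph{fixed large $g$}. At finite $g$ the group $L^{\sfr,\ell}_g$ is finite, so the $E^2_{0,q}$ terms are automatically finitely generated with no coinvariant computation needed, and the $E^2_{1,1}$ term is handled by showing that $\Omega^{\min}_g$ is itself a finitely generated group (via results of Hahn--O'Meara on elementary unitary groups combined with Lemma \ref{lem:KThy}(iv)). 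Your direct approach at $g=\infty$ could be repaired by invoking the same twisted homological stability to control $\colim_g H_1(\Omega^{\min}_g; L^{\sfr,\ell}_g)$, but as written the $(1,1)$ term is simply missing from your analysis.

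Incidentally, your ``principal obstacle'' $H_0(\Omega^{\min}_\infty; H_2(BL^{\sfr,\ell}_\infty))$ is in fact the easier term: for $n \neq 6$ it is the coinvariants of $\Lambda^2(H_\infty \otimes \Z/2)$ under a group containing the elementary unitary matrices, which one checks is $\Z/2$. But the paper never needs this, since at finite $g$ the coefficient group is already finite.
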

\begin{proof}
For (i), the description of $\check{\Xi}_g^{\sfr,\ell}$ in Proposition \ref{prop:sfrMCG} shows that the kernel of its map to $\Omega_g^{\min}$ is a finite group of order a power of 2. Thus the map $(B\check{\Xi}_{\infty}^{\sfr,\ell})^+\to (B\Omega_{\infty}^{\min})^+$ is an isomorphism on $H_*(-;\Z[\tfrac{1}{2}])$, and as these spaces are loop-spaces it is then also an isomorphism on $\pi_*(-)\otimes \Z[\tfrac{1}{2}]$.

For (ii), it is enough to show that the groups $\colim_{g \to \infty}H_i(\check{\Xi}_g^{\sfr,\ell};\Z)$ are finitely-generated for $i \leq 2$. 

We first claim that these groups have homological stability. By the description of $\check{\Xi}_g^{\sfr,\ell}$ in Proposition \ref{prop:sfrMCG}, and using Shapiro's lemma, there is a natural spectral sequence
$$E^2_{p,q} = H_p(U_g(\Z[\pi], \Lambda_n^{\min}) ; H_i(L_g^{\sfr, \ell};\Z) \otimes \Z[bP_{2n}]) \Rightarrow H_{p+q}(\check{\Xi}_g^{\sfr,\ell};\Z).$$
The $H_i(L_g^{\sfr, \ell};\Z)$ are polynomial coefficient systems, and $\Z[bP_{2n}]$ is an abelian coefficient system, so in total this is the kind of coefficient system to which the second part of Theorem 4.20 of \cite{RWW} applies. As described in Section 5.4 of \cite{RWW} that theorem applies in this case as long as the ring $\Z[\pi]$ has finite ``unitary stable rank", which it does by \cite[Theorem 7.3]{CrowleySixt}.

It therefore suffices to show that the groups $H_i(\check{\Xi}_g^{\sfr,\ell};\Z)$ are finitely-generated for $i \leq 2$ and all large enough $g$. The Serre spectral sequence  for the extension in Proposition \ref{prop:sfrMCG} shows that $H_1(\check{\Xi}_g^{\sfr,\ell};\Z)$ is assembled out of subquotients of
$$H_0(\Omega_g^{\min} ; H_1(L_g^{\sfr,\ell};\Z))\text{ and } H_1(\Omega_g^{\min};\Z),$$
and that $H_2(\check{\Xi}_g^{\sfr,\ell};\Z)$ is assembled out of subquotients of
$$H_0(\Omega_g^{\min} ; H_2(L_g^{\sfr,\ell};\Z)), H_1(\Omega_g^{\min} ; H_1(L_g^{\sfr,\ell};\Z)) \text{ and } H_2(\Omega_g^{\min};\Z).$$
Of these, the first terms are finitely-generated as $L_g^{\sfr, \ell}$ is. Since $\Omega_g^{\min}$ is a finite index subgroup of $U_g(\Z[\pi],\Lambda_n^{\min})$, the last terms are finitely-generated by Lemma \ref{lem:KThy} (iv) (and homological stability for the groups $\Omega_g^{\min}$, parallel to the above). The remaining term $H_1(\Omega_g^{\min} ; H_1(L_g^{\sfr,\ell};\Z))$ is finitely-generated because in addition the group $\Omega_g^{\min}$ is finitely-generated for large enough $g$. This may be seen by combining the facts that:
\begin{enumerate}[(i)]
\item the subgroup $EU_g(\Z[\pi], \Lambda_n^{\min})$ of elementary unitary matrices (cf.\ \cite[\S 5.3A]{HO}) is finitely-generated by \cite[Theorem 9.2.9]{HO}, 
\item we have $U_g(\Z[\pi], \Lambda_n^{\min})/EU_g(\Z[\pi], \Lambda_n^{\min}) 
\cong H_1(U_g(\Z[\pi], \Lambda_n^{\min});\Z)$ for $g \geq 3$ by \cite[5.4.2 and 5.4.3]{HO}, which in turn agrees with $\pi_1(GW(\Z[\pi], \Qoppa^{gq}_{\pm}))$ for large enough $g$ by homological stability, and this is finitely-generated by Lemma \ref{lem:KThy} (iv),
\item $\Omega_g^{\min} \leq U_g(\Z[\pi], \Lambda_n^{\min})$ has finite index.\qedhere
\end{enumerate}
\end{proof}

\begin{proof}[Proof of Theorem \ref{thm:Tors}]
We will use \eqref{eq:sfr-weiss-plus}, \eqref{eqn:sfr vs nosfr diff} and \eqref{eq:Delta}. We suppose that $2p-2 < n-1$, leaving the reader to keep track of the inequalities necessary for the general argument.

Suppose first that $p$ is odd. From Lemma \ref{lem:delta} we have that $\pi_i(\Delta)_{(p)}=0$ for $i < 2p-2$, and that there is an isomorphism
$$\bigoplus_{a> 0} \Z/p\{t^a - t^{-a}\} \xto{\sim}\pi_{2p-2}(\Delta)_{(p)}.$$
Furthermore, the cyclotomic structure on this group is given by \eqref{eq:AlgFrob}. As this has no tame submodules, and the homotopy groups of $\hocolim_{g \to \infty}(B\check{\Xi}_g^{\sfr,\ell})^+$ are finitely-generated after $p$-localisation (by Lemma \ref{lem:epsilon} (i) and Lemma \ref{lem:KThy} (iv)) 
and tame (by Lemma \ref{lem:tame}), it follows from \eqref{eq:Delta}  that the induced map
$$\bigoplus_{a> 0} \Z/p\{t^a - t^{-a}\} \to \pi_{2p-2}\left(\hocolim\limits_{g \to \infty}B\blEmb^{\mathrm{sfr}, \cong}_{\partial/2}(X_g ; \ell_{1/2\partial X_g})_\ell^+\right)_{(p)}$$
is injective and has finitely-generated and tame cokernel, and that the lower homotopy groups of the target are finitely-generated and tame. Using Proposition \ref{prop:EMCG up to concordance} the same conclusion holds with block-embeddings replaced by embeddings, again using $2p-2 < n-1$.

 The same argument using the stabilisation of the fibration sequence \eqref{eq:sfr-weiss-plus} and the fact that the homotopy groups of $\hocolim_{g \to \infty}B\Diff_{\partial}^{\sfr}(X_g; \ell_{\partial X_g})_\ell^+ \simeq \Omega^\infty_0 \mathrm{MT}\theta$ are finitely-generated (by Lemma \ref{lem:GRW} (i)) and tame (by Lemma \ref{lem:tame}) shows that the composition of the map above with the connecting homomorphism
$$\partial : \pi_{2p-2}\left(\hocolim\limits_{g \to \infty}B\Emb^{\mathrm{sfr}, \cong}_{\partial/2}(X_g ; \ell_{1/2\partial X_g})_\ell^+\right)_{(p)} \to \pi_{2p-3}(B\Diff_{\partial}^{\sfr}(S^1\times D^{2n-1}; \ell_{\partial_0})_L)_{(p)}$$
is still injective and has cokernel, and all lower homotopy groups, finitely-generated and tame. Finally, the same argument with \eqref{eqn:sfr vs nosfr diff} using that the homotopy groups of $\Map_{\partial}(S^1\times D^{2n-1},O)$ are also finitely-generated and tame (with respect to the evident cyclotomic structure given by taking cyclic covers) gives the claimed result.

If $p=2$ then we modify this argument slightly as follows. We have an exact sequence 
\beq
\pi_3((B\check{\Xi}_\infty^{\sfr,\ell})^+) \to \pi_2(\Delta)\to\pi_{2}\left(\hocolim\limits_{g \to \infty}B\Emb^{\mathrm{sfr}, \cong}_{\partial/2}(X_g ; \ell_{1/2\partial X_g})_\ell^+\right)\to\pi_2((B\check{\Xi}_\infty^{\sfr,\ell})^+) 
\eeq
whose leftmost term is tame by Lemma \ref{lem:tame}. By Lemma \ref{lem:delta} we have a map $\bigoplus_{a> 0} \Z/2\{t^a - t^{-a}\} \to \pi_{2}(\Delta)$ which is injective with finitely-generated and tame cokernel (in fact the cokernel is $\Z/2$). The group $\pi_2((B\check{\Xi}_\infty^{\sfr,\ell})^+)$ is tame (by Lemma \ref{lem:tame}) and finitely-generated (by Lemma \ref{lem:epsilon} (ii)). It then follows that the induced map
$$\bigoplus_{a> 0} \Z/2\{t^a - t^{-a}\} \to \pi_{2}\left(\hocolim\limits_{g \to \infty}B\blEmb^{\mathrm{sfr}, \cong}_{\partial/2}(X_g ; \ell_{1/2\partial X_g})_\ell^+\right)$$
is injective and has finitely-generated and tame cokernel. From here we proceed identically to the case $p$ odd.
\end{proof}


\subsection{Rational homology: Theorem \ref{thm:A}}
 
In this section we shall prove Theorem \ref{thm:A}, namely that the rational homotopy groups of $B\Diff_{\partial}(S^1\times D^{2n-1})$ vanish in degrees $* < n-2$. The strategy will be to show that the other two spaces in the homotopy fibre sequence \eqref{eqn:sfr vs nosfr diff} have the same rational homotopy groups in this range: we start by calculating those of $B\Diff_{\partial}^{\sfr}(S^1\times D^{2n-1};\ell_{\partial})$.

\begin{lem}\label{lem:SfrRatCalc}
For $n\geq 3$ and $0 < k<n-2$ we have
\begin{equation*}
\pi_k(B\Diff^{\sfr}_{\partial}(S^1\times D^{2n-1};\ell_{\partial}))\otimes\Q \cong \begin{cases}
\Q & 2n+k \equiv 0,3 \mod 4\\
0 & \text{ else}.
\end{cases}
\end{equation*}
\end{lem}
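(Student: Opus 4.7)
The plan is to evaluate the long exact sequence on rational homotopy groups of the stabilised Weiss fibration \eqref{eq:sfr-weiss-plus} as $g\to\infty$. First I would identify the middle term with $\Omega^\infty_0\mathrm{MT}\theta$ via Lemma \ref{lem:GRW}, so that part (ii) of that lemma gives $\pi_k(\mathrm{MT}\theta)\otimes\Q = \Q$ for $k=1$ and $0$ for $2 \leq k < 2n+3$. For the right-hand term I would chain: Lemma \ref{lem:delta} (which ensures $\pi_i(\Delta)\otimes\Q = 0$ for $i < n-1$, so that $\hocolim_g B\Emb^{\cong,\sfr}_{\partial/2}(X_g;\ell_{1/2\partial X_g})^+_\ell$ agrees rationally in degrees $\le n-2$ with $\hocolim_g(B\check{\Xi}_g^{\sfr,\ell})^+$), Lemma \ref{lem:epsilon}(i) (which rationally identifies this with $\hocolim_g(B\Omega_g^{\min})^+$), and the Hebestreit--Steimle identification $BU_\infty(\Z[\pi],\Lambda_n^{\min})^+ \simeq \Omega^\infty_0\mathrm{GW}(\Z[\pi],\Qoppa^{gq}_{(-1)^n})$, combined with the fact that $\Omega_\infty^{\min}$ is finite-index in $U_\infty(\Z[\pi],\Lambda_n^{\min})$ so the two plus-constructions have isomorphic rational homotopy in positive degrees. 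Lemma \ref{lem:KThy}(i) rationally splits the fibration \eqref{eq:CDHHLMNNS}, giving
\begin{equation*}
\pi_d(\mathrm{GW})_\Q \cong \pi_d(\mathrm K(\Z[\pi])_{hC_2})_\Q \oplus \pi_d(\mathrm L(\Z[\pi],\qoppa^{gq}_\pm))_\Q,
\end{equation*}
where the first summand is $\Q$ for $d=0,1$ and $0$ otherwise by Lemma \ref{lem:KThy}(ii), and the second is $\Q$ iff $d+2n \equiv 0,1 \pmod 4$ by Lemma \ref{lem:KThy}(v).

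Running the long exact sequence in the range $2 \le k < n-2$, both $\pi_k(\mathrm{MT}\theta)_\Q$ and $\pi_{k+1}(\mathrm{MT}\theta)_\Q$ vanish, so it collapses to an isomorphism $\pi_k(B\Diff^{\sfr}_\partial(S^1 \times D^{2n-1}))_\Q \cong \pi_{k+1}(\mathrm{GW})_\Q$. Since $k+1 \ge 3$ the $K_{hC_2}$-summand does not contribute, so this is $\Q$ precisely when $(k+1)+2n \equiv 0,1 \pmod 4$, equivalently $k+2n \equiv 0,3 \pmod 4$, matching the claim.

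The hard part will be the case $k=1$, where $\pi_1(\mathrm{MT}\theta)_\Q = \Q$ intervenes and the LES only yields an extension
\begin{equation*}
0 \to \pi_2(\mathrm{GW})_\Q \to \pi_1(B\Diff^{\sfr}_\partial(S^1\times D^{2n-1}))_\Q \to \ker(\alpha) \to 0
\end{equation*}
with $\alpha \colon \pi_1(\mathrm{MT}\theta)_\Q \to \pi_1(\mathrm{GW})_\Q$. Checking both parities of $n$ (using the explicit values of $\pi_2(\mathrm{GW})_\Q$ and $\pi_1(\mathrm{GW})_\Q$ computed above) shows that the claim holds if and only if $\alpha$ is injective, equivalently nonzero. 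To prove this I would exhibit a concrete one-parameter family of stably framed diffeomorphisms of $X_g$---e.g.\ an interior $2\pi$-rotation along the $S^1$ factor, supported away from the boundary and reglued into a genuine loop using an isotopy back to the identity---whose Pontrjagin--Thom class in $\pi_1(\mathrm{MT}\theta)_\Q$ is a generator (detected by fibre-integrating the universal class $dt \smile e \in H^{2n+1}(B\theta;\Q)$ associated to the Euler class of the vertical tangent bundle), and whose induced monodromy on $\pi_n(X_g) \cong H \otimes \Z[\pi]$ is multiplication by $t$, so that the determinant of the corresponding class in $U_g(\Z[\pi],\Lambda_n^{\min})$ has nonzero $t$-exponent and hence pairs nontrivially with the $K_{hC_2}$-detector of Lemma \ref{lem:KThy}(iii).
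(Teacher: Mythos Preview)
Your overall route is the paper's: run the long exact sequence of the stabilised Weiss fibration \eqref{eq:sfr-weiss-plus}, compute the middle term via Lemma~\ref{lem:GRW}~(ii), and compute the embedding term via $\Delta$, Lemma~\ref{lem:epsilon}~(i), the finite-index inclusion $\Omega^{\min}_\infty \leq U_\infty(\Z[\pi],\Lambda_n^{\min})$, and Lemma~\ref{lem:KThy}. The analysis for $2\leq k<n-2$ is correct.

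One small inaccuracy: Lemma~\ref{lem:delta} is $p$-local with range $i<\min(2p-2,n-1)$, so it gives no information about $\pi_i(\Delta)_{(p)}$ when $i\geq 2p-2$ and hence does not by itself yield $\pi_i(\Delta)\otimes\Q=0$ for all $i<n-1$. The paper instead applies Lemma~\ref{lem:PlusConst} directly with $S=\Z\setminus\{0\}$, using that the groups $\pi_k(B\blEmb^{\sfr,\cong}_{\partial/2}(X_g))$ are torsion for $2\leq k<n-1$ (immediate from Example~\ref{ex:EmbeddingsOddprimes}, or more directly from Proposition~\ref{prop:pi_khAut} and Lemma~\ref{lem:ConnectingMapNormalInv} since all the groups appearing are finite).

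For $k=1$ your construction is muddled. An element of $\pi_1(B\Diff^{\sfr}_\partial(X_g))$ is a stably framed mapping class, i.e.\ a single diffeomorphism, not a loop of them; your ``interior $2\pi$-rotation \ldots\ reglued into a genuine loop using an isotopy back to the identity'' describes a closed loop in $\Diff_\partial(X_g)$, which lives in $\pi_2(B\Diff_\partial(X_g))$. Moreover the Euler-class detection in $\pi_1(\mathrm{MT}\theta)_\Q$ is unnecessary: since this group is $\Q$, it suffices to produce any stably framed diffeomorphism whose image in $\pi_1((B\Omega^{\min}_\infty)^+)\otimes\Q$ is nonzero, which by Lemma~\ref{lem:KThy}~(iii) means its action on $\pi_n(X_g)$ has determinant of infinite order in $\Z[\pi]^\times$. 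The paper does this by ``dragging a $W_{1,1}$ around the circle'': the fibre of $B\Diff^{\sfr}_\partial(X_{g-1}\setminus\mathrm{int}(D^{2n}))\to B\Diff^{\sfr}_\partial(X_{g-1})$ is $\Emb^{\sfr}(D^{2n},X_{g-1})$ with $\pi_1\cong\langle t\rangle$, and extending the generator by the identity over $W_{1,1}$ gives a stably framed diffeomorphism acting on $\pi_n(X_g)$ as $\id_{2g-2}\oplus(t)\oplus(t)$, determinant $t^2$. This handles both the boundary condition and the stable framing cleanly, neither of which your sketch addresses.
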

\begin{proof}
We follow the strategy outlined in Section \ref{sec:Strategy}, using the plus-constructed fibration \eqref{eq:sfr-weiss-plus} stabilised in the limit $g \to \infty$.

By Example \ref{ex:EmbeddingsOddprimes}, in degrees $2 \leq k < n-1$ we have $\pi_k(B\blEmb^{\mathrm{sfr}, \cong}_{\partial/2}(X_g ; \ell_{1/2\partial X_g}), \ell) \otimes \Q =0$, and by an application of Lemma \ref{lem:PlusConst} we have $\pi_i(\Delta) \otimes \Q=0$ for $i < n-1$. Combined with Proposition \ref{prop:EMCG up to concordance} and Lemma \ref {lem:epsilon} (i) it follows that the map
$$B\Emb^{\mathrm{sfr}, \cong}_{\partial/2}(X_g ; \ell_{1/2\partial X_g})_\ell^+ \to (B\Omega_g^{\min})^+$$
is an isomorphism on rationalised homotopy groups in degrees $< n-1$.

By definition of $\Omega_g^{\min}$, after stabilising and plus-constructing there is a fibration sequence of infinite loop spaces
$$ (B\Omega_\infty^{\min})^+ \to BU_\infty(\Z[\pi], \Lambda_n^{\min})^+ \to BbP_{2n},$$
so as $bP_{2n}$ is finite the left-hand map induces an isomorphism on rationalised homotopy groups. Thus by Lemma \ref{lem:KThy} we have the calculation
$$\pi_k\left((B\Omega_\infty^{\min})^+\right) \otimes \Q \cong \begin{cases}
\Q & k=1\\
0 & \text{ else}
\end{cases} \oplus \begin{cases}
\Q & 2n+k \equiv 0,1 \mod 4\\
0 & \text{ else},
\end{cases}$$
where the first summand is given, as described in Lemma \ref{lem:KThy} (iii), by the determinant. 

On the other hand, by Lemma \ref{lem:GRW} (ii) we have
$$\pi_k(\hocolim_{g \to \infty}B\Diff^{\sfr}_\partial(X_g; \ell_{\partial X_g})_\ell^+) \otimes \Q \cong \begin{cases}
\Q & k=1\\
0 & \text{ else}\end{cases}$$
in degrees $0< * < 2n+3$.

To complete the argument we claim that the map
$$\pi_1(B\Diff^{\sfr}_\partial(X_g; \ell_{\partial X_g})_\ell^+) \to \pi_1((B\Omega_g^{\min})^+)$$
is stably and rationally injective, whence the long exact sequence on rational homotopy groups for the fibration \eqref{eq:sfr-weiss-plus} gives the claim in the statement of this lemma. To show this it is enough to exhibit a stably framed diffeomorphism whose induced map on $\pi_n(X_g)$ has determinant of infinite order in $\Z[\pi]^\times$. Such a diffeomorphism is obtained by ``dragging a $W_{1,1}$ around a loop in $X_g$''. More precisely, restrict the stable framing $\ell$ to $X_{g-1}\subset X_g$ and let $Y$ denote the homotopy fibre of the map
$$B\Diff_{\partial}^{\sfr}(X_{g-1}\setminus\mathrm{int} (D^{2n});\ell_{\partial X_{g}})_{\ell}\to B\Diff_{\partial}^{\sfr}(X_{g-1};\ell_{\partial X_{g}})_{\ell}.$$
By comparing stably framed diffeomorphisms with diffeomorphisms, we see that $Y$ is equivalent to $\Emb^{\sfr}(D^{2n}, X_{g-1})$ so there is a fibration $\Omega (\SOn/\SOn(2n)) \to Y \to X_{g-1}$. In particular $\pi_1(Y) \cong \pi_1(X_{g-1}) = \langle t \rangle$. Form the composition
\beq
Y\to B\Diff_{\partial}^{\sfr}(X_{g-1}\setminus\mathrm{int} (D^{2n});\ell_{\partial X_{g}})_{\ell}\to B\Diff_{\partial}^{\sfr}(X_{g};\ell_{\partial X_{g}})_{\ell}
\eeq
where the last map is extension by the identity on $W_{1,1}$, giving on fundamental groups 
\beq
\langle t \rangle \cong \pi_1(Y) \to \pi_1(B\Diff_{\partial}^{\sfr}(X_{g};\ell_{\partial X_{g}})_{\ell}).
\eeq
Under this map, the generator $t$ is sent to a stably framed diffeomorphism which induces the automorphism of $\pi_n(X_g)$ represented by the matrix $\id_{2g-2}\oplus(t)\oplus (t)$, whose determinant is $t^2\in\Z[\pi]^{\times}$.
\end{proof}

\begin{proof}[Proof of Theorem \ref{thm:A}]
We consider the homotopy fibre sequence \eqref{eqn:sfr vs nosfr diff}. As the infinite orthogonal group $\SOn$ is an infinite loop space, we easily calculate $\pi_k(\Map_\partial(S^1\times D^{2n-1},\SOn)) \cong \pi_{2n+k}(\SOn) \oplus \pi_{2n-1+k}(\SOn)$ and hence see that when rationalised these groups are 1-dimensional if $2n+ k \equiv 0,3 \mod 4$ and zero otherwise, i.e.\
\begin{equation*}
\pi_k(\Map_\partial(S^1\times D^{2n-1},\SOn))\otimes\Q \cong \begin{cases}
\Q & 2n+k \equiv 0,3 \mod 4\\
0 & \text{ else}.
\end{cases}
\end{equation*}
This agrees with the calculation in Lemma \ref{lem:SfrRatCalc}, so we will be finished if we show that for $n\geq 3$ and $k \geq 0$ the connecting map
\begin{equation*}
\pi_k(\Diff_{\partial}(S^1\times D^{2n-1}))\otimes\Q \to \pi_k(\Map_\partial(S^1\times D^{2n-1},\SOn))\otimes\Q
\end{equation*}
is zero.

This map is induced by the action of $\Diff_{\partial}(S^1\times D^{2n-1})$ on $\Bun(T(S^1\times D^{2n-1}),\sfr^*\gamma_{2n};\ell_{\partial})$ by precomposition with the derivative of a diffeomorphism. Thus we have a commutative diagram
\beq
\xymatrix{
\Diff_{\partial}(S^1\times D^{2n-1})\ar[r]\ar[d] &  \Map_{\partial}(S^1\times D^{2n-1},\SOn)\ar[d]^-{\simeq}\\
\blDiff_{\partial}(S^1\times D^{2n-1})\ar[r]\ar[d] &  \widetilde{\Map}_{\partial}(S^1\times D^{2n-1},\SOn)\ar[d]\\
\blTop_{\partial}(S^1\times D^{2n-1})\ar[r] &  \widetilde{\Map}_{\partial}(S^1\times D^{2n-1},\mathrm{STop})
}
\eeq
where the horizontal arrows are induced by the action through the derivative map. The bottom left corner is contractible by \cite[Corollary 2.3]{burghelea-blockTop} and the second right-hand vertical map is a rational equivalence as $\mathrm{STop}/\mathrm{SO}$ is rationally contractible.
\end{proof}

\subsection{Homeomorphism groups}\label{sec:Homeo}

For $2n \geq 6$ smoothing theory provides a fibration
$$\Map_{\partial}(S^1\times D^{2n-1},\tfrac{\Top(2n)}{\mathrm{O}(2n)}) \to B\Diff_\partial(S^1 \times D^{2n-1}) \to B\Top_\partial(S^1 \times D^{2n-1}).$$

Kupers \cite{kupers-finiteness} has shown that the homotopy groups of $\mathrm{Top}(2n)/\On(2n)$ are finitely-generated for $2n \geq 6$, so the homotopy groups of $\Map_{\partial}(S^1\times D^{2n-1}, \mathrm{Top}(2n)/\On(2n))$ are finitely-generated and tame with respect to the cyclotomic structure given by taking cyclic covers. Thus Theorem \ref{thm:B} holds with diffeomorphisms replaced by homeomorphisms.

On the other hand, the rational homotopy groups of $\mathrm{Top}(2n)/\On(2n)$ vanish in degrees $* \leq 4n-2$ by \cite[Theorem A]{KR-WDisc}, so those of $\Map_{\partial}(S^1\times D^{2n-1}, \mathrm{Top}(2n)/\On(2n))$ vanish in degrees $* \leq 2n-2$. Thus the statement of Theorem \ref{thm:A} also holds with diffeomorphisms replaced by homeomorphisms.

\appendix
\section{Some recollections on metastable homotopy theory}\label{sec:metastable}

We collect some tools from metastable homotopy theory, which were used in Sections \ref{sec:hAut-MCG} and \ref{sec:hAutHigherHty}. For definitions and properties of Whitehead products we follow \cite{WhiteheadBook}.

\subsection{The Hilton--Milnor theorem}\label{sec:Hilton-Milnor}
As $W_{g,1}\simeq\bigvee^{2g}S^n$ and $X_g\simeq S^1\vee\bigvee^{2g}S^n$, the Hilton--Milnor theorem (\cite[\S XI.6]{WhiteheadBook}) provides, for all $0\leq k<n-1$, an identification
\begin{equation}\label{eq:Hilton-W_g}
\pi_{2n+k-1}(W_{g,1}) \cong \bigoplus_{\substack{1 \leq i \leq 2g}} \pi_{2n+k-1}(S^n)\{x_i\} \oplus \bigoplus_{\substack{1 \leq i , j \leq 2g\\i < j}} \pi_{2n+k-1}(S^{2n-1})\{x_i \otimes x_j\}.
\end{equation}
The first term is given by $x_i \circ f$ for $f \in \pi_{2n+k-1}(S^n)$, and the second is given by the Whitehead products $[x_i, x_j]\circ g$ for $i < j$, and $g\in\pi_{2n+k-1}(S^{2n-1})$.

Similarly
\begin{equation}\label{eq:Hilton-X_g}
\pi_{2n+k-1}(X_{g}) \cong \bigoplus_{\substack{1 \leq i \leq 2g\\a \in \Z}}  \pi_{2n+k-1}(S^n) \{t^a x_i\}\oplus \bigoplus_{\substack{1 \leq i , j \leq 2g\\a,b\in\Z\\(a,i) < (b,j)}} \pi_{2n+k-1}(S^{2n-1})\{t^a x_i \otimes t^b x_j\},
\end{equation}
where the first term is given by $t^a x_i \circ f$ for $f \in \pi_{2n+k-1}(S^n)$, and the second is given by $[t^a x_i, t^b x_j] \circ g$ for $(a,i) < (b,j)$ and $g \in \pi_{2n+k-1}(S^{2n-1})$.

\subsection{The composition formula for Whitehead products}\label{sec:WhiteheadComposition}
We often use the following special case of the composition formula for Whitehead products: if $\alpha \in \pi_{p}(X)$, $\beta \in \pi_q(X)$, and $\gamma \in \Sigma\pi_{r-1}(S^{q-1}) \subset \pi_r(S^q)$, then
\begin{equation}\label{eq:WhiteheadCompFormula}
[\alpha, \beta \circ \gamma] = [\alpha, \beta] \circ \Sigma^{p-1} \gamma \in \pi_{p+r-1}(X).
\end{equation}
This may be found as \cite[Theorem X.8.18]{WhiteheadBook}.

\subsection{The metastable EHP sequence}\label{sec:EHP}

 The stabilisation map $E: S^n \to \Omega S^{n+1}$ has a homotopy fibre $F$, and considering the Serre spectral sequence for this fibration one sees that there is a $(3n-2)$-connected map $S^{2n-1} \to F$. One then considers the resulting maps
$$S^{2n-1} \xto{P} S^n \xto{E} \Omega S^{n+1}$$
as providing a homotopy fibre sequence in degrees $< 3n-2$, giving an exact sequence
$$\pi_i(S^{2n-1}) \xto{P_*} \pi_i(S^n) \xto{E_*} \pi_{i+1}(S^{n+1}) \xto{H_*} \pi_{i-1}(S^{2n-1}) \xto{P_*} \pi_{i-1}(S^n) \to \cdots$$
for $i \leq 3n-2$. The map $P$ represents the Whitehead square $[\iota_n, \iota_n]$, so the induced map on homotopy groups is $f \mapsto [\iota_n, \iota_n] \circ f : \pi_i(S^{2n-1}) \to \pi_{i}(S^n)$. As above, if $i < 3n-2$ then using that $\pi_i(S^{2n-1})$ consists of $n$-fold suspensions the identity \eqref{eq:WhiteheadCompFormula} means that this may be written as $P_*(\Sigma^n g) = [\iota_n, \Sigma g]$ (cf.\ \cite[Corollary XII.2.5]{WhiteheadBook}). The connecting map $H_* : \pi_{i+1}(S^{n+1}) \to \pi_{i-1}(S^{2n-1})$ is the Hopf invariant.

\begin{lem}\label{lem:Ord2}
Suppose $i < 2n-1$. If $n$ is even then the group
$$\mathrm{Ker}([\iota_n, -] : \pi_{i}(S^n) \to \pi_{n+i-1}(S^n))$$
consists of elements of order 2, and if $n$ is odd  it contains all elements of $\pi_{i}(S^n)$ divisible by $2$.
\end{lem}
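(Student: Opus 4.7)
The strategy is to factor $[\iota_n, -]$ through the metastable EHP exact sequence via the Whitehead composition formula, and then handle each parity of $n$ separately.

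\textbf{Setup via Freudenthal.} In the range $i < 2n-1$, Freudenthal's theorem gives that every $f \in \pi_i(S^n)$ is itself a suspension, and that the iterated suspension
\[
\Sigma^{n-1}:\pi_i(S^n)\xrightarrow{\ \cong\ }\pi_{n+i-1}(S^{2n-1})\cong\pi^s_{i-n}
\]
is an isomorphism. Applying the composition formula \eqref{eq:WhiteheadCompFormula} to such a suspension $f$,
\[
[\iota_n, f] \;=\; [\iota_n,\iota_n]\circ\Sigma^{n-1}f \;=\; P_*\bigl(\Sigma^{n-1}f\bigr),
\]
so under the isomorphism $\Sigma^{n-1}$ the kernel of $[\iota_n, -]$ corresponds to $\ker\bigl(P_*:\pi_{n+i-1}(S^{2n-1})\to\pi_{n+i-1}(S^n)\bigr)$, which by the EHP exactness of Section \ref{sec:EHP} equals the image of the Hopf invariant
\[
H:\pi_{n+i}(S^{n+1})\longrightarrow\pi_{n+i-1}(S^{2n-1}).
\]

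\textbf{Case $n$ odd.} By graded commutativity of the Whitehead product, $[\iota_n,\iota_n]=(-1)^{n^2}[\iota_n,\iota_n]=-[\iota_n,\iota_n]$, hence $2[\iota_n,\iota_n]=0$. Consequently $P_*(2x)=2[\iota_n,\iota_n]\circ x=0$ for every $x$, so $2\pi^s_{i-n}\subseteq\ker(P_*)$, which pulls back to $2\pi_i(S^n)\subseteq\ker([\iota_n,-])$. This is the second assertion.

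\textbf{Case $n$ even.} Here one must show that $\mathrm{image}(H)$ consists of elements of order dividing $2$. The argument uses that, for $n$ even, the classical Hopf invariant satisfies $H^{\mathrm{cl}}([\iota_n,\iota_n])=\pm 2$, together with the standard $2$-primary analysis of the James filtration of $\Omega S^{n+1}$ (where $n+1$ is odd): the EHP differential $H$ takes values in $2$-torsion in this range. Equivalently, one uses the relation coming from the James sectioning of $H$ up to a factor of $2$, so that every class in $\mathrm{image}(H)$ is annihilated by $2$.

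\textbf{Main obstacle.} The odd case is a one-line algebraic consequence of $2[\iota_n,\iota_n]=0$; the substantive content is the even case, where showing that the image of the EHP Hopf invariant lies in $2$-torsion genuinely requires input from the $2$-primary structure of the EHP sequence, rather than purely formal manipulations with Whitehead products.
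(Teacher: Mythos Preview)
Your setup and the odd case are correct and match the paper's argument exactly. The even case, however, is not a proof: you identify the right target (showing $\ker P_* = \mathrm{image}(H)$ is $2$-torsion) and even cite the relevant ingredient $H([\iota_n,\iota_n]) = \pm 2$, but then appeal to unspecified ``$2$-primary analysis of the James filtration'' and ``James sectioning'' without saying what these mean or how they give the conclusion. This is hand-waving at precisely the point you yourself flag as the main obstacle.

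The paper's argument for $n$ even is direct and uses only what you already have. Rather than studying the image of $H$, compose $P_*$ with the \emph{next} Hopf invariant, i.e.\ consider
\[
\pi_{i+n-1}(S^{2n-1}) \xrightarrow{\ P_*\ } \pi_{i+n-1}(S^{n}) \xrightarrow{\ H_*\ } \pi_{i+n-3}(S^{2n-3}),
\]
where $H_*$ comes from the EHP sequence for $S^{n-1}$. For $i=n$ the composite sends $\iota_{2n-1}$ to $H([\iota_n,\iota_n]) = 2$; since both outer groups are in the stable range and identify with $\pi^s_{i-n}$, and since every element of $\pi_{i+n-1}(S^{2n-1})$ is a suspension, the composite $H_* \circ P_*$ is multiplication by $2$. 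Hence any $x \in \ker(P_*)$ satisfies $2x = H_*(P_*(x)) = 0$. This is the missing step, and it uses exactly the fact $H([\iota_n,\iota_n]) = 2$ that you mentioned but did not deploy.
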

\begin{proof}
If $i < 2n-1$ then every element $f \in \pi_i(S^n)$ is a suspension, so we have $[\iota_n, f] = P_*(\Sigma^{n-1} f)$. Furthermore, under this condition $E_*^{n-1} : \pi_i(S^n) \to \pi_{i+n-1}(S^{2n-1})$ is an isomorphism, so the map in question is identified with $P_*(-) = [\iota_n, \iota_n] \circ - : \pi_{i+n-1}(S^{2n-1}) \to \pi_{i+n-1}(S^{n})$.

If $n$ is odd then $[\iota_n, \iota_n]$ has order 2, so all elements divisible by 2 lie in the kernel of $[\iota_n, \iota_n] \circ -$. If $n$ is even then we consider the composition
$$ \pi_{i+n-1}(S^{2n-1}) \xto{P_*} \pi_{i+n-1}(S^{n}) \xto{H_*}  \pi_{i+n-3}(S^{2n-3}).$$
When $i=n$ the composition is the Hopf invariant of $[\iota_n, \iota_n]$, which is $2$, and as the outer groups are in the stable range this map is identified with multiplication by 2 on $\pi_{i-n}^s$. Thus an element in the kernel of $P_*$ must have order 2.
\end{proof}

\begin{lem}\label{lem:Hopf}
We have
$$\mathrm{Ker}([\iota_n, -] : \pi_{n+1}(S^n) \to \pi_{2n}(S^n)) \cong \begin{cases}
\Z/2\{\eta\} & n \equiv 3 \mod 4 \text{ or } n=2,6\\
0 & \text{ otherwise}
\end{cases}$$
and
$$\mathrm{Coker}([\iota_n, -] : \pi_{n+2}(S^n) \to \pi_{2n+1}(S^n)) \cong \Sigma \pi_{2n+1}(S^n).$$
\end{lem}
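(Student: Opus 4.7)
The plan is to reduce both statements to the metastable EHP sequence of Section \ref{sec:EHP} via the composition formula \eqref{eq:WhiteheadCompFormula}, which rewrites $[\iota_n, f] = [\iota_n, \iota_n] \circ \Sigma^{n-1} f = P_\ast(\Sigma^{n-1} f)$ for any suspension $f$, and then perform a case analysis for the kernel statement.

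For the cokernel statement, every $f \in \pi_{n+2}(S^n) = \Z/2\{\eta^2\}$ is an $(n-1)$-fold suspension (since $n+2 \leq 2n-1$ for $n \geq 3$), so $[\iota_n, -]$ factors through $\Sigma^{n-1} : \pi_{n+2}(S^n) \to \pi_{2n+1}(S^{2n-1})$, which is surjective onto $\Z/2\{\eta^2\}$. Hence $\mathrm{Im}([\iota_n, -]) = \mathrm{Im}(P_\ast : \pi_{2n+1}(S^{2n-1}) \to \pi_{2n+1}(S^n))$, which by EHP exactness equals $\ker(E : \pi_{2n+1}(S^n) \to \pi_{2n+2}(S^{n+1}))$. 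The first isomorphism theorem then identifies the cokernel with $\mathrm{Im}(E) = \Sigma\pi_{2n+1}(S^n) \subset \pi_{2n+2}(S^{n+1})$, as claimed.

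For the kernel statement, the same reduction gives $[\iota_n, \eta] = P_\ast(\eta_{2n-1})$, where $\eta_{2n-1}$ is the generator of $\pi_{2n}(S^{2n-1}) = \Z/2$. By EHP exactness at $\pi_{2n}(S^{2n-1})$, this vanishes iff $\eta_{2n-1}$ lies in the image of the Hopf invariant $H : \pi_{2n+2}(S^{n+1}) \to \pi_{2n}(S^{2n-1})$. I would then verify case by case: for $n = 3, 7$ via $[\iota_n, \iota_n] = 0$ directly (equivalently, the Hopf invariant $1$ classes $\nu \in \pi_7(S^4)$ and $\sigma \in \pi_{15}(S^8)$ provide preimages through multiplicativity of the James--Hopf invariant); for $n = 6$ via a direct computation of $\pi_{12}(S^6)$, using that the relevant $2$-torsion class must vanish there; for $n \equiv 3 \pmod 4$ with $n \geq 11$ via a Toda bracket argument realizing $\eta_{2n-1}$ as a Hopf invariant; and for the remaining $n$ (i.e. $n$ odd with $n \equiv 1 \pmod 4$, or $n$ even with $n \notin \{2,6\}$) by detecting $[\iota_n, \iota_n] \circ \eta$ non-trivially in $\pi_{2n}(S^n)$ via its image under further suspension.

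The hard part is the sub-case $n \equiv 3 \pmod 4$ with $n \geq 11$: unlike $n = 3, 7$, there is no Hopf invariant $1$ element in $\pi_{2n+1}(S^{n+1})$ available to directly exhibit a class in $\pi_{2n+2}(S^{n+1})$ with EHP Hopf invariant $\eta_{2n-1}$, so one must invoke a higher-order Toda bracket or an explicit unstable computation. The cokernel statement, by contrast, follows immediately from EHP exactness once the composition formula is applied.
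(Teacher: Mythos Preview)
Your reduction via the composition formula and the EHP sequence is exactly what the paper does, and your argument for the cokernel is essentially identical to the paper's.

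For the kernel, you and the paper agree on the reduction: since $\pi_{n+1}(S^n)=\Z/2\{\eta_n\}$, the question is precisely whether $[\iota_n,\eta_n]=0$. The difference is that the paper then simply \emph{cites} the answer: Hilton shows $[\iota_n,\eta_n]=0$ for $n=2,6$ or $n\equiv 3\pmod 4$ and $[\iota_n,\eta_n]\neq 0$ for $n\equiv 0,1\pmod 4$, while Mahowald handles the remaining case $n\equiv 2\pmod 4$ (non-vanishing). You instead propose to re-derive this case-by-case. That is not wrong in principle, but several of your cases are not actually arguments: ``direct computation of $\pi_{12}(S^6)$'' and ``a Toda bracket argument'' are placeholders, not proofs, and the non-vanishing cases (``detecting \ldots\ via its image under further suspension'') are similarly unspecified. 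The case $n\equiv 3\pmod 4$ with $n\geq 11$ that you flag as hard genuinely is nontrivial, and the case $n\equiv 2\pmod 4$ (which you have folded into the ``remaining $n$'' without comment) required a separate paper of Mahowald. So your proposal correctly locates the content of the lemma but stops short of supplying it; the paper's approach of citing Hilton and Mahowald is both standard and appropriate here.
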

\begin{proof}
Under the suspension isomorphisms $E^{n-1} : \pi_{n+i}(S^n) \to \pi_{2n-1+i}(S^{2n-1})$ these are the maps $\pi_{2n}(P)$ and $\pi_{2n+1}(P)$ in the metastable EHP sequence.

The first map has domain $\Z/2\{\eta_n\}$, so the question is whether $[\iota_n, \eta_n]$ vanishes. Hilton \cite[p.\ 232]{Hilton} has shown that $[\iota_n, \eta_n]  = 0$ for $n=2, 6$ or for $n \equiv 3 \mod 4$, and that $[\iota_n, \eta_n]  \neq 0$ for $n \equiv 0,1 \mod 4$. The remaining case $n \equiv 2 \mod 4$, as well as many of the others, is covered by work of Mahowald \cite{MahowaldWh} which  shows that in this case $[\iota_n, \eta_n]  \neq 0$.

For the second map the portion
$$\pi_{n+2}(S^n) \xto{P_*} \pi_{2n+1}(S^n) \xto{E_*} \pi_{2n+2}(S^{n+1})$$
 of the metastable EHP sequence shows that the cokernel of $P_*$ is identified with the image $\Sigma \pi_{2n+1}(S^n) \subset \pi_{2n+2}(S^{n+1})$ of the suspension map. 
\end{proof}

\subsection{The quadratic approximation}\label{sec:QuadApprox}

Goodwillie's calculus of homotopy functors applied to the identity functor from pointed spaces to pointed spaces has second stage $P_2(X)$. The space $P_2(X)$ may be described as the homotopy fibre of the stable Hopf--James map
$$j_2(X) : Q(X) \to Q(D_2(X)),$$
where $D_2(X) = (X \wedge X)_{h \mathfrak{S}_2}$\footnote{This seems to be folklore. It is mentioned in \cite{arone-mahowald} and \cite{GoodwillieTaylor}. It appears stated as a theorem with a hint for its proof in \cite[Theorem 4.2.1]{behrens}.}. As the $k$-th derivative of the identity functor is a wedge of $(1-k)$-spheres \cite[Section 8]{GoodwillieTaylor}, for $X=S^n$ the map $S^n \to P_2(S^n)$ is $(3n-2)$-connected.

If $p$ is an odd prime number, then working $p$-locally we have
$$D_2(S^n) = (S^n \wedge S^n)_{h \mathfrak{S}_2} \simeq_{(p)} \begin{cases}
* & \text{ if $n$ is odd}\\
S^{2n} & \text{ if $n$ is even}.
\end{cases}$$
In particular, if $n$ is odd then the map $j_2(S^n)$ is zero on $p$-local homotopy.  If $n$ is even, the commutative diagram
\begin{equation*}
\xymatrix{
Q(S^{n-1}) \ar[d]^-{\cong} \ar[rr]^-{j_2(S^{n-1})} & & Q(D_2(S^{n-1})) \ar[d]\\
\Omega Q(\Sigma S^{n-1}) \ar[rr]^-{\Omega j_2(\Sigma S^{n-1})}& & \Omega Q(D_2(\Sigma S^{n-1}))
}
\end{equation*}
from \cite[Theorem 1.2 (1)]{KuhnJH} and the fact that $Q(D_2(S^{n-1})) \simeq_{(p)} *$ shows that $j_2(S^n)$ is also zero on $p$-local homotopy. Thus $p$-locally for $i < 3n-2$ there are short exact sequences
\begin{equation}
0 \to \begin{cases}
0 & \text{ if $n$ is odd}\\
\pi_{i+1-2n}^s & \text{ if $n$ is even}\end{cases} \to \pi_i(S^n) \to \pi_{i-n}^s \to 0.
\end{equation}
The right-hand map is stabilisation, and the left-hand map is 
$$\pi_{i+1-2n}^s \xleftarrow{\sim} \pi_{i}(S^{2n-1}) \xto{[\iota_n, \iota_n] \circ -} \pi_{i}(S^n).$$
For $i<3n-2$ every element of $\pi_{i}(S^{2n-1})$ is an $n$-fold suspension by Freudenthal's theorem, so the identity \eqref{eq:WhiteheadCompFormula} shows that this agrees with
$$\pi_{i+1-2n}^s \xleftarrow{\sim}\pi_{i+1-n}(S^{n}) \xto{[\iota_n, -]} \pi_{i}(S^n).$$

\subsection{Destabilisation}

The following can be extracted from the literature (cf.\ \cite[Lemma 5.3 (i)]{madsen-taylor-williams}, who attribute it to \cite{Thomeier}), but for the reader's convenience we include a proof using the quadratic approximation.

\begin{lem}\label{lem:Epi}
The stabilisation map $\pi_{2n+1}(S^n) \to \pi_{n+1}^s$ is surjective for $n \geq 3$ with $n \neq 6$. For $n=6$ it has cokernel $\Z/4$.
\end{lem}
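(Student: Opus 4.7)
The approach is to analyse the cokernel one prime at a time. At odd primes, for $n \geq 4$, the quadratic approximation from Section \ref{sec:QuadApprox} applied with $i = 2n+1$ (which satisfies the metastable hypothesis $i < 3n-2$ precisely when $n \geq 4$) yields, $p$-locally, a short exact sequence
$$0 \to \begin{cases} \pi_2^s & n \text{ even} \\ 0 & n \text{ odd} \end{cases} \to \pi_{2n+1}(S^n) \to \pi_{n+1}^s \to 0,$$
so the stabilisation is surjective at any odd prime. The case $n=3$ is handled trivially, since $\pi_4^s = 0$.

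At $p=2$ (for $n \geq 4$), the plan is to factor the stabilisation as the sequential colimit of single suspensions
$$E_k : \pi_{2n+k+1}(S^{n+k}) \to \pi_{2n+k+2}(S^{n+k+1})$$
and compute the cokernel of each $E_k$ from the metastable EHP sequence. By exactness, $\mathrm{coker}(E_k)$ injects into $\pi_{2n+k}(S^{2(n+k)-1})$, a group in the $(1-k)$-stem: it equals $\pi_1^s = \Z/2$ for $k=0$, equals $\Z$ for $k=1$ (where $H$ is the classical Hopf invariant for $S^{n+2}$), and is zero for $k \geq 2$. Thus $E_k$ is surjective for $k \geq 2$ and, by Freudenthal, an isomorphism for $k \geq 3$. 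So the obstruction to surjectivity of the full stabilisation is captured by $\mathrm{coker}(E_1 E_0)$ together with how the kernel of $E_2$ identifies its classes. Lemma \ref{lem:Hopf} gives $\mathrm{coker}(E_0) = \ker(P : \Z/2 \to \pi_{2n}(S^n)) = \Z/2\{\eta\}$ iff $n \in \{2,6\}$ or $n \equiv 3 \pmod 4$, and the Hopf invariant one theorem gives $\mathrm{coker}(E_1)$ a $\Z$-summand exactly for $n+2 \in \{2,4,8\}$ (so $n \in \{0,2,6\}$).

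For $n \neq 6$ with $n \geq 4$, the non-trivial class in $\mathrm{coker}(E_0)$, represented in $\pi_{2n+2}(S^{n+1})$ by an element of Hopf invariant one, becomes a Whitehead product after $E_1$ (via identities expressing it through $[\iota_{n+1},\eta_{n+1}]$, which suspend to zero) and is killed by the surjective $E_2$; the (at most $2\Z$) contribution to $\mathrm{coker}(E_1)$ is generated by $[\iota_{n+2},\iota_{n+2}]$ and hence lies in $\ker(E_2)$. The main obstacle, and the exceptional case, is $n=6$: here $n+2 = 8$ is a Hopf-invariant-one dimension, so $\sigma \in \pi_{15}(S^8)$ provides an infinite cyclic contribution to $\mathrm{coker}(E_1)$ which stably maps to a generator of $\pi_7^s \cong \Z/240$. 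The image of $\pi_{13}(S^6)\to \pi_7^s$ is the subgroup of elements whose stable Hopf invariant is divisible by four (index four, of order $60$), giving the claimed cokernel $\Z/4$; the bookkeeping that pins down this index is what makes $n=6$ the only exception.
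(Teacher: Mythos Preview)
Your odd-prime argument via the quadratic approximation is fine and parallels the spirit of the paper's set-up. The gap is in the $2$-primary analysis, specifically in the case $n \equiv 3 \pmod 4$.

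In that case you correctly find $\mathrm{coker}(E_0)=\Z/2$, generated by the class of any $x\in\pi_{2n+2}(S^{n+1})$ with $H_n(x)=\eta$, and $\mathrm{coker}(E_1)=0$ since $n+1$ is even. To conclude you must show that $E_1(x)$ dies after applying $E_2$, i.e.\ that $E_1(x)\equiv [\iota_{n+2},\iota_{n+2}]\pmod{I_2}$. Your parenthetical justification, ``via identities expressing it through $[\iota_{n+1},\eta_{n+1}]$'', cannot work: using the composition formula for the James--Hopf invariant one has
\[
H_n\bigl([\iota_{n+1},\eta_{n+1}]\bigr)=H_n\bigl([\iota_{n+1},\iota_{n+1}]\circ\eta_{2n+1}\bigr)=H_n\bigl([\iota_{n+1},\iota_{n+1}]\bigr)\cdot\eta = (\pm 2)\cdot\eta=0
\]
(the paper records $H_{m-1}([\iota_m,\iota_m])=2$ for $m$ even in the proof of Lemma~\ref{lem:Ord2}). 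Thus $[\iota_{n+1},\eta_{n+1}]\in\ker(H_n)=\mathrm{im}(E_0)$ and represents the \emph{trivial} class in $\mathrm{coker}(E_0)$, so it cannot account for $x$. What you actually need is that any desuspension $z\in\pi_{2n+2}(S^{n+1})$ of $[\iota_{n+2},\iota_{n+2}]$ has $H_n(z)\neq 0$; this is not obvious from the iterated EHP bookkeeping and is precisely the substance of the $n\equiv 3\pmod 4$ case.

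The paper avoids this difficulty by working stably with the quadratic approximation organised via $D_2(S^n)\simeq S^n\wedge\mathbb{RP}^\infty_n$, reducing the question to injectivity of a boundary map $\partial:\pi^s_{n+1}(\mathbb{RP}^\infty_n)\to\pi_{2n}(S^n)$. For $n\equiv 3\pmod 4$ the key input is that $\mathrm{Sq}^2(x^n)=\binom{n}{2}x^{n+2}\neq 0$ in $H^*(\mathbb{RP}^\infty_n;\F_2)$, so the $(n+2)$-cell is attached by~$\eta$ and the map $\pi^s_{n+1}(S^n)\to\pi^s_{n+1}(\mathbb{RP}^\infty_n)$ is zero; this replaces the unproven EHP step above. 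For $n=6$ the paper simply invokes the known values $\pi_{13}(S^6)=\Z/60$ and $\pi_7^s=\Z/240$ and observes that the map is injective with cokernel $\Z/4$; your ``stable Hopf invariant divisible by four'' description is not explained and would need unpacking to count as a proof.
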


\begin{proof}
Let $\mathbb{RP}^{\infty}_n$ denote the stunted real projective space $\mathbb{RP}^{\infty}/\mathbb{RP}^{n-1}$. Using $(S^n \wedge S^n)_{h\mathfrak{S}_2} \simeq S^n \wedge \mathbb{RP}^\infty_n$, the quadratic approximation gives a diagram
\beq
\xymatrix{
 & & \pi_{n+1}^s(S^n) \ar[d] \\
 \pi_{2n+1}(S^n) \ar[r] \ar[d]^E & \pi_{n+1}^s \ar[r] \ar@{=}[d] & \pi_{n+1}^s( \mathbb{RP}^\infty_n) \ar[d] \ar[r]^-{\partial} & \pi_{2n}(S^n) \ar[d]^E\\
\pi_{2n+2}(S^{n+1}) \ar[r] & \pi_{n+1}^s \ar[r] & \pi_{n+1}^s(\mathbb{RP}^\infty_{n+1}) \ar[r]& \pi_{2n+1}(S^{n+1})
}
\eeq
with rows and column exact. Showing that the stabilisation map $\pi_{2n+1}(S^n) \to \pi_{n+1}^s$ is surjective is equivalent to showing that the map $\partial : \pi_{n+1}^s( \mathbb{RP}^\infty_n) \to \pi_{2n}(S^n)$ is injective, which is what we shall do.

The map
\begin{align*}
\pi_{n+1}^s(\mathbb{RP}^\infty_{n+1}) \cong \begin{cases}
\Z & \text{$n+1$ even}\\
\Z/2 & \text{$n+1$ odd}
\end{cases} &\to \pi_{2n+1}(S^{n+1})
\end{align*}
sends a generator to the Whitehead square, so is injective unless $n+1 = 3$ or $7$.

The composition $\Z/2 \cong \pi_{n+1}^s(S^n) \to \pi_{n+1}^s( \mathbb{RP}^\infty_n) \xto{\partial} \pi_{2n}(S^n)$ sends a generator to $[\iota_n, \eta_n]$, which we have seen in the proof of Lemma \ref{lem:Hopf} is nonzero unless $n=2, 6$ or $n \equiv 3 \mod 4$. Thus for $n \geq 3$ with $n \neq 6$ and $n \not\equiv 3 \mod 4$ the map $\partial : \pi_{n+1}^s( \mathbb{RP}^\infty_n) \to \pi_{2n}(S^n)$ is injective as required.

If $n \equiv 3 \mod 4$ then we will show that $\Z/2 \cong \pi_{n+1}^s(S^n) \to \pi_{n+1}^s( \mathbb{RP}^\infty_n)$ is zero, so that $\partial$ is injective as required. In the module over the Steenrod algebra $H^*(\mathbb{RP}^\infty_n;\F_2)$, considered as as the span of the $x^i$ with $i \geq n$ inside $H^*(\mathbb{RP}^\infty; \F_2) = \F_2[x]$, we have $\mathrm{Sq}^2(x^n) = \binom{n}{2}x^{n+2}$ which is non-trivial if $n \equiv 3 \mod 4$. This detects the fact that in this case the $(n+2)$-cell is attached along $\eta$ times the $n$-cell, so $ \pi_{n+1}^s(S^n) \to \pi_{n+1}(\mathbb{RP}^\infty_n)$ is zero as claimed.

For $n=6$ the corresponding map is $\Z/60 \cong \pi_{13}(S^6) \to \pi_{7}^s \cong \Z/240$ so is not onto: it is injective, with cokernel $\Z/4$.
\end{proof}

\section{Coinvariant calculations}\label{sec:Coinv}

Let us write $EU_g(\Z[\pi], \Lambda_n^{\min}) \leq U_g(\Z[\pi], \Lambda_n^{\min})$ for the subgroup generated by the elementary unitary matrices, following \cite[\S5.3A]{HO}. Unwrapping their definitions it is generated by the elements which in terms of the bases $(a_1, a_2, \ldots, a_g, b_1, b_2, \ldots, b_g)$ are given by, writing $\epsilon = (-1)^n$, the matrices
\begin{equation*}
\begin{bmatrix} 1 & 0 & 0 & 0\\
0 & 1 & 0 & 0\\
0 & r & 1 & 0\\
-\epsilon\bar{r} & 0 & 0 & 1\end{bmatrix}, 
\begin{bmatrix} 1 & 0 & 0 & r\\
0 & 1 & -\epsilon\bar{r} & 0\\
0 & 0 & 1 & 0\\
0 & 0 & 0 & 1\end{bmatrix}, 
\begin{bmatrix} 1 & r & 0 & 0\\
0 & 1 & 0 & 0\\
0 & 0 & 1 & 0\\
0 & 0 & -\bar{r} & 1\end{bmatrix}, 
\begin{bmatrix} 1 & 0 & 0 & 0\\
r & 1 & 0 & 0\\
0 & 0 & 1 & -\bar{r}\\
0 & 0 & 0 & 1\end{bmatrix},
\begin{bmatrix} 1 & l \\
0 & 1\end{bmatrix},
\begin{bmatrix} 1 & 0\\
l & 1\end{bmatrix}
\end{equation*}
for $r \in \Z[\pi]$ and $l \in \Lambda_n^{\min}$, their stabilisations, and their conjugates obtained by permuting the $a$'s and $b$'s simultaneously. It will also be useful to have available the matrix
$$\begin{bmatrix}
 0 & 0 & 0 & -1\\
0 & 0 & \epsilon & 0\\
0 & -\epsilon & 0 & 0\\
1 & 0 & 0 & 0
\end{bmatrix} = \begin{bmatrix} 
1 & 0 & 0 & -1\\
0 & 1 & \epsilon & 0\\
0 & 0 & 1 & 0\\
0 & 0 & 0 & 1
\end{bmatrix} \cdot\begin{bmatrix} 
1 & 0 & 0 & 0\\
0 & 1 & 0 & 0\\
0 & -\epsilon & 1 & 0\\
1 & 0 & 0 & 1
\end{bmatrix} \cdot \begin{bmatrix} 
1 & 0 & 0 & -1\\
0 & 1 & \epsilon & 0\\
0 & 0 & 1 & 0\\
0 & 0 & 0 & 1
\end{bmatrix} .$$

By \cite[5.3.8]{HO} the groups $EU_g(\Z[\pi], \Lambda_n^{\min})$ are perfect as long as $g \geq 3$, so are contained in the kernel of any homomorphism from $U_g(\Z[\pi], \Lambda_n^{\min})$ to an abelian group: in particular, they are contained in $\Omega^{\min}_g$.

\begin{lem}\label{lem:coinvariantsH} 
For $g \geq 2$ we have
$$H_0(EU_g(\Z[\pi], \Lambda_n^{\min}) ; H) = 0.$$
\end{lem}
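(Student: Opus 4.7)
The plan is to prove this by a direct computation, using the fact that the action of $EU_g(\Z[\pi], \Lambda_n^{\min})$ on $H = H_n(W_{g,1};\Z) = \Z\langle a_1, \ldots, a_g, b_1, \ldots, b_g \rangle$ factors through the augmentation $\varepsilon: \Z[\pi] \to \Z$, $t \mapsto 1$. Indeed, $H$ carries no information about the fundamental group, so the action of any matrix $A \in EU_g(\Z[\pi], \Lambda_n^{\min})$ on $H$ is by the augmented matrix $\varepsilon(A) \in EU_g(\Z, \Lambda_n^{\min})$. Thus it suffices to show that each of the basis vectors $a_i, b_i$ can be written in the form $A \cdot v - v$ for some $A \in EU_g(\Z[\pi], \Lambda_n^{\min})$ and some $v \in H$.

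The key step is to evaluate certain generators from the list preceding the statement at $r = 1$ and read off their action. For example, setting $r = 1$ in the second generator listed, and augmenting, gives a matrix $A_1$ acting on $\Z\langle a_1, a_2, b_1, b_2 \rangle \subseteq H$ (for $g \geq 2$) by $a_1, a_2 \mapsto a_1, a_2$, $b_1 \mapsto b_1 - \epsilon a_2$ and $b_2 \mapsto b_2 + a_1$ (with $\epsilon = (-1)^n$). Therefore $a_1 = A_1 \cdot b_2 - b_2$ and $a_2 = -\epsilon(A_1 \cdot b_1 - b_1)$ lie in the submodule of relations. Similarly, setting $r = 1$ in the first generator listed, after augmenting, yields a matrix $A_2$ with $A_2 \cdot a_1 - a_1 = -\epsilon b_2$ and $A_2 \cdot a_2 - a_2 = b_1$, placing $b_1$ and $b_2$ in the submodule of relations as well. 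For the remaining basis vectors $a_j, b_j$ with $j \geq 3$ (if $g \geq 3$), one uses the stabilisations of these same matrices composed with a permutation of the hyperbolic basis pairs, which are also in $EU_g(\Z[\pi], \Lambda_n^{\min})$; alternatively, one can simply permute the roles of the indices in the construction above.

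I expect the argument to be entirely mechanical; the only thing to verify carefully is that all the matrices I am using really are elementary unitary matrices (which they are by direct inspection of the explicit generator list given in the excerpt), and that the hypothesis $g \geq 2$ is enough to have two distinct hyperbolic pairs so that both kinds of relations ($r = 1$ in the upper-right and lower-left positions) are available. The main (minor) subtlety will be tracking the sign $\epsilon$ so as to confirm that the elements $\pm \epsilon v'$ produced as relations do indeed generate the same subgroup as the basis vectors $v'$ themselves.
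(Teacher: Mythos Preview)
Your proof is correct and follows essentially the same approach as the paper's. The paper uses the third type of elementary matrix (with $r=1$) to obtain $a_2 \sim a_1 + a_2$ and $b_1 \sim b_1 - b_2$, killing $a_1$ and $b_2$, then permutes indices; you instead use the first and second types to kill the basis vectors, which works equally well. Your remark that the action factors through the augmentation is a helpful clarification that the paper leaves implicit.
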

\begin{proof}
Using the third matrix with $r=1$ we see that in the coinvariants $a_2 \sim a_1 + a_2$, and so $a_1=0$; similarly it gives $b_1 \sim b_1 - b_2$, so $b_2=0$. By permuting we see that all $a_i$ and $b_i$ are zero in the coinvariants.
\end{proof}

Recall from Propositions \ref{prop:pi_khAut} and \ref{cor:pi_khAutWg} that we defined
\begin{align*}
S^+_X &:= \langle x \otimes x \, | \, x \in \pi_n(X_g) \rangle_\Z \subset \pi_n(X_g)^{\otimes_{\Z[\pi]} 2},\\
S^-_X &:= \langle x \otimes y - y \otimes x \, | \, x, y \in \pi_n(X_g) \rangle_\Z \subset \pi_n(X_g)^{\otimes_{\Z[\pi]} 2},\\
S^+_W &:= \langle x \otimes x \, | \, x \in \pi_n(W_{g,1}) \rangle_\Z \subset \pi_n(W_{g,1})^{\otimes 2},\\
S^-_W &:= \langle x \otimes y - y \otimes x \, | \, x, y \in \pi_n(W_{g,1}) \rangle_\Z \subset \pi_n(W_{g,1})^{\otimes 2},
\end{align*}
and there is a natural map $S^\pm_X \to S^\pm_W$ which is a split epimorphism and equivariant for $U_g(\Z[\pi], \Lambda_n^{\min}) \to U_g(\Z, \Lambda_n^{\min})$. Here $S^\pm_W$ is the set of elements of $\pi_n(W_{g,1})^{\otimes 2}$ $\pm$-invariant under swapping the factors, i.e.\ the kernel of $1 \mp T : \pi_n(W_{g,1})^{\otimes 2} \to \pi_n(W_{g,1})^{\otimes 2}$, so writing $M := \pi_n(W_{g,1}) \otimes_{\Z} \Z/2$ there is an exact sequence
$$ \mathrm{Hom}_{\Z/2}(M^{\otimes 2}, \Z/2)\xto{1 \mp T} \mathrm{Hom}_{\Z/2}(M^{\otimes 2}, \Z/2)\to \mathrm{Hom}_{\Z/2}(S^\pm_W \otimes \Z/2, \Z/2) \to 0$$
Following \cite[Lemma 13.5]{Bak}, $\mathrm{Hom}_{\Z}(S^\pm_W, \Z/2) \cong \mathrm{Hom}_{\Z/2}(S^\pm_W \otimes \Z/2, \Z/2)$ is in bijection with the set $Q_{\pm}(M)$ of $\pm$-quadratic forms on $M = \pi_n(W_{g,1}) \otimes \Z/2$. Whatever the parity of $n$, the hyperbolic quadratic form reduced modulo 2 determines an element of $Q_{\pm}(M)$, which is by definition invariant for $U_g(\Z, \Lambda_n^{\min})$, so gives an $U_g(\Z[\pi], \Lambda_n^{\min})$-invariant map
$$\phi^\pm : S^\pm_X \to S^\pm_W \to \Z/2.$$
Unwrapping the proof of \cite[Lemma 13.5]{Bak}, this is given by any sesquilinear form $f : M \otimes M \to \Z/2$ such that $\lambda(x,y) = f(x,y) + f(y,x)$ and $q(x) = f(x,x)$. For example we can take $f$ to be given by $f(a_i, b_j) = \delta_{ij}$ and zero on all other pairs of basis elements.

\begin{lem}\label{lem:coinvariants U_g}
The $U_g(\Z[\pi], \Lambda_n^{\min})$-invariant map $S^\pm_X \subset \pi_n(X_g)^{\otimes_{\Z[\pi]} 2} \xto{\lambda_X} \Z[\pi]$, as well as $\phi^\pm$ in the case of $S^\pm_X$ for $\pm = (-1)^{n+1}$, induces for $g \geq 3$ an isomorphism
$$H_0(EU_g(\Z[\pi], \Lambda_n^{\min}) ; S^\pm_X) \cong \bigoplus_{a > 0} \Z\{t^a \pm (-1)^n t^{-a}\} \oplus \begin{cases}
\Z/2 & \text{ if $\pm = (-1)^{n+1}$}\\
\Z\{2 t^0\} & \text{ if $\pm = (-1)^n$}.
\end{cases}$$
\end{lem}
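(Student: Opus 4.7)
The plan is to pass to a matrix description for concreteness, check surjectivity by exhibiting explicit preimages, and prove injectivity by a chain of reduction relations using the elementary matrices listed at the start of this appendix.

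Under the hyperbolic basis, $\pi_n(X_g) \otimes_{\Z[\pi]} \pi_n(X_g)$ identifies $\Z[\pi]$-linearly with $M_{2g}(\Z[\pi])$ via $x_i \otimes x_j \mapsto E_{ij}$; under this the swap $T$ becomes the conjugate-transpose $\dagger$, $S^\pm_X$ becomes the group of $\pm$-self-adjoint matrices with form-parameter diagonal, and the $U_g$-action is $M \mapsto UMU^\dagger$. Both $\lambda_X$ and $\phi^\pm$ are tautologically $EU_g$-invariant as they are defined from $U_g$-invariant data. Setting $\epsilon := (-1)^n$ and $v_a^\pm := t^a a_1 \otimes b_1 \pm t^{-a} b_1 \otimes a_1 \in S^\pm_X$, one computes $\lambda_X(v_a^\pm) = t^a \pm \epsilon t^{-a}$, hitting each generator $t^a \pm (-1)^n t^{-a}$ of the target for $a > 0$; for $\pm = \epsilon$ the element $v_0^\pm$ has $\lambda_X(v_0^\pm) = \pm 2 \in \Z\{2 t^0\}$, and for $\pm = -\epsilon$ it has $\lambda_X(v_0^\pm) = 0$ but $\phi^\pm(v_0^\pm) = 1 \in \Z/2$, giving surjectivity.

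For injectivity I would derive three kinds of relations in coinvariants. First, applying the fourth listed Eichler transvection $a_i \mapsto a_i + r a_j$, $b_j \mapsto b_j - \bar r b_i$ to an off-diagonal test element $t^c a_i \otimes a_j \pm t^{-c} a_j \otimes a_i \in S^\pm_X$ yields $(r \pm \bar r) x_l \otimes x_l \equiv 0$ for every $r \in \Z[\pi]$ and basis vector $x_l$, killing the entire form-parameter diagonal; the $S^-$ diagonal vanishes entirely, and in $S^+$ only the 2-torsion classes $[x_l \otimes x_l]$ survive. Specialising to $r = 1$ also gives $[a_i \otimes a_j + a_j \otimes a_i] \equiv -[a_j \otimes a_j]$. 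Second, for $g \geq 3$ the composite $U := E_{12}(1) \circ E_{13}(1) \in EU_g$ (with $E_{1j}(1)$ the listed matrix for the $(1, j)$-pair of $a$-coordinates, $r = 1$) sends $a_1 \mapsto a_1 + a_2 + a_3$ and fixes $a_2, a_3$, so
\[
[a_1 \otimes a_1] \;=\; [(a_1+a_2+a_3)^{\otimes 2}] \;=\; 3 [a_1 \otimes a_1] - 3 [a_1 \otimes a_1] \;=\; 0
\]
in $H_0(EU_g; S^+_X)$, and likewise $[b_l \otimes b_l] = 0$. Third, applying $U$ to a non-hyperbolic-pair off-diagonal element $\alpha_c := t^c a_1 \otimes a_2 \pm t^{-c} a_2 \otimes a_1 \in S^\pm_X$ gives $\alpha_c$ plus its $(a_3, a_2)$-analogue $\alpha_c^{(3,2)}$ plus diagonal terms killed by the first step, whence $[\alpha_c^{(3, 2)}] = 0$; the composite pair-swap matrix displayed at the end of the listed generators then identifies $[\alpha_c] = [\alpha_c^{(3, 2)}] = 0$. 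Analogous arguments dispose of the other non-hyperbolic off-diagonal support types. The surviving classes are thus exactly the $[v_c^\pm]$ together with $[v_0^\pm]$ in the residual summand, and $(\lambda_X, \phi^\pm)$ separates them as claimed.

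The main obstacle is the systematic verification of the third step, ensuring every non-hyperbolic-pair off-diagonal element type admits such a reduction via the listed generators. The hypothesis $g \geq 3$ is essential for the three-term argument killing pure-diagonal $S^+_X$-classes in the second step; the pair-swap matrix used in the third step lies already in $EU_2$, so it is available as soon as $g \geq 2$.
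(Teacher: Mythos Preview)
Your overall strategy mirrors the paper's: use the listed elementary unitary matrices to reduce every class to a combination of the ``hyperbolic-pair'' elements $v_c^\pm = t^c a_1 \otimes b_1 \pm t^{-c} b_1 \otimes a_1$, then detect these via $\lambda_X$ and $\phi^\pm$. Your treatment of the diagonal terms (steps one and two) is a genuine variation---the paper instead kills $a_i \otimes a_i$ by combining a transvection $a_1 \mapsto a_1 + b_2$ with the already-established vanishing of mixed $a_i \otimes b_j$ terms---and your three-term argument is a nice alternative (and is where $g \geq 3$ enters for you). A minor correction: the displayed ``pair-swap'' matrix sends $a$'s to $b$'s rather than permuting hyperbolic pairs, so it does not directly give $[\alpha_c^{(1,2)}] = [\alpha_c^{(3,2)}]$; you need instead the signed permutation matrices in the $GL_g$-block, which lie in $EU_g$ via the usual elementary-matrix identity.

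There is, however, a genuine gap in your injectivity argument. After your three steps, the coinvariants are spanned over $\Z$ by the classes $[v_c^\pm]$ with $c \in \Z$, but you have not derived any relation among these. In particular you have not shown
\[
[v_c^\pm] = \pm(-1)^n\,[v_{-c}^\pm],
\]
which is precisely the paper's step (iv), obtained by applying the displayed pair-swap matrix to $v_c^\pm$ (it sends $a_1 \mapsto b_2$, $b_1 \mapsto \epsilon a_2$, so $v_c^\pm \mapsto \pm\epsilon\, v_{-c}^{\pm}$ up to a permutation of indices). Without this relation your final sentence fails: $\lambda_X$ does \emph{not} separate $[v_c^\pm]$ from $\pm\epsilon\,[v_{-c}^\pm]$, since $\lambda_X(v_{-c}^\pm) = t^{-c} \pm \epsilon t^c = \pm\epsilon\,\lambda_X(v_c^\pm)$, and in the case $\pm = (-1)^{n+1}$ the 2-torsion of $[v_0^\pm]$ (needed to land in $\Z/2$) also comes from this relation with $c = 0$. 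So as written you have only bounded the coinvariants from below, not from above.
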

\begin{proof}
Let us write $x \sim y$ to mean that $x$ and $y$ become equal after taking coinvariants. As a $\Z$-module $S^+_X$ is freely generated by
$$a_i \otimes a_i, \quad (t^f + t^{-f})a_i \otimes a_i, \quad b_i \otimes b_i, \quad (t^f + t^{-f})b_i \otimes b_i, \quad t^e x_i \otimes x_j + t^{-e}x_j \otimes x_i, $$
with $i < j$, $f \in \mathbb{N}_{>0}$, and $e \in \mathbb{Z}$. 

\begin{enumerate}[(i$^+$)]
\item Using the fourth matrix with $r=1$ on the 1st and 3rd hyperbolic pairs, we have
\begin{align*}
t^e a_1 \otimes b_2 + t^{-e} b_2 \otimes a_1 &\sim t^e (a_1+ a_3) \otimes b_2 + t^{-e} b_2 \otimes (a_1+ a_3)
\end{align*}
and so $t^{e} a_3 \otimes b_2 + t^{-e} b_2 \otimes a_3 \sim 0$. By permuting we find that $t^e a_i \otimes b_j + t^{-e} b_j \otimes a_i \sim 0$ for all $i \neq j$.

\item Using the first matrix with $r=(-1)^{n+1}$ we have $(t^f + t^{-f})a_1 \otimes a_1 \sim (t^f + t^{-f})(a_1+b_2) \otimes (a_1 + b_2)$ and so $(t^f + t^{-f})(a_1 \otimes b_2 + b_2 \otimes a_1) + (t^f + t^{-f})b_2 \otimes b_2 \sim 0$. Combining with the above we have $(t^f + t^{-f})b_2 \otimes b_2 \sim 0$. The analogous argument shows $(t^f + t^{-f}) a_1 \otimes a_1 \sim 0$, and by permuting we get the same for all other indices.

The analogous argument applied to $a_1 \otimes a_1$ shows that $b_1 \otimes b_1 \sim 0$, and so on.

\item Using the fourth matrix with $r=t^e$ we have that $a_1 \otimes a_1$ is equivalent to
$$ (a_1 + t^e a_2) \otimes (a_1 + t^e a_2) = a_1 \otimes a_1 + t^e a_2 \otimes a_1 + t^{-e} a_1 \otimes a_2 + a_2 \otimes a_2$$
so with (ii$^{+}$) it follows that $t^e a_2 \otimes a_1 + t^{-e} a_1 \otimes a_2 \sim 0$. We similarly get $t^e b_2 \otimes b_1 + t^{-e} b_1 \otimes b_2 \sim 0$, and by permuting get the same for all other indices.

\item The remaining basis elements are $t^e a_i \otimes b_i + t^{-e} b_i \otimes a_i$. Applying the seventh matrix to $t^e a_1 \otimes b_1 + t^{-e} b_1 \otimes a_1$ gives $(-1)^n (t^e b_2 \otimes a_2 + t^{-e} a_2 \otimes b_2)$, so permuting indices shows that
$$t^e a_i \otimes b_i + t^{-e} b_i \otimes a_i \sim (-1)^n (t^{-e} a_i \otimes b_i + t^{e} b_i \otimes a_i).$$
This lets us rewrite any $t^e a_1 \otimes b_1 + t^{-e} b_1 \otimes a_1$ to have $e \geq 0$, and also shows that $(1-(-1)^n)(a_1 \otimes b_1 + b_1 \otimes a_1) \sim 0$. Unwrapping the definition above shows that
$$\phi^+(a_1 \otimes b_1 + b_1 \otimes a_1) \neq 0 \in \Z/2,$$
so when $n$ is odd the element $a_1 \otimes b_1 + b_1 \otimes a_1$ has order precisely 2 in the coinvariants. The elements $t^e a_i \otimes b_i + t^{-e} b_i \otimes a_i$ with $e \geq 0$ (with $e > 0$ if $n$ is odd) are sent by $\lambda_X$ to the linearly independent elements $t^e + (-1)^n t^{-e} \in \Z[\pi]$, which finishes the argument for $S^+_X$.
\end{enumerate}

As a $\Z$-module $S^-_X$ is freely generated by
$$(t^f-t^{-f})a_i \otimes a_i,  \quad (t^f-t^{-f})b_i \otimes b_i, \quad t^e x_i \otimes x_j - t^{-e}x_j \otimes x_i$$
with $i \neq j$, $f \in \mathbb{N}_{>0}$, and $e \in \Z$.

\begin{enumerate}[(i$^-$)]
\item Just as in (i$^+$) we find that $t^e a_i \otimes b_j - t^{-e}b_j \otimes a_i \sim 0$ for all $i \neq j$.

\item Just as in (ii$^+$) we find that  $(t^f-t^{-f}) a_i \otimes a_i \sim 0$ and $(t^f-t^{-f}) b_i \otimes b_i \sim 0$. 

\item Just as in (iii$^+$) we find that $t^e a_i \otimes a_j - t^{-e} a_j \otimes a_i \sim 0$ and $t^e b_i \otimes b_j - t^{-e} b_j \otimes b_i \sim 0$ for all $i \neq j.$

\item The remaining basis elements are $t^e a_i \otimes b_i - t^{-e} b_i \otimes a_i$. As in (iv$^-$), applying the seventh matrix to $t^e a_1 \otimes b_1 - t^{-e} b_1 \otimes a_1$ gives $-(-1)^n (t^{-e} a_2 \otimes b_2 - t^e b_2 \otimes a_2)$, so permuting indices shows that
$$t^e a_1 \otimes b_1 - t^{-e} b_1 \otimes a_1 \sim -(-1)^n (t^{-e} a_1 \otimes b_1 - t^e b_1 \otimes a_1).$$
This lets us rewrite any $t^e a_i \otimes b_i - t^{-e} b_i \otimes a_i$ to have $e \geq 0$, and shows that $(1+(-1)^n)(a_1 \otimes b_1 -  b_1 \otimes a_1) \sim 0$. As
$$\phi^-(a_1 \otimes b_1 -  b_1 \otimes a_1) \neq 0 \in \Z/2$$
it follows that when $n$ is even the element $a_1 \otimes b_1 -  b_1 \otimes a_1$ has order precisely 2 in the coinvariants. The elements $t^e a_i \otimes b_i - t^{-e} b_i \otimes a_i$ with $e \geq 0$ (with $e > 0$ if $n$ is even) are sent by $\lambda_X$ to the linearly independent elements $t^e - (-1)^n t^{-e} \in \Z[\pi]$, which finishes the argument for $S^-_X$.\qedhere
\end{enumerate}
\end{proof}

\section{Relation to classical calculations}\label{sec:classical}

We wish to explain how our calculation relates to the classical approach via surgery theory, pseudoisotopy theory, and the algebraic $K$-theory of spaces, following the programme of Weiss--Williams \cite{WW1, WW2, WW3, WWSurvey}. As full details for that programme are not yet available, and this comparison is not our main goal, the following should be considered as provisional. It also assumes the reader is familiar with those papers: we take \cite{WWSurvey} as our main reference. We also suppose for simplicity that $p$ is odd: then one can probably replace the work of Weiss and Williams with that of Burghelea--Lashof \cite{BL}. 

The homotopy classes produced by Theorem \ref{thm:B} have the following description. Using Example \ref{ex:EmbeddingsOddprimes} and Proposition \ref{prop:EMCG up to concordance} we may form the composition
\beq
\xymatrix{
\Z/p \otimes S^{(-1)^{n+1}}_X \ar@{=}[d] & \pi_{2p-3}(\Emb^{\cong}_{\partial/2}(X_g))_{(p)} \ar[d]^-{\cong} \ar[r]^-{\partial}& \pi_{2p-3}(B\Diff_\partial(S^1 \times D^{2n-1}))_{(p)}\\
 \pi_{2p-3}(\blEmb^{\mathrm{sfr}, \cong}_{\partial/2}(X_g ; \ell_{1/2\partial X_g}))_{(p)} \ar[r]^-{\text{forget}} & \pi_{2p-3}(\blEmb^{\cong}_{\partial/2}(X_g))_{(p)}
}
\eeq
and our argument shows that this factors over the $\check{\Xi}_g^{\sfr,\ell}$-coinvariants of $\Z/p \otimes S^{(-1)^{n+1}}_X$, giving an injection; it remains injective when mapped further to $B\Top_\partial(S^1 \times D^{2n-1})$. We wish to explain how this is related to the calculation of $\pi_*(B\Top_\partial(S^1 \times D^{2n-1}))_{(p)}$ via pseudoisotopy theory.

The Weiss fibre sequences for homeomorphisms and block homeomorphisms give the diagram
\beq
\xymatrix{
\Top_{\partial}(S^1\times D^{2n-1})\ar[r] \ar[d] & \Top_{\partial}(X_g) \ar[r] \ar[d]& \Emb^{\mathrm{TOP}, \cong}_{\partial/2}(X_g) \ar[d] \ar[r]& B\Top_{\partial}(S^1\times D^{2n-1})\\
{*\simeq \blTop_{\partial}(S^1\times D^{2n-1})}\ar[r] \ar[d] & \blTop_{\partial}(X_g) \ar[r]^-{\simeq} \ar[d] & \blEmb^{\mathrm{TOP}, \cong}_{\partial/2}(X_g) \ar[d]\\
\frac{\blTop_{\partial}(S^1\times D^{2n-1})}{\Top_{\partial}(S^1\times D^{2n-1})}\ar[r] \ar[d]^-{\simeq}& \frac{\blTop_{\partial}(X_g)}{\Top_{\partial}(X_g)} \ar[r]& F\\
B\Top_{\partial}(S^1\times D^{2n-1}) &
}
\eeq
where rows and columns form homotopy fibre sequences. The analogue of the proof of Proposition \ref{prop:EMCG up to concordance} for homeomorphisms shows that $F$ is $(n-2)$-connected. Therefore in this range of degrees the diagram identifies the map
$$\partial : \pi_*(\Emb^{\mathrm{TOP}, \cong}_{\partial/2}(X_g)) \to \pi_*(B\Top_{\partial}(S^1\times D^{2n-1}))$$
with the composition
$$\pi_* \left(\Emb^{\mathrm{TOP}, \cong}_{\partial/2}(X_g)\right) \to \pi_*\left(\blEmb^{\mathrm{TOP}, \cong}_{\partial/2}(X_g)\right)\xleftarrow{\cong} \pi_*\left(\blTop_{\partial}(X_g)\right) \to \pi_*\left(\tfrac{\blTop_{\partial}(X_g)}{\Top_{\partial}(X_g)}\right).$$

The theory of Weiss--Williams provides a map
\beq
\xymatrix{
\frac{\blTop_{\partial}(S^1\times D^{2n-1})}{\Top_{\partial}(S^1\times D^{2n-1})} \ar[r] & \Omega^\infty(\mathrm{H}^s(S^1 \times D^{2n-1})_{hC_2})
}
\eeq
and similarly for $\frac{\blTop_{\partial}(X_g)}{\Top_{\partial}(X_g)}$. Here $\mathrm{H}^s(M)$ denotes the (simple) stable topological $h$-cobordism spectrum of $M$: the parameterised stable $h$-cobordism theorem \cite{WJR} identifies it with (the connective cover of) $\Sigma^{-1} \mathrm{Wh}^{\mathrm{TOP}}(M)$, the desuspension of the topological Whitehead spectrum of $M$; $\mathrm{H}^s(M)$ is equipped with a certain involution, and $(-)_{hC_2}$ denotes the homotopy orbit spectrum with respect to this involution. The main theorem concerning these maps is that they are equivalences in the pseudoisotopy stable range, but we shall not use this: the existence of these maps suffices.

It follows from \cite[Section 1.5]{WWSurvey} that there is a map $w$ making the square
\beq
\xymatrix{
\blTop_{\partial}(X_g) \ar[r] \ar[d] & \frac{\blTop_{\partial}(X_g)}{\Top_{\partial}(X_g)} \ar[r] & \Omega^\infty(\mathrm{H}^s(X_g)_{hC_2}) \ar[d]^-{\text{transfer}}\\
\hAut_\partial(X_g) \ar[r] & \hAut(X_g) \ar[r]^-{w}& \Omega^\infty \mathrm{H}^s(X_g)
} 
\eeq
commute up to homotopy. The map $w$ factors through a map $w_A : \hAut(X_g) \to \Omega^\infty \Sigma^{-1} \mathrm{A}(X_g)$, which one should think of as the crossed homomorphism (up to homotopy) corresponding to the splitting of the fibration sequence
$$A(X_g) \to A_{\pi}(E) \to B\hAut(X_g)$$
given by the family $A$-theory characteristic $\chi: B\hAut(X_g) \to A_{\pi}(E)$ associated to the universal $X_g$-fibration $X_g \to E \xto{\pi} B\hAut(X_g)$ (we have written $A(-) := \Omega^\infty \mathrm{A}(-)$).

Consider instead the universal $(X_g, \partial X_g)$-fibration $p : (E, \partial E) \to B\hAut_\partial(X_g)$. As the inclusion $B\hAut_\partial(X_g) \times S^1 \times S^{2n-2} = \partial E \to E$ is $n$-connected, there is a fibrewise map
$$\tau : E \to B\hAut_\partial(X_g) \times S^1$$
and hence a map $A(\tau) : A_p(E) \to B\hAut_\partial(X_g) \times A(S^1)$ over $B\hAut_\partial(X_g)$. This identifies the composition
$$\hAut_\partial(X_g) \to \hAut(X_g) \xto{w} \Omega^\infty \mathrm{H}^s(X_g) \to \Omega^\infty \mathrm{H}^s(S^1)$$
as the map obtained by looping the map $B\hAut_\partial(X_g) \to B\hAut_{S^1}(X_g) \xto{\iota} A(S^1)$ given by considering
$$B\hAut_\partial(X_g) \times S^1 \times \{*\} \subset \partial E \to E \xto{\tau} B\hAut_\partial(X_g) \times S^1$$
as a family of retractive spaces over $S^1$.

We may form the following commutative diagram:
\beq
\xymatrix{
\Z/p \otimes S^{(-1)^{n+1}}_X \ar@{=}[d]  \ar[rr]^-{U_g(\Z[\pi], \Lambda_n^{\min})\text{-coinvariants}}& & \bigoplus_{a > 0} \Z/p\{t^a - t^{-a}\} \ar[dd]\\
 \pi_{2p-3}(\blEmb^{\mathrm{sfr}, \cong}_{\partial/2}(X_g ; \ell_{1/2\partial X_g}))_{(p)} \ar[r]^-{\text{forget}} \ar[rdd]_-{\text{forget}} & \pi_{2p-3}(\blEmb^{\mathrm{TOP}, \cong}_{\partial/2}(X_g))_{(p)}\\
& \pi_{2p-3}(\blTop_{\partial}(X_g))_{(p)} \ar[u]^-{\cong} \ar[d] \ar[r]& \pi_{2p-3}(\frac{\blTop_{\partial}(X_g)}{\Top_{\partial}(X_g)})_{(p)} \ar[d]\\
 & \pi_{2p-3}(\hAut_{\partial}(X_g))_{(p)} \ar[r]^-{w_*} \ar[d] & \pi_{2p-3}(\mathrm{H}^s(X_g))_{(p)} \ar[d]^-{\simeq}\\
 & \pi_{2p-3}(\hAut_{S^1}(X_g))_{(p)} \ar[r]^-{\iota_*} & \pi_{2p-3}(\mathrm{H}^s(S^1))_{(p)}
}
\eeq
It follows from our results that the diagonal map and lower left-hand maps are injective on $U_g(\Z[\pi], \Lambda_n^{\min})$-coinvariants. The $U_g(\Z[\pi], \Lambda_n^{\min})$-action on $\pi_{2p-3}(\hAut_{S^1}(X_g))_{(p)}$  extends to a $\mathrm{GL}_{2g}(\Z[\pi])$-action, and using our description 
$$\pi_{2p-3}(\hAut_{S^1}(X_g))_{(p)} \cong \Hom_{\Z[\pi]}(\pi_n(X_g), \pi_{n}(X_g))\otimes \Z/p$$
we see that these $\mathrm{GL}_{2g}(\Z[\pi])$-coinvariants are $\Z/p[t, t^{-1}]$, and that the induced map on coinvariants is given by the inclusion
$$\bigoplus_{a > 0} \Z/p\{t^a - t^{-a}\} \subset \Z/p[t, t^{-1}].$$

Using Waldhausen's description $\Omega^\infty_0 \mathrm{A}(S^1) \simeq \hocolim_{n,g \to \infty} B\hAut_{S^1}(S^1 \vee \bigvee_{2g} S^n)^+$ and Freudenthal's stability range, it follows using Lemma \ref{lem:PlusConst} that the induced map
$$\Z/p[t, t^{-1}] \cong H_0(\mathrm{GL}_{2g}(\Z[\pi]) ; \pi_{2p-3}(\hAut_{S^1}(X_g))_{(p)}) \to \pi_{2p-3}(\Sigma^{-1} \mathrm{A}(S^1))_{(p)}$$
is an isomorphism, and by consideration of cyclotomic structures that the induced map
$$\Z/p[t, t^{-1}] \cong H_0(\mathrm{GL}_{2g}(\Z[\pi]) ; \pi_{2p-3}(\hAut_{S^1}(X_g))_{(p)}) \to \pi_{2p-3}(\mathrm{H}^s(S^1))_{(p)}$$
is injective with finitely-generated cokernel. It follows that the composition along the rightmost column in the diagram above is injective with finitely-generated cokernel. This explains the sense in which the copy of $\overset{\infty}{\bigoplus}\, \Z/p$ given by Theorem \ref{thm:B} is the same as that coming from pseudoisotopy theory and the algebraic $K$-theory of spaces, cf.\ \cite{GKM-Nil}, though our result holds beyond the range in which pseudoisotopy theory is known to apply.

\bibliographystyle{amsalpha}
\bibliography{S1xDisk}
\end{document}